\pdfoutput=1
\documentclass[11pt]{article}

\usepackage[letterpaper,left=1in,right=1in,top=1in,bottom=1in]{geometry}
\usepackage{amssymb,amsmath,graphicx,amsfonts}
\usepackage{amsthm}
\usepackage{mathrsfs}
\usepackage{multirow}
\usepackage{graphicx,epsfig}
\usepackage{color}
\usepackage{empheq}
\usepackage{cases}
\usepackage{cite}
\usepackage{amsgen}
\usepackage{amscd}
\usepackage{booktabs}
\usepackage{latexsym}
\usepackage{euscript}
\usepackage{bm,hyperref,subfigure}
\usepackage{enumerate}

\theoremstyle{plain}
\newtheorem{theorem}{Theorem}[section]
\newtheorem{lemma}[theorem]{Lemma}

\newtheorem{proposition}[theorem]{Proposition}
\theoremstyle{definition}

\newtheorem{remark}{Remark}[section]

\newenvironment{keywords}%
  {\par\medskip\noindent{\bf Key words. }\ignorespaces}{\par\medskip}
\newenvironment{AMS}%
  {\par\medskip\noindent{\bf AMS subject classifications. }\ignorespaces}{\par\medskip}

\allowdisplaybreaks
\numberwithin{equation}{section}
\numberwithin{figure}{section}

\def\u{{\bf u}}
\def\v{{\bf v}}
\def\x{{\bf x}}
\def\y{{\bf y}}
\def\z{{\bf z}}

\def\e{{\bf e}}
\def\dof{{N}}
\def\K{{\bf K}}
\def\M{{\bf M}}
\def\A{{\bf A}}
\def\n{{\bf n}}
\def\H{{\bf H}}

\def\f{{\bf f}}
\def\w{{\bf w}}

\def\d{{\mathrm d}}

\def\R {{\mathbb R}}

\def\le{\leqslant}
\def\ge{\geqslant}
\def\Omega{\varOmega}
\def\Delta{\varDelta}

\title{Dynamic Ritz projection of mean curvature flow\\ and optimal ${\bf L^2}$ convergence of parametric FEM\thanks{This work is supported in part by AMSS-PolyU Joint Laboratory and the Research Grants Council of Hong Kong (PolyU/GRF15303022 and PolyU/RFS2324-5S03).}} 

\author{
Buyang Li\thanks{Department of Applied Mathematics, The Hong Kong Polytechnic University, Hong Kong. E-mail address: buyang.li@polyu.edu.hk and claire.tang@polyu.edu.hk} and Rong Tang\footnotemark[\value{footnote}]
}

\date{}

\begin{document}

\maketitle

\begin{abstract} 
A new approach is developed to study the convergence of parametric finite element approximations to the mean curvature flow of closed surfaces in three-dimensional space. In this approach, the error analysis is conducted by comparing the numerical solution to a dynamic Ritz projection of the mean curvature flow introduced in this paper, rather than an interpolation of the mean curvature flow, as commonly used in the literature. The errors associated with the dynamic Ritz projection in approximating the mean curvature flow are established in the $L^2$ and $W^{1,p}$ norms. Leveraging these results, optimal-order convergence of parametric finite element methods for mean curvature flow of closed surfaces in the $L^\infty(0,T;L^2)$ norm is proved, including the convergence of parametric finite element methods with piecewise linear finite elements. 
\end{abstract} 

\begin{keywords}
{Surface evolution, mean curvature flow, parametric FEM, dynamic Ritz projection, convergence.}
\end{keywords}

\begin{AMS} 
65M15, 65M60, 49M10, 35K65
\end{AMS}


\setlength\abovedisplayskip{4pt}
\setlength\belowdisplayskip{4pt}

\section{Introduction}\label{Se:1}

The numerical approximation of surface evolution under geometric flows, including mean curvature flow, Willmore flow and surface diffusion, have been intensively studied in the past. The most well-known example of geometric flows is mean curvature flow, which describe the evolution of a surface $\Gamma(t)\subset\R^3$, with certain initial condition $\Gamma(0)=\Gamma^0$, moving with velocity 
\begin{align*}
v = - H n ,
\end{align*} 
where $H$ and $n$ denote the mean curvature and the normal vector of the surface. 
By using identity $Hn=- \Delta_{\Gamma}{\rm id}_{\Gamma}$, where $\Delta_{\Gamma}$ and ${\rm id}_{\Gamma}$ denote the Laplace--Beltrami operator and identity map on surface $\Gamma$, mean curvature flow can also be written as 
\begin{equation}
\label{MCF-2}
v = \Delta_{\Gamma}{\rm id}_{\Gamma} .
\end{equation} 
By utilizing the formulation in \eqref{MCF-2}, Dziuk introduced the following type of finite element method (FEM) in \cite{Dziuk1990} for approximating surface evolution under mean curvature flow: assuming that $\Gamma(t_{m-1})$ is already approximated by a piecewise triangular surface $\Gamma_h^{m-1}$, find a parametrization of surface $\Gamma_h^m$ through a finite element function $u^m_h: \Gamma_h^{m-1}\rightarrow \R^3$ which is determined by some weak formulation of \eqref{MCF-2}. Such methods are referred to as parametric FEMs. 

Since 1990s, parametric FEMs have been widely used for approximating surface evolution under various geometric flows and interface evolution in various related problems. Many novel numerical methods were developed to address the challenges (such as prevention of mesh distortion and preservation of energy stability) in approximating surface evolution; see the artificial tangential motion constructed by Barrett, Garcke \& N\"urnberg's (BGN) \cite{BGN07, BGN08,BGN08b}, Elliott \& Fritz \cite{Elliott-Fritz-2017,Elliott-Fritz-2016}, Hu \& Li \cite{HL22}, Duan \& Li \cite{Duan-Li-2024}, and the structure-preserving parametric FEMs \cite{Bao-2020-SISC,Bao-Li-2024,Zhao-Jiang-Bao-2020}. These techniques have significantly improved the performance of parametric FEMs in approximating surface evolution under geometric flows. However, proving convergence of these methods remains challenging. 

Convergence of parametric FEMs for geometric flows has been addressed for mean curvature flow and Willmore flow of curves in \cite{Dziuk94,DeckelnickDziuk,Deckelnick-Dziuk-2009,Bartels-2013,Elliott-Fritz-2017,Li-2020-SINUM,Ye-Cui-SINUM}, and for graph surfaces and axisymmetric surfaces in \cite{deckelnick1995convergence,Deckelnick-Dziuk-2006,DN2021SJNA,BDN2021IJNA,Deckelnick-Styles-2022}. However, the techniques developed in the proofs are not applicable to the analysis of geometric flow of general closed surfaces. We also refer to \cite{Elliott-Fritz-2017,BDN2021IJNA,DN2021SJNA,Mierswa2020,Deckelnick-Styles-2022,Barrett-Deckelnick-Styles-2017} for the proofs of convergence of parametric FEMs for mean curvature flow of closed curves, graph surfaces, axisymmetric surfaces, and surfaces of torus type, with Elliott \& Fritz's tangential motion \cite{Elliott-Fritz-2017,Elliott-Fritz-2016} generated by a tangential transformation on $\Gamma^0$. 

Convergence of the parametric FEMs for mean curvature flow, Willmore flow and surface diffusion of closed surfaces was firstly {{proved in \cite{KLL19,KLL-2021,HL22,Elliott-Garcke-Kovacs,MR4454930} for some}} equivalent formulations of the geometric flows which couple the velocity equation of geometric flows with the geometric evolution equations of mean curvature $H$ and normal vector $n$, by formulating the algorithms into evolving FEMs as \cite{Dziuk-Elliott-2007} and utilizing the matrix-vector formulation of evolving FEMs introduced in \cite{KLLP2017}. For example, convergence of parametric FEMs for mean curvature flow of closed surfaces is proved for the following equivalent formulation of mean curvature flow: 
\begin{align}
\label{KLL}
\begin{aligned}
\partial_tX&=v\circ X &&\mbox{on}\,\,\,\Gamma^0 \\
v &= - H n &&\mbox{on}\,\,\,\Gamma(t) \\
\partial_t^\bullet H-\Delta_{\Gamma(t)} H &= |\nabla_{\Gamma(t)} n|^2 H &&\mbox{on}\,\,\,\Gamma(t)\\
\partial_t^\bullet n-\Delta_{\Gamma(t)} n &= |\nabla_{\Gamma(t)} n|^2 n &&\mbox{on}\,\,\,\Gamma(t) ,
\end{aligned}
\end{align} 
where $X(\cdot,t):\Gamma^0\rightarrow\R^3$ is the flow map which determines $\Gamma(t)=\{X(p,t):p\in\Gamma^0\}$, and $\partial_t^\bullet $ denotes material derivative along the particle trajectories of the flow map, i.e., 
$$
\partial_t^\bullet u(x,t) = \frac{\d}{\d t} u (X(p,t),t) \,\,\,\mbox{at point}\,\,\, x=X(p,t)\,\,\,\mbox{on}\,\,\,\Gamma(t) . 
$$
The analyses in \cite{KLL19,KLL-2021,HL22} are restricted to finite elements of degree $k\ge 2$. This condition is required for proving convergence of numerical solutions in the $W^{1,\infty}$ norm in order to control the nonlinear terms and to utilize the equivalence of $L^p$ and $W^{1,p}$ norms of functions on the numerical-solution surface and interpolated surface. 

The convergence of other parametric FEMs, designed for approximating \eqref{MCF-2} instead of \eqref{KLL}, is more challenging due to the degeneracy of the nonlinear Laplacian--Beltrami operator $\Delta_{\Gamma}{\rm id}_{\Gamma}$ acting on surface $\Gamma$; see the discussions in \cite{Bai-Li-2023,Li21}. In the spatially semi-discretization setting, the convergence of parametric FEMs for mean curvature flow of surfaces with formulation \eqref{MCF-2} was proved in \cite{BL22A,Li21} for finite elements of degree $k\ge 6$ based on a discovery that the nonlinear Laplacian--Beltrami operator is $H^1$ elliptic with respect to the normal component of the trajectory error (error between the exact and numerical flow maps). The restriction to finite elements of degree $k\ge 6$ is needed to control the nonlinearities in error analysis by using the very weak estimates obtained based on the partial $H^1$ ellipticity in the normal direction. In the fully discrete setting, the convergence of Dziuk's semi-implicit parametric FEM for mean curvature flow of surfaces was proved in \cite{Bai-Li-2023} for finite elements of degree $k\ge 3$ based on a discovery that the nonlinear Laplacian--Beltrami operator is $H^1$ elliptic with respect to the distance error (distance between the exact and numerical surfaces) --- the strong $H^1$ ellipticity in both normal and tangential directions leads to stronger estimates of the errors for controlling the nonlinearities and therefore reduces the requirements of finite elements degree from $k\ge 6$ to $k\ge 3$. 

In summary, the convergence of some fundamental algorithms for geometric flows of surfaces still remains open. The existing proofs of convergence of parametric FEMs for mean curvature flow and other geometric flows of closed surfaces are all based on optimal-order $H^1$-norm error estimates that require using finite elements of degree $k\ge 2$ to control the $W^{1,\infty}$ boundedness of numerical solutions of surface position $X$, mean curvature $H$ and normal vector $n$. The following two questions remain open:\vspace{3pt} 
\begin{itemize}
\item
Convergence of parametric FEMs for mean curvature flow and other geometric flows of closed surfaces with piecewise linear finite elements still remains open.\vspace{3pt} 

\item
Optimal-order convergence of parametric FEMs in the $L^\infty(0,T;L^2)$ norm for these geometric flows remains open.\vspace{3pt}  
\end{itemize}
The two questions are addressed simultaneously in the current paper for approximating formulation \eqref{KLL} of mean curvature flow. It turns out that the two questions are closely related such that our answer to the second question (by introducing a dynamic Ritz projection which reduces the remainders in the error equations) also addresses the first question. 
In particular, the new framework developed in this paper, by defining and utilizing a dynamic Ritz projection of geometric flow in error analysis, is promising for proving convergence of parametric FEMs for geometric flows with optimal-order convergence and lower-degree finite elements. 

\section{Dynamic Ritz projection and main results}

Let $\Gamma_h^0$ be a piecewise {polynomial} surface that interpolates the smooth surface $\Gamma^0$, with each piece being the image of the reference triangle under a polynomial map of degree $k\ge 1$, and assume that the curved triangles are shape-regular and quasi-uniform with mesh size $h$; see \cite{Demlow-2009,Kovacs2018} and Lenoir's isoparametric approximation of a surface\cite{Lenoir-1986}. 

Let $\x^0=(p_1,\cdots,p_\dof)\in \mathbb{R}^{3\dof}$ be the nodal vector that collects all the nodes $p_j\in \mathbb{R}^3$, $j=1,\ldots,\dof$, in $\Gamma_h^0$. We evolve $\x^0$ in time and denote its position at time $t$ by $\x(t)=(x_1(t),\cdots,x_\dof(t))$, which determines a piecewise (possibly curved) triangular surface $\Gamma_h[\x(t)]$ via piecewise polynomial interpolation on a plane reference triangle, and denote by $S_h[\x(t)]$ the finite element space of polynomial degree $k$ on the piecewise triangular surface $\Gamma_h[\x(t)]$. 

There exists a unique finite element function $X_h(\cdot,t)$ of piecewise polynomial degree $k$ defined on $\Gamma_h[\x^0]$ satisfying 
$$
X_h(p_j,t)=x_j(t)\quad\mbox{for}\quad j=1,\dots,\dof.
$$ 
This is the discrete flow map which maps $\Gamma_h[\x^0]$ to $\Gamma_h[\x(t)]$. The semidiscrete parametric FEM for \eqref{KLL} is to find 
\begin{equation*}
(X_h(\cdot,t),v_h(\cdot,t),H_h(\cdot,t),n_h(\cdot,t))
\in S_h[\x^0]^3 \times S_h[\x(t)]^3  \times S_h[\x(t)] \times S_h[\x(t)]^3  
\end{equation*}
such that the following weak formulation holds for all $(\chi_H,\chi_n) \in S_h(\Gamma_h[\x])  \times S_h(\Gamma_h[\x])^3 $: \\[-10pt]
\begin{subequations}\label{weak-form}
\begin{align}
\label{BFormX}
&\partial_{t}X_h = v_h\circ X_h \quad\hspace{5pt} \mbox{on}\,\,\, \Gamma_h[\x^0] \\ 
\label{BFormb}
&v_h =  -I_h(H_hn_h) \quad\mbox{on}\,\,\, \Gamma_h[\x] \\
&\int_{\Gamma_h[\x]} \partial_{t,h}^\bullet H_h  \chi_H 
+ \int_{\Gamma_h[\x]} \nabla_{\Gamma_h[\x]} H_h \cdot \nabla_{\Gamma_h[\x]} \chi_H 
= \int_{\Gamma_h[\x]} |\nabla_{\Gamma_h[\x]}n_h |^2 H_h \chi_H 
\label{H-Fem} \\
&\int_{\Gamma_h[\x]} \partial_{t,h}^\bullet n_h \cdot \chi_n
+\int_{\Gamma_h[\x]} \nabla_{\Gamma_h[\x]} n_h \cdot \nabla_{\Gamma_h[\x]} \chi_n 
= \int_{\Gamma_h[\x]}  |\nabla_{\Gamma_h[\x]}n_h|^2 n_h \cdot \chi_n ,
\label{n-Fem}  
\end{align}
\end{subequations}
where $I_h$ denotes the Lagrange interpolation onto the finite element space $S_h[\x(t)]$ {and $\partial_{t,h}^\bullet$ denotes the material derivative on $\Gamma_h[\x(t)]$ with respect to the discrete flow map $X_h(\cdot,t)$.}
The initial value for  \eqref{weak-form} can be chosen as follows: 
$X_h(\cdot,0)={\rm id}$ on $\Gamma_h[\x^0]$; $H_h(\cdot,0)$ and $n_h(\cdot,0)$ are the Lagrange interpolations of $H(\cdot,0)$ and $n(\cdot,0)$, respectively. 

Let $\x^*=\x^*(t)$ be the nodal vector which collects the nodes evolving according to the exact flow map $X(\cdot,t):\Gamma^0\rightarrow \R^3$, and denote by $\Gamma_h[\x^*]$ the piecewise curved triangular surface that interpolates $\Gamma(t)$ at the nodes in $\x^*$. The finite element space on $\Gamma_h[\x^*]$ is denoted by $S_h(\Gamma_h[\x^*])$. 

The analyses in \cite{KLL19,KLL-2021,HL22} are based on estimating the error between numerical solution $(X_h,v_h,H_h,n_h)$ and $(\hat I_h^*X,\hat I_h^*v,\hat R_h^*H, \hat R_h^*n)$, where $\hat I_h^*X$ and $\hat I_h^*v$ are Lagrange interpolations of $X$ and $v$ onto $\Gamma_h[\x^0]$ and $\Gamma_h[\x^*]$, respectively, and $\hat R_h^*H, \hat R_h^*n\in S_h(\Gamma_h[\x^*])$ are the linear Ritz projections of $H,n$ onto surface $\Gamma_h[\x^*]$, respectively, defined by 
\begin{subequations}\label{Ritz-old} 
\begin{align}
\int_{\Gamma_h[\x^*]}
(\hat R_h^*H\,\varphi_h+\nabla_{\Gamma_h[\x^*]} \hat R_h^*H\cdot \nabla_{\Gamma_h[\x^*]} \varphi_h) &=
\int_{\Gamma}
(H\varphi_h^l+\nabla_{\Gamma} H \cdot \nabla_{\Gamma}  \varphi_h^l)  &&\notag\\
&\hspace{44pt}\forall\,\varphi_h\in S_h(\Gamma_h[\x^*]) , \\[5pt]
\int_{\Gamma_h[\x^*]}
(\hat R_h^*n\,\cdot \phi_h+\nabla_{\Gamma_h[\x^*]} \hat R_h^*n\cdot \nabla_{\Gamma_h[\x^*]} \phi_h) &=
\int_{\Gamma}
(n \cdot \phi_h^l+\nabla_{\Gamma} n \cdot \nabla_{\Gamma}  \phi_h^l)  &&\notag\\
&\hspace{38pt}\forall\,\phi_h\in S_h(\Gamma_h[\x^*])^3 ,
\end{align}
\end{subequations}
In \eqref{Ritz-old}, $\varphi_h^l$ and $\phi_h^l$ denote the lifts of functions $\varphi_h\in S_h(\Gamma_h[\x^*])$ and $\phi_h\in S_h(\Gamma_h[\x^*])^3$ to the exact surface $\Gamma=\Gamma(t)$, respectively; see Section \ref{section:lifts}. By this definition of Ritz projection, $(\hat I_h^*X,\hat I_h^*v,\hat R_h^*H, \hat R_h^*n)$ satisfies numerical scheme \eqref{weak-form} up to some remainders, i.e.,
{\small 
\begin{subequations}\label{weak-interpolated}
\begin{align}
\label{interpolated-X}
\partial_{t}\hat I_h^*X &= \hat I_h^*v\circ \hat I_h^*X &&\mbox{on}\,\,\, \Gamma_h[\x^0] \\ 
\label{interpolated-v}
\hat I_h^*v &=  - \hat I_h^*(\hat R_h^*H \, \hat R_h^*n) + d_v &&\mbox{on}\,\,\, \Gamma_h[\x^*] \\
\notag\int_{\Gamma_h[\x^*]} \partial_{t,h}^\bullet \hat R_h^*H \chi_H 
&+ \int_{\Gamma_h[\x^*]} \nabla_{\Gamma_h[\x^*]} \hat R_h^*H \cdot \nabla_{\Gamma_h[\x^*]} \chi_H && \forall\,\chi_H \in S_h(\Gamma_h[\x^*])\\
\label{interpolated-H}&= \int_{\Gamma_h[\x^*]} |\nabla_{\Gamma_h[\x^*]}\hat R_h^*n |^2 \hat R_h^*H \chi_H + \int_{\Gamma_h[\x^*]} d_H \chi_H \\  
\int_{\Gamma_h[\x^*]} \partial_{t,h}^\bullet \hat R_h^*n \cdot \chi_n
&+ \int_{\Gamma_h[\x^*]} \nabla_{\Gamma_h[\x^*]} \hat R_h^*n \cdot \nabla_{\Gamma_h[\x^*]} \chi_n && \forall\,\chi_n \in S_h(\Gamma_h[\x^*])^3 \notag \\
&= \int_{\Gamma_h[\x^*]}  |\nabla_{\Gamma_h[\x^*]}\hat R_h^*n|^2 \hat R_h^*n \cdot \chi_n 
+ \int_{\Gamma_h[\x^*]} d_n \cdot\chi_n , 
\label{interpolated-n}  
\end{align}
\end{subequations}
where $ d_v = \hat I_h^*(\hat R_h^*H \, \hat R_h^*n) - \hat I_h^*(\hat I_h^*H \, \hat I_h^*n) $, $d_H$ and $d_n$ are remainders which satisfy the following estimates (for some constant $C$ which is independent of mesh size $h$): 
\begin{align}
&\|d_v\|_{L^2(\Gamma_h[\x^*])} + h\|d_v\|_{H^1(\Gamma_h[\x^*])} \le Ch^{k+1} , \\ 
&\Big| \int_{\Gamma_h[\x^*]} d_H \chi_H  \Big | 
+ \Big| \int_{\Gamma_h[\x^*]} d_n \chi_n \Big| 
\le Ch^{k+1} 
(\|\chi_H \|_{H^1(\Gamma_h[\x^*])} + \|\chi_n \|_{H^1(\Gamma_h[\x^*])} ) . 
\end{align} 

We denote by $\hat x_h$, $\hat v_h$, $\hat H_h$ and $\hat n_h$ the lift of $X_h$, $v_h$, $H_h$ and $n_h$ to $\Gamma_h[\x^*]$, i.e., finite element functions on $\Gamma_h[\x^*]$ with the same nodal vectors as $X_h$, $v_h$, $H_h$ and $n_h$, respectively. 
By using this notation, we can define the following finite element error functions on $\Gamma_h[\x^*]$: 
\begin{align}
\begin{aligned}
\hat e_{x,h} &= \hat x_h - {\rm id}_{\Gamma_h[\x^*]} ,
&& \hat e_{v,h}= \hat v_h - \hat I_h^*v, \\
\hat e_{H,h} &= \hat H_h - \hat R_h^*H, 
&&\hat  e_{n,h}= \hat n_h - \hat R_h^*n ,  
\end{aligned}
\end{align}
where $\hat e_{x,h}$ represents the error between surfaces $\Gamma_h[\x]$ and $\Gamma_h[\x^*]$, and $\hat e_{v,h}$, $\hat e_{H,h}$ and $\hat e_{n,h}$ represent the errors in the numerical approximations of velocity, mean curvature, and normal vector, respectively. 
Such definitions of error functions are used in \cite{KLL19,KLL-2021,HL22} in proving convergence of parametric FEMs for mean curvature flow and Willmore flow. However, the presence of remainder $d_v$ in \eqref{interpolated-v} hinders people from proving optimal-order convergence in the $L^\infty(0,T;L^2)$ norm for the reason that $\|\hat e_{x,h}\|_{H^1(\Gamma_h[\x^*])}$ frequently appears due to surface location errors, and this needs to be controlled by using $\|d_v\|_{H^1(\Gamma_h[\x^*])} $ instead of $\|d_v\|_{L^2(\Gamma_h[\x^*])} $. Therefore, optimal-order convergence of parametric FEMs for mean curvature flow was only proved in the $L^\infty(0,T;H^1)$ norm in the literature, and such $L^\infty(0,T;H^1)$ error analysis typically requires $W^{1,\infty}$ boundedness of the numerical solutions in order to bound the nonlinear terms in the error analysis. This requires the convergence order to be sufficiently high in order to apply the inverse inequality of finite element functions to prove $W^{1,\infty}$ boundedness of the numerical solutions, and this limits the error analyses to high-order finite elements of degree $k\ge 2$. 

Our solution to the above-mentioned two questions, i.e., optimal-order convergence in the $L^\infty(0,T;L^2)$ norm and convergence of parametric FEM for mean curvature flow with piecewise linear finite elements, is based on the following two observations: 
\begin{enumerate}
\item
The evolution equations of $H$ and $n$ in \eqref{KLL} have similar nonlinear structures as the harmonic map heat flow studied in \cite{Gui-Li-Wang-2022}, where the proof of optimal-order convergence of FEMs in the $L^\infty(0,T;L^2)$ norm only requires utilizing $W^{1,4}$ boundedness of $e_{H,h}$ and $e_{n,h}$ to bound the nonlinear terms appearing in the error analysis. This motivates us to consider $L^\infty(0,T;L^2)$ error estimates of parametric FEMs for \eqref{KLL} based on boundedness of $e_{H,h}$ and $e_{n,h}$ in a norm weaker than the $W^{1,\infty}$ norm, instead of the $L^\infty(0,T;H^1)$ error estimates considered in \cite{KLL19,KLL-2021,HL22}. The latter approach requires $W^{1,\infty}$ boundedness of $e_{H,h}$ and $e_{n,h}$ (this cannot be proved for piecewise linear finite elements so far) to bound the nonlinear terms in error analysis. 

\item
However, convergence of parametric FEMs for mean curvature flow of closed surfaces requires $W^{1,\infty}$ boundedness of $\hat e_{x,h}$ to guarantee that the norm equivalence of finite element functions on $\Gamma_h[\x]$ and $\Gamma_h[\x^*]$ associated to a common nodal vector. Such norm equivalence is frequently used and can hardly be relaxed in analyzing the error of parametric FEMs for surface evolution under geometric flows. However, an optimal-order error estimate such as 
\begin{align}\label{ex-LinftyL2-k+1}
\|\hat e_{x,h}\|_{L^\infty(0,T;L^2(\Gamma_h[\x^*]))}\le Ch^{k+1}
\end{align}
is not enough to guarantee the $W^{1,\infty}$ boundedness of $\hat e_{x,h}$ in the case $k=1$ (i.e., for piecewise linear finite elements) --- we actually need $k>1$ (quadratic or higher-order finite elements) to have some extra convergence rate in order to prove the $W^{1,\infty}$ boundedness of $\hat e_{x,h}$ from \eqref{ex-LinftyL2-k+1}; see the arguments in \cite{KLL19,KLL-2021,HL22}. 

This difficulty can be overcome if we can prove the following super-convergence rate in $L^2(0,T;H^{1})$ norm: 
\begin{align}\label{ev-L2H1}
& 
\|\hat e_{v,h}\|_{L^2(0,T;H^{1}(\Gamma_h[\x^*]))} \le Ch^{k+1} ,
\end{align}
which would imply the following result (position is the time integral of velocity): 
\begin{align}\label{ex-LinftyH1}
& 
 \|\hat e_{x,h}\|_{L^\infty(0,T;H^{1}(\Gamma_h[\x^*]))} 
\le Ch^{k+1} .
\end{align}
This would further imply (via inverse inequality in two dimensions)
\begin{align}\label{ex-LinftyW1infty-h}
& 
\|\hat e_{x,h}\|_{L^\infty(0,T;W^{1,\infty}(\Gamma_h[\x^*]))} \le Ch^k . 
\end{align}
This can be used to prove the uniform $W^{1,\infty}$ boundedness of $\hat e_{x,h}$ for $k\ge 1$. 
However, the main difficulty of this approach is that \eqref{ev-L2H1} cannot be shown with the presence of remainder $d_v$ in \eqref{interpolated-v}. To overcome this difficulty,  we {\it redefine} the error functions using a modified Ritz projection of the mean curvature flow that could exclude the remainder term $d_v$ in \eqref{interpolated-v}.

\end{enumerate}

Our idea is to define a dynamic Ritz projection of $(X,v,H,n)$ as the finite element solution $(Y_h^*, v_h^*, H_h^*, n_h^*)$ of a nonlinearly coupled surface evolution problem in \eqref{def-Ritz}, with $Y_h^*(\cdot,t):\Gamma_h^0\rightarrow\R^3$ being a finite element flow map with nodal vector $\y^*=\y^*(t)$, and $v_h^*$, $H_h^*$ and $n_h^*$ being functions on $\Gamma_h[\y^*]$ determined by the equations:   
{\small
\begin{align}\label{def-Ritz} 
\partial_t Y_h^*&= v_h^*\circ Y_h^* &&\mbox{on}\,\,\,\Gamma_h^0 \\[5pt]
v_h^* &= -I_h^*( H_h^* n_h^*) &&\mbox{on}\,\,\,\Gamma_h[\y^*] \notag\\ 
\int_{\Gamma_h[\y^*]}
(H_h^*\varphi_h^*+\nabla_{\Gamma_h[\y^*]} H_h^*\cdot \nabla_{\Gamma_h[\y^*]} \varphi_h^*)  &= 
\int_{\Gamma}
(H\varphi_h^l+\nabla_{\Gamma} H \cdot \nabla_{\Gamma} \varphi_h^l)  
&&\forall\,\varphi_h\in S_h(\Gamma_h[\x^*]) \notag\\[-2pt]
\int_{\Gamma_h[\y^*]}
(n_h^*\cdot\phi_h^*+\nabla_{\Gamma_h[\y^*]} n_h^*\cdot \nabla_{\Gamma_h[\y^*]} \phi_h^*) &=
\int_{\Gamma}
(n\cdot\phi_h^l+\nabla_{\Gamma} n \cdot \nabla_{\Gamma}  \phi_h^l)  
&&\forall\,\phi_h\in S_h(\Gamma_h[\x^*])^3 , 
\notag 
\end{align}
\!\!}
where $I_h^*$ denotes the Lagrange interpolation operator onto $S_h(\Gamma_h[\y^*]) $, and $\varphi_h^l$ denotes the lift of a function $\varphi_h\in S_h(\Gamma_h[\x^*])$ to surface $\Gamma=\Gamma(t)$, i.e., $\varphi_h^l(x^l) = \varphi_h(x)$ for $x\in \Gamma_h[\x^*]$, with $x^l$ denoting the lift of point $x$ from $\Gamma_h[\x^*]$ to $\Gamma$, and $\varphi_h^*$ denotes the finite element function on $\Gamma_h[\y^*]$ with the same nodal vector as $\varphi_h\in S_h(\Gamma_h[\x^*])$. {The initial value for the system \eqref{def-Ritz} is given by $Y_h^*(\cdot, 0) = \rm id$ on $\Gamma_h^0$.}
In this definition, $H_h^*$ and $n_h^*$ are Ritz projections of $H$ and $n$ onto an unknown surface $\Gamma_h[\y^*]$ which evolves with velocity $v_h^* = - I_h^*( H_h^* n_h^*)$ determined by this Ritz projection. 

{Note that the idea of using Ritz projection to achieve $H^1$ superconvergence and subsequently obtain optimal $L^2$ error estimates was first introduced in \cite{MaryPriori} for a class of nonlinear parabolic equations. Nonlinear types of Ritz projections were employed to ensure uniform control over the gradient of the height function for the mean curvature flow, Willmore flow and surface diffusion of graphs; see \cite{2000_DECKELNICK,Deckelnick-Dziuk-2006,Deckelnick-Styles-2022}. The dynamic Ritz projection introduced in this paper distinguishes itself from the classical Ritz projection primarily through modifications to the first two equations in \eqref{def-Ritz}. These alterations enable the surface $\Gamma_h[\y^*]$ to evolve according to an evolution equation (thus earning the name ``dynamic"), thereby differentiating it from the interpolated surface $\Gamma_h[\x^*]$.}

The first main result of this paper is the following theorem about optimal-order approximation properties for this dynamic Ritz projection of mean curvature flow. 

\begin{theorem}\label{THM1}
{\it 
Assume that the exact solution $(X, v, H, n)$ of the mean curvature flow is sufficiently smooth and the flow map $X(\cdot,t): \Gamma^0\to \Gamma(t)$ is a diffeomorphism for $t\in [0,T]$. Let $(\hat y_h^*,\hat v_h^*,\hat H_h^*,\hat n_h^*)$ be the lift of the dynamic Ritz projection $(Y_h^*, v_h^*, H_h^*, n_h^*)$ defined in \eqref{def-Ritz}, with finite elements of degree $k\ge 1$. 
Then there exists a constant $h_0>0$ such that the following error bound holds for mesh size $h\le h_0$: 
\begin{align}\label{Error-of-Ritz}
&\|\hat y_h^* - {\rm id}_{\Gamma_h[\x^*]} \|_{{{L^\infty(0,T;L^2(\Gamma_h[\x^*]))}}} 
+ \| \hat v_h^* - \hat I_h^*v \|_{{{L^\infty(0,T;L^2(\Gamma_h[\x^*]))}}} \notag\\
&\quad\, 
+ \| \hat H_h^* - \hat I_h^*H \|_{{{L^\infty(0,T;L^2(\Gamma_h[\x^*]))}}}
+ \| \hat n_h^* - \hat I_h^*n \|_{{{L^\infty(0,T;L^2(\Gamma_h[\x^*]))}}} \notag\\
&\quad\, + \| \partial^\bullet_{t,h}(\hat H_h^* - \hat I_h^*H) \|_{{{L^\infty(0,T;L^2(\Gamma_h[\x^*]))}}} 
+ \| \partial^\bullet_{{t,h}}(\hat n_h^* - \hat I_h^*n) \|_{{{L^\infty(0,T;L^2(\Gamma_h[\x^*]))}}} 
\le Ch^{k+1} ,
\end{align}
{where $\hat I_h^*X$ and $\hat I_h^*v$ denote the Lagrange interpolations of $X$ and $v$ onto $\Gamma_h[\x^0]$ and $\Gamma_h[\x^*]$, respectively.} The constant $C$ is independent of $h$ and $t\in[0,T]$ (though it may depend on $T$). 
}
\end{theorem}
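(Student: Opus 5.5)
The system \eqref{def-Ritz} is an ODE for the nodal vector $\y^*(t)$. The reason is that once $\y^*$ is known, $H_h^*$ and $n_h^*$ are determined by two linear elliptic problems on $\Gamma_h[\y^*]$; then $v_h^*$ is given by interpolation and $\partial_t\y^*$ by the first equation. I would therefore use a continuity (bootstrap) argument. Define
\[
\hat e_y=\hat y_h^*-{\rm id}_{\Gamma_h[\x^*]},
\]
and let $T_h^*\le T$ be the maximal time on which the solution exists and satisfies $\|\hat e_y\|_{W^{1,\infty}(\Gamma_h[\x^*])}\le h^{1/2}$. On $[0,T_h^*]$ the surfaces $\Gamma_h[\y^*]$ and $\Gamma_h[\x^*]$ are uniformly equivalent, so $L^p$ and $W^{1,p}$ norms of finite element functions sharing a nodal vector are equivalent on the two surfaces (the standard lemmas of \cite{KLL19,KLL-2021}). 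The Ritz problems on $\Gamma_h[\y^*]$ are also uniformly coercive, so local existence of the ODE is immediate.

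Step 1 compares the Ritz projections on the two surfaces. Subtract \eqref{Ritz-old} from the third and fourth equations of \eqref{def-Ritz}. The right-hand sides coincide, so $e_H:=\hat H_h^*-\hat R_h^*H$ satisfies
\[
a_{\x^*}(e_H,\varphi_h)=a_{\x^*}(\hat H_h^*,\varphi_h)-a_{\y^*}(H_h^*,\varphi_h^*).
\]
The right side is a difference of bilinear forms on two surfaces. The standard formula for such differences (integrate along the segment $\x^*+\theta(\y^*-\x^*)$) bounds it by $C\|\nabla \hat e_y\|_{L^2}\|\hat H_h^*\|_{W^{1,\infty}}\|\varphi_h\|_{H^1}$. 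This gives
\[
\|e_H\|_{H^1}+\|e_n\|_{H^1}\le C\|\hat e_y\|_{H^1}.
\]
Combining this with the known $L^2$ estimates $\hat R_h^*H-\hat I_h^*H=O(h^{k+1})$, this step reduces everything to controlling $\hat e_y$ in $H^1$.

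Step 2 estimates the velocity. Subtract \eqref{interpolated-X}–\eqref{interpolated-v} from the first two equations of \eqref{def-Ritz}. This gives
\[
\partial_t \hat e_y=\hat v_h^*-\hat I_h^*v,
\]
and
\[
\hat v_h^*-\hat I_h^*v=-\hat I_h^*(\hat H_h^*\hat n_h^*)+\hat I_h^*(\hat I_h^*H\,\hat I_h^*n).
\]
The key point is that $d_v$ no longer appears as an $O(h^k)$ $H^1$-remainder. The velocity error equals the interpolated product of $(\hat H_h^*-\hat I_h^*H)$ and $(\hat n_h^*-\hat I_h^*n)$ against smooth factors. I would split $\hat H_h^*-\hat I_h^*H=e_H+(\hat R_h^*H-\hat I_h^*H)$. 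The piece $\hat R_h^*H-\hat I_h^*H$ is only $O(h^k)$ in $H^1$, so the $H^1$ estimate must be arranged with care. I would use the superconvergence property $\|\hat R_h^*H-\hat I_h^*H\|_{H^1}\le Ch^{k+1}$, which holds for the Ritz projection minus the Lagrange interpolant on quasi-uniform meshes (for $k=1$ this is the standard supercloseness; in general I would instead compare with a suitable superconvergent interpolant). This yields
\[
\tfrac{d}{dt}\|\hat e_y\|_{H^1}^2\le C\|\hat e_y\|_{H^1}^2+Ch^{2k+2}.
\]
The time derivative of the $H^1$ norm on the moving surface $\Gamma_h[\x^*]$ produces only a bounded factor. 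Gronwall then gives $\|\hat e_y\|_{L^\infty H^1}\le Ch^{k+1}$. The inverse inequality gives $\|\hat e_y\|_{W^{1,\infty}}\le Ch^k\le \tfrac12 h^{1/2}$ for $h\le h_0$, so $T_h^*=T$.

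Step 3 handles the material derivatives. Differentiate the Ritz equations in time along the discrete flow, using the transport formulas $\frac{d}{dt}\int \partial^\bullet$ for $m$ and $a$ on both surfaces. Repeating Step 1 with $\partial_t\hat e_y=\hat v_h^*-\hat I_h^*v$ bounded in $H^1$ by $Ch^{k+1}$ gives $\|\partial^\bullet_{t,h}e_H\|_{H^1}+\|\partial^\bullet_{t,h}e_n\|_{H^1}\le Ch^{k+1}$. This needs $W^{1,\infty}$ bounds on $\hat H_h^*$ and $\partial^\bullet\hat H_h^*$, which follow from the closeness already proved and the inverse inequality. Together with the known estimate $\partial^\bullet(\hat R_h^*H-\hat I_h^*H)=O(h^{k+1})$ in $L^2$, all terms in \eqref{Error-of-Ritz} follow.

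The main obstacle is Step 2 for $k=1$. A crude $H^1$ bound on $\hat R_h^*H-\hat I_h^*H$ loses one power of $h$ and destroys the bootstrap. I expect to need an $H^1$ supercloseness result, or alternatively to test the velocity equation with a negative-norm/duality argument that transfers the derivative onto smooth factors.
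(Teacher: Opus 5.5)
\textbf{The gap is in Step 2, and everything after it depends on it.}

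Splitting $\hat H_h^*-\hat I_h^*H=e_H+(\hat R_h^*H-\hat I_h^*H)$ brings back exactly the remainder $d_v=\hat I_h^*(\hat R_h^*H\,\hat R_h^*n)-\hat I_h^*(\hat I_h^*H\,\hat I_h^*n)$ that the paper identifies as the obstruction. Your $H^1$ Gronwall then needs $\|\hat R_h^*H-\hat I_h^*H\|_{H^1(\Gamma_h[\x^*])}\le Ch^{k+1}$. Such supercloseness requires structured meshes (e.g.\ approximate-parallelogram conditions when $k=1$). It fails on the general shape-regular, quasi-uniform curved meshes assumed here. You also cannot swap in a different interpolant, because $\Gamma_h[\x^*]$ moves with exactly $\hat I_h^*v$.

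In fact your intermediate claim $\|\hat y_h^*-{\rm id}\|_{L^\infty(0,T;H^1)}\le Ch^{k+1}$ is stronger than the theorem and false in general. The quantity $\partial^\bullet_{t,h}\hat e_y=\hat v_h^*-\hat I_h^*v$ is generically of size $h^k$ in $H^1$. The paper's experiment with $k=1$ is consistent with this: $\hat x_h-{\rm id}$ is only $O(h)$ in the $H^1$ seminorm while $\hat x_h-\hat y_h^*$ is $O(h^2)$, so $\hat y_h^*-{\rm id}$ is only $O(h)$ there.

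Without superconvergence your argument yields only $\|\hat e_y\|_{H^1}+\|e_H\|_{H^1}\le Ch^k$. This has two consequences:
\begin{itemize}
\item It gives no $O(h^{k+1})$ bound in $L^2$, for any $k$.
\item For $k=1$ it does not close the bootstrap. The inverse inequality from $H^1$ gives $\|\hat e_y\|_{W^{1,\infty}}\le Ch^{k-1}=O(1)$, and the same defect affects the $W^{1,\infty}$ bound on $\hat H_h^*$ used in Step 1.
\end{itemize}
Step 1 itself is fine as an $H^1$ statement; it is the $p=2$ case of the paper's bound on its auxiliary function $w$.

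\textbf{Two ingredients are missing.}

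First, you need $O(h^k)$ error bounds in $W^{1,p}$ for some $p>2$. The inverse inequality then gives $W^{1,\infty}$ errors $O(h^{k-2/p})$, which are small even for $k=1$. The paper obtains these as follows.
\begin{itemize}
\item Define $w\in H^1(\Gamma)^4$ through $\int_\Gamma(w\cdot\varphi^l+\nabla_\Gamma w\cdot\nabla_\Gamma\varphi^l)=({\bf K}(\x^*)-{\bf K}(\y^*)){\bf u}^*\cdot{\bf P}_h\varphi$. Then $\hat u_h^*$ is the ordinary Ritz projection of $u+w$ onto $\Gamma_h[\x^*]$.
\item Meyers-type regularity gives $\|w\|_{W^{1,p}(\Gamma)}\le C\|\hat e_y\|_{W^{1,p}}$.
\item $W^{1,p}$ stability and approximation of the Ritz projection, together with Gronwall on the integrated velocity equation, then close the estimate in $W^{1,p}$.
\end{itemize}

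Second, and this is the heart of the $L^2$ bound, you need a duality argument for $w$. Test with the solution of $\varphi^l-\Delta_\Gamma\varphi^l=w$.
\begin{itemize}
\item The leading term $\int_\Gamma(u\cdot\varphi^l\,\nabla_\Gamma\cdot\hat e_y^{\,l}+\nabla_\Gamma u\cdot D_\Gamma\hat e_y^{\,l}\nabla_\Gamma\varphi^l)$ is integrated by parts and bounded by $C\|\hat e_y\|_{L^2}\|\varphi^l\|_{H^2}$.
\item Every other term is $O(h)\|\hat e_y\|_{H^1}=O(h^{k+1})$, thanks to the $W^{1,p}$ step.
\end{itemize}
This yields $\|(\hat u_h^*)^l-u\|_{L^2(\Gamma)}\le C\|\hat e_y\|_{L^2}+Ch^{k+1}$. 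Gronwall is then run on the $L^2$ form of the integrated velocity equation, so $\hat e_y$ never needs to be superconvergent in $H^1$.

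Your closing remark about duality points in this direction, but it has to carry the whole proof rather than serve as a fallback. The material-derivative bounds are obtained the same way: differentiate the Ritz relation in time, get $O(h^k)$ in $H^1$, then use duality for $O(h^{k+1})$ in $L^2$. They do not come from $O(h^{k+1})$ bounds in $H^1$.
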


In view of the results in Theorem \ref{THM1}, we compare the numerical solution $(X_h, v_h,H_h,n_h)$ with the dynamic Ritz projection $(Y_h^*, v_h^*,H_h^*,n_h^*)$, which satisfies \eqref{weak-form} up to some remainders $d_H^* \in S_h[\bf y^*]$ and $d_n^* \in S^3_h[\bf y^*]$, i.e., 
{\small
\begin{subequations}\label{weak-Ritz}
\begin{align}
\label{weak-Ritz-X}
\partial_t Y_h^* &= v_h^*\circ Y_h^* \hspace{18pt} &&\mbox{on}\,\,\,\Gamma_h^0 \\[5pt]
\label{weak-Ritz-v}
v_h^* &= -I_h^*( H_h^* n_h^*) &&\mbox{on}\,\,\,\Gamma_h[\y^*] \\ 
\int_{\Gamma_h[\y^*]} \partial_{t,h}^\bullet H_h^* \chi_H 
&+ \int_{\Gamma_h[\y^*]} \nabla_{\Gamma_h[\y^*]} H_h^* \cdot \nabla_{\Gamma_h[\y^*]} \chi_H && \forall \, \chi_H \in S_h(\Gamma_h[\y^*])\notag\\
&
= \int_{\Gamma_h[\y^*]} |\nabla_{\Gamma_h[\y^*]}n_h^* |^2 H_h^* \chi_H 
+ \int_{\Gamma_h[\y^*]} d_H^* \chi_H 
\label{weak-Ritz-H} \\
\int_{\Gamma_h[\y^*]} \partial_{t,h}^\bullet n_h^* \cdot \chi_n
&+\int_{\Gamma_h[\y^*]} \nabla_{\Gamma_h[\y^*]} n_h^* \cdot \nabla_{\Gamma_h[\y^*]} \chi_n && \forall \, \chi_n \in S_h^3(\Gamma_h[\y^*])\notag\\
&
= \int_{\Gamma_h[\y^*]}  |\nabla_{\Gamma_h[\y^*]}n_h^* |^2 n_h^* \cdot \chi_n 
+ \int_{\Gamma_h[\y^*]} d_n^* \cdot \chi_n.
\label{weak-Ritz-n} 
\end{align}
\end{subequations}
\hspace{-3.5pt}}
By introducing $u_h^* :=(n_h^*,H_h^*)$, the weak formulation in \eqref{weak-Ritz} can be rewritten as follows:
{\small
\begin{subequations}\label{weak-Ritz-u-version}
\begin{align}
\label{weak-Ritz-X-u}
\partial_t Y_h^* &= v_h^*\circ Y_h^* \hspace{18pt} &&\mbox{on}\,\,\,\Gamma_h^0 \\[5pt]
\label{weak-Ritz-v-u}
v_h^* &= -I_h^*( H_h^* n_h^*) &&\mbox{on}\,\,\,\Gamma_h[\y^*] \\ 
\int_{\Gamma_h[\y^*]} \partial_{t,h}^\bullet u_h^* \cdot \chi_u
&+ \int_{\Gamma_h[\y^*]} \nabla_{\Gamma_h[\y^*]} u_h^* \cdot \nabla_{\Gamma_h[\y^*]} \chi_u && \forall \, \chi_u \in S_h^4(\Gamma_h[\y^*])\notag\\
&
= \int_{\Gamma_h[\y^*]} |\nabla_{\Gamma_h[\y^*]}n_h^* |^2 u_h^* {{\cdot}}\chi_u
+ \int_{\Gamma_h[\y^*]} d_u^* {{\cdot}} \chi_u 
\label{weak-Ritz-u}.
\end{align}
\end{subequations}
\hspace{-3.5pt}}
where $d_u^*$ is remainder which satisfies the following estimate (this can be shown by using the result of Theorem \ref{THM1}; see Lemma \ref{Lemma:defects}): 
\begin{align*}
&\Big| \int_{\Gamma_h[\y^*]} d_u^* \cdot\chi_u  \Big |   
\le Ch^{k+1}  \|\chi_u \|_{H^1(\Gamma_h[\y^*])} .
\end{align*} 
In particular, compared with \eqref{interpolated-v}, no remainder appears in equation \eqref{weak-Ritz-v-u}. This makes it possible to prove the estimates in \eqref{ev-L2H1}--\eqref{ex-LinftyH1} by redefining the error functions as follows:
\begin{align}\label{error-functions}
\hat e_{x,h} := \hat x_h - \hat y_h^*, 
\quad
\hat e_{v,h} := \hat v_h - \hat v_h^*,
\quad
\hat e_{H,h} := \hat H_h - \hat H_h^* 
\quad\mbox{and}\quad
\hat e_{n,h} := \hat n_h - \hat n_h^* ,
\end{align} 
where $(\hat x_h,\hat v_h,\hat H_h,\hat n_h)$ and  $(\hat y_h^*,\hat v_h^*,\hat H_h^*,\hat n_h^*)$ are the lifts of $(X_h,v_h,H_h,n_h)$ and $(Y_h^*, v_h^*, H_h^*, n_h^*)$, respectively, i.e., the numerical solution of mean curvature flow defined in \eqref{weak-form} and the dynamic Ritz projection of mean curvature flow defined in \eqref{def-Ritz}, respectively. This leads to second main result of this paper, which is presented in the following theorem. 

\begin{theorem}\label{THM2} 
{\it 
Under the assumptions of Theorem {\rm\ref{THM1}}, the following error bound holds: 
\begin{align}\label{THM2-result} 
&\| \hat e_{x,h} \|_{L^\infty(0,T;H^1(\Gamma_h[\x^*]))}
+ \|\hat e_{H,h} \|_{L^\infty(0,T;L^2(\Gamma_h[\x^*]))}
+ \|\hat e_{n,h} \|_{L^\infty(0,T;L^2(\Gamma_h[\x^*]))} \notag\\
&\quad\, 
+ \|\hat e_{v,h} \|_{L^2(0,T;H^1(\Gamma_h[\x^*]))} 
+ \|\hat e_{H,h} \|_{L^2(0,T;H^1(\Gamma_h[\x^*]))} 
+ \|\hat e_{n,h} \|_{L^2(0,T;H^1(\Gamma_h[\x^*]))} \notag \\
&\le Ch^{k+1} ,
\end{align}
where $ C$ is a constant independent of $h$ and $t\in[0,T]$, but may depend on $T$.
}
\end{theorem}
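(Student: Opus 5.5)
We prove Theorem \ref{THM2} by a stability (energy) argument for the difference between the numerical scheme \eqref{weak-form} and the dynamic Ritz projection written in the form \eqref{weak-Ritz-u-version}, combined with a bootstrap (continuity) argument in time.

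\textbf{Bootstrap assumption.} Let $t^*\in(0,T]$ be the maximal time such that
\begin{align*}
\|\hat e_{x,h}(t)\|_{W^{1,\infty}(\Gamma_h[\x^*])}\le h^{1/2}
\quad\mbox{and}\quad
\|\hat e_{u,h}(t)\|_{W^{1,4}(\Gamma_h[\x^*])}\le 1
\quad\mbox{for } t\in[0,t^*],
\end{align*}
where $\hat e_{u,h}=(\hat e_{n,h},\hat e_{H,h})$. Since the errors vanish at $t=0$ up to $O(h^{k+1})$, we have $t^*>0$. On $[0,t^*]$ this yields the following consequences.
\begin{itemize}
\item Finite element norms on $\Gamma_h[\x]$, $\Gamma_h[\y^*]$ and $\Gamma_h[\x^*]$ of functions sharing a nodal vector are equivalent. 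For $\Gamma_h[\y^*]$ versus $\Gamma_h[\x^*]$ this uses Theorem \ref{THM1} plus the inverse inequality, or an $H^1$ bound of $\hat y_h^*-{\rm id}$ obtained in its proof.
\item The numerical solutions $H_h$ and $n_h$ are bounded in $W^{1,4}$, because $H_h^*$ and $n_h^*$ are, by Theorem \ref{THM1}.
\end{itemize}

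\textbf{Error equations.} Subtract \eqref{weak-Ritz-u} from \eqref{H-Fem}--\eqref{n-Fem} after transporting everything to the common surface $\Gamma_h[\y^*]$. Take $\chi_u=e_u$, the error expressed on $\Gamma_h[\y^*]$. The geometric perturbation terms have the form
\begin{align*}
\int_{\Gamma_h[\x]}-\int_{\Gamma_h[\y^*]}
\quad\mbox{of}\quad \partial^\bullet u\,\chi\quad\mbox{and of}\quad \nabla u\cdot\nabla\chi .
\end{align*}
We bound them by the standard formula for the difference of integrals over two surfaces (as in \cite{KLL19}). This gives
\begin{align*}
C\|\nabla \hat e_{x,h}\|_{L^2}\,\|u_h^*\|_{W^{1,\infty}}\,\|\chi\|_{H^1},
\end{align*}
where $\|u_h^*\|_{W^{1,\infty}}$ is bounded uniformly, since $u_h^*$ is a Ritz projection of smooth data.

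The nonlinear term $|\nabla n_h|^2u_h-|\nabla n_h^*|^2u_h^*$ is handled as for harmonic map heat flow \cite{Gui-Li-Wang-2022}. We use Hölder's inequality with the $W^{1,4}$ bounds and the two-dimensional Gagliardo--Nirenberg inequality $\|e\|_{L^4}^2\le C\|e\|_{L^2}\|e\|_{H^1}$. This gives a bound by $\epsilon\|\nabla e_u\|_{L^2}^2+C_\epsilon\|e_u\|_{L^2}^2$. The defect $d_u^*$ contributes $Ch^{k+1}\|e_u\|_{H^1}$ by the defect estimate stated after \eqref{weak-Ritz-u-version} (Lemma \ref{Lemma:defects}). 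Since there is no remainder in \eqref{weak-Ritz-v-u}, the velocity error is
\begin{align*}
e_v=-I_h(H_hn_h)+I_h^*(H_h^*n_h^*).
\end{align*}
Using the product structure and super-approximation properties of $I_h$, we aim for
\begin{align*}
\|e_v\|_{H^1}\le C\big(\|e_u\|_{H^1}+\|\nabla e_x\|_{L^2}\big).
\end{align*}
Together with $\frac{\d}{\d t}e_x=e_v$, this gives
\begin{align*}
\frac{\d}{\d t}\|\nabla \hat e_{x,h}\|_{L^2}^2\le C\|\nabla\hat e_{x,h}\|_{L^2}^2+C\|e_u\|_{H^1}^2 .
\end{align*}

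\textbf{Closing the estimate.} Add the $u$-energy inequality, multiplied by a large constant $K$, to the inequality for $e_x$. The $\|e_u\|_{H^1}^2$ term on the right of the $e_x$ inequality is absorbed by the dissipation $K\|\nabla e_u\|^2$; its $L^2$ part is controlled by Gronwall's lemma. Gronwall then gives
\begin{align*}
\|e_x\|_{L^\infty H^1}+\|e_u\|_{L^\infty L^2}+\|e_u\|_{L^2H^1}\le Ch^{k+1}
\quad\mbox{on }[0,t^*],
\end{align*}
and the bound on $\|e_v\|_{L^2H^1}$ follows from the estimate for $e_v$ above.

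The inverse inequality in two dimensions then gives $\|e_x\|_{W^{1,\infty}}\le Ch^{k}\le \tfrac12 h^{1/2}$, which is valid even for $k=1$. For $W^{1,4}$ we have $\|e_u\|_{W^{1,4}}\le Ch^{-3/2}\|e_u\|_{L^2}\le Ch^{k-1/2}$, which is small. This bound is not directly pointwise in time for the $H^1$-part, so we additionally test with $\partial^\bullet e_u$. This yields an $L^\infty H^1$ bound of order $h^{k+1}$ (possibly at the cost of using the $\partial^\bullet_{t,h}$ estimates of Theorem \ref{THM1} for the defects). The resulting $\|e_u\|_{W^{1,4}}\le Ch^{k+1-1/2-1}=Ch^{k-1/2}$ is small. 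Hence $t^*=T$.

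\textbf{Main obstacle.} The hard part is the geometric perturbation terms when $\hat e_{x,h}$ is only controlled in $H^1$ and $k=1$. These terms must be bounded by $\|\nabla e_x\|_{L^2}$ times $W^{1,\infty}$ norms of the Ritz-projected quantities, not of the numerical ones. Arranging every such term so that the $W^{1,\infty}$ factor falls on $u_h^*$, $v_h^*$ or the exact geometry, rather than on the error, is where we expect the delicate bookkeeping.
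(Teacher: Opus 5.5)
Your proposal is correct and is essentially the paper's proof. The shared steps are:
\begin{itemize}
\item a continuity argument on $[0,t^*]$;
\item the energy estimate from testing the $u$-error equation with $e_u$, where the matrix-perturbation terms are bounded by $\|\nabla e_x\|_{L^2}$ times $\|u_h^*\|_{W^{1,\infty}}$ and $\|\partial^\bullet_{t,h}u_h^*\|_{L^\infty}$;
\item the bound $\|e_v\|_{H^1}\le C\|e_u\|_{H^1}$;
\item the $H^1$ estimate for $e_x$ from $\dot{\bf e}_\x={\bf e}_\v$, followed by Gronwall.
\end{itemize}
The $L^\infty$ bound on $\partial^\bullet_{t,h}u_h^*$ comes from Lemma~\ref{Lemma:L2-dtH} plus an inverse inequality, and you should list it next to $\|u_h^*\|_{W^{1,\infty}}$. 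The velocity bound needs no $\nabla e_x$ term and no super-approximation, because ${\bf e}_\v$ is a nodewise product that does not depend on the surface.

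The one genuine variation is the bootstrap hypothesis on $e_u$.
\begin{itemize}
\item You assume $\|e_u\|_{W^{1,4}}\le 1$ and treat the cubic terms as for harmonic map heat flow, via Gagliardo--Nirenberg.
\item The paper assumes $\|e_u\|_{L^\infty}\le h^{k-0.1}$, recovered from $\|e_u\|_{L^\infty}\le Ch^{-1}\|e_u\|_{L^2}\le Ch^k$. Terms such as $\int|\nabla e_n|^2|e_u|^2$ are then bounded directly by $Ch^{2k-0.2}\|\nabla e_u\|_{L^2}^2$ and absorbed.
\end{itemize}
The two hypotheses are not comparable. Yours needs only boundedness, not smallness, of $e_u$, and it is closer to the harmonic-map motivation in the introduction. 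The paper's uses smallness in $L^\infty$, which makes the Hölder bookkeeping cruder and shorter. Both close by an inverse inequality from the $O(h^{k+1})$ bound in $L^\infty(0,t^*;L^2)$.

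There are two points to fix.
\begin{itemize}
\item Drop the step of testing with $\partial^\bullet e_u$. Your own bound $\|e_u\|_{W^{1,4}}\le Ch^{-3/2}\|e_u\|_{L^2}\le Ch^{k-1/2}$ already holds pointwise in time and closes the bootstrap. The $\partial^\bullet$-testing would need defect bounds in an $L^2$-dual norm, or bounds on the defect's time derivative. The paper provides neither: Lemma~\ref{Lemma:defects} is only an $H^1$-dual bound.
\item For $k=1$, the norm equivalence between $\Gamma_h[\y^*]$ and $\Gamma_h[\x^*]$ does not follow from Theorem~\ref{THM1}, or from an $H^1$ bound, plus inverse inequalities. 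Both routes give only $O(h^{k-1})$ in $W^{1,\infty}$, which is not small. The equivalence comes from the $W^{1,p}$ estimate with $p>2$ in Lemma~\ref{Lemma:W1p}, whose bootstrap already guarantees $\|\hat y_h^*-{\rm id}_{\Gamma_h[\x^*]}\|_{W^{1,\infty}}\le 1/2$ on $[0,T]$.
\end{itemize}
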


The error estimates in Theorem \ref{THM1} and Theorem \ref{THM2}, and the estimates of the Lagrange interpolation error, lead to the third main result of this paper, i.e., optimal-order convergence of FEMs for mean curvature flow in the $L^\infty(0,T;L^2)$ norm with finite elements of degree $k\ge 1$. Before stating this result, we list some basic notations for finite element functions on $\Gamma_h[\x]$, $\Gamma_h[\x^*]$ and $\Gamma$. 

For any given finite element function $w_h\in S_h(\Gamma_h[\x])$ on the approximate surface $\Gamma_h[\x]$, 
we denote its nodal vector by $\w$, which collects all the values of $w_h$ at the nodes of $\Gamma_h[\x]$. The finite element function on the interpolated surface $\Gamma_h[\x^*]$ with the same nodal vector $\w$ is denoted by $\hat w_h$. The function $\hat w_h$ can be further lifted to $\Gamma[X]$ as $(\hat w_h)^l$, see details in Section \ref{section:lifts}. 
The lift from $S_h(\Gamma_h[\x])$ to $\Gamma[X]$ is denoted by $w_h^L = (\hat w_h)^l$. 
\begin{theorem}\label{THM3} 
  {\it 
Under the assumptions of Theorem {\rm\ref{THM1}}, the numerical solution of mean curvature flow defined in \eqref{weak-form}, with finite elements of degree $k\ge 1$, satisfies the following error bound{\rm:}
  \begin{align}
  &\|\hat x_h - {\rm id}_{\Gamma_h[\x^*]} \|_{{{L^\infty(0,T;L^2(\Gamma_h[\x^*]))}}}
+ \| \hat v_h - \hat I_h^*v \|_{{{L^\infty(0,T;L^2(\Gamma_h[\x^*]))}}} \notag\\
&\quad\, 
+ \| \hat H_h - \hat I_h^*H \|_{{{L^\infty(0,T;L^2(\Gamma_h[\x^*]))}}}
+ \| \hat n_h - \hat I_h^*n \|_{{{L^\infty(0,T;L^2(\Gamma_h[\x^*]))}}} \le Ch^{k+1} , \label{THM3-result} \\[5pt]  
  &\| X_h^L - {\rm id} \|_{L^\infty(0,T;L^2(\Gamma[X]))}
  + \|H_h^L - H \|_{L^\infty(0,T;L^2(\Gamma[X]))} \notag\\
  &\quad
  + \|n_h^L - n \|_{L^\infty(0,T;L^2(\Gamma[X]))} + \|v_h^L - v \|_{L^2(0,T;L^2(\Gamma[X]))} 
  \le Ch^{k+1} , \label{THM3-result2} 
  \end{align}
  where $ C$ is a constant independent of $h$ and $t\in[0,T]$ (but may depend on $T$).
  }
\end{theorem}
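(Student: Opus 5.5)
The plan is to deduce Theorem~\ref{THM3} from Theorems~\ref{THM1} and \ref{THM2} with the triangle inequality and standard interpolation and lifting estimates. For \eqref{THM3-result}, split each error on $\Gamma_h[\x^*]$ into a numerical-versus-projection part and a projection-versus-interpolation part. For the position this gives
\[
\hat x_h - {\rm id}_{\Gamma_h[\x^*]} = \hat e_{x,h} + (\hat y_h^* - {\rm id}_{\Gamma_h[\x^*]}),
\]
and analogously for $v$, $H$, $n$:
\[
\hat H_h-\hat I_h^*H=\hat e_{H,h}+(\hat H_h^*-\hat I_h^*H),
\]
and so on. The first parts are bounded by Theorem~\ref{THM2} in $L^\infty(0,T;H^1)$ for $x$ and in $L^\infty(0,T;L^2)$ for $H$ and $n$. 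The second parts are bounded directly by \eqref{Error-of-Ritz}. Each piece is therefore $O(h^{k+1})$.

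The velocity needs extra care, since Theorem~\ref{THM2} only bounds $\hat e_{v,h}$ in $L^2(0,T;H^1)$. I would use \eqref{BFormb} and \eqref{weak-Ritz-v} to write
\[
\hat e_{v,h} = -\hat I_h^*\bigl(\hat H_h\hat n_h - \hat H_h^*\hat n_h^*\bigr),
\]
where $\hat I_h^*$ acts nodally. This identity holds because both $v_h$ and $v_h^*$ are nodal interpolants and the lifts share nodal vectors. Expanding the difference gives
\[
\hat H_h\hat n_h - \hat H_h^*\hat n_h^* = \hat e_{H,h}\hat n_h + \hat H_h^*\hat e_{n,h}.
\]
Using the $L^2$-stability of nodal interpolation for products of finite element functions, via equivalence of nodal and $L^2$ norms under quasi-uniformity, I then obtain
\[
\|\hat e_{v,h}\|_{L^2}\le C\bigl(\|\hat n_h\|_{L^\infty}\|\hat e_{H,h}\|_{L^2}+\|\hat H_h^*\|_{L^\infty}\|\hat e_{n,h}\|_{L^2}\bigr).
\]
The $L^\infty$ bound on $\hat H_h^*$ and $\hat n_h^*$ follows from \eqref{Error-of-Ritz}, the inverse inequality $h^{k+1}h^{-1}\le C$, and the boundedness of $\hat I_h^*H$ and $\hat I_h^*n$. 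The $L^\infty$ bound on $\hat n_h$ follows the same way using Theorem~\ref{THM2}. This yields the $L^\infty(0,T;L^2)$ bound for the velocity.

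For \eqref{THM3-result2}, I pass from $\Gamma_h[\x^*]$ to $\Gamma[X]$ via the lift. Write
\[
H_h^L - H = (\hat H_h - \hat I_h^*H)^l + \bigl((\hat I_h^*H)^l - H\bigr).
\]
The first term is controlled by the norm equivalence between $\Gamma_h[\x^*]$ and $\Gamma$ under the lift (Section~\ref{section:lifts}) together with \eqref{THM3-result}. The second term is the Lagrange interpolation error, which is $O(h^{k+1})$ for smooth $H$. The same splitting works for $n$ and $v$, and for the position, where $\hat I_h^*X$ composed with the lift corresponds to ${\rm id}$ on $\Gamma$ up to interpolation error $Ch^{k+1}$. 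The $L^2(0,T;L^2)$ velocity estimate follows a fortiori from the $L^\infty$ one.

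The only nontrivial points are the $L^\infty$ bounds on the discrete solutions needed for the product in the velocity error, and the validity of the lift norm equivalences. Both follow from inverse estimates and from the $W^{1,\infty}$ smallness of $\hat e_{x,h}$ implied by \eqref{ex-LinftyW1infty-h}, which is available once Theorem~\ref{THM2} holds. I expect the velocity term to be the main obstacle. If the interpolation-stability argument for products proves delicate for $k\ge2$, I would fall back on bounding $\hat I_h^*$ elementwise by $\|\cdot\|_{L^\infty}$ of nodal values, which is equivalent to the $L^2$ norm scaled by $h^{-1}$ on each element.
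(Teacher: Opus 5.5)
Your proposal is correct and follows the paper's (omitted) argument: \eqref{THM3-result} comes from the triangle inequality combining Theorems~\ref{THM1} and \ref{THM2}, and \eqref{THM3-result2} from adding the Lagrange-interpolation and geometric lift errors between $\Gamma_h[\x^*]$ and $\Gamma$. Your extra step for the velocity fills in something the paper leaves implicit, since Theorem~\ref{THM2} only controls $\hat e_{v,h}$ in $L^2(0,T;H^1)$: you write $\hat e_{v,h}=-\hat I_h^*(\hat e_{H,h}\hat n_h+\hat H_h^*\hat e_{n,h})$ and use nodal $L^2$-stability, which is the $L^2$ analogue of Lemma~\ref{Lemma:ev-eu}, and this is sound (the lift equivalence you invoke is a property of $\Gamma_h[\x^*]$ alone, so no smallness of $\hat e_{x,h}$ is needed there).
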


The rest of this paper is devoted to the proofs of Theorem \ref{THM1} and Theorem \ref{THM2}. The proof of Theorem \ref{THM3} is standard and therefore omitted (in fact, \eqref{THM3-result} follows from Theorem \ref{THM1} and Theorem \ref{THM2} with the application of the triangle inequality, and \eqref{THM3-result2} follows from an additional estimate for the interpolation errors). {The appendices in the supplementary material provide essential results and detailed proofs of Lemmas \ref{Lemma:L2-dtH}--\ref{Lemma:defects}, which are used for proving Theorems \ref{THM1} and \ref{THM2}. These proofs, following similar techniques as those in Lemmas \ref{Lemma:W1p}--\ref{Lemma:L2}, have been omitted from the main paper.}

\section{Proof of Theorem \ref{THM1}}\label{Proof-THM1}

\subsection{Lifts}\label{section:lifts} 

Throughout this article, we denote by $C$ and $h_0$ two generic positive constants which are different at different occurrences, possibly depending on the norms of the exact solution and $T$, but are independent of mesh size $h$. 

Given a smooth surface $\Gamma\subset \mathbb{R}^3$, the surface tangential gradient of a scalar function $u: \Gamma\to \mathbb{R}$ is a column vector denoted by $ \nabla_ \Gamma u$.
For a vector-valued function $u= (u_1,u_2,u_3)^\top: \Gamma\to \mathbb{R}^3$, we define 
$$
\nabla_ \Gamma u 
:= (\nabla_ \Gamma u_1,\nabla_ \Gamma u_2,\nabla_ \Gamma u_3) . 
$$
We denote by ${\rm id}$ the identity function on $\R^3$, i.e., ${\rm id}(x)=x$ for $x\in\R^3$. Its domain of definition can be restricted to any surface in $\R^3$. 

The finite element basis functions on $\Gamma_h[\x]$ are denoted by $\phi_j[\x]$, $j=1,\dots,\dof $, which are polynomials of degree $k$ after being pulled back to the reference plane triangle, and satisfy the following identities: 
\begin{equation*}
	\phi_j[\x](x_i) = \delta_{ij}, \quad \ i,j = 1,  \dotsc, \dof .
\end{equation*}
This definition of basis functions implies the following transport property (see \cite{Demlow-2009}):
\begin{align}\label{transport-phi}
\partial_{t,h}^{\bullet} \phi_j[\x(t)] = 0 \quad\mbox{on}\,\,\,\Gamma_h[\x(t)],\,\,\, j=1,\dots,N.
\end{align}
The finite element space on $\Gamma_h[\x]$ is defined as 
$
S_h(\Gamma_h[\x]) 
:= \big\{ \sum_{j=1}^N c_j \phi_j[\x]: 
	c_j\in \mathbb{R} \big\}.
$

From \cite[Lemma 7.1]{KLLP2017} or \cite[(2.15)-(2.16)]{Demlow-2009} we know that, there exists $h_0>0$ such that for $h\le h_0$ and $t\in [0,T]$, any point $x\in \Gamma_h[\x^*(t)]$ can be lifted to a unique point $x^l\in \Gamma(t)$ satisfying relation 
$$
x^l - x = \pm |x^l - x| n(x^l) .
$$
Then, any function $\varphi$ on $\Gamma_h[\x^*(t)]$ can be lifted to a function $\varphi^l$ on $\Gamma(t)$, defined as  
$$
\varphi^l(x^l)=\varphi(x) \quad\forall\, x\in  \Gamma_h[\x^*(t)]. 
$$
The lifted functions satisfy the following estimates uniformly for $h$ and $t$:
\begin{align} \label{LiftBound}
\begin{aligned}
C^{-1}\| \phi\|_{L^{2}\left(\Gamma_h[\x^*]\right)} 
& \leq\| \phi^{l}\|_{L^{2}(\Gamma[X])} 
\le C\| \phi\|_{L^{2}\left(\Gamma_h[\x^*]\right)} \\
C^{-1}\| \nabla_{\Gamma_h[\x^*]}\varphi\|_{L^2\left(\Gamma_h[\x^*]\right)} & 
\leq\| \nabla_{\Gamma[X]} \varphi^{l}\|_{L^2(\Gamma[X])} 
\le C\| \nabla_{\Gamma_h[\x^*]} \varphi\|_{L^2\left(\Gamma_h[\x^*]\right)} 
\end{aligned}
\end{align}
which hold for all $ \phi \in L^2( \Gamma_h[\x^*])$ and $\varphi\in H^1( \Gamma_h[\x^*])$, respectively.

\subsection{Matrix-vector formulation}
\label{Sec_MvForm}

The matrix-vector notations of \cite{KLL-2021,KLL19,HL22} will be used in this paper. In particular, we define $\K(\x) = \M(\x)+\A(\x)$, with ${\bf M}(\x)\in \mathbb{R}^{N\times N}$ and ${\bf A}(\x )\in \mathbb{R}^{N\times N}$ denoting the mass matrix and stiffness matrix associated to finite element space $S_h(\Gamma_h[\x])$ on surface $\Gamma_h[\x]$, respectively, and define  
{\begin{align*}
{\bf M}^{[d]}(\x ) =   I_d \otimes {\bf M}(\x) 
\quad\mbox{and}\quad
{\bf A}^{[d]}(\x ) =  I_d \otimes {\bf A}(\x)  , 
\end{align*}}
where $I_d$ is the $d\times d$ identity matrix. We denote by $\v$, $\n$ and $\H$ the nodal vectors of $v_h$, $n_h$ and $H_h$, respectively, and denote by $\f_1(\x,\n)\in \R^{3\dof}$ and $\f_2(\x,\n,\H)\in \R^{\dof}$ the nonlinear terms associated to the right-hand sides of (\ref{n-Fem}) and (\ref{H-Fem}), respectively, defined by 
\begin{subequations}
\begin{align*}
\f_1(\x,\n)_{{j+(m-1)N}}
&=\int_{\Gamma_h[\x]} |\nabla_{\Gamma_h[\x]} n_h|^2 (n_h)_m \phi_j \\
\f_2(\x,\n,\H)_{j}
&=\int_{\Gamma_h[\x]} |\nabla_{\Gamma_h[\x]} n_h|^2 H_h \phi_j , 
\end{align*}
\end{subequations}
with $j = 1,\dots, N$ and $m = 1,2,3$, where $(n_h)_m$ denotes the $m$th component of $n_h\in \mathbb{R}^3$. 

By introducing $\u :=(\n,\H)^\top$, the spatially semidiscrete parametric surface FEM in \eqref{weak-form} can be rewritten into the matrix-vector form:
\begin{subequations} \label{matrix-vector-form}
\begin{align}
\dot \x &= \v \label{Mform-x}\\ 
\v &= -{\bf I}_h(\H\bullet\n) \label{Mform-v}\\ 
\M^{[4]}(\x) \dot \u + \A^{[4]}(\x) \u &= \f(\x,\u), 
\label{Mform-nH}
\end{align}
\end{subequations} 
where 
\begin{align} 
\label{Mf}
\f(\x,\u) = 
\begin{pmatrix} 
\f_1(\x,\n)\\
\f_2(\x,\n,\H)
\end{pmatrix}\in \mathbb{R}^{3N+N} , 
\end{align} 
and ${\bf I}_h(\H\bullet\n)$ denotes the nodal vector of the finite element function $I_h(H_hn_h)$. 

We denote by $\y^*$, $\v^*$ and $\u^*=((\n^*)^\top,(\H^*)^\top)^\top$ the nodal vectors of $Y_h^*$, $v_h^*$ and $u_h^*=((n_h^*)^\top,H_h^*)^\top$, respectively. The latter are defined in \eqref{def-Ritz}, which can be written into the following matrix-vector form:
\begin{subequations}\label{def-yHn}
\begin{align}\label{def-y*}
\dot\y^*&= - {\bf I}_h( {\bf H}^*\bullet {\bf n}^*) \\
{\bf K}^{[4]}(\y^*){\bf u}^*\cdot{\bm \varphi}&=
\int_{\Gamma}
(u \cdot \varphi_h^l+\nabla_{\Gamma} u \cdot \nabla_{\Gamma}  \varphi_h^l)  
&&\forall\,\varphi_h\in S_h(\Gamma_h[\x^*])^4 ,
\label{def-Rhu} 
\end{align}
\end{subequations}
where ${\bm\varphi}$ is the nodal vector of finite element function $\varphi_h$. The existence of $\y^*$ as a sufficiently good approximation to $\x^*$ will be proved. Then \eqref{weak-Ritz} can be written into the following matrix-vector form: 
\begin{align}\label{ritz-matrix-form}
\dot\y^*&= {\bf v}^* \\
{\bf v}^*&= -{\bf I}_h({\bf H}^*\bullet {\bf n}^* ) \\
{\bf M}^{[4]}(\y^*)\dot {\bf u}^*+{\bf A}^{[4]}(\y^*){\bf u}^* &= {\bf f}(\y^*,{\bf u}^*) + {\bf M}^{[4]}(\y^*) {\bf d}_{\bf u}^* ,
\end{align}
where ${\bf d}_{\bf u}^*$ denotes the nodal vector of the finite element function $d_u^* = ((d_n^*)^\top,d_H^*)^\top$, with $d_H^*$ and $d_n^*$ being the remainders defined in \eqref{weak-Ritz}. In the rest of this paper, we omit the superscripts in $\M^{[4]}(\x)$, $\A^{[4]}(\x)$ and $\K^{[4]}(\x)$ for the simplicity of notations. 

\subsection{Perturbation of mass matrix and stiffness matrix}
\label{section:def-hat-ey}

For $\e_\y = \y^*-\x^*$, which is the nodal vector of the finite element function $\hat e_y=\hat y_h^*-{\rm id}$ on $\Gamma_h[\x^*]$, we consider the following intermediate surfaces 
$$
\Gamma_h[\y^\theta] \quad\mbox{with}\quad 
\y^\theta=(1-\theta)\x^*+\theta\y^* = \x^* + \theta\e_\y \quad\mbox{for}\,\,\, \theta\in [0,1] .
$$
The finite element functions on $\Gamma_h[\y^\theta] $ with nodal vectors $\e_\y$, $\z$ and $\w$ are denoted by $\hat e_y^\theta$, $\hat z_h^\theta $ and  $\hat w_h^\theta $, respectively  (thus $\hat e_y^0=\hat e_y$). 
The following result was proved in \cite[Lemma 4.3]{KLLP2017} and \cite[Lemma 7.2]{KLL19}.

\begin{lemma}
\label{CBI}
If $\|  \nabla_{ \Gamma_h[\x^*]}\hat e_y\|_{L^\infty( \Gamma_h[\x^*])}\le 1/2$, 
then the following inequalities hold for $ \theta\in [0,1]$ and $1\le p\le \infty${\rm:}  
\begin{align}
\label{L2-equiv}
& \|\hat w_h^ \theta\|_{L^p( \Gamma_h[\y^\theta])} \le c_p \| \hat w_h^0\|_{L^ p( \Gamma_h[\x^*])},\\
\label{grad-equiv}
& \|  \nabla_{ \Gamma_h[\y^\theta]} \hat w_h^ \theta\|_{L^p( \Gamma_h[\y^\theta])} \le c_p \|  \nabla_{ \Gamma_h[\x^*]} \hat w_h^0\|_{L^p(\Gamma_h[\x^*])},
\end{align}
where $c_p$ is a constant independent of $ \theta$ and $h$, and $c_\infty = 2$. 
\end{lemma}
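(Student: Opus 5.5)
The argument rests on one fact: for fixed $\theta$ the two surfaces $\Gamma_h[\y^\theta]$ and $\Gamma_h[\x^*]$ share a reference parametrization. More precisely, $\Gamma_h[\y^\theta]$ is the image of $\Gamma_h[\x^*]$ under the finite element map
\[
\Phi_\theta={\rm id}+\theta\hat e_y:\Gamma_h[\x^*]\to\Gamma_h[\y^\theta].
\]
A finite element function with a given nodal vector transforms by composition with this map, i.e. $\hat w_h^\theta\circ\Phi_\theta=\hat w_h^0$. This holds because both sides are the same polynomial on each reference triangle and have the same nodal values. So the plan is to reduce both inequalities to a change of variables and bound the Jacobian of $\Phi_\theta$ and its inverse uniformly in $\theta$ and $h$.

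I work element by element. Let $E$ be a curved triangle of $\Gamma_h[\x^*]$, and let $x\in E$ with tangent space $T_xE$. The tangential derivative of $\Phi_\theta$ is
\[
D\Phi_\theta = P + \theta(\nabla_{\Gamma_h[\x^*]}\hat e_y)^\top \quad\mbox{on } T_xE,
\]
where $P$ is the tangential projection. For any tangent vector $\tau$, the hypothesis $\|\nabla_{\Gamma_h[\x^*]}\hat e_y\|_{L^\infty}\le 1/2$ gives
\[
\tfrac12|\tau|\le |D\Phi_\theta\,\tau|\le \tfrac32|\tau| .
\]
So $D\Phi_\theta$ restricted to $T_xE$ is an injective linear map onto its image, which is the tangent plane of $\Gamma_h[\y^\theta]$ at $\Phi_\theta(x)$. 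Its singular values lie in $[1/2,3/2]$. It follows that the area element ratio $J_\theta$ satisfies $1/4\le J_\theta\le 9/4$. Also, $\Phi_\theta$ restricted to $E$ is a bijection onto the corresponding element of $\Gamma_h[\y^\theta]$, since both are images of the same reference triangle.

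For \eqref{L2-equiv} with $p<\infty$, I change variables:
\[
\int_{\Gamma_h[\y^\theta]}|\hat w_h^\theta|^p=\int_{\Gamma_h[\x^*]}|\hat w_h^0|^pJ_\theta\le \tfrac94\int_{\Gamma_h[\x^*]}|\hat w_h^0|^p .
\]
For $p=\infty$ the inequality is an equality with constant $1$, because the two functions take the same set of values.

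For \eqref{grad-equiv}, I apply the chain rule to $\hat w_h^0=\hat w_h^\theta\circ\Phi_\theta$. This gives $\nabla_{\Gamma_h[\x^*]}\hat w_h^0=(D\Phi_\theta)^\top\nabla_{\Gamma_h[\y^\theta]}\hat w_h^\theta$, with $(D\Phi_\theta)^\top$ acting on the tangent plane of $\Gamma_h[\y^\theta]$. The lower singular value bound $1/2$ then yields
\[
|\nabla_{\Gamma_h[\y^\theta]}\hat w_h^\theta\circ\Phi_\theta|\le 2|\nabla_{\Gamma_h[\x^*]}\hat w_h^0|
\]
pointwise. This gives $c_\infty=2$ directly, and for $p<\infty$ it gives $c_p=2\,(9/4)^{1/p}$ after the same change of variables.

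The main obstacle is the pointwise linear-algebra step for curved (isoparametric) elements. One has to check that the tangential Jacobian of $\Phi_\theta$, taken between the two different tangent planes, has singular values controlled by the gradient of $\hat e_y$ alone. I would handle this through the reference parametrizations $F_E$ and $F_E+\theta\,\hat e_y\circ F_E$. I would then write the metric tensors and compare them as Gram matrices: $G_\theta=DF^\top(P+\theta B)^\top(P+\theta B)DF$, with $\|B\|\le 1/2$. This also confirms that $\Phi_\theta$ remains a bijection on each element for every $\theta\in[0,1]$.
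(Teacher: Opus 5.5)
Your argument is correct. The paper does not prove this lemma itself; it imports it from \cite[Lemma 4.3]{KLLP2017} and \cite[Lemma 7.2]{KLL19}. Your pull-back through the elementwise map $\Phi_\theta={\rm id}+\theta\hat e_y$ is therefore a complete, self-contained substitute. In its reference-element form, the hypothesis gives $\tfrac14 G_0\le G_\theta\le\tfrac94 G_0$ for the Gram matrices of $F_E$ and $F_E+\theta\,\hat e_y\circ F_E$. Hence $\sqrt{\det G_\theta}\le\tfrac94\sqrt{\det G_0}$ and $G_\theta^{-1}\le 4G_0^{-1}$. This yields the two inequalities with $c_p=(9/4)^{1/p}$ and $c_p=2(9/4)^{1/p}$ respectively, and $c_\infty=2$.

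Within this paper's own toolkit, the natural alternative would be to treat $\Gamma_h[\y^\theta]$ as a surface moving in $\theta$ with velocity $\hat e_y^\theta$. One would use $\partial_\theta^\bullet\hat w_h^\theta=\partial_\theta^\bullet\hat e_y^\theta=0$ together with \eqref{dt-grad-f}, which is how Lemma \ref{BilinearMA} and all later surface comparisons in the paper are handled. That route first requires controlling $|\nabla_{\Gamma_h[\y^\theta]}\hat e_y^\theta|$ along trajectories via a Riccati-type differential inequality, and then also produces a factor at most $2$. Your static change of variables avoids that step and gives explicit constants in one stroke.

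One small point of wording needs fixing. Injectivity of $\Phi_\theta$ on an element does not follow from both elements being images of the same reference triangle. Nor does it follow from positivity of $G_\theta$, which only gives a local diffeomorphism. You never actually need it, however. Integrals and finite element functions on $\Gamma_h[\y^\theta]$ are defined through the reference parametrization, so the Gram-matrix version in your last paragraph is the rigorous form of your proof.
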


The following lemma was proved in \cite[ Lemma 7.1]{KLL19}. 
\begin{lemma}\label{BilinearMA} 
If $\|  \nabla_{ \Gamma_h[\x^*]}\hat e_y\|_{L^\infty( \Gamma_h[\x^*])}\le 1/2$, then the following relations hold{\rm:} 
\begin{align}
\label{M-Diff-0}
& (\M(\y^*)-\M(\x^*))\z \cdot \w = 
\int_0^1 \int_{ \Gamma_h[\y^\theta]} \hat w_h^ \theta (  \nabla_{ \Gamma_h[\y^\theta]}\cdot \hat e_y^ \theta) \hat z_h^ \theta,\\[-2pt]
\label{A-Diff-0}
& (\A(\y^*)-\A(\x^*))\z \cdot \w = 
\int_0^1 \int_{ \Gamma_h[\y^\theta]}  \nabla_{ \Gamma_h[\y^\theta]} \hat w_h^ \theta \cdot \big( D_{ \Gamma_h[\y^\theta]}\hat e_y^ \theta \big)  \nabla_{ \Gamma_h[\y^\theta]}\hat z_h^ \theta ,
\end{align}
where $D_{\Gamma_h[\y^\theta]}\hat e_y^{\theta}=\operatorname{tr}\big(E^{\theta}\big) I_{3}-\big(E^{\theta}+(E^{\theta})^{\top} \big)$ and $E^{\theta}=\nabla_{\Gamma_h[\y^\theta]} \hat e_y^{\theta} \in \mathbb{R}^{3 \times 3}$.
\end{lemma}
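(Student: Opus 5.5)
We prove both identities by the fundamental theorem of calculus in the parameter $\theta$: $\theta$ plays the role of a pseudo-time along the family of surfaces $\Gamma_h[\y^\theta]$, $\theta\in[0,1]$. Since $\frac{\d}{\d\theta}\y^\theta=\e_\y$, the nodes of $\Gamma_h[\y^\theta]$ move with constant nodal velocity $\e_\y$. The induced surface velocity is therefore the finite element function $\hat e_y^\theta$, i.e.\ the function on $\Gamma_h[\y^\theta]$ with nodal vector $\e_\y$.

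\textbf{Step 1 (well-posedness).} We first check that the hypothesis $\|\nabla_{\Gamma_h[\x^*]}\hat e_y\|_{L^\infty(\Gamma_h[\x^*])}\le 1/2$ keeps every $\Gamma_h[\y^\theta]$ nondegenerate. On each element, the map $x\mapsto x+\theta\hat e_y(x)$ from $\Gamma_h[\x^*]$ to $\Gamma_h[\y^\theta]$ has tangential differential $I+\theta\nabla_{\Gamma_h[\x^*]}\hat e_y$. This is uniformly invertible because $\theta\|\nabla\hat e_y\|_{L^\infty}\le 1/2$. Consequently the curved elements remain regular, surface gradients and the integrals below are well defined, and the integrands depend smoothly on $\theta$.

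\textbf{Step 2 (mass matrix).} Set $m(\theta)=\M(\y^\theta)\z\cdot\w=\int_{\Gamma_h[\y^\theta]}\hat w_h^\theta\hat z_h^\theta$. By the transport property \eqref{transport-phi}, applied with $\theta$ in place of $t$, the material derivatives $\partial^\bullet_\theta\hat w_h^\theta$ and $\partial^\bullet_\theta\hat z_h^\theta$ vanish. The Leibniz formula for evolving surfaces, applied elementwise and then summed over elements, gives
\[
m'(\theta)=\int_{\Gamma_h[\y^\theta]}\hat w_h^\theta\,(\nabla_{\Gamma_h[\y^\theta]}\cdot\hat e_y^\theta)\,\hat z_h^\theta .
\]
Integrating from $\theta=0$ to $\theta=1$ yields \eqref{M-Diff-0}. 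No interface terms arise, because no integration by parts is performed.

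\textbf{Step 3 (stiffness matrix).} Set $a(\theta)=\int_{\Gamma_h[\y^\theta]}\nabla\hat w_h^\theta\cdot\nabla\hat z_h^\theta$, where $\nabla=\nabla_{\Gamma_h[\y^\theta]}$. Differentiating gives
\[
a'(\theta)=\int_{\Gamma_h[\y^\theta]}\Big(\partial^\bullet_\theta(\nabla\hat w_h^\theta)\cdot\nabla\hat z_h^\theta+\nabla\hat w_h^\theta\cdot\partial^\bullet_\theta(\nabla\hat z_h^\theta)+\nabla\hat w_h^\theta\cdot\nabla\hat z_h^\theta\,\nabla\cdot\hat e_y^\theta\Big).
\]
We then use the elementwise commutator formula of Dziuk--Elliott,
\[
\partial^\bullet\nabla u=\nabla\partial^\bullet u-\big(\nabla V-n n^\top(\nabla V)^\top\big)\nabla u .
\]
Here $V=\hat e_y^\theta$ and $\partial^\bullet u=0$, so only the commutator term survives. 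Since $\nabla\hat w_h^\theta$ and $\nabla\hat z_h^\theta$ are tangential, the $nn^\top$ contributions drop after the inner products. Collecting terms and writing $E^\theta=\nabla\hat e_y^\theta$, the integrand becomes $\nabla\hat w_h^\theta\cdot(\operatorname{tr}(E^\theta)I_3-(E^\theta+(E^\theta)^\top))\nabla\hat z_h^\theta$. Integrating in $\theta$ over $[0,1]$ gives \eqref{A-Diff-0}.

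\textbf{Main obstacle.} The delicate point is Step 3: deriving the commutator formula for $\partial^\bullet_\theta\nabla$ on piecewise curved surfaces, and keeping track of the matrix convention (which index of $E^\theta$ is differentiated) so that exactly the symmetric combination $E^\theta+(E^\theta)^\top$ appears. I would carry out this computation on the reference triangle via the parametrization's first fundamental form $G_\theta$, using $\frac{\d}{\d\theta}\big(\sqrt{\det G_\theta}\,G_\theta^{-1}\big)$. This route avoids normal-vector issues altogether.
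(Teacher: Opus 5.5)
Your proposal is correct and follows the same argument as the result the paper cites, \cite[Lemma 7.1]{KLL19}: the fundamental theorem of calculus in $\theta$, with $\Gamma_h[\y^\theta]$ moving with velocity $\hat e_y^\theta$, vanishing $\theta$-material derivatives of finite element functions with fixed nodal vectors, and the transport formulae for the mass and Dirichlet integrals (which you correctly rederive from the commutator formula \eqref{dt-grad-f}). The index-convention issue you flag as the main obstacle is harmless, because the two gradient factors contribute $E^\theta$ and $(E^\theta)^\top$ respectively whichever convention is used, so the fundamental-form computation is not needed.
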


Lemma \ref{CBI} and Lemma \ref{BilinearMA} imply the following result: If $ \|  \nabla_{ \Gamma_h[\x^*]} \hat e_y\|_{ L^\infty( \Gamma_h[\x^*])}\le 1/2$ then $ \|  \nabla_{ \Gamma_h[\y^\theta]} \hat e_y^ \theta\|_{ L^\infty( \Gamma_h[\y^\theta])}\le 1$ for $ \theta\in [0,1]$ and therefore 
\begin{align}\label{equiv-norm}
&\begin{aligned}
&\textrm{the norms } \| \cdot\|_{\M(\x^*+ \theta \e_\y)} 
\textrm{ are $h$-uniformly equivalent for } \theta\in [0,1], \\[-2pt] 
&\textrm{the norms } \| \cdot\|_{\A(\x^*+ \theta \e_\y)} 
\textrm{ are $h$-uniformly equivalent for } \theta\in [0,1], \\ 
\end{aligned} \\
\label{M-Diff}
&\hspace{-5pt} (\M(\y^*)-\M(\x^*))\z \cdot \w \le 
C \| \hat w_h^0 \|_{L^p(\Gamma_h[\x^*])} \| \nabla_{\Gamma_h[\x^*]}\hat e_y^0 \|_{L^{p'}(\Gamma_h[\x^*])} \| \hat z_h^0\|_{L^\infty(\Gamma_h[\x^*])},\\
\label{A-Diff}
&\hspace{-5pt} (\A(\y^*)-\A(\x^*))\z \cdot \w \le 
C\| \hat w_h^0 \|_{W^{1,p}(\Gamma_h[\x^*])} \| \hat e_y^0 \|_{W^{1,p'}(\Gamma_h[\x^*])} \| \hat z_h^0\|_{W^{1,\infty}( \Gamma_h[\x^*])},\\
\label{A-Diff-2}
&\hspace{-5pt} (\A(\y^*)-\A(\x^*))\z \cdot \w \le 
C \| \hat e_y^0\|_{W^{1,\infty}(\Gamma_h[\x^*])} \| \hat z_h^0\|_{W^{1,p}( \Gamma_h[\x^*])} \| \hat w_h^0\|_{W^{1,p'}( \Gamma_h[\x^*])},
\end{align}
given that $p$ and $p'$ satisfy the relation $\frac{1}{p} + \frac{1}{p'} = 1$. 

In addition to Lemma \ref{CBI} and Lemma \ref{BilinearMA}, the following relation will be used: If $K\subset\Gamma$ or $K\subset\Gamma_h[\x^*]$ is a smooth piece of surface which evolves under the velocity field $w$, and $\partial_t^\bullet$ denotes the material derivative with respect to $w$, then   
\begin{align}\label{dt-grad-f}
\partial_t^\bullet \nabla_K f = \nabla_K \partial_t^\bullet f  - (\nabla_K w - n_Kn_K^\top(\nabla_K w)^\top)\nabla_K f ,
\end{align}
where $n_K$ denotes the unit normal vector of $K$.

\subsection{${\bf W^{1,p}}$ error estimates for the dynamic Ritz projection} 

Let $\hat y_h^*$, $\hat H_h^*$ and $\hat n_h^*$ be the finite element functions in $S_h(\Gamma_h[\x^*])$ with the nodal vectors $\y^*$, ${\bf H}^*$ and ${\bf n}^*$ defined in \eqref{def-yHn}, respectively. In this subsection we prove the following lemma. 

\begin{lemma}\label{Lemma:W1p}
{\it 
Under the assumptions of Theorem {\rm\ref{THM1}}, there exists $h_0>0$ such that for mesh size $h\le h_0$ there exists a unique solution $(\y^*,{\bf H}^*,{\bf n}^*)$ of \eqref{def-yHn} satisfying the following estimates for all $2\le p <\infty${\rm:} 
\begin{align}\label{Ritz-W1p}
&\|\hat y_h^* - {{\rm id}_{\Gamma_h[\x^*]}}\|_{W^{1,p}(\Gamma_h[\x^*])} 
+ \|\hat v_h^*-  \hat I_h^*v\|_{W^{1,p}(\Gamma_h[\x^*])}  \notag\\
&+ \|\hat H_h^*-  \hat I_h^*H\|_{W^{1,p}(\Gamma_h[\x^*])} 
+ \|\hat n_h^*-  \hat I_h^*n\|_{W^{1,p}(\Gamma_h[\x^*])}  \le C_p h^k .
\end{align}
}
\end{lemma}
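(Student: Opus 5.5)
We will prove existence, uniqueness and the $W^{1,p}$ bound at the same time. The argument is a continuation (bootstrap) in time for the ODE system \eqref{def-yHn}, together with $W^{1,p}$ estimates for Ritz projections on a perturbed surface.

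\emph{Reduction to a Lipschitz ODE.} Given a nodal vector $\y$ with $\|\nabla_{\Gamma_h[\x^*]}\hat e_y\|_{L^\infty}\le 1/2$, where $\hat e_y$ has nodal vector $\y-\x^*$, the matrix $\K(\y)$ is symmetric positive definite by \eqref{equiv-norm}. Hence \eqref{def-Rhu} determines $\u^*=\u^*(\y,t)$ uniquely and smoothly in $\y$. Equation \eqref{def-y*} is then an ODE $\dot\y^*=F(\y^*,t)$ with a locally Lipschitz right-hand side. Local existence and uniqueness follow, and we may continue the solution as long as the smallness condition holds.

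Let $t^*\le T$ be the maximal time on which $\|\hat e_y\|_{W^{1,\infty}}\le h^{(k-1)/2}\cdot$const. For $k=1$ we instead use the condition $\|\hat e_y\|_{W^{1,p}}\le C_*h$ for a large fixed $p$, combined with the inverse inequality $\|\cdot\|_{W^{1,\infty}}\le Ch^{-2/p}\|\cdot\|_{W^{1,p}}$, which gives $W^{1,\infty}$ smallness $Ch^{1-2/p}$. On $[0,t^*]$ we shall prove the bound \eqref{Ritz-W1p} with a constant independent of $C_*$. The inverse inequality then closes the bootstrap, so $t^*=T$.

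\emph{Estimate of $\hat H_h^*,\hat n_h^*$ for given $\y^*$.} Write $\hat u_h^*-\hat R_h^*u$, where $\hat R_h^*$ is the standard Ritz projection \eqref{Ritz-old} on $\Gamma_h[\x^*]$. Subtracting the two defining relations gives
\[
\K(\x^*)(\u^*-\u_R)\cdot\bm\varphi=-(\K(\y^*)-\K(\x^*))\u^*\cdot\bm\varphi .
\]
The right-hand side is controlled by Lemma \ref{BilinearMA}, in the form \eqref{M-Diff}--\eqref{A-Diff}, with $\hat z_h=\hat u_h^*$ bounded in $W^{1,\infty}$. This yields a functional bounded by $C\|\hat e_y\|_{W^{1,p}}\|\varphi_h\|_{W^{1,p'}}$. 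We then apply the $W^{1,p}$ stability of the discrete Ritz projection on $\Gamma_h[\x^*]$ (a Rannacher--Scott type estimate for surface FEM, which we take as known). It gives
\[
\|\hat u_h^*-\hat R_h^*u\|_{W^{1,p}}\le C\|\hat e_y\|_{W^{1,p}}.
\]
Together with the known bound $\|\hat R_h^*u-\hat I_h^*u\|_{W^{1,p}}\le Ch^k$, we obtain
\[
\|\hat u_h^*-\hat I_h^*u\|_{W^{1,p}}\le C(h^k+\|\hat e_y\|_{W^{1,p}}).
\]
The $W^{1,\infty}$ boundedness of $\hat u_h^*$ used above follows from this bound via the inverse inequality, by a short sub-bootstrap.

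\emph{Estimate of $\hat e_y$.} Since $\dot\x^*$ is the nodal vector of $\hat I_h^*v=-\hat I_h^*(Hn)$, we have
\[
\dot\e_\y=-{\bf I}_h(\H^*\bullet\n^*)+{\bf I}_h(\H\bullet\n)|_{\x^*}.
\]
The finite element function with this nodal vector is
\[
-\hat I_h^*\big(\hat H_h^*\hat n_h^*-\hat I_h^*H\,\hat I_h^*n\big).
\]
Its $W^{1,p}$ norm is bounded, by $W^{1,p}$ stability of $I_h$ on products of finite element functions (elementwise inverse estimates), by $C(h^k+\|\hat e_y\|_{W^{1,p}})$. Differentiating in time and using the transport property \eqref{transport-phi} on $\Gamma_h[\x^*]$, we get
\[
\frac{d}{dt}\|\hat e_y\|_{W^{1,p}}\le C\|\hat e_y\|_{W^{1,p}}+Ch^k .
\]
The extra term comes from \eqref{dt-grad-f}, i.e., the time derivative of the norm on the moving surface $\Gamma_h[\x^*]$, and is bounded by $C\|\hat e_y\|_{W^{1,p}}$. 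Since $\hat e_y(0)=0$, Gronwall's inequality gives $\|\hat e_y\|_{W^{1,p}}\le C_ph^k$, and this closes the bootstrap. The velocity bound follows from the same product estimate, because $\hat v_h^*-\hat I_h^*v$ is exactly the function whose nodal vector is $\dot\e_\y$.

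The main obstacle is the Ritz step: we need the $W^{1,p}$ stability of the Ritz projection on the discrete surface, uniformly in $h$. We also need the structure of \eqref{A-Diff}, which is what lets the perturbation be bounded by $\|\hat e_y\|_{W^{1,p}}$ rather than by its $W^{1,\infty}$ norm. For $k=1$ the bootstrap is only barely closable, through the $h^{1-2/p}$ margin, which is why we restrict to $p<\infty$ in the smallness condition.
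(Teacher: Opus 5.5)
Your argument is correct and follows the same overall architecture as the paper's proof:
\begin{itemize}
\item continuation in time for the ODE system \eqref{def-yHn};
\item a bound $\|\hat u_h^*-\hat I_h^*u\|_{W^{1,p}}\le C(h^k+\|\hat y_h^*-{\rm id}\|_{W^{1,p}})$ obtained from the perturbation identities of Lemma~\ref{BilinearMA};
\item a Gronwall argument for $\hat e_y$, driven by $-\hat I_h^*(\hat H_h^*\hat n_h^*-\hat I_h^*H\,\hat I_h^*n)$.
\end{itemize}
The central Ritz step, however, rests on a different key lemma.

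\textbf{Your Ritz step.} You compare $\hat u_h^*$ directly with $\hat R_h^*u$ and invoke a purely discrete stability statement: if $z_h\in S_h$ satisfies $\K(\x^*)\z\cdot{\bm\varphi}=\ell(\varphi_h)$ for all $\varphi_h$, with $|\ell(\varphi_h)|\le M\|\varphi_h\|_{W^{1,p'}}$, then $\|z_h\|_{W^{1,p}}\le CM$. This is not literally Rannacher--Scott stability, which concerns $R_h$ applied to a given $W^{1,p}$ function. It is the discrete inf--sup counterpart. It is true, but it needs its own duality argument, using:
\begin{itemize}
\item continuous $W^{1,q}$-regularity of $I-\Delta_\Gamma$ for $q=p,p'$;
\item $W^{1,p'}$-stability of the Ritz projection for $p'\le 2$, obtained by duality from the case $p\ge 2$.
\end{itemize}

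\textbf{The paper's Ritz step.} The paper instead turns the discrete functional into a continuous one. Using $P_h$, it defines $w$ through \eqref{def-w}, so that by \eqref{KH.varphi} $\hat u_h^*$ is the Ritz projection of $u+w$. Meyers' estimate then gives $\|w\|_{W^{1,p}}\le C\|\hat e_y\|_{W^{1,p}}$. Since $\hat u_h^*-\hat R_h^*u=\hat w_h^*$, the ordinary $W^{1,p}$-stability of the Ritz projection yields exactly your inequality.

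\textbf{What each route buys.} Your route avoids the auxiliary continuous problem. It also avoids the $W^{1,p'}$-stability of $P_h$ for $p'\le 2$, which the paper's bound on $\ell$ tacitly needs but Appendix~\ref{appendix_B} only proves for $p\ge 2$. The paper's route pays off later: the continuous function $w$ is reused in Lemma~\ref{Lemma:L2} and Appendix~\ref{appendix_C}. There $\|w\|_{L^2}$ is estimated by duality and the $H^2$-regularity of $w$ is used, which your discrete argument does not supply.

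\textbf{Cosmetic differences.} Differentiating the $W^{1,p}$ norm on the moving surface $\Gamma_h[\x^*]$ is equivalent to the paper's integration of $X_h^*-Y_h^*$ on $\Gamma_h^0$. Your bootstrap can be simplified:
\begin{itemize}
\item put the $W^{1,\infty}$ bounds on $\hat H_h^*-\hat I_h^*H$ and $\hat n_h^*-\hat I_h^*n$ directly into the hypothesis, as in \eqref{ind-hat-yHn}, which removes the sub-bootstrap;
\item the fixed threshold $1/2$ closes for every $k\ge1$, because $C_ph^{k-2/p}\to0$ for any fixed $p>2$ (the paper takes $p=4$), so no separate case $k=1$ is needed.
\end{itemize}
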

\begin{proof}
Problem \eqref{def-yHn} is essentially a system of ordinary differential equations (ODEs). We assume that $t_*\in(0,T]$ is the maximal time such that the solution of problem \eqref{def-yHn} exists and satisfies the following estimates hold for $t\in[0,t_*]$: 
\begin{subequations}\label{ind-hat-yHn}
\begin{align}
&\|\hat y_h^*-{{{\rm id}_{\Gamma_h[\x^*]}}}\|_{W^{1,\infty}(\Gamma_h[\x^*])} \le 1/2, \label{ind-hat-y}\\
&\|\hat H_h^*-  \hat I_h^*H\|_{W^{1,\infty}(\Gamma_h[\x^*])} \le 1/2, \label{ind-hat-H}\\
&\|\hat n_h^*- \hat  I_h^*n\|_{W^{1,\infty}(\Gamma_h[\x^*])} \le 1/2 . \label{ind-hat-n}
\end{align}
\end{subequations} 
Under this condition, we shall prove that \eqref{Ritz-W1p} holds for $t\in[0,t_*]$ and all $2\le p< \infty$ (with some constants $h_0$ and $C_p$ that are independent of $t_*$). In particular, for $p=4$, the local-in-time existence and uniqueness of solutions to ODE system \eqref{def-Ritz} (and the continuity in time of solutions to the ODE system) guarantee that its solution extends to $t\in[0,t_*+\delta_h]$ for some $\delta_h>0$ and satisfies \eqref{Ritz-W1p} for $t\in[0,t_*+\delta_h]$ with $C_4$ replaced by $2C_4$. For sufficiently small $h$ (smaller than some constant independent of $t_*$) such that $2C_4h^{k-2/4} \le 1/2$, this implies that \eqref{ind-hat-yHn} holds for $t\in[0,t_*+\delta_h]$. This will prove that $t_*=T$ (otherwise $t_*\in(0,T]$ is not the maximal time for \eqref{ind-hat-yHn} to hold). 

{
Since \(u_h^*\) is defined on the surface \(\Gamma_h[\y^*]\) via \eqref{def-Rhu}, we must bridge the gap between the discrete surfaces \(\Gamma_h[\y^*]\) and \(\Gamma_h[\x^*]\) to estimate \(\|\hat u_h^* - \hat I_h^* u \|_{W^{1,p}(\Gamma_h[\x^*])}\). 
Under condition \eqref{ind-hat-yHn}, we can rewrite \eqref{def-Rhu} as
\begin{align}\label{def-varphi-modified}
{\bf K}(\x^*){\bf u}^*\cdot{\bm \varphi} = ({\bf K}(\x^*)-{\bf K}(\y^*)){\bf u}^*\cdot{\bm \varphi} + \int_{\Gamma} \Big( u\,\varphi_h^l + \nabla_{\Gamma} u \cdot \nabla_{\Gamma}\varphi_h^l \Big).
\end{align}
To characterize the gap term \(
({\bf K}(\x^*)-{\bf K}(\y^*)){\bf u}^*\cdot{\bm \varphi}
\), we define \(w\in H^1(\Gamma)^4\) as the solution of the following weak formulation:
\begin{align}\label{def-w}
\int_{\Gamma} \Big( w \cdot \varphi^l + \nabla_{\Gamma} w \cdot \nabla_{\Gamma}\varphi^l \Big)
= ({\bf K}(\x^*)-{\bf K}(\y^*)){\bf u}^*\cdot ({\bf P}_h\varphi) \quad\forall\,\varphi^l\in H^1(\Gamma)^4,
\end{align}
where \(\varphi\) denotes the inverse lift of \(\varphi^l\) onto \(\Gamma_h[\x^*]\) and \({\bf P}_h\varphi\) is the nodal vector of \(P_h\varphi\), i.e., the \(L^2\) projection of \(\varphi\in H^1(\Gamma_h[\x^*])^4\) onto \(S_h(\Gamma_h[\x^*])^4\).
}
 Since the $L^2$ projection operator $P_h$ is bounded in the $L^p$ norm for $1\le p\le \infty$ (see Appendix \ref{appendix_B} in the supplementary material) and the $L^p$ norms of $\varphi$ and $\varphi^l$ are equivalent for $1\le p\le \infty$, it follows that 
$$
\|P_h\varphi\|_{L^p(\Gamma_h[\x^*])}
\le C\|\varphi\|_{L^p(\Gamma_h[\x^*])}
\le C\|\varphi^l\|_{L^p(\Gamma)} 
\quad\forall\, 1\le p\le \infty. 
$$
Since ${\bf P}_h\varphi_h= {\bm \varphi}$ for all $\varphi_h\in S_h(\Gamma_h[\x^*])$,  it follows from \eqref{def-varphi-modified}--\eqref{def-w} that 
\begin{align}\label{KH.varphi}
{\bf K}(\x^*){\bf u}^*\cdot{\bm \varphi} =
\int_{\Gamma} ((w+u) \cdot \varphi_h^l+\nabla_{\Gamma} (w+u) \cdot \nabla_{\Gamma}  \varphi_h^l) 
\quad\forall\, \varphi_h\in S_h(\Gamma_h[\x^*]) .
\end{align}
This means that $\hat u_h^*$ is the linear Ritz projection of $w+u$ onto $S_h(\Gamma_h[\x^*])$. If we further define $\hat w_h^*$ as the linear Ritz projection of $w$ onto $S_h(\Gamma_h[\x^*])$, then $\hat u_h^*-\hat w_h^*$ is the linear Ritz projection of $u$ onto $S_h(\Gamma_h[\x^*])$. 
{\paragraph{Estimate for $\|w\|_{W^{1,p}(\Gamma)}$}
To derive an estimate for \(\|w\|_{W^{1,p}(\Gamma)}\), we consider the PDE problem \eqref{def-w} on the continuous surface, which can be reformulated as} 
\begin{align}\label{PDE-w-2}
 \int_{\Gamma} (w\varphi^l+\nabla_{\Gamma} w \cdot \nabla_{\Gamma}  \varphi^l) 
 = \ell(\varphi^l), 
\end{align}
where $\ell(\varphi^l)= ({\bf K}(\x^*)-{\bf K}(\y^*)){\bf u}^*\cdot {\bf P}_h \varphi $ 
is a linear functional on $\varphi^l$. Under condition \eqref{ind-hat-yHn}, the following estimate follows from inequalities \eqref{M-Diff}--\eqref{A-Diff}:
\begin{align*}
  |\ell(\varphi^l)| 
&\le
C\|\hat u_h^*\|_{W^{1,\infty}(\Gamma_h[\bf x^*])} \|{{{\rm id}_{\Gamma_h[\x^*]}}}-\hat y_h^*\|_{W^{1,p}(\Gamma_h[\x^*])}
 \|\varphi\|_{W^{1,p'}(\Gamma_h[\x^*])}
 \\
 &\le
C\|{{{\rm id}_{\Gamma_h[\x^*]}}}-\hat y_h^*\|_{W^{1,p}(\Gamma_h[\x^*])}
 \|\varphi^l\|_{W^{1,p'}(\Gamma)}. 
\end{align*}
This means that
$$
\|\ell\|_{W^{-1,p}(\Gamma)} := \|\ell\|_{(W^{1,p'}(\Gamma))'}
\le 
C\|{{{\rm id}_{\Gamma_h[\x^*]}}}-\hat y_h^*\|_{W^{1,p}(\Gamma_h[\x^*])} . 
$$
Then the standard $W^{1,p}$ estimates for the elliptic PDE problem in \eqref{PDE-w-2} implies that (cf. {{\cite[Theorem 1]{Meyers1963}}}, which extends to PDEs on surfaces via estimates on local {{coordinate}} charts)
\begin{align}\label{W1p-Hh-2}
\|w\|_{W^{1,p}(\Gamma)}
\le 
C\|\ell\|_{W^{-1,p}(\Gamma)}
\le 
C\|{{{\rm id}_{\Gamma_h[\x^*]}}}-\hat y_h^*\|_{W^{1,p}(\Gamma_h[\x^*])} 
\quad\mbox{for}\,\,\, 2\le p<\infty. 
\end{align}

{{\paragraph{Estimates for \(L^2\) and \(W^{1,p}\) norms of \((\hat u_h^*)^l - (\hat w_h^*)^l - u\)}
The following \(W^{1,p}\) and \(H^1\) estimates for the linear Ritz projection onto the interpolated surface \(\Gamma_h[\x^*]\) were established in \cite[Corollaries 4.2 and 4.5]{Demlow-2009}:}}
\begin{align}
\label{W1p-wh-H1} 
&\|(\hat w_h^*)^l-w\|_{L^{2}(\Gamma)} + h\|(\hat w_h^*)^l-w\|_{H^{1}(\Gamma)}
\le 
Ch^{k+1} \|w\|_{H^{k+1}(\Gamma)} , \\
&\|(\hat w_h^*)^l-w\|_{W^{1,\infty}(\Gamma)}
\le 
C\|w-(\hat I_h^*w)^l\|_{W^{1,\infty}(\Gamma)} +Ch^{k+1}|\ln h| \|w\|_{W^{1,\infty}(\Gamma)} \notag\\
&\hspace{92.5pt} \le C h^{k} \|w\|_{W^{k+1,\infty}(\Gamma)}. 
\label{W1p-wh-W1inf} 
\end{align} 
Since the complex interpolation spaces between $H^{k+1}(\Gamma)$ and $W^{k+1,\infty}(\Gamma)$ are  $W^{k+1,p}(\Gamma)$ for $2\le p\le \infty$ (cf. \cite[Theorem 6.4.5]{BL76}), the complex interpolation (cf. \cite[Theorem 4.1.2]{BL76}) of the above two estimates yields, for $2\le p\le \infty$,  
\begin{align}\label{W_1pW_2}
    \|(\hat w_h^*)^l - w\|_{W^{1,p}(\Gamma)} \le Ch^k \|w\|_{W^{k+1,p}(\Gamma)} . 
\end{align}
Moreover, the linear Ritz projection onto the interpolated surface is naturally stable in $H^1$ norm, i.e., 
\begin{align}\label{H1_stability}
    \|(\hat w_h^*)^l\|_{H^1(\Gamma)} \le C  \|w\|_{H^1(\Gamma)}.
\end{align}
By utilizing \eqref{W1p-wh-W1inf} and the $W^{1,\infty}$ stability of Lagrange interpolation, we derive the following result:
\begin{equation}\label{W1infty_stablity}
  \begin{aligned}
   \|(\hat w_h^*)^l\|_{W^{1,\infty}(\Gamma)}
   &\le \|(\hat w_h^*)^l - w\|_{W^{1,\infty}(\Gamma)} + \|w\|_{W^{1,\infty}(\Gamma)}\\
   &\le C\|w-(\hat I_h^*w)^l\|_{W^{1,\infty}(\Gamma)} +C \|w\|_{W^{1,\infty}(\Gamma)} \le C  \|w\|_{W^{1,\infty}(\Gamma)}. 
  \end{aligned}
\end{equation}
The complex interpolation between \eqref{H1_stability} and \eqref{W1infty_stablity} yields the following result (cf. \cite[Theorem 4.1.2]{BL76}) for $2\le p \le \infty$: 
\begin{equation}\label{W1p_stability}
  \begin{aligned}
    \|(\hat w_h^*)^l\|_{W^{1,p}(\Gamma)} \le C \|w\|_{W^{1,p}(\Gamma)} . 
  \end{aligned}
\end{equation}
Since $\hat u_h^*-\hat w_h^*$ is the linear Ritz projection of $u$ onto $S_h(\Gamma_h[\x^*])$, replacing $w$ by $u$ in \eqref{W1p-wh-H1} and \eqref{W_1pW_2} yields that 
\begin{align}
&\|(\hat u_h^*)^l - (\hat w_h^*)^l-u\|_{L^{2}(\Gamma)}
+h\|(\hat u_h^*)^l - (\hat w_h^*)^l-u\|_{H^{1}(\Gamma)}
\le  
C\|u\|_{H^{k+1}(\Gamma)} h^{k+1}  , \label{H-w-H-L2}  \\
&\|(\hat u_h^*)^l-(\hat w_h^*)^l-u\|_{W^{1,p}(\Gamma)}
\le 
C\|u\|_{W^{k+1,p}(\Gamma)} h^{k}  \quad \mbox{for}\,\,\, 2\le p \le  \infty .  
\label{H-w-H-W1p-1}
\end{align}
{{\paragraph{Estimates for \(L^2\) and \(W^{1,p}\) norms of \((\hat u_h^*)^l  - u\)}
As a result of \eqref{W1p-wh-H1} and \eqref{H-w-H-L2}, by using the triangle inequality we have }}
\begin{align}
\|(\hat u_h^*)^l-u\|_{L^2(\Gamma)} 
&\le 
\|(\hat w_h^*)^l\|_{L^2(\Gamma)}+
C\|u\|_{H^{k+1}(\Gamma)} h^{k+1} \notag \\
&\le 
\|w\|_{L^2(\Gamma)}+\|(\hat w_h^*)^l-w\|_{L^2(\Gamma)}
+C \|u\|_{H^{k+1}(\Gamma)}  h^{k+1} \notag \\
&\le 
\|w\|_{L^2(\Gamma)} + Ch^2\|w\|_{H^2(\Gamma)}
+C \|u\|_{H^{k+1}(\Gamma)} h^{k+1} , \label{H-H-L2-1} \\[5pt] 
\|(\hat u_h^*)^l-u\|_{W^{1,p}(\Gamma)}
&\le 
\|(\hat w_h^*)^l\|_{W^{1,p}(\Gamma)}+
C\|u\|_{W^{k+1,p}(\Gamma)} h^k  . \label{H-H-W1p-11}
\end{align} 
Then, substituting inequality \eqref{W1p_stability} into \eqref{H-H-W1p-11}, we obtain  
\begin{align}
\label{W1p-Hh-1}
&\|(\hat u_h^*)^l-u\|_{W^{1,p}(\Gamma)}
\le 
C\|w\|_{W^{1,p}(\Gamma)}  +
C\|u\|_{W^{k+1,p}(\Gamma)} h^k . 
\end{align}

From \eqref{W1p-Hh-1} and \eqref{W1p-Hh-2} we obtain the following result for $2\le p<\infty$: 
\begin{align}\label{W1p-Hh-H}
\|(\hat u_h^*)^l-u\|_{W^{1,p}(\Gamma)}
\le 
C\|{{{\rm id}_{\Gamma_h[\x^*]}}}-\hat y_h^*\|_{W^{1,p}(\Gamma_h[\x^*])} 
+ C \|u\|_{W^{k+1,p}(\Gamma)} h^k . 
\end{align}

In order to establish an estimate for $\|{{{\rm id}_{\Gamma_h[\x^*]}}}-\hat y_h^*\|_{W^{1,p}(\Gamma_h[\x^*])} $ on the right-hand side of \eqref{W1p-Hh-H}, we consider the flow maps $X_h^*:\Gamma_h[\x^0]\rightarrow \Gamma_h[\x^*]$ and $Y_h^*=\hat y_h^*\circ X_h^*$, which satisfy the following relation: 
$$
\frac{\d}{\d t}(X_h^* - Y_h^*) = - \hat I_h^*(\hat I_h^*H\hat I_h^*n-\hat H_h^*\hat n_h^*)\circ X_h^* . 
$$
which can be written into the integral form:
\begin{align}\label{X-Y-expr}
X_h^*(s)  - Y_h^*(s) = - \int_0^s \hat I_h^*(\hat I_h^*H \hat I_h^*n-\hat H_h^*\hat n_h^*)\circ X_h^* \, \d t .
\end{align}
Then, applying gradient to \eqref{X-Y-expr} and using the chain rule of partial differentiation, we have 
$$
\nabla_{\Gamma_h^0}(X_h^*(s)  - Y_h^*(s) )
= -\int_0^s \nabla_{\Gamma_h^0} X_h^* \, [\nabla_{\Gamma_h[\x^*]} \hat I_h^*(\hat I_h^*H\hat I_h^*n-\hat H_h^*\hat n_h^*)]\circ X_h^* \,\, \d t .
$$
Then, by considering the $L^p$ norm of both sides of this relation, we obtain the following result for $s\in(0,t_*]$:  
\begin{align}
&\|\nabla_{\Gamma_h^0}(X_h^*(s)  - Y_h^*(s)  )\|_{L^p(\Gamma_h^0)} \notag \\ 
&\le
C\int_0^s \|[\nabla_{\Gamma_h[\x^*]} \hat I_h^*(\hat I_h^*H\hat I_h^*n-\hat H_h^*\hat n_h^*)]\circ X_h^*\|_{L^p(\Gamma_h^0)} \d t 
\quad\mbox{(since $\|X_h^*\|_{W^{1,\infty}(\Gamma_h^0)}$ is bounded)} \notag \\ 
&\le
C\int_0^s (\| \hat I_h^*H-\hat H_h^*\|_{W^{1,p}(\Gamma_h[\x^*])} \| \hat I_h^*n \|_{W^{1,\infty}(\Gamma_h[\x^*])} + \| \hat H_h^*\|_{W^{1,\infty}(\Gamma_h[\x^*])}\|\hat I_h^*n-\hat n_h^*\|_{W^{1,p}(\Gamma_h[\x^*])}) \d t \notag \\
&\hspace{30pt} \mbox{($W^{1,p}$ stability of $\hat I_h^*$ is used; see Appendix \ref{appendix_A} in the supplementary material)} \notag \\ 
&\le
C\int_0^s (\| \hat I_h^*H-\hat H_h^*\|_{W^{1,p}(\Gamma_h[\x^*])} + \|\hat I_h^*n-\hat n_h^*\|_{W^{1,p}(\Gamma_h[\x^*])}) \d t ,
\end{align}
where the last inequality uses the boundedness of $\| \hat H_h^*\|_{W^{1,\infty}(\Gamma_h[\x^*])} $, which follows from \eqref{ind-hat-yHn}. Similarly, by considering the $L^p$ norm of \eqref{X-Y-expr} directly, we can obtain the following result: 
\begin{align}\label{id-yh-Lp}
\|X_h^*(s) - Y_h^*(s)\|_{L^p(\Gamma_h^0)} 
&\le
C\int_0^s (\| \hat I_h^*H-\hat H_h^*\|_{L^p(\Gamma_h[\x^*])} + \|\hat I_h^*n-\hat n_h^*\|_{L^p(\Gamma_h[\x^*])}) \d t .
\end{align}
Since ${{{\rm id}_{\Gamma_h[\x^*]}}}-\hat y_h^*= (X_h^* - Y_h^*)\circ (X_h^*)^{-1}$ and therefore $\|{{{\rm id}_{\Gamma_h[\x^*]}}}-\hat y_h^*\|_{W^{1,p}(\Gamma[\x^*(s)])} \sim \|X_h^*(s) - Y_h^*(s)\|_{W^{1,p}(\Gamma_h^0)} $, the last two estimates and Lagrange interpolation error estimates imply that 
\begin{align}\label{id-yh-W1p-expr}
\|{{{\rm id}_{\Gamma_h[\x^*(s)]}}}-\hat y_h^*\|_{W^{1,p}(\Gamma[\x^*(s)])} 
&\le
C\int_0^s (\|\hat I_h^*H-\hat H_h^*\|_{W^{1,p}(\Gamma_h[\x^*])} + \| \hat I_h^*n-\hat n_h^*\|_{W^{1,p}(\Gamma_h[\x^*])}) \d t \notag \\
&\le
C\int_0^s (\| H-(\hat H_h^*)^l\|_{W^{1,p}(\Gamma)} + \| n-(\hat n_h^*)^l\|_{W^{1,p}(\Gamma)}) \d t 
+Ch^k \notag\\
&\le
C\int_0^s \| u-(\hat u_h^*)^l\|_{W^{1,p}(\Gamma)} \d t 
+Ch^k \quad\mbox{for}\,\,\, s\in(0,t_*] . 
\end{align}
Substituting this into \eqref{W1p-Hh-H} and using Gronwall's inequality, we obtain 
\begin{align*}
\|{{{\rm id}_{\Gamma_h[\x^*]}}}-\hat y_h^*\|_{W^{1,p}(\Gamma_h[\x^*])} 
+\|(\hat u_h^*)^l-u\|_{W^{1,p}(\Gamma)}
\le 
Ch^k
\quad\mbox{for}\,\,\, 2\le p<\infty.  
\end{align*}
Since $\|(\hat I_h^*u)^l - (\hat u_h^*)^l\|_{W^{1,p}(\Gamma)} \le \|(\hat I_h^*u)^l - u\|_{W^{1,p}(\Gamma)} + \|(\hat u_h^*)^l-u\|_{W^{1,p}(\Gamma)} \le Ch^k$ and 
$\|(\hat I_h^*u)^l - (\hat u_h^*)^l\|_{W^{1,p}(\Gamma)} \sim \|\hat I_h^*u - \hat u_h^*\|_{W^{1,p}(\Gamma_h[\x^*])}$, it follows that 
\begin{align}\label{W1p-Hh-H-nh-n}
\|{{{\rm id}_{\Gamma_h[\x^*]}}}-\hat y_h^*\|_{W^{1,p}(\Gamma_h[\x^*])} + \|\hat I_h^*u - \hat u_h^*\|_{W^{1,p}(\Gamma_h[\x^*])}
\le 
Ch^k
\quad\mbox{for}\,\,\, 2\le p<\infty.  
\end{align}
Moreover, we can express $\hat I_h^*v - \hat v_h^* $ as 
\begin{align}
\hat I_h^*v - \hat v_h^* 
& = - \hat I_h^*(\hat I_h^*H \hat I_h^*n) + \hat I_h^*(\hat H_h^*\hat n_h^*) \notag\\
& = - \hat I_h^*[(\hat I_h^*H - \hat H_h^*) \hat I_h^*n] - \hat I_h^*[\hat H_h^* (\hat I_h^*n - \hat n_h^*)] . 
\end{align}
and apply the $W^{1,p}$ stability of Lagrange interpolation (as shown in Appendix \ref{appendix_A} in the supplementary material). This leads to the following result for $t\in[0,t_*]$: 
\begin{align}
\| \hat I_h^*v - \hat v_h^* \|_{W^{1,p}(\Gamma_h[\x^*])}
& \le C\| \hat I_h^*H - \hat H_h^*\|_{W^{1,p}(\Gamma_h[\x^*])} \| \hat I_h^*n\|_{W^{1,\infty}(\Gamma_h[\x^*])}  \notag\\
&\quad\, 
+ C\|\hat H_h^*\|_{W^{1,\infty}(\Gamma_h[\x^*])} \| \hat I_h^*n - \hat n_h^*\|_{W^{1,p}(\Gamma_h[\x^*])} \notag\\
& \le C\| \hat I_h^*u - \hat u_h^*\|_{W^{1,p}(\Gamma_h[\x^*])} 
\le Ch^k , 
\end{align}
where the boundedness of $\|\hat H_h^*\|_{W^{1,\infty}(\Gamma_h[\x^*])}$ follows from \eqref{ind-hat-yHn}, and the last inequality follows from \eqref{W1p-Hh-H-nh-n}. 
This proves Lemma \ref{Lemma:W1p} according to the discussions in the text below \eqref{ind-hat-yHn}. 
\hfill\end{proof}

\subsection{${\bf L^2}$ error estimates for the dynamic Ritz projection} 
$\,$

\begin{lemma}\label{Lemma:L2}
{\it  
Under the assumptions of Theorem {\rm\ref{THM1}}, there exists $h_0>0$ such that, for mesh size $h\le h_0$, \eqref{def-yHn} has a unique solution $(\y^*,{\bf H}^*,{\bf n}^*)$, which satisfies the following estimate{\rm:} 
\begin{align}
&\|\hat y_h^*-{{{\rm id}_{\Gamma_h[\x^*]}}}\|_{L^2(\Gamma_h[\x^*])} 
+ \|\hat H_h^*- \hat I_h^*H\|_{L^2(\Gamma_h[\x^*])} 
+ \|\hat n_h^*- \hat I_h^*n\|_{L^2(\Gamma_h[\x^*])}  \le C h^{k+1} . \label{L2-y*} 
\end{align}
}
\end{lemma}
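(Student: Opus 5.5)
We would follow the structure of the proof of Lemma \ref{Lemma:W1p}, but replace the $W^{1,p}$ elliptic bound for the auxiliary function $w$ in \eqref{def-w} by an $L^2$ bound obtained through a duality argument. Existence, uniqueness and the $W^{1,\infty}$ smallness conditions \eqref{ind-hat-yHn} are taken over from Lemma \ref{Lemma:W1p}. In particular, $\|\hat e_y\|_{W^{1,p}}\le C_ph^k$ and $\|\hat u_h^*\|_{W^{1,\infty}}\le C$ on $[0,T]$. The starting point is \eqref{H-H-L2-1}:
\[
\|(\hat u_h^*)^l-u\|_{L^2(\Gamma)}\le \|w\|_{L^2(\Gamma)}+Ch^2\|w\|_{H^2(\Gamma)}+Ch^{k+1}.
\]
So it suffices to bound $\|w\|_{L^2}$ by $C\|\hat e_y\|_{L^2(\Gamma_h[\x^*])}+Ch^{k+1}$, and $h^2\|w\|_{H^2}$ by $Ch^{k+1}$ plus a small multiple of $\|\hat e_y\|_{L^2}$.

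\textbf{Step 1: $L^2$ bound for $w$.} Take $\varphi^l=w$, or better, a dual solution $z$ with $(I-\Delta_\Gamma)z=w$, so that $\|w\|_{L^2}^2=\ell(z)=(\K(\x^*)-\K(\y^*))\u^*\cdot {\bf P}_hz$. Using Lemma \ref{BilinearMA}, this becomes
\[
\int_0^1\!\!\int_{\Gamma_h[\y^\theta]}\Bigl[\hat u_h^{*\theta}(\nabla\cdot\hat e_y^\theta)(P_hz)^\theta+\nabla (P_hz)^\theta\cdot D\hat e_y^\theta\,\nabla\hat u_h^{*\theta}\Bigr].
\]
Both terms are linear in $\nabla\hat e_y$. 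We integrate by parts on each curved element to move the derivative off $\hat e_y$ and onto $\hat u_h^*$ and $P_hz$. This requires second derivatives of $\hat u_h^*$ elementwise. We handle them by writing $\hat u_h^*=\hat I_h^*u+(\hat u_h^*-\hat I_h^*u)$; the smooth part has bounded piecewise $W^{2,\infty}$ norm. The remainder is controlled by the inverse inequality combined with \eqref{Ritz-W1p}, giving piecewise $\|\cdot\|_{W^{2,p}}\le Ch^{k-1}$, so its contribution is absorbed by the factor $h\cdot h^{k-1}$ against the $H^1$ jump terms. The jump terms across element edges are treated by trace inequalities and $\|\hat e_y\|_{W^{1,p}}\le Ch^k$. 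The target is
\[
\|w\|_{L^2}^2\le C\bigl(\|\hat e_y\|_{L^2}+h^{k+1}\bigr)\|z\|_{H^2}\le C\bigl(\|\hat e_y\|_{L^2}+h^{k+1}\bigr)\|w\|_{L^2},
\]
using $H^2$ regularity of $I-\Delta_\Gamma$ and $H^2$-stability of $P_h z$ measured piecewise. For $h^2\|w\|_{H^2}$ we only need a crude bound. Since $\|w\|_{H^1}\le Ch^k$ by \eqref{W1p-Hh-2}, we get $h\|w\|_{H^1}\le Ch^{k+1}$. It therefore suffices to replace $h^2\|w\|_{H^2}$ in \eqref{H-H-L2-1} by $Ch\|w\|_{H^1}$, which follows from the first-order Ritz estimate $\|(\hat w_h^*)^l-w\|_{L^2}\le Ch\|w\|_{H^1}$.

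\textbf{Step 2: Gronwall.} From \eqref{id-yh-Lp} with $p=2$ and interpolation error estimates,
\[
\|\hat e_y(s)\|_{L^2}\le C\int_0^s\|(\hat u_h^*)^l-u\|_{L^2}\,\d t+Ch^{k+1}.
\]
Combining this with Step 1,
\[
\|(\hat u_h^*)^l-u\|_{L^2}\le C\|\hat e_y\|_{L^2}+Ch^{k+1}.
\]
Gronwall's inequality then gives $\|\hat e_y\|_{L^2}+\|(\hat u_h^*)^l-u\|_{L^2}\le Ch^{k+1}$. Adding the interpolation error $\|(\hat I_h^*u)^l-u\|_{L^2}\le Ch^{k+1}$ and using the norm equivalence \eqref{LiftBound} yields \eqref{L2-y*}.

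\textbf{Main obstacle.} The hard step is Step 1: getting $\|w\|_{L^2}$ controlled by $\|\hat e_y\|_{L^2}$ rather than $\|\hat e_y\|_{H^1}$. The functional $\ell$ involves $\nabla\hat e_y$, so we must integrate by parts on the discrete surface, which brings in elementwise second derivatives and interelement jumps of $\hat u_h^*$ and $P_hz$. If the elementwise jump terms prove troublesome, the fallback is to replace $\hat u_h^*$ in the bilinear forms by the smooth function $u$, lifted, at a cost of
\[
C\|\hat e_y\|_{W^{1,4}}\,\|\hat u_h^*-\hat I_h^*u\|_{W^{1,4}}\|z\|_{H^2}\le Ch^{2k}\|z\|_{H^2}.
\]
This is $O(h^{k+1})$ for $k\ge1$, since $2k\ge k+1$. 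With this replacement, integration by parts involves only smooth coefficients and the $H^2$ function $z$, and the jump terms come solely from $P_hz$. These can be handled using $z-P_hz=O(h^2)$ in $L^2$ and $O(h)$ in $H^1$.
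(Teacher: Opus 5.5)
Your proposal is correct and follows the paper's skeleton. You start from \eqref{H-H-L2-1}, bound $\|w\|_{L^2(\Gamma)}$ by duality with $\varphi^l-\Delta_\Gamma\varphi^l=w$ and Lemma~\ref{BilinearMA}, and close with \eqref{id-yh-Lp} and Gronwall. Two steps, however, are executed differently.

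\textbf{The $Ch^2\|w\|_{H^2(\Gamma)}$ term.} The paper proves $\|w\|_{H^2(\Gamma)}\le Ch^{-2}\|\hat e_y\|_{L^2(\Gamma_h[\x^*])}$ by inverse inequalities and $H^2$ regularity. The term then becomes $C\|\hat e_y\|_{L^2}$ and is absorbed by Gronwall. You instead use the first-order Ritz bound $\|(\hat w_h^*)^l-w\|_{L^2(\Gamma)}\le Ch\|w\|_{H^1(\Gamma)}$ together with $\|w\|_{H^1(\Gamma)}\le C\|\hat e_y\|_{H^1}\le Ch^k$ from \eqref{W1p-Hh-2}. This is equally valid and slightly more direct.

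\textbf{Where you integrate by parts.} This is the more substantial difference. The paper never integrates by parts on a discrete surface. It writes $\|w\|_{L^2(\Gamma)}^2=|B_0+B_1+B_2+B_3|$, where:
\begin{itemize}
\item $B_3$ moves the integrals from $\Gamma_h[\y^\theta]$ to $\Gamma_h[\x^*]$;
\item $B_2$ lifts them to $\Gamma$;
\item $B_1$ replaces $\hat u_h^*$ and $P_h\varphi$ by $u$ and $\varphi^l$ in the stiffness part.
\end{itemize}
Each of these costs $Ch\|\hat e_y\|_{H^1}\|\varphi^l\|_{H^2(\Gamma)}\le Ch^{k+1}\|\varphi^l\|_{H^2(\Gamma)}$. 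Only the remaining $B_0$ is integrated by parts, on the smooth closed surface $\Gamma$. There the integrands are globally $H^1$ (mass part, $\hat u_h^{*,l}\cdot\hat\varphi_h^{0,l}$) or $H^2$ (stiffness part), so no edge or curvature terms arise.

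Your elementwise integration by parts on $\Gamma_h[\y^\theta]$ can be closed for $k\ge1$, but the bookkeeping is heavier than you indicate. Besides the jumps of $\nabla\hat u_h^*$ and $\nabla P_hz$, there are element curvatures and the co-normal jumps $[\mu]$ of the piecewise surface itself, of size $O(h^k+|\nabla\hat e_y|)$ on $\Gamma_h[\y^\theta]$. These survive in your fallback as well, so the jumps do not come ``solely from $P_hz$''. They are harmless: in the mass part, for instance, $\|\hat e_y\|_{L^2(\mathcal E)}\le Ch^{-1/2}\|\hat e_y\|_{L^2}$ and $\|[\mu]\|_{L^2(\mathcal E)}\le Ch^{k-1/2}$ over the edge set $\mathcal E$ give a contribution $Ch^{k-1}\|\hat e_y\|_{L^2}\|z\|_{H^2(\Gamma)}$. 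The paper's transfer to $\Gamma$ simply avoids these terms.

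Your fallback, supplemented by replacing $P_hz$ with $z$ (cost $Ch\|\nabla\hat e_y\|_{L^2}\|z\|_{H^2(\Gamma)}$) and lifting to $\Gamma$, becomes exactly the paper's $B_1$--$B_3$ argument.
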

\begin{proof}
From \eqref{H-H-L2-1} we see that, in order to estimate $\|(\hat u_h^*)^l-u\|_{L^2(\Gamma)} $, it suffices to  estimate $\|w\|_{L^2(\Gamma)}$ and $\|w\|_{H^2(\Gamma)}$. 
By estimating the right-hand side of \eqref{def-w} using \eqref{M-Diff}--\eqref{A-Diff}, along with the results in Lemma \ref{Lemma:W1p} and the inverse inequality for finite element functions, we immediately get the following estimate: 
\begin{align*}
\bigg| \int_{\Gamma} (w\varphi^l+\nabla_{\Gamma} w \cdot \nabla_{\Gamma}  \varphi^l) \bigg| 
\le
&C\|{{{\rm id}_{\Gamma_h[\x^*]}}}-\hat y_h^*\|_{H^1(\Gamma_h[\x^*])} \|\hat u_h^*\|_{W^{1,\infty}(\Gamma_h[\x^*])} \|P_h\varphi\|_{H^1(\Gamma_h[\x^*])} \\
\le
&Ch^{-2}\|{{{\rm id}_{\Gamma_h[\x^*]}}}-\hat y_h^*\|_{L^2(\Gamma_h[\x^*])} \|\hat u_h^*\|_{W^{1,\infty}(\Gamma_h[\x^*])} 
\|\varphi\|_{L^2(\Gamma_h[\x^*])} 
\\
\le
&Ch^{-2}\|{{{\rm id}_{\Gamma_h[\x^*]}}}-\hat y_h^*\|_{L^2(\Gamma_h[\x^*])} 
\|\varphi^l\|_{L^2(\Gamma)} , 
\end{align*}
which implies that (via duality) 
$
\int_{\Gamma} (w\varphi^l+\nabla_{\Gamma} w \cdot \nabla_{\Gamma}  \varphi^l) 
= \int_{\Gamma} f \varphi^l 
$
for some function $f$ satisfying the following inequality:
$$
\|f\|_{L^2(\Gamma)} \le Ch^{-2}\|{{{\rm id}_{\Gamma_h[\x^*]}}}-\hat y_h^*\|_{L^2(\Gamma_h[\x^*])}  .
$$
The standard $H^2$ regularity estimate of elliptic equations says that 
$$
\|w\|_{H^2(\Gamma)} \le
C\|f\|_{L^2(\Gamma)} \le Ch^{-2}\|{{{\rm id}_{\Gamma_h[\x^*]}}}-\hat y_h^*\|_{L^2(\Gamma_h[\x^*])}  .
$$
Substituting this into \eqref{H-H-L2-1} yields 
\begin{align}\label{H-H-L2-2}
\|(\hat u_h^*)^l-u\|_{L^2(\Gamma)} 
&\le 
\|w\|_{L^2(\Gamma)} 
+ 
C\|{{{\rm id}_{\Gamma_h[\x^*]}}}-\hat y_h^*\|_{L^2(\Gamma_h[\x^*])} + C h^{k+1} . 
\end{align} 

It remains to estimate $\|w\|_{L^2(\Gamma)}$ and $\|{{{\rm id}_{\Gamma_h[\x^*]}}}-\hat y_h^*\|_{L^2(\Gamma_h[\x^*])}$. This can be done as follows, by using \eqref{def-w} with a duality argument. We define $\varphi^l\in H^2(\Gamma)$ to be the solution of 
\begin{align}\label{def-varphi}
\varphi^l - \Delta_{\Gamma}  \varphi^l = w \,\,\,\mbox{on}\,\,\, \Gamma ,
\end{align}
and let $\varphi$ be the inverse lift of $\varphi^l$ onto $\Gamma_h[\x^*]$. 
Then 
\begin{align}\label{H2-varphi-ww}
\|\varphi^l\|_{H^2(\Gamma)} \le C \|w\|_{L^2(\Gamma)} . 
\end{align}
For $\y^\theta:=(1-\theta)\x^*+\theta \y^*$, let $\hat u_h^{*,\theta}$, $\hat \varphi_h^{\theta}$ and $\hat e_y^\theta$ be the finite element functions on $\Gamma_h[\y^\theta]$ with nodal vectors ${\bf u}^*$, ${\bf P}_h\varphi$ and ${\bf e_y}:=\y^*-\x^*$, respectively. In particular, $\hat u_h^{*,0} = \hat u_h^* $. Then surface $\Gamma_h[\y^\theta]$ moves with velocity $\hat e_y^{\theta} $ as parameter $\theta\in[0,1]$ changes, and the following relation holds: 
$$
\partial_\theta^{\bullet} \hat u_h^{*,\theta} = \partial_\theta^{\bullet} \hat \varphi_h^{\theta} = 0 .
$$
Testing \eqref{def-varphi} by $w$ and using \eqref{def-w}, we have 
\begin{align} \label{w-L2}
\|w\|_{L^2(\Gamma)}^2 
&=\int_{\Gamma} (w\varphi^l+\nabla_{\Gamma} w \cdot \nabla_{\Gamma}  \varphi^l)  \notag \\ 
&=  ({\bf K}(\x^*)-{\bf K}(\y^*)){\bf u}^*\cdot ({\bf P}_h\varphi) \notag \\
&=
\bigg| \int_0^1 \frac{\d}{\d\theta} \int_{\Gamma_h[\y^\theta]} (\hat u_h^{*,\theta}\cdot \hat\varphi_h^\theta + \nabla_{\Gamma_h[\y^\theta]}\hat u_h^{*,\theta} \cdot \nabla_{\Gamma_h[\y^\theta]} \hat\varphi_h^\theta ) \bigg| \notag \\
&=
\bigg|  \int_0^1 \int_{\Gamma_h[\y^\theta]} (\hat u_h^{*,\theta}\cdot \hat\varphi_h^{\theta} \nabla_{\Gamma_h[\y^\theta]}\cdot \hat e_y^{\theta} 
+ \nabla_{\Gamma_h[\y^\theta]}\hat u_h^{*,\theta}\cdot D_{\Gamma_h[\y^\theta]} \hat e_y^{\theta}\nabla_{\Gamma_h[\y^\theta]} \hat\varphi_h^{\theta} ) \, \d\theta \bigg| \notag\\
&\hspace{255pt} \mbox{(see Lemma \ref{BilinearMA})} \notag\\
&= 
\big| B_0 + B_ 1 + B_2 + B_3 \big| ,
\end{align} 
where 
\begin{align}\label{def-B1-B2}
B_0:= & \int_{\Gamma} (\hat u_h^{*,l}\cdot \hat\varphi_h^{0,l} \nabla_{\Gamma}\cdot \hat e_y^{0,l} 
+ \nabla_{\Gamma} u \cdot D_{\Gamma} \hat e_y^{0,l}\nabla_{\Gamma} \varphi^l ) \\ 
B_1 :=& \int_{\Gamma} (\nabla_{\Gamma}\hat u_h^{*,l}\cdot D_{\Gamma} \hat e_y^{0,l}\nabla_{\Gamma} \hat\varphi_h^{0,l} - \nabla_{\Gamma} u \cdot D_{\Gamma} \hat e_y^{0,l}\nabla_{\Gamma} \varphi^l ) \\
B_2 :=& \int_{\Gamma_h[\x^*]} (\hat u_h^{*}\cdot \hat\varphi_h^{0} \nabla_{\Gamma_h[\x^*]}\cdot \hat e_y^{0}+ \nabla_{\Gamma_h[\x^*]}\hat u_h^{*}\cdot D_{\Gamma_h[\x^*]} \hat e_y^{0}\nabla_{\Gamma_h[\x^*]} \hat\varphi_h^{0} ) \notag\\
& - \int_{\Gamma} (\hat u_h^{*,l}\cdot \hat\varphi_h^{0,l} \nabla_{\Gamma}\cdot \hat e_y^{0,l} 
+ \nabla_{\Gamma}\hat u_h^{*,l}\cdot D_{\Gamma} \hat e_y^{0,l}\nabla_{\Gamma} \hat\varphi_h^{0,l} )  
\\
B_3 :=& \int_0^1 \int_{\Gamma_h[\y^\theta]} (\hat u_h^{*,\theta} \cdot \hat\varphi_h^{\theta} \nabla_{\Gamma_h[\y^\theta]}\cdot \hat e_y^{\theta} 
+ \nabla_{\Gamma_h[\y^\theta]} \hat u_h^{*,\theta} \cdot D_{\Gamma_h[\y^\theta]} \hat e_y^{\theta}\nabla_{\Gamma_h[\y^\theta]} \hat\varphi_h^{\theta} ) \, \d\theta \notag\\
&- \int_0^1 \int_{\Gamma_h[\x^*]} (\hat u_h^{*}\cdot \hat\varphi_h^{0} \nabla_{\Gamma_h[\x^*]}\cdot \hat e_y^{0}+ \nabla_{\Gamma_h[\x^*]}\hat u_h^{*}\cdot D_{\Gamma_h[\x^*]} \hat e_y^{0}\nabla_{\Gamma_h[\x^*]} \hat\varphi_h^{0} )   \, \d\theta 
\end{align}
In the expression of $B_0$, we can remove the gradient acting on $\hat e_y^{0,l}$ by utilizing integration by parts. This will yield the following result: 
\begin{align}\label{estimate-B0}
|B_0| 
\le C\|\hat e_y^{0,l} \|_{L^2(\Gamma)}  \|\varphi^l\|_{H^2(\Gamma)} 
\le C\|\hat e_y^0 \|_{L^2(\Gamma_h[\x^*])}  \|\varphi^l\|_{H^2(\Gamma)} . 
\end{align}
Since $\hat\varphi_h^{0,l} = (P_h\varphi)^l$, it follows that 
$\|\nabla_\Gamma(\varphi^l - \hat\varphi_h^{0,l} ) \|_{L^2(\Gamma)}  \le Ch\|\varphi^l\|_{H^2(\Gamma)}$. This implies 
\begin{align}\label{estimate-B1}
|B_1| 
\le&
C\| (\hat u_h^*)^l - u \|_{W^{1,p}(\Gamma)} \|\hat e_y^{0,l} \|_{H^1(\Gamma)} \|\varphi^l\|_{{W^{1,\frac{2p}{p-2}}(\Gamma)}} \notag\\
&
+\| (\hat u_h^*)^l \|_{W^{1,\infty}(\Gamma_h[\x^*])} \|\hat e_y^{0,l} \|_{H^1(\Gamma)} Ch\|\varphi^l\|_{H^2(\Gamma)} 
\quad\mbox{(for some $p>2$)}\notag \\
\le&
Ch \|\hat e_y^{0,l} \|_{H^1(\Gamma)} \|\varphi^l\|_{H^2(\Gamma)} , 
\end{align}
where the last inequality follows from the $W^{1,p}$ error estimate in Lemma \ref{Lemma:W1p}. Similarly, by estimating $B_2$ with the error between the interpolated surface $\Gamma_h[\bf x^*]$ and the exact surface $\Gamma[X]$ (cf. \cite[Proposition 2.3]{Demlow-2009}),  we can derive the following result:  
\begin{align}\label{estimate-B2-B3}
|B_2|
\le&
Ch \|\hat e_y^{0,l} \|_{H^1(\Gamma)} \|\varphi^l\|_{H^2(\Gamma)} . 
\end{align}
{We consider the intermediate surfaces between $\Gamma_h[\y^\theta]$ and $\Gamma_h[\x^*]$ defined by
$$
\Gamma_h[\y^{\theta,\alpha}], \quad \text{with} \quad \y^{\theta,\alpha} = (1-\alpha)\x^* + \alpha\y^\theta = \x^* + \alpha\theta\e_\y,\quad \alpha \in [0,1],
$$
for fixed $\theta \in [0,1]$. As $\alpha$ varies, the intermediate surface $\Gamma_h[\y^{\theta,\alpha}]$ moves with velocity $\theta \hat e_y^\theta$. By employing these intermediate surfaces along with the estimates in \eqref{M-Diff}--\eqref{A-Diff-2} and the bound $\|\hat e_y^0\|_{W^{1,p}(\Gamma_h[\x^*])} \le Ch$ from \eqref{Ritz-W1p} in Lemma \ref{Lemma:W1p}, we can obtain (details are omitted)
\begin{align}\label{estimate-B3}
  |B_3|
  \le&
  Ch \|\hat e_y^{0,l} \|_{H^1(\Gamma)} \|\varphi^l\|_{H^2(\Gamma)}.
\end{align}
}
Therefore, by substituting the estimates of $B_0$, $B_1$, $B_2$ and $B_3$ into \eqref{w-L2} and using the estimate of $\|\varphi^l\|_{H^2(\Gamma)} $ in \eqref{H2-varphi-ww}, we obtain 
\begin{align*}
\|w\|_{L^2(\Gamma)}^2
&\le
C \|\hat e_y^{0}\|_{L^2(\Gamma_h[\bf x^*])} \|w\|_{L^2(\Gamma)} 
+ Ch \|\hat e_y^{0} \|_{H^1(\Gamma_h[\bf x^*])} \|w\|_{L^2(\Gamma)} \notag\\
&\le
C \|\hat e_y^{0}\|_{L^2(\Gamma_h[\bf x^*])} \|w\|_{L^2(\Gamma)} 
+ Ch^{k+1} \|w\|_{L^2(\Gamma)} , 
\end{align*} 
where the last inequality follows from the $W^{1,p}$ estimate of $\hat e_y^{0} = {{{\rm id}_{\Gamma_h[\x^*]}}}-\hat y_h^*$ in Lemma \ref{Lemma:W1p} with $p=2$. This implies that 
\begin{align}\label{w_L2_bound}
\|w\|_{L^2(\Gamma)} 
&\le
C \| {{{\rm id}_{\Gamma_h[\x^*]}}}-\hat y_h^*\|_{L^2(\Gamma_h[\bf x^*])} + Ch^{k+1} .
\end{align} 
Then, substituting this result into \eqref{H-H-L2-2}, we obtain 
\begin{align}\label{Hh-H-L2-1-y-x}
\|(\hat u_h^*)^l-u\|_{L^2(\Gamma)} 
&\le  
C\|{{{\rm id}_{\Gamma_h[\x^*]}}}-\hat y_h^*\|_{L^2(\Gamma_h[\x^*])} + C h^{k+1}. 
\end{align} 
The first term on the right-hand side of \eqref{Hh-H-L2-1-y-x} can be estimated similarly as \eqref{id-yh-W1p-expr}, by choosing $p=2$ in \eqref{id-yh-Lp} and rewriting it equivalently as follows: 
\begin{align}\label{id-yh-L2-expr}
\|{{{\rm id}_{\Gamma_h[\x^*(s)]}}}-\hat y_h^*\|_{L^2(\Gamma[\x^*(s)])} 
&\le
C\int_0^s \| (\hat u_h^*)^l(t) - u(t) \|_{L^2(\Gamma)} \d t + Ch^{k+1}, ~~~\rm{for}~s\in[0,T].
\end{align}
Then, substituting \eqref{Hh-H-L2-1-y-x} into \eqref{id-yh-L2-expr} and using Gronwall's inequality, we obtain an optimal-order estimate of $\|{{{\rm id}_{\Gamma_h[\x^*]}}}-\hat y_h^*\|_{L^2(\Gamma_h[\x^*])} $, i.e., 
\begin{align}
\|{{{\rm id}_{\Gamma_h[\x^*]}}}-\hat y_h^*\|_{L^2(\Gamma_h[\x^*])} 
\le Ch^{k+1}, ~~~\rm{for}~s\in[0,T].
\end{align}
Substituting this estimate back into \eqref{Hh-H-L2-1-y-x} yields the following result: 
\begin{align}\label{L2-u-u}
\|(\hat u_h^*)^l-u\|_{L^2(\Gamma)} &\le Ch^{k+1} , ~~~\rm{for}~s\in[0,T].
\end{align} 
Since $\|(\hat I_h^*u)^l-u\|_{L^2(\Gamma)} \le Ch^{k+1} $, by using \eqref{L2-u-u} and the triangle inequality
\begin{align*}
\|(\hat u_h^*)^l-(\hat I_h^*u)^l\|_{L^2(\Gamma)}
\le 
\|(\hat u_h^*)^l-u\|_{L^2(\Gamma)}  + \|(\hat I_h^*u)^l-u\|_{L^2(\Gamma)} ,
\end{align*} 
as well as the norm equivalence $\|(\hat u_h^*)^l-(\hat I_h^*u)^l\|_{L^2(\Gamma)} \sim \|\hat u_h^*-\hat I_h^*u\|_{L^2(\Gamma_h[\x^*])}  $, 
we obtain the $L^2$ error estimate of the dynamic Ritz projection in Lemma \ref{Lemma:L2}. 
\hfill\end{proof}

Substituting Lemma \ref{Lemma:W1p} into \eqref{W1p-Hh-2} with $p=2$, and substituting Lemma \ref{Lemma:L2} into \eqref{w_L2_bound}, we obtain  
\begin{align}\label{L2-H1-w} 
\|w\|_{L^2(\Gamma)} + h\|w\|_{H^1(\Gamma)} \le Ch^{k+1} . 
\end{align} 
This result will be used in the next subsection. 


In addition to the $W^{1,p}$ and $L^2$ error estimates of the dynamic Ritz projection, we can also differentiate equation \eqref{KH.varphi} in time and, by using the resulting derivative equation, prove the following $L^2$ error estimates for the material derivative of the dynamic Ritz projection. 

\begin{lemma}\label{Lemma:L2-dtH}
{\it 
Under the assumptions of Theorem {\rm\ref{THM1}}, there exists $h_0>0$ such that, for mesh size $h\le h_0$, the solution $(\y^*,{\bf H}^*,{\bf n}^*)$ of \eqref{def-yHn} satisfies the following estimates{\rm:} 
\begin{align}
\| (\partial_{t,h}^\bullet H_h^*)^\wedge -  \hat I_h^*\partial_t^{\bullet}H\|_{L^2(\Gamma_h[\x^*])}
+h \| (\partial_{t,h}^\bullet H_h^*)^\wedge - \hat  I_h^*\partial_t^{\bullet}H\|_{H^1(\Gamma_h[\x^*])} 
&\le C h^{k+1}, \label{L2-dtH*} \\
\| (\partial_{t,h}^\bullet n_h^*)^\wedge - \hat  I_h^*\partial_t^{\bullet}n\|_{L^2(\Gamma_h[\x^*])}
+h \| (\partial_{t,h}^\bullet n_h^*)^\wedge - \hat  I_h^*\partial_t^{\bullet}n\|_{H^1(\Gamma_h[\x^*])} 
&\le C h^{k+1} , \label{L2-dtn*} 
\end{align}
where $\partial_{t,h}^\bullet H_h^*$ and $(\partial_{t,h}^\bullet H_h^*)^\wedge$ are finite element functions on $\Gamma_h[\y^*]$ and $\Gamma_h[\x^*]$, respectively, with a common nodal vector $\dot{\bf H}^*$. 
}
\end{lemma}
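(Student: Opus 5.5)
We differentiate the identity \eqref{KH.varphi} in time and repeat the structure of the proofs of Lemmas \ref{Lemma:W1p}--\ref{Lemma:L2} for the time-differentiated quantities. Fix $\varphi_h\in S_h(\Gamma_h[\x^*])^4$ with a time-independent nodal vector ${\bm\varphi}$, so that $\partial_{t,h}^\bullet\varphi_h=0$ on $\Gamma_h[\x^*]$ by \eqref{transport-phi}. The material derivative of its lift $\varphi_h^l$ along the velocity of $\Gamma$ is then $O(h^{k+1})$-small, since the lift map moves with a velocity differing from $v$ by the interpolation error. Differentiating \eqref{def-Rhu} written on $\Gamma_h[\y^*]$ gives
\begin{align*}
{\bf K}(\y^*)\dot{\bf u}^*\cdot{\bm\varphi}
= -\frac{\d}{\d t}\big({\bf K}(\y^*)\big){\bf u}^*\cdot{\bm\varphi}
+ \int_\Gamma\big(\partial_t^\bullet u\cdot\varphi_h^l+\nabla_\Gamma\partial_t^\bullet u\cdot\nabla_\Gamma\varphi_h^l\big) + \mathcal{R}(\varphi_h),
\end{align*}
where $\mathcal R$ collects the terms from the transport formula on $\Gamma$, namely the divergence of $v$ and the commutator \eqref{dt-grad-f}, together with the discrepancy between the true and lift velocities. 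By the standard Leibniz formula, $\frac{\d}{\d t}{\bf K}(\y^*){\bf u}^*\cdot{\bm\varphi}$ equals $\int_{\Gamma_h[\y^*]}(u_h^*\cdot\varphi_h^*\,\nabla\cdot v_h^*+\nabla u_h^*\cdot D_{\Gamma_h[\y^*]}v_h^*\,\nabla\varphi_h^*)$. The corresponding exact terms with $v$ and $u$ on $\Gamma$ appear in $\mathcal R$. Subtracting, the right-hand side becomes the Ritz data of $\partial_t^\bullet u$, plus a ``gap'' functional $\ell_t(\varphi_h)$. This functional consists of differences of the form (Ritz projection quantities on $\Gamma_h[\y^*]$ with $v_h^*$) minus (exact quantities on $\Gamma$ with $v$), plus ${\bf K}(\x^*)-{\bf K}(\y^*)$ acting on $\dot{\bf u}^*$.

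As in \eqref{def-w}, we then define $w_t\in H^1(\Gamma)^4$ through $\ell_t(P_h\varphi)$, so that $(\partial^\bullet_{t,h}u_h^*)^\wedge$ is the linear Ritz projection on $\Gamma_h[\x^*]$ of $\partial_t^\bullet u+w_t$. Both estimates in \eqref{L2-dtH*}--\eqref{L2-dtn*} then follow from \eqref{W1p-wh-H1}, applied to $\partial_t^\bullet u$, and from interpolation error bounds, provided
$$
\|w_t\|_{L^2(\Gamma)}+h\|w_t\|_{H^1(\Gamma)}\le Ch^{k+1}+\epsilon\|(\partial^\bullet_{t,h}u_h^*)^\wedge-\hat I_h^*\partial_t^\bullet u\|_{H^1},
$$
with $\epsilon$ small so that the last term can be absorbed. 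The $H^1$ bound is direct. Each term of $\ell_t$ is bounded by $\|\hat v_h^*-\hat I_h^*v\|_{W^{1,p}}$ or $\|\hat e_y\|_{W^{1,p}}$ (both $O(h^k)$ by Lemma \ref{Lemma:W1p}), or by $\|\hat u_h^*-\hat I_h^*u\|_{W^{1,p}}$, or by geometric errors $O(h^{k})$. The ${\bf K}(\x^*)-{\bf K}(\y^*)$ term involving $\dot{\bf u}^*$ is estimated via \eqref{A-Diff-2} as $Ch^{k}\|\dot{\hat u}_h^*\|_{H^1}$, which is bootstrapped using the $H^1$ bound of $\partial_t^\bullet u$ and absorption.

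\textbf{Main obstacle: the $L^2$ bound on $w_t$.} We repeat the duality argument of Lemma \ref{Lemma:L2}: solve $\varphi^l-\Delta_\Gamma\varphi^l=w_t$ and split $\ell_t(P_h\varphi)$ analogously to $B_0,\dots,B_3$. The leading terms are integrals of the form $\int_\Gamma \nabla u\cdot D_\Gamma(\hat v_h^{*,l}-v)\nabla\varphi^l$ and $\int_\Gamma \nabla\dot{u}\cdot D_\Gamma\hat e_y^{l}\nabla\varphi^l$. In these we integrate by parts to move the derivative off the error, giving bounds
$$
C\big(\|\hat v_h^*-\hat I_h^*v\|_{L^2}+\|\hat e_y\|_{L^2}\big)\|\varphi^l\|_{H^2}\le Ch^{k+1}\|w_t\|_{L^2},
$$
where we use Theorem \ref{THM1}'s precursor Lemma \ref{Lemma:L2} for the velocity error too. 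The $L^2$ velocity error bound follows from $v_h^*=-I_h^*(H_h^*n_h^*)$ and Lemma \ref{Lemma:L2}, writing the product difference and using interpolation stability in $L^2$ for products with smooth factors (an inverse estimate handles the $W^{1,p}$ correction at cost $h\cdot h^k$). The remaining terms carry an extra factor $h$ (from $\varphi^l-(P_h\varphi)^l$ or the surface approximation) times $H^1$ errors of size $O(h^k)$, exactly as in \eqref{estimate-B1}--\eqref{estimate-B3}. The delicate point is the term containing $\dot{\bf u}^*$ paired with $\hat e_y$: we need $\|(\partial_{t,h}^\bullet u_h^*)^\wedge\|_{W^{1,\infty}}$ bounded. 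We will get this from the $H^1$ estimate already proved together with the inverse inequality and interpolation, which gives $W^{1,\infty}$ error $O(h^{k-1})$. This is insufficient for $k=1$. Therefore we would instead integrate by parts on $\Gamma_h[\x^*]$ elementwise to shift the gradient onto the smooth factor, or use the $W^{1,p}$ version of the Ritz argument (as in Lemma \ref{Lemma:W1p}) for $\partial_t^\bullet u$ first. That version yields $\|(\partial_{t,h}^\bullet u_h^*)^\wedge-\hat I_h^*\partial_t^\bullet u\|_{W^{1,p}}\le Ch^k$ and hence boundedness in $W^{1,p}$ for large $p$, which suffices with Hölder's inequality.
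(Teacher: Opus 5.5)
Your plan is sound in outline, but it is organized differently from the paper.

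\textbf{How the two routes differ.} The paper differentiates \eqref{KH.varphi}, which already contains the auxiliary function $w$ of Lemma~\ref{Lemma:W1p}. This makes $\partial_{t,h}^\bullet\hat u_h^*$ the Ritz projection of $\partial_{t,h}^\bullet w+\partial_{t,h}^\bullet u+g$, and the paper then estimates $g$ (reusing \eqref{L2-H1-w}) and $\partial_{t,h}^\bullet w$ separately in $H^1$ and $L^2$. Because $\partial_{t,h}^\bullet w$ comes from differentiating \eqref{def-w} with arbitrary $H^1(\Gamma)$ test functions transported by $\tilde v_h^*$, the paper must also control $\partial_{t,h}^\bullet P_h\varphi$ (see \eqref{varphi_t}). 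It handles the terms containing $\dot{\bf u}^*$ through the bootstrap hypothesis \eqref{ind-H-K}, closed by a continuity argument and an inverse estimate with factor $h^{-2/3}$.

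You instead differentiate the discrete identity with a frozen nodal test vector, and only afterwards build one Riesz representer $w_t$ of $\ell_t\circ P_h$. Tested against finite element functions this gives the same identity,
${\bf K}(\x^*)\dot{\bf u}^*\cdot{\bm\varphi}=-\bigl(\tfrac{\d}{\d t}{\bf K}(\y^*)\bigr){\bf u}^*\cdot{\bm\varphi}+({\bf K}(\x^*)-{\bf K}(\y^*))\dot{\bf u}^*\cdot{\bm\varphi}+\cdots$.
Your packaging avoids all $\partial_{t,h}^\bullet P_h\varphi$ terms and replaces the continuity argument in the $H^1$ step by absorption. The price is that one combined functional must go through the whole $B_0$--$B_3$-type duality analysis at once.

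\textbf{Details that need repair.}

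(i) Differentiate along the lift velocity $\tilde v_h^*$, for which $\partial_{t,h}^\bullet\varphi_h^l=0$ exactly, and move $(\tilde v_h^*-v)\cdot\nabla_\Gamma u$ into the data. Along $v$ you get $\partial_t^\bullet\varphi_h^l=(v-\tilde v_h^*)\cdot\nabla_\Gamma\varphi_h^l$, which jumps across element edges and cannot be paired in $H^1$.

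(ii) Lemma~\ref{Lemma:W1p} gives only $\|\nabla\hat e_y\|_{L^\infty}=O(h^{k-\varepsilon})$. Bounding the coupling term by $\|\nabla\hat e_y\|_{L^\infty}\|(\partial_{t,h}^\bullet u_h^*)^\wedge\|_{H^1}$ therefore yields only $O(h^{k-\varepsilon})$. Split $(\partial_{t,h}^\bullet u_h^*)^\wedge=\hat I_h^*\partial_t^\bullet u+E$. Use $\|\hat e_y\|_{H^1}=O(h^k)$ against the smooth part, and $\|\nabla\hat e_y\|_{L^\infty}$ only against $E$, which is then absorbed.

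(iii) Your claim that the inverse inequality is insufficient for $k=1$ is wrong. The $H^1$ bound gives $\|E\|_{W^{1,\infty}}\le Ch^{k-1}=O(1)$ and $\|E\|_{W^{1,6}}\le Ch^{k-2/3}$, which is exactly how the paper closes \eqref{ind-H-K}. So no $W^{1,p}$ Ritz argument is needed. Elementwise integration by parts is not a substitute either, since $\nabla(\partial_{t,h}^\bullet u_h^*)^\wedge\cdot D\hat e_y\nabla P_h\varphi$ has no smooth factor. What works (the paper's treatment of $W_6$) is:
\begin{itemize}
\item replace $\nabla(\partial_{t,h}^\bullet u_h^*)^\wedge$ by $\nabla\partial_t^\bullet u$, at cost $\|E\|_{H^1}\|\hat e_y\|_{W^{1,3}}\|\varphi^l\|_{W^{1,6}}\le Ch^{2k}\|\varphi^l\|_{H^2}$;
\item then integrate by parts, using $\|\hat e_y\|_{L^2}\le Ch^{k+1}$.
\end{itemize}
For $k=1$, every quadratic term here must be exactly $O(h^{2k})$. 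So use finite H\"older exponents, never $\|\nabla\hat e_y\|_{L^\infty}$.

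(iv) Two further terms are leading-order, not ``extra factor $h$'' terms: the part of $\bigl(\tfrac{\d}{\d t}{\bf K}(\y^*)\bigr){\bf u}^*\cdot{\bm\varphi}$ that is linear in $\nabla\hat e_y$ (the paper's $W_1$, $W_5$), and $\int_\Gamma\nabla_\Gamma((\hat u_h^*)^l-u)\cdot D_\Gamma v\,\nabla_\Gamma\varphi^l$. They need the same integration by parts, using $\|\hat e_y\|_{L^2}$ and $\|(\hat u_h^*)^l-u\|_{L^2}$ respectively.
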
 

The proof of Lemma \ref{Lemma:L2-dtH} is based on differentiating \eqref{KH.varphi} in time, which leads to complicated expressions. However, the techniques for proving Lemma \ref{Lemma:L2-dtH} are similar as those for proving Lemma \ref{Lemma:W1p} and Lemma \ref{Lemma:L2}. Therefore, we omit the proof here and refer the readers to Appendix \ref{appendix_C} in the supplementary material.

\subsection{Estimates of the remainders}

The remainders $d_H^*$ and $d_n^*$ defined in \eqref{weak-Ritz} can be estimated by using the approximation properties of the dynamic Ritz projection in Lemma \ref{Lemma:W1p}--\ref{Lemma:L2-dtH}. This result is presented in the following lemma and the proof is omitted. We refer the readers to Appendix \ref{appendix_D} in the supplementary material for more details. 
\begin{lemma}\label{Lemma:defects}
Under the assumptions of Theorem {\rm\ref{THM1}}, there exists $h_0>0$ such that, for mesh size $h\le h_0$, the remainder $d_{u}^*$ defined in \eqref{weak-Ritz-u-version} satisfies the following estimate{\rm:} 
\begin{align*}
\Big| \int_{\Gamma_h[\y^*]} d_u^* \cdot \chi_u \Big | 
\le Ch^{k+1}  \|\chi_u \|_{H^1(\Gamma_h[\y^*])^4}  . 
\end{align*}
\end{lemma}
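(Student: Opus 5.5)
The approach is to write the remainder $d_u^*$ explicitly. It is whatever is left when the dynamic Ritz projection $u_h^*$ is substituted into the semidiscrete $u$-equation on $\Gamma_h[\y^*]$. We then compare each term with the exact weak equations $\partial_t^\bullet u-\Delta_\Gamma u=|\nabla_\Gamma n|^2u$ on $\Gamma$.

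Fix $\chi_u\in S_h(\Gamma_h[\y^*])^4$. Let $\hat\chi_u$ be the function on $\Gamma_h[\x^*]$ with the same nodal vector, and let $\chi_u^l$ be its lift to $\Gamma$. By Lemma \ref{CBI} and \eqref{LiftBound}, $\|\chi_u^l\|_{H^1(\Gamma)}\sim\|\chi_u\|_{H^1(\Gamma_h[\y^*])}$. Using the defining relation \eqref{def-Rhu}, we replace the stiffness term:
$$\int_{\Gamma_h[\y^*]}\nabla u_h^*\cdot\nabla\chi_u=\int_\Gamma(u\cdot\chi_u^l+\nabla_\Gamma u\cdot\nabla_\Gamma\chi_u^l)-\int_{\Gamma_h[\y^*]}u_h^*\cdot\chi_u.$$
After subtracting the exact weak equation tested with $\chi_u^l$, we obtain
\begin{align*}
\int_{\Gamma_h[\y^*]} d_u^*\cdot\chi_u
&=\Big(\int_{\Gamma_h[\y^*]}\partial^\bullet_{t,h}u_h^*\cdot\chi_u-\int_\Gamma\partial_t^\bullet u\cdot\chi_u^l\Big)
+\Big(\int_\Gamma u\cdot\chi_u^l-\int_{\Gamma_h[\y^*]}u_h^*\cdot\chi_u\Big)\\
&\quad+\Big(\int_\Gamma|\nabla_\Gamma n|^2u\cdot\chi_u^l-\int_{\Gamma_h[\y^*]}|\nabla_{\Gamma_h[\y^*]}n_h^*|^2u_h^*\cdot\chi_u\Big).
\end{align*}
There is a subtlety in the first bracket. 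The material derivative $\partial_t^\bullet$ on $\Gamma$ is along the exact flow, while the lift of $\chi_u$ moves with $\Gamma_h[\x^*]$ and not with $\Gamma_h[\y^*]$. We only use the weak form at fixed time, so this causes no problem: the exact equation holds pointwise in $t$ for any test function.

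Each bracket is then split into a surface-change part and a function-error part. The surface-change part goes from $\Gamma_h[\y^*]$ to $\Gamma_h[\x^*]$ through \eqref{M-Diff} with $\|\hat e_y\|_{L^2}\le Ch^{k+1}$ and $\|\hat e_y\|_{W^{1,p}}\le Ch^k$; from $\Gamma_h[\x^*]$ to $\Gamma$, the geometric error is $O(h^{k+1})$ by Demlow's estimates. The function-error part is controlled by Lemma \ref{Lemma:L2} for $u_h^*-\hat I_h^*u$ and by Lemma \ref{Lemma:L2-dtH} for the material derivative, together with interpolation error bounds.

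The first two brackets are then bounded by $Ch^{k+1}\|\chi_u\|_{L^2}$. The one delicate piece is the $\Gamma_h[\y^*]$ versus $\Gamma_h[\x^*]$ mass difference. Via \eqref{M-Diff-0} it involves $\nabla\cdot\hat e_y$, which is only $O(h^k)$ in $W^{1,p}$. To handle it, I integrate by parts on each $\Gamma_h[\y^\theta]$ to move the derivative onto $\hat u_h^{*,\theta}\cdot\hat\chi^\theta$. This gives $Ch^{k+1}\|\chi_u\|_{H^1}$, where $\hat u_h^*$ is bounded in $W^{1,\infty}$ by Lemma \ref{Lemma:W1p} and the curvature terms are bounded. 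This is exactly why the estimate is stated against the $H^1$ norm of $\chi_u$.

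The nonlinear bracket is the main obstacle, because $n_h^*$ has only $O(h^k)$ gradient accuracy. I would write
$$|\nabla n_h^*|^2-|\nabla n|^2=(\nabla n_h^*-\nabla n)\cdot(\nabla n_h^*+\nabla n)$$
and integrate by parts to put the derivative onto the smooth factor $(\nabla n)\,u\cdot\chi$. This leaves $\|n_h^*-n\|_{L^2}\le Ch^{k+1}$ times $\|\chi\|_{H^1}$, up to a quadratic remainder $|\nabla(n_h^*-n)|^2|u||\chi|$. That remainder is bounded by $\|\nabla(n_h^*-n)\|_{L^4}^2\|\chi\|_{L^2}\le Ch^{2k}\|\chi\|_{L^2}$ using Lemma \ref{Lemma:W1p} with $p=4$. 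For $k\ge1$ this gives $h^{2k}\le h^{k+1}$. Integration by parts on the piecewise surface $\Gamma_h[\x^*]$ produces jump terms across element edges. I would avoid them by doing the integration by parts on $\Gamma$ after lifting, where the lifted functions are in $W^{1,p}$ and the surface is smooth, so no jumps arise. The remaining pieces of this bracket are $u_h^*-u$, bounded in $L^2$, and geometric errors, all $O(h^{k+1})$. Collecting everything proves the lemma.
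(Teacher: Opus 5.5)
Your proposal follows essentially the paper's proof in Appendix~\ref{appendix_D}. It uses the same splitting of $\int_{\Gamma_h[\y^*]} d_u^*\cdot\chi_u$, via the Ritz relation, into the material-derivative, mass and nonlinear brackets $d_{u,1}^*,d_{u,2}^*,d_{u,3}^*$, handled with intermediate surfaces $\Gamma_h[\y^\theta]$, Demlow's geometric estimates, Lemmas~\ref{Lemma:W1p}--\ref{Lemma:L2-dtH}, $O(h^{2k})$ quadratic remainders, and integration by parts to trade $\nabla\hat e_y$ and the gradient error of $n_h^*$ for $L^2$ errors at the cost of $\|\chi_u\|_{H^1}$.

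The one thing you understate is the $\Gamma_h[\y^*]\to\Gamma_h[\x^*]$ change in the nonlinear bracket. It is just as delicate as the mass difference, since it is linear in $\nabla\hat e_y$ and so only $O(h^k)$ if bounded directly. As in the paper's $E_2$, $E_4$, $E_5$, you must first replace $|\nabla n_h^{*}|^2u_h^*$ by the smooth $|\nabla_\Gamma n|^2u$ at cost $O(h^{2k})$, then integrate by parts on $\Gamma$, rather than filing this term under $O(h^{k+1})$ geometric errors.
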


\section{Proof of Theorem \ref{THM2} and Theorem \ref{THM3}}\label{Proof-THM2}

In this section we prove Theorem \ref{THM2} on the optimal-order convergence of the parametric FEM for mean curvature flow in the $L^\infty(0,T;L^2)$ norm, by utilizing the estimates of the dynamic Ritz projection in Lemma \ref{Lemma:W1p}--\ref{Lemma:L2-dtH} and the estimates of the remainders in Lemma \ref{Lemma:defects}. 

\subsection{Basic settings}\label{section:THM2-basic}

The numerical solution $(\x,\v,\u)$ and the dynamic Ritz projection $(\x^*,\v^*,\u^*)$ satisfy equations \eqref{matrix-vector-form} and \eqref{ritz-matrix-form}, respectively. 
By subtracting \eqref{ritz-matrix-form} from \eqref{matrix-vector-form}, we find that the errors functions
$$
\mathbf{e}_{\mathbf{x}} = \mathbf{x} - \y^*, \quad \mathbf{e}_{\mathbf{v}} = \mathbf{v} - \mathbf{v}^* 
\quad\mbox{and}\quad \mathbf{e}_{\mathbf{u}} = \mathbf{u} - \mathbf{u}^* 
$$
satisfy the following equations:
\begin{subequations}\label{error-copy}
    \begin{align}
        \dot {{\bf e}}_\x & = \bf e_{\bf v} \label{error-x-copy}\\
        \bf e_{\bf v} & = -({\bf I}_h({\bf H}\bullet {\bf n} )-{\bf I}_h({\bf H}^*\bullet {\bf n}^* )) \label{error-v-copy}\\
        {\bf M}(\bf x) \dot{\bf e}_{\bf u} + {\bf A}(\bf x)\bf e_{\bf u} & =  - (\bf M(x) - \bf M(\y^*))\dot{\bf u}^* 
         - (\bf A(x) - \bf A(\y^*))\bf u^* \notag \\
        & \quad + (\bf f(\bf x,\bf u) -\bf f(\y^*,\bf u^*) ) - {\bf M}(\y^*) {\bf d}_{\bf u}^* . 
        \label{error-u-copy}
    \end{align}
\end{subequations}
Let $\x^\theta=(1-\theta)\y^*+\theta\x$ for $\theta\in[0,1]$, which defines an intermediate surface $\Gamma_h[\mathbf{x}^\theta]$ moving with the velocity $e_x^\theta$ as parameter $\theta\in[0,1]$ changes, and denote by $e_x^\theta$, $e_v^\theta$ and $e_u^\theta$ the finite element functions on $\Gamma_h[\x^\theta]$ with nodal vectors $\e_\x$, $\e_\v$ and $\e_\u$, respectively. In particular, we denote $e_x=e_x^0$, $e_v=e_v^0$ and $e_u=e_u^0$, which are finite element functions on $\Gamma_h[\y^*]$. 
On the intermediate surface $\Gamma_h[\mathbf{x}^\theta]$ we also define finite element functions 
$$
v_h^\theta \quad\mbox{and}\quad u_h^\theta=((n_h^\theta)^\top, H_h^\theta)^\top 
$$ 
with nodal vectors $\v^\theta = (1-\theta)\v^* + \theta\v$ and $\u^\theta = (1-\theta)\u^* + \theta\u$, respectively. We also denote by $u_h^{*,\theta}=((n_h^{*,\theta})^\top,H_h^{*,\theta})^\top$ the finite element function on $\Gamma_h[\mathbf{x}^\theta]$ with nodal vector $\u^*$. 

Similar as the proof of {Theorem \ref{THM1}}, we define $t^* \in [0, T]$ as the maximal time such that the numerical solution exists and the following inequalities are satisfied:
\begin{subequations}\label{error-induction-assumption} 
    \begin{align}
        \|e_x(\cdot, t)\|_{W^{1,\infty}(\Gamma_h[\y^*])} & \leq h^{k-0.1}\label{error-induction-x}\\
        \|e_u(\cdot, t)\|_{L^\infty(\Gamma_h[\y^*])} & \leq h^{k-0.1} \label{error-induction-u-infty} \quad \text{for} \quad t \in [0, t^*].
    \end{align}
\end{subequations}
At time $t=0$ we have $e_x(\cdot, 0) = 0$ and $e_u=\hat I_h^*u - \hat u_h^*$ on $\Gamma_h[\x^0]$. Therefore, by using the inverse inequality of finite element functions and the $L^2$ estimates of $\hat I_h^*u - \hat u_h^*$ in Theorem {\rm\ref{THM1}}, we have 
\begin{equation*}
    \begin{aligned}
        \|e_u(\cdot, 0)\|_{L^\infty(\Gamma_h[\y^*])} & \leq Ch^{-1} \|e_u(\cdot, 0)\|_{L^2(\Gamma_h[\y^*])} \le Ch^{k} . 
    \end{aligned}
\end{equation*}
For sufficiently small $h$ such that $Ch^{k}\le h^{k-0.1}$, the inequality above implies $t^*>0$. 

Note that, under condition \eqref{error-induction-assumption}, the $L^p$ and $W^{1,p}$ norms on surfaces $\Gamma_h[\mathbf{x}]$ and $\Gamma_h[\y^*]$ are equivalent for $1 \leq p \leq \infty$ (as shown in Lemma \ref{CBI}). This norm equivalence will be used frequently in the following subsections. In particular, under condition \eqref{error-induction-assumption}, we shall prove the following proposition (with some constants $h_0$ and $C$ that are independent of $t^*$). 

\begin{proposition}\label{Proposition:error-final}
{\it 
Under the assumptions in Theorem {\rm\ref{THM1}} and \eqref{error-induction-assumption} there exists $h_0>0$ such that, when mesh size $h\le h_0$, the following estimate holds{\rm:} 
        \begin{align}\label{error-final}
         \|e_x\|_{L^\infty(0,t^*;H^1(\Gamma_h[\y^*]))} 
         + \|e_u\|_{L^\infty(0,t^*;L^2(\Gamma_h[\y^*]))} 
         + \|e_u\|_{L^2(0,t^*;H^1(\Gamma_h[\y^*]))}  
         \le Ch^{k+1}.
        \end{align}
}
\end{proposition}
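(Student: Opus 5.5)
The proof will be an energy argument for the coupled error system \eqref{error-copy}. The $u$-equation will be tested with $\e_\u$, and the bound for $\e_\x$ will come from integrating the velocity error in time. The key structural fact is that \eqref{error-v-copy} carries no remainder. Hence $\e_\v$ is controlled by $\e_\u$ alone, in $H^1$:
\begin{align*}
\|e_v\|_{H^1(\Gamma_h[\y^*])}\le C\|e_u\|_{H^1(\Gamma_h[\y^*])}.
\end{align*}
To obtain this, I will write $I_h(H_hn_h)-I_h^*(H_h^*n_h^*)$ as $I_h(e_H n_h^*)+I_h(H_h e_n)$. I will then use the $W^{1,p}$ stability of Lagrange interpolation together with the $W^{1,\infty}$ boundedness of $u_h^*$, which follows from Lemma \ref{Lemma:W1p} and the inverse inequality. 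The $L^\infty$ bound \eqref{error-induction-u-infty} controls the quadratic term $e_He_n$: using $\|e_u\|_{L^\infty}\|e_u\|_{H^1}$ plus an inverse estimate on the gradient of the product, this contribution is absorbed as $C\|e_u\|_{H^1}$. Since $\dot\e_\x=\e_\v$, integrating in time will give
\begin{align*}
\|e_x(t)\|_{H^1}^2\le Ct\int_0^t\|e_u\|_{H^1}^2\,\d s.
\end{align*}
The time derivative of the $H^1$ norm on the moving surface $\Gamma_h[\y^*]$ only produces an additional Gronwall term.

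For the $u$-equation, I will test \eqref{error-u-copy} with $\e_\u$. I will use the identity
\begin{align*}
\tfrac12\tfrac{\d}{\d t}\|\e_\u\|_{\M(\x)}^2=\M(\x)\dot\e_\u\cdot\e_\u+\tfrac12\e_\u\cdot\tfrac{\d}{\d t}\M(\x)\e_\u,
\end{align*}
where the last term is bounded by $C\|e_u\|_{L^2}^2$ since $v_h$ is bounded in $W^{1,\infty}$ under \eqref{error-induction-assumption}. This gives
\begin{align*}
\tfrac12\tfrac{\d}{\d t}\|e_u\|_{L^2}^2+\|\nabla e_u\|_{L^2}^2\le C\|e_u\|_{L^2}^2+\text{(RHS terms)}.
\end{align*}
The right-hand side terms are handled as follows.

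\textbf{The defect term.} Lemma \ref{Lemma:defects} bounds it by $Ch^{k+1}\|e_u\|_{H^1}$.

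\textbf{The surface-perturbation terms.} The terms $(\M(\x)-\M(\y^*))\dot\u^*\cdot\e_\u$ and $(\A(\x)-\A(\y^*))\u^*\cdot\e_\u$ are estimated by Lemma \ref{BilinearMA} along the path $\x^\theta$, via \eqref{M-Diff}--\eqref{A-Diff}. This requires $\dot u_h^*$ bounded in $L^\infty$ and $u_h^*$ bounded in $W^{1,\infty}$, which follow from Lemma \ref{Lemma:L2-dtH}, Lemma \ref{Lemma:W1p} and inverse inequalities. The result is the bound $C\|e_x\|_{H^1}\|e_u\|_{H^1}$. This step is precisely where the $H^1$ control of $e_x$, rather than $L^2$ control, is indispensable; obtaining it is why the dynamic Ritz projection was introduced.

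\textbf{The nonlinear term.} I will split $\f(\x,\u)-\f(\y^*,\u^*)$ into a surface-change part and a function-change part. The surface-change part is again bounded by $C\|e_x\|_{H^1}\|e_u\|_{L^2}$ via the $\theta$-derivative formula. The function-change part is $\int|\nabla n_h|^2u_h\cdot e_u-|\nabla n_h^*|^2u_h^*\cdot e_u$. Expanding it in the harmonic-map-heat-flow manner of \cite{Gui-Li-Wang-2022} produces three kinds of terms:
\begin{itemize}
\item a linear term $2\nabla n_h^*:\nabla e_n\,(u_h^*\cdot e_u)$, bounded by $C\|\nabla e_u\|\|e_u\|$;
\item a term $|\nabla n_h^*|^2|e_u|^2$, bounded by $C\|e_u\|^2$;
\item quadratic and cubic terms such as $|\nabla e_n|^2\,u_h\cdot e_u$.
\end{itemize}
The last kind is bounded by $\|e_u\|_{L^\infty}\|\nabla e_u\|_{L^2}^2\,\|u_h\|_{L^\infty}\le Ch^{k-0.1}\|\nabla e_u\|^2$, and so is absorbed into the left-hand side for small $h$. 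After Young's inequality, adding the $e_x$ estimate and applying Gronwall, I obtain \eqref{error-final}. The initial error is $\|e_u(0)\|_{L^2}\le Ch^{k+1}$ by Theorem \ref{THM1}.

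\textbf{Main obstacle.} I expect the hardest step to be the treatment of the nonlinear and surface-difference terms using only $W^{1,\infty}$ smallness of $e_x$ and $L^\infty$ smallness of $e_u$. In particular, no $W^{1,\infty}$ bound on $e_u$ is available, so every term involving $\nabla e_u$ must appear at most quadratically, and at most linearly unless it carries a small $L^\infty$ factor. If the term $\nabla e_n$ in $|\nabla n_h|^2$ resists this treatment, my fallback is to use a $W^{1,4}$ bound: $\|e_u\|_{W^{1,4}}\le Ch^{-1/2}\|e_u\|_{H^1}$, combined with the $L^2(0,t;H^1)$ integrability being established.

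Finally, once \eqref{error-final} is proved on $[0,t^*]$, inverse inequalities give
\begin{align*}
\|e_x\|_{W^{1,\infty}}\le Ch^{-1}h^{k+1}=Ch^k \quad\text{and}\quad \|e_u\|_{L^\infty}\le Ch^{k},
\end{align*}
both strictly better than $h^{k-0.1}$. Hence the bootstrap \eqref{error-induction-assumption} extends past $t^*$, which forces $t^*=T$.
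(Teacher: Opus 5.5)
Your proposal is correct and follows the paper's proof essentially step for step: the energy test of \eqref{error-u-copy} with $\e_\u$ (Lemma \ref{Lemma:e_u}), the remainder-free bound $\|e_v\|_{H^1}\le C\|e_u\|_{H^1}$ (Lemma \ref{Lemma:ev-eu}), time integration of $\|e_x\|_{H^1}^2$ as in \eqref{e-x-rewrite}, Gronwall, and the inverse-inequality closure of the bootstrap; your surface-change/function-change split of the nonlinear term is only a reorganization of the paper's single homotopy along $\x^\theta$. One small fix concerns the term $\tfrac12\e_\u\cdot\tfrac{\d}{\d t}\M(\x)\e_\u=\tfrac12\int_{\Gamma_h[\x]}|e_u^1|^2\nabla_{\Gamma_h[\x]}\cdot v_h$ (which the paper silently drops): for $k=1$ it cannot be bounded via $\|v_h\|_{W^{1,\infty}}\le C$, because \eqref{error-induction-assumption} only gives $\|\nabla e_u\|_{L^\infty}\le Ch^{k-1.1}$, so instead split $v_h$ into the function with nodal vector $\v^*$ plus $e_v^1$ and bound the $e_v^1$ contribution by $\|e_u\|_{L^\infty}\|e_u\|_{L^2}\|\nabla e_v\|_{L^2}\le Ch^{k-0.1}\|e_u\|_{L^2}\|e_u\|_{H^1}$, which is absorbable.
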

{{\begin{remark}\label{maximal-time-T}\upshape
By the local-in-time existence and uniqueness, as well as the continuity of solutions, to the ODE system \eqref{matrix-vector-form}, there exists $\delta_h>0$ such that the numerical solution and the error estimate in \eqref{error-final} holds for $t\in[0,t^*+\delta_h]$, with $C$ replaced by $2C$ therein. This would imply that, when $h$ is smaller than some constant (which is independent of $t^*$), \eqref{error-induction-assumption} holds for $t\in[0,t^*+\delta_h]$. This would imply that $t^*=T$ (otherwise $t^*\in(0,T]$ would not be the maximal time satisfying the condition), and therefore, the error estimate in \eqref{error-final} holds for $t\in[0,T]$. Then, by the norm equivalence between $\Gamma_h[\y^*]$ and $\Gamma_h[\x^*]$, the error estimate in \eqref{error-final} can be equivalently written into \eqref{THM2-result}. This would complete the proof of Theorem \ref{THM2}. 
\end{remark}
}}

\subsection{Estimates of $\|e_u\|_{L^\infty(0,t;L^2(\Gamma_h[\y^*]))}$ and $\|e_u\|_{L^2(0,t;H^1(\Gamma_h[\y^*]))}$}

In this subsection, we establish an estimate of $\|e_u\|_{L^\infty(0,t;L^2(\Gamma_h[\y^*]))}$ and $\|e_u\|_{L^2(0,t;H^1(\Gamma_h[\y^*]))}$ in terms of $\|e_x\|_{L^2(0,t;H^1(\Gamma_h[\y^*]))}$ and $\|e_u\|_{L^2(0,t;L^2(\Gamma_h[\y^*]))}$. 

\begin{lemma}\label{Lemma:e_u}
{\it 
Under the assumptions in Theorem {\rm\ref{THM1}} and \eqref{error-induction-assumption}, there exists $h_0>0$ such that when mesh size $h\le h_0$ the following estimate holds for $t\in[0,t^*]${\rm:}
        \begin{equation}\label{e_u}
        \begin{aligned}
            & \quad \|e_u(t)\|_{L^2(\Gamma_h[\y^*(t)])}^2 + \int_0^{t} \|\nabla_{\Gamma_h[\y^*]} e_u(s)\|_{L^2(\Gamma_h[\y^*(s)])}^2 \, \d s \\
            & \leq C \int_{0}^{t} \big(\|e_x(s)\|_{H^1(\Gamma_h[\y^*(s)])}^2 + \|e_u(s)\|_{L^2(\Gamma_h[\y^*(s)])}^2\big)\,\d s  + Ch^{2k+2}.
        \end{aligned}
        \end{equation}
}
\end{lemma}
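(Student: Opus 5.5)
Test the error equation \eqref{error-u-copy} with $\e_\u$ and use the standard matrix-vector energy identity. Since $\frac{\d}{\d t}\big(\e_\u\cdot\M(\x)\e_\u\big)=2\,\e_\u\cdot\M(\x)\dot\e_\u+\e_\u\cdot\frac{\d}{\d t}\M(\x)\,\e_\u$, we obtain
\begin{align*}
\tfrac12\tfrac{\d}{\d t}\|e_u\|_{\M(\x)}^2+\|e_u\|_{\A(\x)}^2
&=\tfrac12\e_\u\cdot\tfrac{\d}{\d t}\M(\x)\e_\u
-(\M(\x)-\M(\y^*))\dot\u^*\cdot\e_\u
-(\A(\x)-\A(\y^*))\u^*\cdot\e_\u\\
&\quad+(\f(\x,\u)-\f(\y^*,\u^*))\cdot\e_\u-\M(\y^*)\d_\u^*\cdot\e_\u .
\end{align*}
Under \eqref{error-induction-assumption}, Lemma \ref{CBI} makes the $\M(\x)$ and $\M(\y^*)$ norms (and the $\A$ norms) uniformly equivalent. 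The plan is to bound each right-hand term by $\epsilon\|\nabla e_u\|_{L^2}^2+C(\|e_x\|_{H^1}^2+\|e_u\|_{L^2}^2)+Ch^{2k+2}$ on $\Gamma_h[\y^*]$, then absorb, integrate in time, and use $\|e_u(0)\|_{L^2}\le Ch^{k+1}$ from Theorem \ref{THM1}.

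\emph{Easy terms.} The term $\frac{\d}{\d t}\M(\x)$ equals $\int e_u\cdot e_u\,\nabla_{\Gamma_h[\x]}\cdot v_h$. Here $\|v_h\|_{W^{1,\infty}}\le \|v_h^*\|_{W^{1,\infty}}+\|e_v\|_{W^{1,\infty}}$, which is bounded by Lemma \ref{Lemma:W1p} and the bootstrap. This gives $C\|e_u\|_{L^2}^2$. The defect is bounded by Lemma \ref{Lemma:defects} and Young: $Ch^{k+1}\|e_u\|_{H^1}\le \epsilon\|e_u\|_{H^1}^2+Ch^{2k+2}$.

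\emph{Mass and stiffness perturbations.} Apply Lemma \ref{BilinearMA} with $\y^*$ in place of $\x^*$ and $\x$ in place of $\y^*$. Use \eqref{M-Diff} with $p=p'=2$: the mass difference is bounded by $C\|\nabla e_x\|_{L^2}\|e_u\|_{L^2}\|\partial^\bullet_{t,h}u_h^*\|_{L^\infty}$. Use \eqref{A-Diff} similarly: the stiffness difference is bounded by $C\|e_x\|_{H^1}\|e_u\|_{H^1}\|u_h^*\|_{W^{1,\infty}}$. The required $L^\infty$ and $W^{1,\infty}$ bounds on $u_h^*$ and $\partial^\bullet_{t,h}u_h^*$ come from Theorem \ref{THM1}, Lemma \ref{Lemma:L2-dtH} and inverse estimates (an $O(h^{k+1})$ $L^2$ error gives an $O(h^{k})$ $W^{1,\infty}$ error up to $h^{-1}$-type factors, which stay bounded for $k\ge1$), together with interpolation stability. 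In particular the $\partial^\bullet_{t,h}$ part of Theorem \ref{THM1} is what makes the mass term harmless. Young then gives $\epsilon\|e_u\|_{H^1}^2+C\|e_x\|_{H^1}^2$.

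\emph{Nonlinear term (main obstacle).} Write the difference as $[\f(\x,\u)-\f(\x,\u^*)]+[\f(\x,\u^*)-\f(\y^*,\u^*)]$. The second bracket is a geometric perturbation. Express it via the $\theta$-derivative along $\Gamma_h[\x^\theta]$, as in Lemma \ref{BilinearMA}; it is bounded by $C\|e_x\|_{H^1}\|e_u\|_{L^2}$, using $W^{1,\infty}$ bounds of $u_h^*$.

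The first bracket expands as $\int(|\nabla n_h|^2-|\nabla n_h^*|^2)u_h\cdot e_u+\int|\nabla n_h^*|^2e_u\cdot e_u$. The delicate part is $\int \nabla e_n:(\nabla n_h+\nabla n_h^*)\,u_h\cdot e_u$. It contains $\nabla e_n\,\nabla e_n\,e_u$-type cubic pieces, which cannot be closed using only $W^{1,\infty}$ control of $u_h^*$. I would follow the harmonic-map-heat-flow strategy of \cite{Gui-Li-Wang-2022}:
\begin{itemize}
\item bound $\|u_h\|_{L^\infty}$ through \eqref{error-induction-u-infty};
\item handle $\int|\nabla e_n|^2|u_h||e_u|$ by $\|e_u\|_{L^\infty}\|\nabla e_u\|_{L^2}^2\le h^{k-0.1}\|\nabla e_u\|_{L^2}^2$, which is absorbable for small $h$;
\item bound the linear piece $\int\nabla e_n\,\nabla n_h^*\,u_h\,e_u$ by $C\|\nabla e_u\|_{L^2}\|e_u\|_{L^2}$.
\end{itemize}
This is exactly why the bootstrap controls only $\|e_u\|_{L^\infty}$ and not $W^{1,\infty}$. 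I expect the bookkeeping of this term, in particular making sure no $\|\nabla e_u\|_{L^\infty}$ appears, to be the hardest step.

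Collecting the bounds, choosing $\epsilon$ small, and integrating from $0$ to $t$ yields \eqref{e_u}.
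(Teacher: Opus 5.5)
Your proposal follows the paper's proof. You test \eqref{error-u-copy} with $\e_\u$, bound the mass and stiffness differences by Lemma~\ref{BilinearMA} using $W^{1,\infty}$ bounds on $u_h^*$ and $\partial^\bullet_{t,h}u_h^*$ (from Lemmas~\ref{Lemma:L2}--\ref{Lemma:L2-dtH} and inverse estimates), use Lemma~\ref{Lemma:defects} for the defect, and absorb the cubic piece $\int|\nabla e_n|^2u_h\cdot e_u$ through $\|e_u\|_{L^\infty}\le h^{k-0.1}$. Your splitting of $\f(\x,\u)-\f(\y^*,\u^*)$ into a fixed-surface part and a fixed-nodal-vector geometric part is slightly cleaner than the paper's single homotopy in $(\x^\theta,\u^\theta)$. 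It avoids mixed terms such as $I_{312}$--$I_{316}$, because the geometric part then involves only the bounded $u_h^*$. You are also right to keep $\tfrac12\e_\u\cdot\dot{\M}(\x)\e_\u$, which the paper's identity \eqref{u-weak-test-eu} leaves out.

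One justification fails, however, in exactly the case $k=1$ that the paper targets. You assert that $\|e_v\|_{W^{1,\infty}}$ is bounded by the bootstrap, but the bootstrap controls only $\|e_u\|_{L^\infty}$. The inverse inequality then gives only $\|e_v\|_{W^{1,\infty}}\le Ch^{-1}\|e_v\|_{L^\infty}\le Ch^{k-1.1}$. For $k=1$ this blows up and would put a constant $Ch^{-0.1}$ into Gronwall's inequality. Repair it by writing $v_h=v_h^*+e_v$ on $\Gamma_h[\x]$. The $v_h^*$ part is harmless, since $\|v_h^*\|_{W^{1,\infty}}\le C$. For the remainder use
\[
\Big|\int_{\Gamma_h[\x]}|e_u^1|^2\,\nabla_{\Gamma_h[\x]}\cdot e_v^1\Big|
\le C\|e_u\|_{L^\infty}\|e_u\|_{L^2}\|\nabla e_v\|_{L^2}
\le Ch^{k-0.1}\|e_u\|_{L^2}\|e_u\|_{H^1},
\]
where $\|e_v\|_{H^1}\le C\|e_u\|_{H^1}$ is Lemma~\ref{Lemma:ev-eu}. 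Its proof uses only \eqref{error-induction-u-infty} and the $W^{1,\infty}$ bound of $u_h^*$, not the present lemma. Young's inequality then gives $\epsilon\|e_u\|_{H^1}^2+C\|e_u\|_{L^2}^2$, and the rest of your argument goes through unchanged.
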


\begin{proof}
Testing equation \eqref{error-u-copy} with $\mathbf{e}_{\mathbf{u}}$, we obtain the following relation:
    \begin{equation}\label{u-weak-test-eu}
        \begin{aligned}
            & \quad \frac{1}{2} \frac{\d}{\d t}\|e_u^1\|_{L^2(\Gamma_h[\bf x])}^2 + \|\nabla_{\Gamma_h[\bf x]} e_u^1\|_{L^2(\Gamma_h[\bf x])}^2 \\
            & = -\bf e_{\bf u}^\top (\bf M(\bf x) - \bf M(\y^*)) \dot{\bf u}^* - \bf e_{\bf u}^\top (\bf A(\bf x) - \bf A(\y^*)) {\bf u}^*\\
            & \quad\, + {\bf e}_{\bf u}^\top({\bf f}(\x,{\bf u}) - {\bf f}({\y^*}, {\bf u^*})) - {\bf e}_{\bf u}^\top {\bf M}({\y^*}){\bf d}_{\bf u}^* \\
            &=: I_1 + I_2 + I_3 + I_4.
        \end{aligned}
    \end{equation}
where $e_u^1$ denotes the finite element function on $\Gamma_h[\x^\theta]$ with $\theta=1$. Additionally, $\partial_{\theta}^\bullet e_x^\theta =\partial_{\theta}^\bullet e_v^\theta = 0$ and $\partial_{\theta}^\bullet e_u^\theta = 0$. Since $u_h^*\in\Gamma_h[\y^*]$ and $\hat u_h^*\in\Gamma_h[\x^*]$ have the same nodal vectors, by using the equivalence of $L^p$ and $W^{1,p}$ norms on $\Gamma_h[\mathbf{x}^\theta]$ and $\Gamma_h[\y^*]$, and Lemma \ref{Lemma:L2}--\ref{Lemma:L2-dtH}, we have 
\begin{align}
\label{u_h-w-infty-bound}
            \|u_h^*\|_{W^{1,\infty}(\Gamma_h[\y^*])} &\leq C\|\hat{u}_h^*\|_{W^{1,\infty}(\Gamma_h[\mathbf{x}^*])} \notag\\
            &\leq C \|\hat{u}_h^* - \hat{I}_h^* u\|_{W^{1,\infty}(\Gamma_h[\mathbf{x}^*])} + C \|\hat{I}_h^* u\|_{W^{1,\infty}(\Gamma_h[\mathbf{x}^*])} \notag\\
            &\leq C h^{-2} h^{k+1} + C \leq C, \\ 
\label{t-u_h-w-infty-bound}
        \|\partial_{t,h}^\bullet u_h^*\|_{W^{1,\infty}(\Gamma_h[\y^*])} 
        &\leq C\|(\partial_t^\bullet {u}_h^*)^\wedge\|_{W^{1,\infty}(\Gamma_h[\mathbf{x}^*])} \notag\\
        &\leq C \|(\partial_t^\bullet {u}_h^*)^\wedge - \hat{I}_h^*\partial_t^\bullet  u\|_{W^{1,\infty}(\Gamma_h[\mathbf{x}^*])} + C \| \hat{I}_h^*\partial_t^\bullet u\|_{W^{1,\infty}(\Gamma_h[\mathbf{x}^*])} \notag\\
        &\leq C h^{-2} h^{k+1} + C \leq C,
\end{align}
where Lemma \ref{Lemma:L2-dtH} and inverse inequality are used in the last inequalities of \eqref{u_h-w-infty-bound} and \eqref{t-u_h-w-infty-bound}. Then, with inequalities \eqref{u_h-w-infty-bound}--\eqref{t-u_h-w-infty-bound} and \eqref{M-Diff}--\eqref{A-Diff-2}, we can estimate $|I_1|$ and $|I_2|$ as follows:
    \begin{align}\label{I-1-estimtes}
            |I_1| &\le C \|\nabla_{\Gamma_h[\y^*]} e_x \|_{L^2(\Gamma_h[\y^*])} \|e_u\|_{L^2(\Gamma_h[\y^*])} \\
            \label{I-2-estimates}
            |I_2| &\le C \|\nabla_{\Gamma_h[\y^*]} e_x \|_{L^2(\Gamma_h[\y^*])} \|\nabla_{\Gamma_h[\y^*]}e_u\|_{L^2(\Gamma_h[\y^*])}.
    \end{align}
Lemma \ref{Lemma:defects} guarantees that  
    \begin{align}\label{I-4-estimates}
            |I_4| \le Ch^{k+1} \|e_u\|_{H^1(\Gamma_h[\y^*])}.
    \end{align}
    {{It remains to estimate $|I_3|$. By employing the identity \eqref{dt-grad-f}, we can bound $|I_3|$ by the sum of five terms as follows:}}
{\small
    \begin{align}\label{I-3-formulation}
            |I_3| &= |{\bf e}_{\bf u}^\top ({\bf f} (\x,{\bf u}) - {\bf f}({\y^*}, {\bf u^*}))| = \bigg|\int_{\Gamma_h[\bf x]} \big|\nabla _{\Gamma_h[\bf x]} n_h\big|^2 u_h \cdot e_u^1 - \int_{\Gamma_h[\y^*] }\big|\nabla _{\Gamma_h[\y^*]} n_h^*\big|^2 u_h^* \cdot e_u\bigg| \notag\\
            &=\bigg| \int_0^1 \frac{\d}{\d\theta} \int_{\Gamma_h[\x^\theta]} \big|\nabla _{\Gamma_h[\x^\theta]} n_h^\theta\big|^2 u_h^\theta \cdot e_u^\theta \d\theta \bigg| \notag\\
            & \le \bigg| \int_0^1 \int_{\Gamma_h[\x^\theta]} \big|\nabla _{\Gamma_h[\x^\theta]}n_h^\theta\big|^2 u_h^\theta\cdot e_u^\theta \nabla_{\Gamma_h[\x^\theta]} \cdot e_x^\theta \,\d \theta \bigg| 
            + \bigg| \int_0^1 \int_{\Gamma_h[\x^\theta]}  \big|\nabla _{\Gamma_h[\x^\theta]}n_h^\theta\big|^2 e_u^\theta \cdot e_u^\theta \,\d \theta \bigg| \notag\\
            & \quad \, + \bigg| \int_0^1 \int_{\Gamma_h[\x^\theta]} 2\nabla_{\Gamma_h[\x^\theta]} n_h^\theta \Big(\nabla_{\Gamma_h[\x^\theta]} \partial_{\theta}^\bullet n_h^\theta - \nabla_{\Gamma_h[\x^\theta]} e_x^\theta \nabla_{\Gamma_h[\x^\theta]} n_h^\theta  \notag\\
            &\qquad\qquad\qquad\qquad\qquad\qquad\quad + n_{\Gamma_h[\x^\theta]} n_{\Gamma_h[\x^\theta]}^\top (\nabla_{\Gamma_h[\x^\theta]} e_x^\theta)^\top \nabla_{\Gamma_h[\x^\theta]} n_h^\theta \Big) u_h^\theta\cdot e_u^\theta \,\d\theta\bigg| \notag\\
            & =:I_{31} + I_{32} + I_{33} + I_{34} + I_{35} , 
    \end{align}
\!}
where $n_{\Gamma_h[\x^\theta]}$ is the unit normal vector of $\Gamma_h[\x^\theta]$, while $n_h^\theta$ is the finite element function with nodal vector ${\bf n}^\theta = (1-\theta){\bf n}^* + \theta{\bf n}$, with ${\bf n}^*$ and ${\bf n}$ being the nodal vectors of the dynamic Ritz projection $n_h^*\in S_h(\Gamma_h[\y^*])$ and numerical solution $n_h\in S_h(\Gamma_h[\x])$, respectively. 
Since $n_h^\theta = n_h^{*,\theta} + \theta e_n^\theta$ and $u_h^\theta = u_h^{*,\theta} + \theta e_u^\theta$, $I_{31}$ can be bounded as follows:
    \begin{align}\label{I-31-estimate}
            I_{31}  &= \bigg|\int_0^1 \int_{\Gamma_h[\x^\theta]} \big|\nabla_{\Gamma_h[\x^\theta]} (n_h^{*,\theta} + \theta e_n^\theta)\big|^2 (u_h^{*,\theta} + \theta e_u^\theta) \cdot e_u^\theta \nabla_{\Gamma_h[\x^\theta]}\cdot e_x^\theta \,\d\theta\bigg| \notag\\
            & \le \bigg| \int_0^1\int_{\Gamma_h[\x^\theta]} \big|\nabla_{\Gamma_h[\x^\theta]} n_h^{*,\theta}\big|^2 (u_h^{*,\theta}\cdot e_u^\theta)\nabla_{\Gamma_h[\x^\theta]}\cdot e_x^\theta \,\d\theta\bigg| \notag\\
            & \quad \, + \bigg| \int_0^1 \int_{\Gamma_h[\x^\theta]} \theta\big|\nabla_{\Gamma_h[\x^\theta]} n_h^{*,\theta}\big|^2 \big|e_u^\theta\big|^2 \nabla_{\Gamma_h[\x^\theta]}\cdot e_x^\theta \,\d\theta\bigg| \notag\\
            & \quad \,+ \bigg| \int_0^1 \int_{\Gamma_h[\x^\theta]} \theta^2\big|\nabla_{\Gamma_h[\x^\theta]} e_n^\theta\big|^2 (u_h^{*,\theta} \cdot e_u^\theta)\nabla_{\Gamma_h[\x^\theta]}\cdot e_x^\theta\,\d\theta\bigg| \notag\\
            & \quad \,+ \bigg|\int_{0}^1\int_{\Gamma_h[\x^\theta]} \theta^3\big|\nabla_{\Gamma_h[\x^\theta]} e_n^\theta\big|^2 \big|e_u^\theta\big|^2 \nabla_{\Gamma_h[\x^\theta]}\cdot e_x^\theta\,\d\theta\bigg| \notag\\
            & \quad \, + \bigg|\int_0^1 \int_{\Gamma_h[\x^\theta]} 2\theta (\nabla_{\Gamma_h[\x^\theta]}n_h^{*,\theta} \cdot \nabla_{\Gamma_h[\x^\theta]}e_n^\theta)( u_h^{*,\theta} \cdot e_u^\theta)\nabla_{\Gamma_h[\x^\theta]}\cdot e_x^\theta\,\d\theta \bigg| \notag\\
            & \quad\,+ \bigg| \int_0^1 \int_{\Gamma_h[\x^\theta]} 2\theta^2 \nabla_{\Gamma_h[\x^\theta]}n_h^{*,\theta} \cdot \nabla_{\Gamma_h[\x^\theta]}e_n^\theta \,|e_u^\theta|^2\, \nabla_{\Gamma_h[\x^\theta]} \cdot e_x^\theta \,\d\theta  \bigg| \notag\\
            & =: I_{311} + I_{312} + I_{313} + I_{314} + I_{315} + I_{316}.
    \end{align}
Then, with norm equivalence of $L^p$ and $W^{1,p}$ norms on $\Gamma_h[\x^\theta]$ and $\Gamma_h[\y^*]$, as well as estimates \eqref{error-induction-assumption} and \eqref{u_h-w-infty-bound}, the following estimates of $I_{31j}$, $j=1,\dots,6$, can be derived: 
    \begin{align}\label{I-311-316}
            I_{311} & \le C \|e_u\|_{L^2(\Gamma_h[\y^*])} \|\nabla_{\Gamma_h[\y^*]} e_x\|_{L^2(\Gamma_h[\y^*])}, \notag\\
            I_{312} & \le C \|e_u\|_{L^2(\Gamma_h[\y^*])} \|\nabla_{\Gamma_h[\y^*]} e_x\|_{L^2(\Gamma_h[\y^*])}\|e_u\|_{L^\infty(\Gamma_h[\y^*])} \notag\\
            &\le C \|e_u\|_{L^2(\Gamma_h[\y^*])} \|\nabla_{\Gamma_h[\y^*]} e_x\|_{L^2(\Gamma_h[\y^*])}, \notag\\
            I_{313} &\le C \|\nabla_{\Gamma_h[\y^*]} e_n\|_{L^2(\Gamma_h[\y^*])}^2 \|\nabla_{\Gamma_h[\y^*]} e_x\|_{L^\infty(\Gamma_h[\y^*])} \|e_u\|_{L^\infty(\Gamma_h[\y^*])} \notag\\
            & \le C h^{2k-0.2} \|\nabla_{\Gamma_h[\y^*]} e_u\|_{L^2(\Gamma_h[\y^*])}^2, \notag\\
            I_{314} &\le C \|\nabla_{\Gamma_h[\y^*]} e_n\|_{L^2(\Gamma_h[\y^*])}^2 \|\nabla_{\Gamma_h[\y^*]} e_x\|_{L^\infty(\Gamma_h[\y^*])} \|e_u\|_{L^\infty(\Gamma_h[\y^*])}^2 \notag\\
            & \le C h^{3k-0.3} \|\nabla_{\Gamma_h[\y^*]} e_u\|_{L^2(\Gamma_h[\y^*])}^2, \notag\\
            I_{315} &\le C\|\nabla_{\Gamma_h[\y^*]} e_n\|_{L^2(\Gamma_h[\y^*])} \|e_u\|_{L^2(\Gamma_h[\y^*])} \|\nabla_{\Gamma_h[\y^*]} e_x\|_{L^\infty(\Gamma_h[\y^*])} \notag\\
            & \le Ch^{k-0.1} \|e_u\|_{L^2(\Gamma_h[\y^*])}\|\nabla_{\Gamma_h[\y^*]} e_u\|_{L^2(\Gamma_h[\y^*])}, \notag\\
            I_{316} &\le C \|\nabla_{\Gamma_h[\y^*]} e_n\|_{L^2(\Gamma_h[\y^*])} \|e_u\|_{L^2(\Gamma_h[\y^*])} \|e_u\|_{L^\infty(\Gamma_h[\y^*])}\|\nabla_{\Gamma_h[\y^*]} e_x\|_{L^\infty(\Gamma_h[\y^*])} \notag\\
            & \le Ch^{2k-0.2} \|e_u\|_{L^2(\Gamma_h[\y^*])}\|\nabla_{\Gamma_h[\y^*]} e_u\|_{L^2(\Gamma_h[\y^*])}. 
    \end{align} 
By summing up the estimates of $I_{31j}$, $j=1,\dots,6$, in \eqref{I-311-316}, we obtain
    \begin{align}\label{I-31-final}
            I_{31} &\le C\|e_u\|_{L^2(\Gamma_h[\y^*])}^2 + C\|e_x\|_{H^1(\Gamma_h[\y^*])}^2  +Ch^{2k-0.2} \|\nabla_{\Gamma_h[\y^*]}e_u\|_{L^2(\Gamma_h[\y^*])}^2.
    \end{align}
In the same way, the following estimates of $I_{3j}$, $j=2,\dots,5$, can be obtained:  
    \begin{align}
            I_{32}  + I_{34} + I_{35}&\le C\|e_u\|_{L^2(\Gamma_h[\y^*])}^2 + C\|\nabla_{\Gamma_h[\y^*]}e_x\|_{L^2(\Gamma_h[\y^*])}^2 \notag\\
            & \quad\, +Ch^{2k-0.2} \|\nabla_{\Gamma_h[\y^*]}e_u\|_{L^2(\Gamma_h[\y^*])}^2 . 
    \end{align}
    {{Furthermore, by applying Young's inequality, the term \(I_{33}\) can be bounded as follows:
    \begin{align}
      I_{33} \le \epsilon \|\nabla_{\Gamma_h[\y^*]}e_u\|_{L^2(\Gamma_h[\y^*])}^2 + C(\epsilon)\|e_u\|_{L^2(\Gamma_h[\y^*])}^2.
    \end{align}
    These estimates lead to  }}
    \begin{align}\label{I-3-final}
            |I_3| \le C\|e_u\|_{L^2(\Gamma_h[\y^*])}^2 + C\|e_x\|_{H^1(\Gamma_h[\y^*])}^2 +{(Ch^{2k-0.2} + \epsilon)} \|\nabla_{\Gamma_h[\y^*]}e_u\|_{L^2(\Gamma_h[\y^*])}^2.
    \end{align}
Then, substituting estimates \eqref{I-1-estimtes}--\eqref{I-4-estimates} and \eqref{I-3-final} into \eqref{u-weak-test-eu} yields the following result:
    \begin{align}\label{error-u-intermediate-0}
            &\frac{1}{2} \frac{\d}{\d t}\|e_u^1\|_{L^2(\Gamma_h[\bf x])}^2 + \|\nabla_{\Gamma_h[\bf x]} e_u^1\|_{L^2(\Gamma_h[\bf x])}^2 \\
            & \le C\|e_x\|_{H^1(\Gamma_h[\y^*])}^2 + C \|e_u\|_{L^2(\Gamma_h[\y^*])}^2 + {(Ch^{2k-0.2} + \epsilon)}\|\nabla_{\Gamma_h [\y^*]}e_u\|_{L^2(\Gamma_h[\y^*])}^2 + Ch^{2k+2}. \notag
    \end{align}
{By employing the $H^1$ semi-norm equivalence between the surfaces $\Gamma_h[\y^*]$ and $\Gamma_h[\mathbf{x}]$, and by choosing $h$ and $\epsilon$ sufficiently small, the term
\(
(Ch^{2k-0.2} + \epsilon) \|\nabla_{\Gamma_h[\y^*]}e_u\|_{L^2(\Gamma_h[\y^*])}^2
\)
can be absorbed into the left-hand side of the above inequality.} This reduces \eqref{error-u-intermediate-0} to the following result:
    \begin{align}\label{error-u-intermediate}
            &\frac{1}{2} \frac{\d}{\d t}\|e_u^1\|_{L^2(\Gamma_h[\bf x])}^2 + \|\nabla_{\Gamma_h[\bf x]} e_u^1\|_{L^2(\Gamma_h[\bf x])}^2 \notag\\
            & \le C\|e_x\|_{H^1(\Gamma_h[\y^*])}^2 + C \|e_u\|_{L^2(\Gamma_h[\y^*])}^2+ Ch^{2k+2}.
    \end{align}
    Integrating the above inequality from $0$ to $t$, along with the norm equivalence between surfaces $\Gamma_h[\bf x]$ and $\Gamma_h[\y^*(t)]$, we have
    \begin{align}
            &\quad\,\|e_u(t)\|_{L^2(\Gamma_h[\y^*(t)])}^2 + \int_0^t \|\nabla _{\Gamma_h[\y^*(s)]} e_u(s)\|_{L^2(\Gamma_h[\y^*(s)])}^2\,\d s \notag\\
            & \le C \int_0^t \big( \| e_x(s)\|_{H^1(\Gamma_h[\y^*(s)])}^2 + \| e_u(s)\|_{L^2(\Gamma_h[\y^*(s)])}^2 \big)\,\d s +Ch^{2k+2},
    \end{align}
    where $\|e_u(0)\|_{L^2(\Gamma_h[\y^*(0)])} \le Ch^{k+1}$ is used. This proves the result of Lemma \ref{Lemma:e_u}. 
\hfill\end{proof}

\subsection{Estimates of $\|e_v(t)\|_{H^1(\Gamma_h[\y^*(t)])}$}

In this subsection, we establish an estimate of $\|e_v(t)\|_{H^1(\Gamma_h[\y^*(t)])}$ in terms of $\|e_u(t)\|_{H^1(\Gamma_h[\y^*(t)])}$. 

\begin{lemma}\label{Lemma:ev-eu}
{\it 
Under the assumptions in Theorem {\rm\ref{THM1}} and \eqref{error-induction-assumption} there exists $h_0>0$ such that, when mesh size $h\le h_0$, the following estimate holds for $t\in[0,t^*]${\rm:} 
    \begin{equation}\label{ev-eu}
    \begin{aligned}
            \|e_v(t)\|_{H^1(\Gamma_h[\y^*(t)])} \le C \|e_u(t)\|_{H^1(\Gamma_h[\y^*(t)])} .
    \end{aligned}
    \end{equation}
}
\end{lemma}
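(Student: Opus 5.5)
The plan is to start from the nodal identity \eqref{error-v-copy}: since Lagrange interpolation acts nodewise, $e_v$ is the finite element function on $\Gamma_h[\y^*]$ whose nodal values are $-(H_jn_j - H^*_jn^*_j)$. With $e_H$ and $e_n$ the components of $e_u$ on $\Gamma_h[\y^*]$, we have $H_jn_j - H_j^*n_j^* = (e_H)_j n_j^* + H_j^*(e_n)_j + (e_H)_j(e_n)_j$ at every node, so on $\Gamma_h[\y^*]$
\begin{align*}
e_v = -I_h^*\big(e_H\, n_h^*\big) - I_h^*\big(H_h^*\, e_n\big) - I_h^*\big(e_H\, e_n\big) .
\end{align*}
The estimate \eqref{ev-eu} then reduces to bounding these three interpolated products in $H^1(\Gamma_h[\y^*])$ by $\|e_u\|_{H^1(\Gamma_h[\y^*])}$.

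For the first two terms, write $I_h^*(e_H n_h^*) = e_H n_h^* - (I_h^*-{\rm Id})(e_H n_h^*)$. The product satisfies $\|e_H n_h^*\|_{H^1}\le C\|n_h^*\|_{W^{1,\infty}}\|e_H\|_{H^1}$. For the interpolation error, apply the elementwise estimate $\|(I_h^*-{\rm Id})g\|_{H^1(K)}\le Ch^k|g|_{H^{k+1}(K)}$. On each curved element, $e_H$ and $n_h^*$ are polynomials of degree $k$ in reference coordinates, so the $(k+1)$st derivatives of their product only involve terms in which at least one derivative falls on each factor. This gives the elementwise bound $Ch^{k}|g|_{H^{k+1}(K)} \le C\|n_h^*\|_{W^{1,\infty}(K)}\|e_H\|_{H^1(K)}$ after inverse inequalities, with the geometry of the curved element contributing bounded factors. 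Equivalently, I would invoke the $W^{1,p}$ stability of Lagrange interpolation for products of finite element functions from Appendix \ref{appendix_A}, which the paper already uses in the proof of Lemma \ref{Lemma:W1p}, in the form $\|I_h^*(a_hb_h)\|_{H^1}\le C\|a_h\|_{W^{1,\infty}}\|b_h\|_{H^1}$. The required uniform bound on $\|u_h^*\|_{W^{1,\infty}(\Gamma_h[\y^*])}$, which covers both $n_h^*$ and $H_h^*$, is exactly \eqref{u_h-w-infty-bound}. Together these give $C\|e_u\|_{H^1}$ for the first two terms.

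I expect the quadratic term $I_h^*(e_He_n)$ to be the main obstacle, because $e_u$ is not known to be bounded in $W^{1,\infty}$. The plan is to put one factor in $L^\infty$ and avoid any $W^{1,\infty}$ bound on the other. Using the same interpolation-stability argument, I will aim for
\begin{align*}
\|I_h^*(e_He_n)\|_{H^1} \le C\big(\|e_H\|_{L^\infty}\|e_n\|_{H^1} + \|e_n\|_{L^\infty}\|e_H\|_{H^1}\big) .
\end{align*}
This is legitimate elementwise: inverse estimates bound mixed derivatives of the product by $L^\infty\times$gradient terms, and the interpolant of a product of two degree-$k$ functions is controlled by the $H^1$ norm of the product on each element. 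The induction hypothesis \eqref{error-induction-u-infty} gives $\|e_u\|_{L^\infty}\le h^{k-0.1}\le 1$, so this term is also at most $C\|e_u\|_{H^1}$.

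Finally, $n_h^*$ and $H_h^*$ live on $\Gamma_h[\y^*]$, while the elementwise interpolation estimates are cleanest on $\Gamma_h[\x^*]$. To pass between them I will use the norm equivalence of Lemma \ref{CBI}, which applies because \eqref{ind-hat-y} and Lemma \ref{Lemma:W1p} give $\|\hat y_h^*-{\rm id}\|_{W^{1,\infty}}\le Ch^{k-1/2}\to0$. Adding the three bounds yields \eqref{ev-eu} with $C$ independent of $t^*$.
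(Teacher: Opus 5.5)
Your proposal is correct and follows essentially the same route as the paper. Both arguments split ${\bf I}_h({\bf H}\bullet{\bf n})-{\bf I}_h({\bf H}^*\bullet{\bf n}^*)$ nodally into the contributions $e_Hn_h^*$, $H_h^*e_n$ and $e_He_n$. They transfer to $\Gamma_h[\x^*]$ by norm equivalence and apply the $H^1$ stability of Lagrange interpolation on products of finite element functions from Appendix~\ref{appendix_A}. The linear terms are then controlled by \eqref{u_h-w-infty-bound}, and the quadratic term by the $L^\infty$ induction hypothesis \eqref{error-induction-u-infty}.
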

\begin{proof}
Equation \eqref{error-v-copy} can be written as $\bf e_{\bf v} = - {\bf I}_h[ ({\bf H} - {\bf H}^*)\bullet {\bf n} ] 
- {\bf I}_h[ {\bf H}^* \bullet ({\bf n} - {\bf n}^*) ] $, which implies (using the $H^1$ stability of Lagrange interpolation in Appendix \ref{appendix_A} and the norm equivalence between surfaces $\Gamma_h[\bf x^*]$ and $\Gamma_h[\y^*]$) 
        \begin{align}
            \|e_v\|_{H^1(\Gamma_h[\y^*])} & \le C \|e_v\|_{H^1(\Gamma_h[\x^*])}\notag \\
            & \le C\|e_H\|_{H^1(\Gamma_h[\x^*])} \|n_h^*\|_{W^{1,\infty}(\Gamma_h[\x^*])} + C\|e_n\|_{H^1(\Gamma_h[\x^*])} \|H_h^*\|_{W^{1,\infty}(\Gamma_h[\x^*])} \notag\\
            & \quad \,+ C \|e_H\|_{H^1(\Gamma_h[\x^*])} \|e_n\|_{L^\infty(\Gamma_h[\x^*])} +C \|e_n\|_{H^1(\Gamma_h[\x^*])} \|e_H\|_{L^\infty(\Gamma_h[\x^*])} \notag\\
            & \le C\|e_H\|_{H^1(\Gamma_h[\y^*])} \|n_h^*\|_{W^{1,\infty}(\Gamma_h[\y^*])} + C\|e_n\|_{H^1(\Gamma_h[\y^*])} \|H_h^*\|_{W^{1,\infty}(\Gamma_h[\y^*])} \notag\\
            & \quad \,+ C \|e_H\|_{H^1(\Gamma_h[\y^*])} \|e_n\|_{L^\infty(\Gamma_h[\y^*])} +C \|e_n\|_{H^1(\Gamma_h[\y^*])} \|e_H\|_{L^\infty(\Gamma_h[\y^*])} \notag\\
            & \le C\|e_u\|_{H^1(\Gamma_h[\y^*])}, 
        \end{align}
where \eqref{error-induction-u-infty} and \eqref{u_h-w-infty-bound} are used in the last inequality.
\hfill\end{proof}

We can substitute the estimate in Lemma \ref{Lemma:e_u} into the estimate in Lemma \ref{Lemma:ev-eu}. This yields the following inequality: 
    \begin{align}\label{L2-ev-L2-eu}
            \|e_v\|_{L^2(0,t;H^1(\Gamma_h[\y^*]))} 
            &\le C \|e_u\|_{L^2(0,t;H^1(\Gamma_h[\y^*]))} \\
            &\le C ( \|e_x\|_{L^2(0,t;H^1(\Gamma_h[\y^*]))} + \|e_u\|_{L^2(0,t;L^2(\Gamma_h[\y^*]))} ) + Ch^{k+1} . \notag
    \end{align}

\subsection{Proof of Proposition \ref{Proposition:error-final}}

Since $e_x(\cdot,0)=0$, it follows that 
    \begin{align}\label{e-x-rewrite}
            & \|e_x(t)\|_{H^1(\Gamma_h[\y^*(t)])}^2 \notag\\
            &= \int_0^t \frac{\d}{\d s} \|e_x(s)\|_{H^1(\Gamma_h[\y^*])}^2 \,\d s \notag\\
            & = \int_0^t \Big( 2{\bf e}_\x(s)^\top {\bf K}(\y^*(s)) \dot{\bf e}_\x(s) + {\bf e}_\x(s)^\top\frac{\d}{\d s} {\bf K}(\y^*(s)) {\bf e}_\x(s) \Big) \,\d s \notag\\
            & \le C \int_0^t \Big( \|e_x(s)\|_{H^1(\Gamma_h[\y^*(s)])}\|e_v(s)\|_{H^1(\Gamma_h[\y^*(s)])} + \|e_x(s)\|_{H^1(\Gamma_h[\y^*(s)])}^2\Big) \,\d s \notag\\
            & \le  C\int_0^t \|e_v(s)\|_{H^1(\Gamma_h[\y^*(s)])}^2 \,\d s + C\int_0^t\|e_x(s)\|_{H^1(\Gamma_h[\y^*(s)])}^2 \,\d s , 
    \end{align}
where the second to last inequality follows from the following estimate (which can be derived from the expressions in Lemma \ref{BilinearMA}): 
\begin{align*}
\Big|{\bf e}_\x(s)^\top\frac{\d}{\d s} {\bf K}(\y^*(s)) {\bf e}_\x(s) \Big|
&\le C\|e_x(s)\|_{H^1(\Gamma_h[\y^*(s)])}^2 \|v_h^*(s)\|_{W^{1,\infty}(\Gamma_h[\y^*(s)])} \\
&\le C\|e_x(s)\|_{H^1(\Gamma_h[\y^*(s)])}^2 , 
\end{align*}
in which the boundedness of $\|v_h^*(s)\|_{W^{1,\infty}(\Gamma_h[\y^*(s)])}$ follows from applying the norm equivalence between $\Gamma_h[\y^*]$ and $\Gamma_h[\x^*]$, the triangle inequality, inverse inequality and the $W^{1,p}$ estimate in \eqref{Ritz-W1p} with $p=2$, i.e., 
\begin{align*}
\|v_h^*\|_{W^{1,\infty}(\Gamma_h[\y^*])}
\le C\|\hat v_h^*\|_{W^{1,\infty}(\Gamma_h[\x^*])} 
&\le C\|\hat v_h^* -  \hat I_h^*v\|_{W^{1,\infty}(\Gamma_h[\x^*])} + \|\hat I_h^*v\|_{W^{1,\infty}(\Gamma_h[\x^*])} \\
&\le Ch^{-1} \|\hat v_h^* -  \hat I_h^*v\|_{H^1(\Gamma_h[\x^*])} + \|\hat I_h^*v\|_{W^{1,\infty}(\Gamma_h[\x^*])} \\ 
&\le C  .
\end{align*}

The right-hand side of \eqref{e-x-rewrite} can be estimated with \eqref{L2-ev-L2-eu}. This leads to the following result: 
    \begin{align}\label{ex-L2-H1}
            \|e_x(t)\|_{H^1(\Gamma_h[\y^*(t)])}^2 
            & \le C \int_{0}^{t} \big(\|e_x(s)\|_{H^1(\Gamma_h[\y^*(s)])}^2 + \|e_u(s)\|_{L^2(\Gamma_h[\y^*(s)])}^2\big)\,\d s  + Ch^{2k+2} .
    \end{align}
Then, summing up \eqref{e_u} and \eqref{ex-L2-H1}, we obtain the following result for $t\in(0,t^*] $:  
    \begin{align}\label{ex-eu-L2-H1}
            &\|e_x(t)\|_{H^1(\Gamma_h[\y^*(t)])}^2 + \|e_u(t)\|_{L^2(\Gamma_h[\y^*(t)])}^2 
                + \int_0^{t} \|\nabla_{\Gamma_h[\y^*]} e_u(s)\|_{L^2(\Gamma_h[\y^*(s)])}^2 \, \d s \notag\\
            & \le C \int_{0}^{t} \big(\|e_x(s)\|_{H^1(\Gamma_h[\y^*(s)])}^2 + \|e_u(s)\|_{L^2(\Gamma_h[\y^*(s)])}^2\big)\,\d s  + Ch^{2k+2} .
    \end{align}
The result of Proposition \ref{Proposition:error-final} follows from applying Gronwall's inequality to \eqref{ex-eu-L2-H1}. 
{Moreover, the discussions in Remark \ref{maximal-time-T} show that $t^*=T$. This completes the proof of Theorem \ref{THM2}.}
\hfill$\square$

\section{Numerical tests}
In this section, we present numerical experiments to support the theoretical analysis for the convergence rate of the semidiscrete parametric FEM in \eqref{weak-form}. 

We consider the evolution of the two-dimensional sphere $\Gamma(t)$ under mean curvature flow, which was used for testing the convergence rates of numerical methods for mean curvature flow in \cite{KLL19}. The exact solution of the surface at time $t>0$ is a sphere of radius $R(t) = \sqrt{R(0)^2 - 4t}$ with $R(0)=2$, which reaches zero at time $t = 1$. The mean curvature $H$ and normal vector $n$ of the evolving sphere $\Gamma(t)$ can also be calculated explicitly. 

\begin{figure}[htp!]
  \vspace{-15pt}
  \begin{center}
  \subfigure[Finite elements of degree $k=1$]{\includegraphics[width=0.49\textwidth]{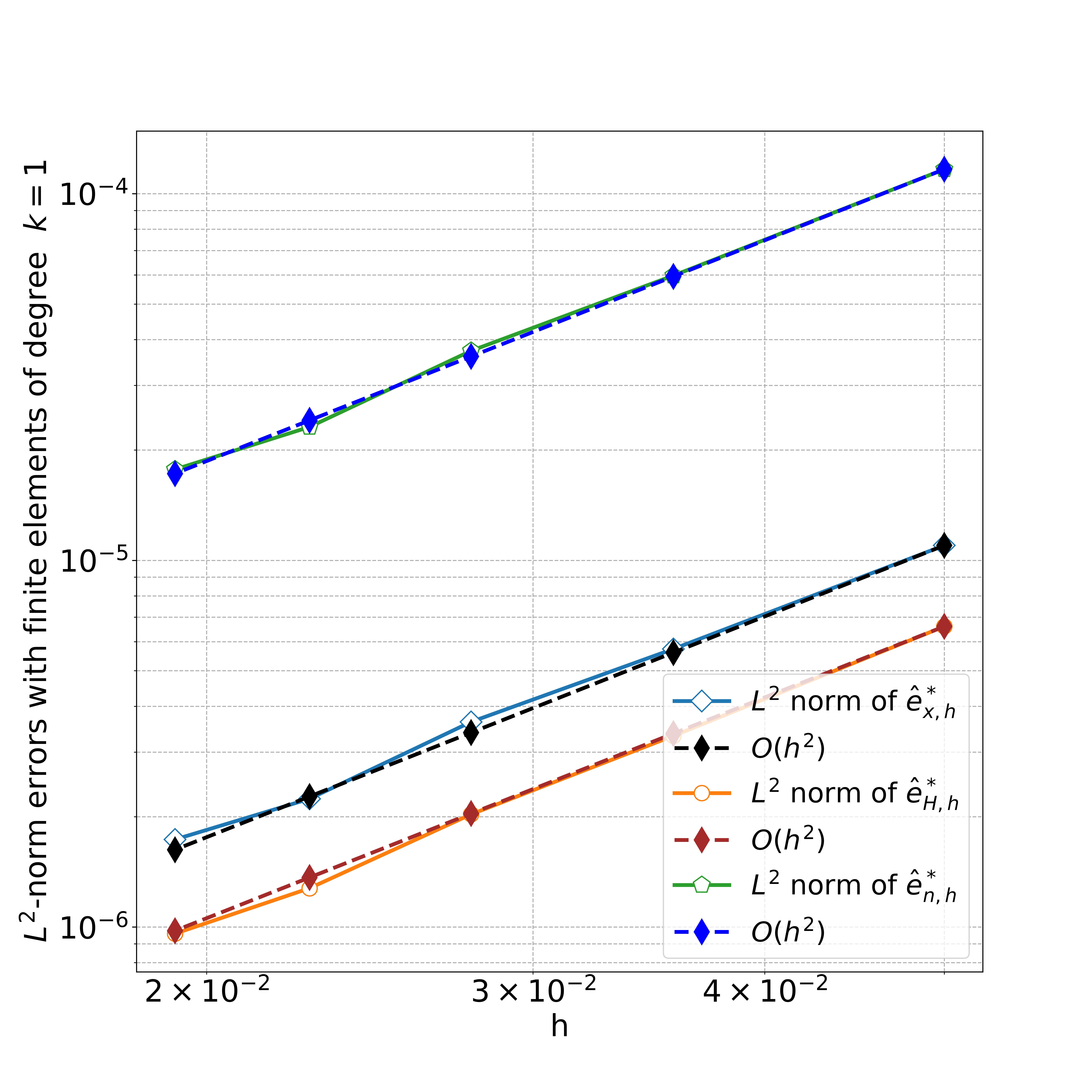}}
  \subfigure[Finite elements of degree $k=2$]{\includegraphics[width=0.49\textwidth]{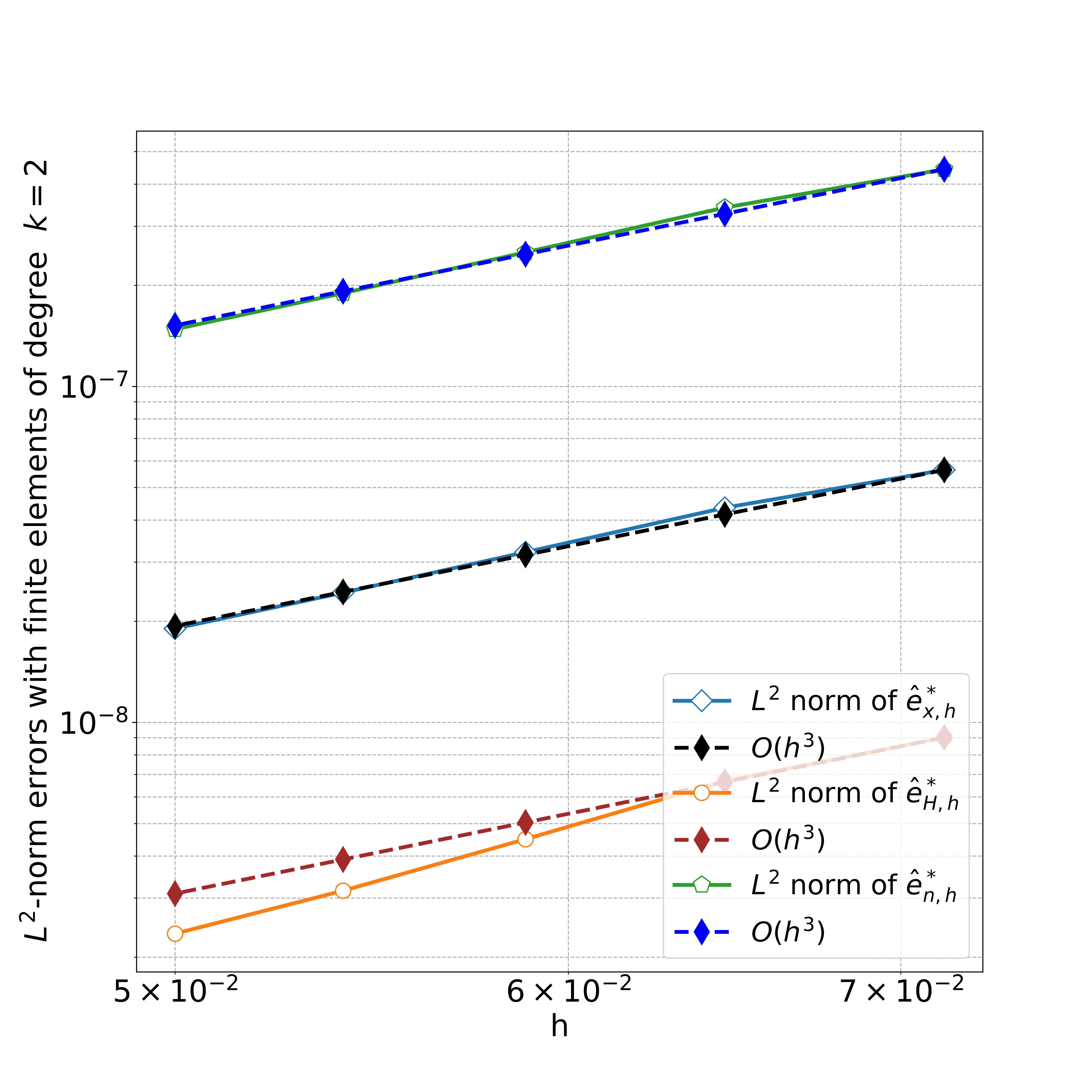}}

  \subfigure[Finite elements of degree $k=3$]{\includegraphics[width=0.49\textwidth]{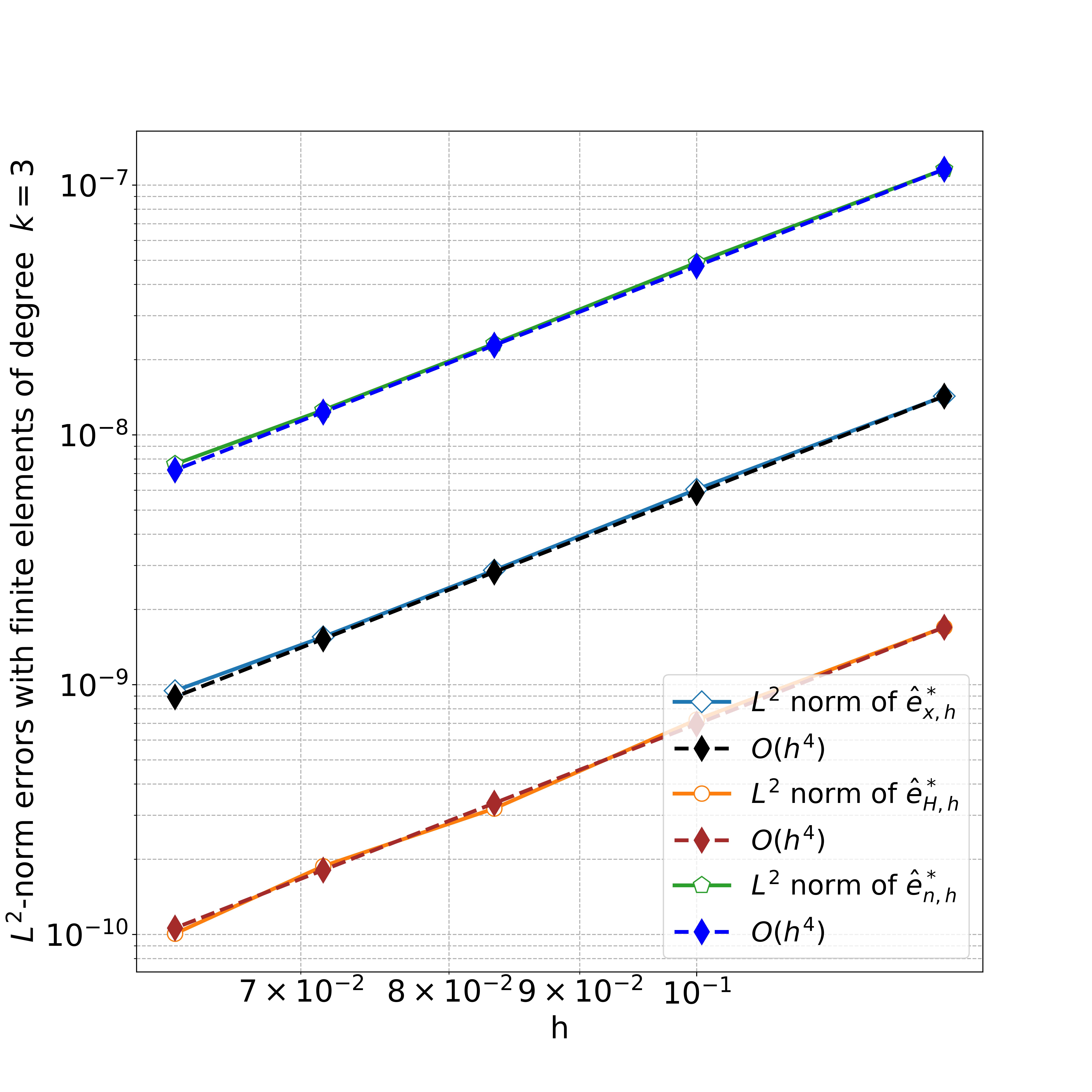}}
  \end{center}
  \vspace{-5pt}
        \caption{\small $L^2$ norm for errors and convergence rates of the numerical solutions.}
        \label{fig:err_rate}
\end{figure}

\begin{figure}[htp!]
  \vspace{-15pt}
  \begin{center}
  \subfigure[Convergence rates for $\hat e_{x,h}$, $\hat e_{H,h}$ and $\hat e_{n,h}$ ]{\includegraphics[width=0.49\textwidth]{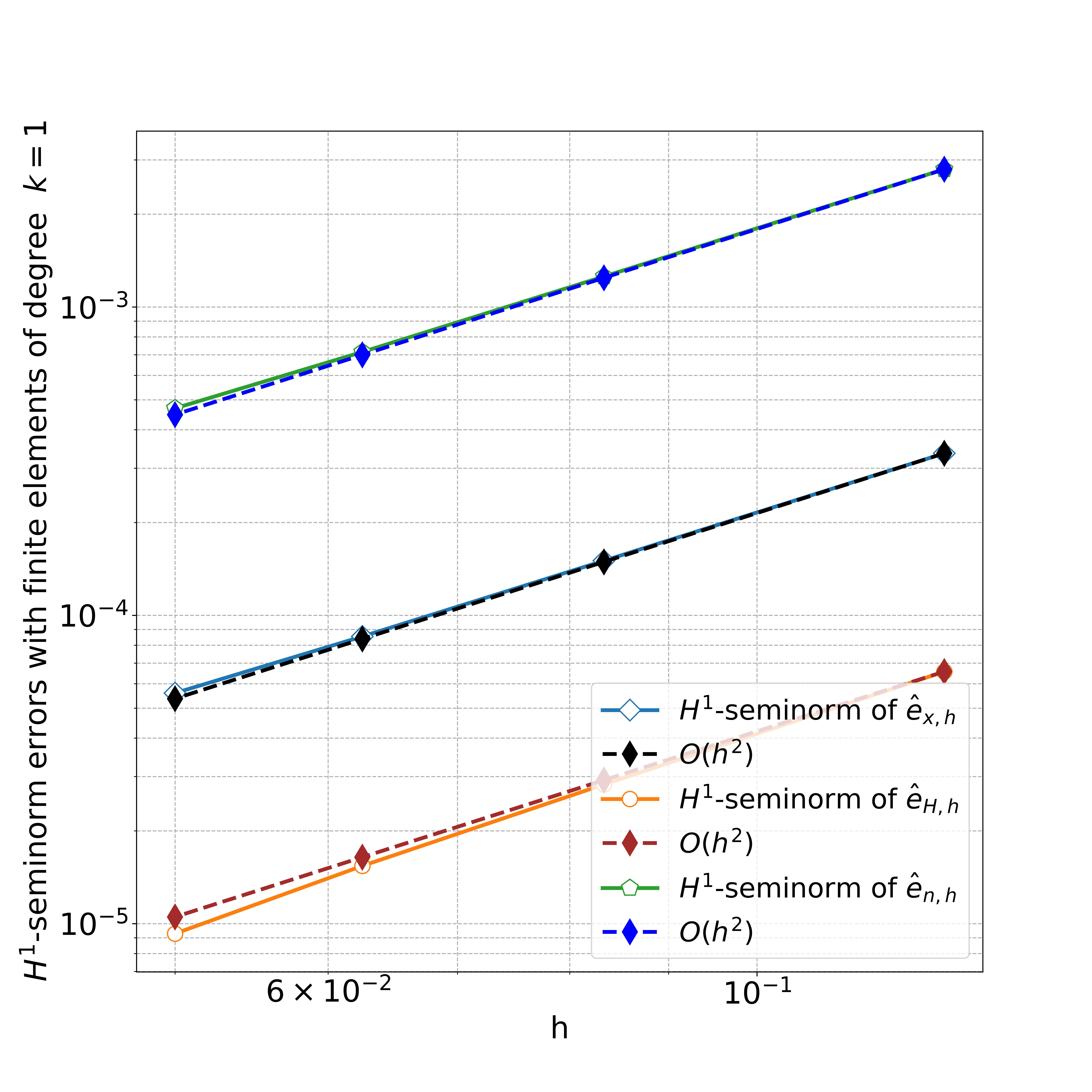}}
  \subfigure[Convergence rates for $\hat e_{x,h}^*$, $\hat e_{H,h}^*$ and $\hat e_{n,h}^*$ ]{\includegraphics[width=0.49\textwidth]{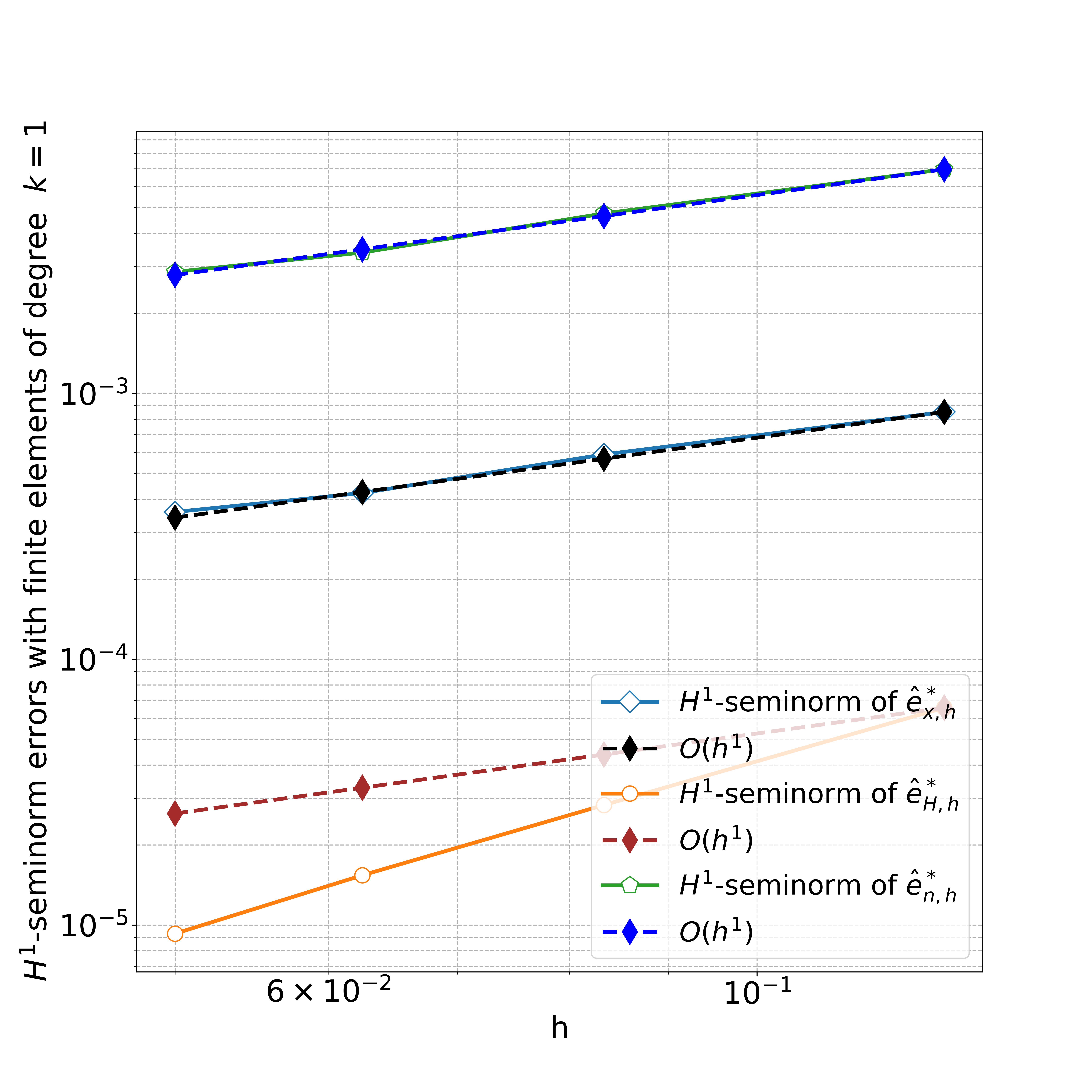}}

  \end{center}
  \vspace{-5pt}
        \caption{\small The $H^1$-seminorm of the errors for finite elements of degree $k=1$}
        \label{fig:err_rate_H1semi}
\end{figure}

We solve the problem by the algorithm in \eqref{weak-form} up to time $T= 0.125$ with finite elements of degrees $1$, $2$ and $3$, respectively, using a four-step backward differentiation formula (BDF), with a sufficiently small time stepsize $\tau = 0.001$ such that the errors from temporal discretizations can be neglected in testing the convergence rates of spatial discretizations. 

The $L^2$ norms of the error functions $\hat e_{x,h}^*=\hat x_h - {\rm id}_{\Gamma_h[\x^*]}$, $\hat e_{H,h}^*=\hat H_h - \hat I_h^*H$ and $\hat e_{n,h}^*=\hat n_h - \hat I_h^*n$ at time $T$ are presented in Figure \ref{fig:err_rate}, which indicates that the errors of the numerical solutions are about $O(h^{k+1})$ for finite elements of degree $k = 1, 2, 3$. This is consistent with the error estimates established in Theorem \ref{THM3}.

{Figure~\ref{fig:err_rate_H1semi}(a) shows the $H^1$-seminorms of the error functions $\hat{e}_{x,h}$, $\hat{e}_{H,h}$, and $\hat{e}_{n,h}$ (between the numerical solution and the dynamic Ritz projection) at time $T$, demonstrating second-order convergence for finite elements of degree $k=1$, which agrees with the error estimate in Theorem~\ref{THM2}. In contrast, the $H^1$-seminorms of $\hat{e}^*_{x,h}$, $\hat{e}^*_{H,h}$, and $\hat{e}^*_{n,h}$ (between the numerical solution and the interpolated solution) is only $O(h)$, as shown in Figure~\ref{fig:err_rate_H1semi}(b). 
}

\section{Conclusion}

We have defined a dynamic Ritz projection of mean curvature flow of closed surfaces in the three-dimensional space, and have proved optimal-order error bounds in the $L^2$ and $W^{1,p}$ norms for the dynamic Ritz projection in approximating the solution of mean curvature flow. By utilizing these approximation results, we have proved optimal-order convergence of parametric FEMs for formulation \eqref{KLL} of mean curvature flow in the $L^\infty(0,T;L^2)$ norm, as well as convergence of parametric FEMs for mean curvature flow with piecewise linear finite elements. The new approach developed in this paper --- analyzing the error of numerical approximation through a dynamic Ritz projection of the mean curvature flow --- can serve as a foundational framework for studying the convergence of numerical approximations for other geometric flows and parametric FEMs.

\section*{Acknowledgement}
The authors would like to thank Prof. Christian Lubich for helpful discussions and valuable comments in an early stage of the research. The research was supported in part by the AMSS-PolyU Joint Laboratory, the Research Grants Council of the Hong Kong Special Administrative Region, China (Project No. PolyU/RFS2324-5S03, PolyU/GRF15303022), and an internal grant of The Hong Kong Polytechnic University (Project ID: P0051154). 

\renewcommand{\refname}{\bf References}

\bibliographystyle{abbrv}
\bibliography{MCF_Vt}

\newpage
\appendix

\begin{center}
{\large\bf Supplementary material} \vspace{10pt}
\end{center}
\renewcommand{\thesection}{\Alph{section}}

This supplementary material complements the paper by providing some fundamental results that will be used in this paper and the detailed proofs of Lemmas \ref{Lemma:L2-dtH}--\ref{Lemma:defects}, which are omitted in the paper as they are based on the same techniques used in the proofs of Lemmas \ref{Lemma:W1p}--\ref{Lemma:L2}. 

\section{${\bf L^p}$ and ${\bf W^{1,p}}$ stability of interpolation operator 
on finite element functions of higher degree} \label{appendix_A}
\renewcommand{\theequation}{A.\arabic{equation}}
\renewcommand{\thesubsection}{A.\arabic{subsection}}

Let $\hat I_h^*:C(\Gamma_h[\x^*])\rightarrow S_h(\Gamma_h[\x^*])$ be the Lagrange interpolation operator onto the finite element space of polynomial degree $k\ge 1$, and let $S_h^r[\x^*]$ be the space of finite element functions of polynomial degree $r> k$. 
In the proof of Lemma \ref{Lemma:W1p} and \ref{Lemma:L2} we have used the following stability results:  
\begin{align}
\begin{aligned}
\|\hat I_h^* \varphi_h \|_{L^p(\Gamma_h[\x^*])} 
&\le
C \| \varphi_h\|_{L^p(\Gamma_h[\x^*])} &&\mbox{for}\,\,\,\varphi_h \in S_h^r[\x^*],
\,\,\, 1\le p\le \infty,  \\
\|\hat I_h^* \varphi_h \|_{W^{1,p}(\Gamma_h[\x^*])} 
&\le
C \| \varphi_h \|_{W^{1,p}(\Gamma_h[\x^*])} 
&&\mbox{for}\,\,\,\varphi_h \in S_h^r[\x^*] ,\,\,\, 1\le p\le \infty, 
\end{aligned}
\end{align}
which can be proved as follows: First, 
\begin{align*}
\|\hat I_h^* \varphi_h - \varphi_h \|_{L^p(\Gamma_h[\x^*])} 
\le
C h^{k+1} \|\nabla_{\Gamma_h[\x^*]}^{k+1}\varphi_h \|_{L^p(\Gamma_h[\x^*])} 
\le 
C \|\varphi_h \|_{L^p(\Gamma_h[\x^*])}  ,
\end{align*}
where the second to last inequality follows from the standard error estimate of Lagrange interpolation, and the last inequality follows from the inverse inequality of finite element functions in $S_h^r[\x^*]$. 
Then, by using the triangle inequality, we have 
\begin{align*}
\|\hat I_h^* \varphi_h \|_{L^p(\Gamma_h[\x^*])} 
\le
\|\hat I_h^* \varphi_h - \varphi_h \|_{L^p(\Gamma_h[\x^*])} 
+ \|\varphi_h \|_{L^p(\Gamma_h[\x^*])} 
\le 
C \|\varphi_h \|_{L^p(\Gamma_h[\x^*])} . 
\end{align*}
The $W^{1,p}$ stability of $\hat I_h^*$ can be proved similarly.

\section{${\bf L^p}$ and ${\bf W^{1,p}}$ stability of ${\bf L^2}$ projection onto $\Gamma_h[\x^*]$} \label{appendix_B}
\renewcommand{\theequation}{B.\arabic{equation}}
\renewcommand{\thesubsection}{B.\arabic{subsection}}

The $L^2$ projection operator $P_h:L^2(\Gamma_h[\x^*])\rightarrow S_h(\Gamma_h[\x^*])$ is defined as 
\begin{align*}
\int_{\Gamma_h[\x^*]} (\varphi - P_h\varphi) \chi_h = 0\quad\forall\, \varphi\in L^2(\Gamma_h[\x^*])\,\,\,\mbox{and}\,\,\, \chi_h \in S_h(\Gamma_h[\x^*]) .
\end{align*}
It is naturally bounded in the $L^2$ norm, i.e., $\|P_h\varphi\|_{L^2(\Gamma_h[\x^*])} \le C\|\varphi\|_{L^2(\Gamma_h[\x^*])} $ for $\varphi\in L^2(\Gamma_h[\x^*])$. Additionally, by the same proof of \cite[Lemma 6.1]{Thomee2006}, it is shown that the $L^2$ projection $P_h$ is bounded in the $L^\infty$ norm, i.e., $\|P_h\varphi\|_{L^\infty(\Gamma_h[\x^*])} \le C\|\varphi\|_{L^\infty(\Gamma_h[\x^*])} $.
Then, by the real interpolation between the $L^2$ and $L^\infty$ stability estimates, we derive the $L^p$ stability of $P_h$ for $2\le p\le \infty$, i.e.,  
$$
\|P_h\varphi\|_{L^p(\Gamma_h[\x^*])} \le C\|\varphi\|_{L^p(\Gamma_h[\x^*])} 
\quad\forall\, \varphi\in L^p(\Gamma_h[\x^*])\,\,\,\mbox{and}\,\,\, 2\le p\le \infty. 
$$
The $W^{1,\infty}$ stability of $P_h$ can be shown by using the inverse inequality of finite element functions, the $L^\infty$ stability of $P_h$, and the approximation property of the Lagrange interpolation, i.e.,
\begin{equation}\label{l2projectionW1infty}
  \begin{aligned}
\|P_h\varphi - I_h\varphi\|_{W^{1,\infty}(\Gamma_h[\x^*])} 
&\le Ch^{-1} \|P_h\varphi - I_h\varphi\|_{L^\infty(\Gamma_h[\x^*])} \\
&=Ch^{-1} \|P_h(\varphi - I_h\varphi)\|_{L^\infty(\Gamma_h[\x^*])} \\
&\le
Ch^{-1} \|\varphi - I_h\varphi\|_{L^\infty(\Gamma_h[\x^*])} \\
&\le 
C\|\varphi\|_{W^{1,\infty}(\Gamma_h[\x^*])} ,
\end{aligned}
\end{equation} 
which implies that 
$$
\|P_h\varphi\|_{W^{1,\infty}(\Gamma_h[\x^*])} 
\le \|P_h\varphi - I_h\varphi\|_{W^{1,\infty}(\Gamma_h[\x^*])} + \|I_h\varphi\|_{W^{1,\infty}(\Gamma_h[\x^*])} \le C\|\varphi\|_{W^{1,\infty}(\Gamma_h[\x^*])},
$$
where the last inequality uses the $W^{1,\infty}$ stability of the Lagrange interpolation. The $H^1$ stability of $P_h$ can be shown similarly by comparing $P_h$ with the standard $H^1$ projection $R_h: H^1(\Gamma_h[\x^*])\rightarrow S_h(\Gamma_h[\x^*]) $, i.e., 
\begin{equation}\label{l2projectionH1}
  \begin{aligned}
  \|P_h\varphi - R_h\varphi\|_{H^1(\Gamma_h[\x^*])} 
  &\le Ch^{-1} \|P_h\varphi - R_h\varphi\|_{L^2(\Gamma_h[\x^*])} \\
  &=Ch^{-1} \|P_h(\varphi - R_h\varphi)\|_{L^2(\Gamma_h[\x^*])} \\
  &\le
  Ch^{-1} \|\varphi - R_h\varphi\|_{L^2(\Gamma_h[\x^*])} \\
  &\le 
  C\|\varphi\|_{H^1(\Gamma_h[\x^*])},
  \end{aligned}
\end{equation}
  which implies that 
  $$
  \|P_h\varphi\|_{H^1(\Gamma_h[\x^*])} 
  \le \|P_h\varphi - R_h\varphi\|_{H^1(\Gamma_h[\x^*])} + \|R_h\varphi\|_{H^1(\Gamma_h[\x^*])} \le C\|\varphi\|_{H^1(\Gamma_h[\x^*])},
  $$
Then, by the complex interpolation between the $H^1$ and $W^{1,\infty}$ stability results, we have 
$$
\|P_h\varphi\|_{W^{1,p}(\Gamma_h[\x^*])} \le C\|\varphi\|_{W^{1,p}(\Gamma_h[\x^*])} \quad\mbox{for}\,\,\, 2\le p\le\infty. 
$$

\section{Proof of Lemma \ref{Lemma:L2-dtH}} \label{appendix_C}
\renewcommand{\theequation}{C.\arabic{equation}}
\renewcommand{\thesubsection}{C.\arabic{subsection}}

The proof of Lemma \ref{Lemma:L2-dtH} is similar as the proofs for Lemma \ref{Lemma:W1p} and Lemma \ref{Lemma:L2}. For the readers' convenience, we present a complete proof and divide it into several subsections. 

\subsection{Basic setting}\label{section:B1}
We define a discrete velocity $\tilde v_h^*$ on the smooth surface $\Gamma$ in the same way as \cite[Section 5.3]{Kovacs2018} and \cite[Section 7.3]{KLLP2017}. In particular, the surface $\Gamma$ is decomposed into curved elements which move with velocity $\tilde v_h^*$, which is defined by 
\begin{equation*}
    \tilde v_h^* \big(X_h^*(\cdot,t)^l,t\big) = \frac{\d}{\d t} (X_h^*(\cdot,t)^l ) .
\end{equation*}
Then the discrete material derivatives on the smooth surface $\Gamma$ and its interpolated surface $\Gamma_h[\x^*]$ are given by
\begin{align*}
\begin{aligned}
\partial_{t,h}^{\bullet} f &=\partial_{t} f + \tilde v_h^* \cdot \nabla f &&\mbox{for $f$ defined on $\Gamma$} , \\
\partial_{t,h}^{\bullet} \varphi_{h} &=\partial_{t} \varphi_{h} + \hat I_h^*v \cdot \nabla \varphi_{h} &&\mbox{for $\varphi_h$ defined on $\Gamma_h[\x^*]$}  , 
\end{aligned}
\end{align*}
where $\hat I_h^*v\in S_h[\x^*]$ is the interpolated velocity (i.e., velocity of the interpolated surface $\Gamma_h[\x^*]$). The two material derivatives above are related by   
\begin{equation}
\label{eq: lift and material derivatives}
\partial_{t,h}^{\bullet} \varphi_{h}^{l}=\big(\partial_{t,h}^{\bullet} \varphi_{h}\big)^{l} 
\,\,\,\mbox{for}\,\,\,\varphi_h\in S_h(\Gamma_h[\x^*]) . 
\end{equation}
It is known that (cf. \cite[Lemma 5.4]{Kovacs2018}) 
\begin{align}\label{tilde_v_h-v}
\|\tilde v_h^*-v\|_{L^\infty(\Gamma)}
+ h\|\nabla_{\Gamma}(\tilde v_h^*-v)\|_{L^\infty(\Gamma)} \le Ch^{k+1} . 
\end{align}
Moreover, the surface $\Gamma$ can be considered as moving with velocity $\tilde v_h^*$, and the following identities hold: 
\begin{align}
\label{transport-hatv1}
\frac{\d}{\d t} \int_{\Gamma} f 
&= \int_{\Gamma} (\partial_{t,h}^\bullet f+ f \nabla_{\Gamma}\cdot \tilde v_h^*) ,
\\[5pt] 
\frac{\d}{\d t} \int_{\Gamma} \nabla_{\Gamma}f\cdot\nabla_{\Gamma}g 
&= \int_{\Gamma} \big( \nabla_{\Gamma} \partial_{t,h}^\bullet f\cdot\nabla_{\Gamma}g +\nabla_{\Gamma} f\cdot\nabla_{\Gamma}\partial_{t,h}^\bullet g 
+\nabla_{\Gamma}f\cdot D_{\Gamma}\tilde v_h^*\nabla_{\Gamma}g\, \big)  ,
\label{transport-hatv2}
\end{align}
where $D_\Gamma \tilde v_h^* = 
(\nabla_{\Gamma}\cdot \tilde v_h^*) I_3
- (\nabla_{\Gamma} \tilde v_h^*+(\nabla_{\Gamma} \tilde v_h^*)^{\top}) $; see Lemma \ref{BilinearMA} and \cite[Lemma 3.1]{Dziuk-Lubich-Mansour-2011}. 
Since the two functions $u_h^*$ (defined on surface $\Gamma_h[\y^*]$) and $\hat u_h^*$  (defined on surface $\Gamma_h[\x^*]$) have the same nodal vector, it follows that 
\begin{align}\label{relation-dthHh}
(\partial_{t,h}^\bullet u_h^*)^\wedge 
=\partial_{t,h}^\bullet \hat u_h^* \,\,\,\mbox{(both with nodal vector $\dot {\bf u}^*$)} . 
\end{align}

Since problem \eqref{def-yHn} is essentially an ODE system, we can assume that $t_*\in(0,T]$ is the maximal time such that the solution of problem \eqref{def-yHn} satisfies the following estimate for $t\in[0,t_*]$:  
\begin{align}\label{ind-H-K}
&
\| \partial_{t,h}^\bullet \hat u_h^* - \hat I_h^*\partial_t^{\bullet}u\|_{W^{1,6}(\Gamma_h[\x^*])}
\le 1 . 
\end{align}
Under this condition, we shall prove that \eqref{L2-dtH*}--\eqref{L2-dtn*} hold for $t\in[0,t_*]$ (for some constants $h_0$ and $C$ which are independent of $t_*$). Then, since problem \eqref{def-yHn} is essentially an ODE system, the local well-posedness of the ODE system and the continuity of $\partial_{t,h}^\bullet \hat u_h^*$ in time imply that \eqref{L2-dtH*}--\eqref{L2-dtn*} hold for $t\in[0,t_*+\delta_h]$ with some $\delta_h>0$ and with $C$ replaced by $2C$. This implies that
$$
\| \partial_{t,h}^\bullet \hat u_h^* - \hat I_h^*\partial_t^{\bullet}u\|_{W^{1,6}(\Gamma_h[\x^*])}
\le Ch^{-\frac{2}{3}} \| \partial_{t,h}^\bullet \hat u_h^* - \hat I_h^*\partial_t^{\bullet}u\|_{H^1(\Gamma_h[\x^*])}
\le Ch^{k-\frac23}\quad\mbox{for}\,\,\, t\in[0,t_*+\delta]. 
$$
The above inequality implies that \eqref{ind-H-K} holds for $t\in[0,t_*+\delta_h]$ (when $k\ge 1$ and $h$ is smaller than some constant which is independent of $t_*$). This would imply that $t_*=T$ (otherwise $t_*$ is not the maximal time for \eqref{ind-H-K} to hold) and therefore completes the proof of Lemma \ref{Lemma:L2-dtH}.

\subsection{Reduction to estimates of $\partial_{t,h}^\bullet w$ and $g$}
For $\varphi_h \in S_h[\x^*]^4$ satisfying $\partial_{t,h}^{\bullet}\varphi_h=0$ on $\Gamma_h[\x^*]$ and accordingly $\partial_{t,h}^{\bullet}\varphi_h^l=0$ on $\Gamma$ as a result of \eqref{eq: lift and material derivatives}, we differentiate \eqref{KH.varphi} in time and using \eqref{transport-hatv1}--\eqref{transport-hatv2}. 
This yields that 
\begin{align}\label{K-x-H-varphi}
&{\bf K}(\x^*)\dot {\bf u}^*\cdot{\bm \varphi} \notag \\
&=
\int_{\Gamma} ((\partial_{t,h}^\bullet  w+\partial_{t,h}^\bullet u) \cdot \varphi_h^l 
+\nabla_{\Gamma} (\partial_{t,h}^\bullet  w+\partial_{t,h}^\bullet u) \cdot \nabla_{\Gamma}  \varphi_h^l) \notag \\
&\quad
+ 
\int_{\Gamma} ((w+ u) \cdot \varphi_h^l \nabla_{\Gamma}\cdot \tilde v_h^* +\nabla_{\Gamma} (w+ u) \cdot D_{\Gamma}\tilde v_h^*   \nabla_{\Gamma}  \varphi_h^l) 
- \bigg(\frac{\d}{\d t}{\bf K}(\x^*) \bigg)  {\bf u}^*\cdot{\bm \varphi} , 
\end{align} 
where
\begin{align*}
\bigg(\frac{\d}{\d t}{\bf K}(\x^*) \bigg)  {\bf u}^*\cdot{\bm \varphi}
=
& 
\int_{\Gamma_h[\x^*]} (\hat u_h^* \cdot \varphi_h \nabla_{\Gamma_h[\x^*]}\cdot \hat I_h^*v 
+\nabla_{\Gamma_h[\x^*]} \hat u_h^* \cdot D_{\Gamma_h[\x^*]}\hat I_h^*v \nabla_{\Gamma_h[\x^*]} \varphi_h) ,
\end{align*} 
which follows from the identities in Lemma \ref{BilinearMA}. 
Let $g\in H^1(\Gamma)^4$ be the solution of the following weak formulation: 
\begin{align}\label{def-g}
\int_{\Gamma} 
(g\cdot \varphi^l + \nabla_{\Gamma} g \cdot \nabla_{\Gamma}  \varphi^l) 
& =
\int_{\Gamma} (w\cdot \varphi^l \nabla_{\Gamma}\cdot \tilde v_h^* +\nabla_{\Gamma} w\cdot D_{\Gamma}\tilde v_h^*   \nabla_{\Gamma}  \varphi^l) 
\notag \\
&\quad
+\int_{\Gamma} u\cdot \varphi^l \nabla_{\Gamma}\cdot \tilde v_h^* 
-\int_{\Gamma_h[\x^*]} \hat u_h^*\cdot \varphi \nabla_{\Gamma_h[\x^*]}\cdot \hat I_h^*v   \notag \\ 
& \quad 
+ \int_{\Gamma} \nabla_{\Gamma} u \cdot D_{\Gamma}\tilde v_h^*   \nabla_{\Gamma}  \varphi^l   
-\int_{\Gamma_h[\x^*]} \nabla_{\Gamma_h[\x^*]} \hat u_h^* \cdot D_{\Gamma_h[\x^*]}\hat I_h^*v  \nabla_{\Gamma_h[\x^*]} \varphi  \notag \\
& =: L_1(\varphi^l) + L_2(\varphi^l) + L_3(\varphi^l) 
\qquad\forall\,\varphi^l\in H^1(\Gamma)^4 , 
\end{align} 
where $\varphi $ is the inverse lift of $\varphi^l$ from $\Gamma$ onto $\Gamma_h[\x^*]$. Then, substituting \eqref{def-g} into \eqref{K-x-H-varphi}, we have  
\begin{align*}
{\bf K}(\x^*)\dot {\bf u}^*\cdot{\bm \varphi} 
&=
\int_{\Gamma} ((\partial_{t,h}^\bullet w+\partial_{t,h}^\bullet u + g)\varphi_h^l 
+\nabla_{\Gamma} (\partial_{t,h} ^\bullet w+\partial_{t,h}^\bullet u + g) \cdot \nabla_{\Gamma}  \varphi_h^l) 
\quad\forall\, \varphi_h\in S_h(\Gamma_h[\x^*])^4, 
\end{align*} 
which implies that 
$\partial_{t,h}^\bullet \hat u_h^*$ (with nodal vector $\dot{\bf u}^*$) is the linear Ritz projection of $\partial_{t,h}^\bullet  w+\partial_{t,h}^\bullet  u + g$ onto $S_h(\Gamma_h[\x^*])^4$. As a result, we have (see \cite[Proof of Corollary 4.2]{Demlow-2009}, {inserting (4.4) into (3.3) and (3.5) therein}) 
\begin{align}
\|(\partial_{t,h} ^\bullet \hat u_h^*)^l - \partial_{t,h}^\bullet w-\partial_{t,h} ^\bullet u - g \|_{H^1(\Gamma)} 
&\le
Ch^{k+1} \|\partial_{t,h}^\bullet  w + \partial_{t,h}^\bullet u + g \|_{H^1(\Gamma)} \\
&\quad
+ C
\inf_{\chi_h\in S_h(\Gamma_h[\x^*])} \|\partial_{t,h}^\bullet  w + \partial_{t,h}^\bullet u + g - \chi_h^l \|_{H^1(\Gamma)} . \notag 
\end{align} 
and
\begin{align}
\|(\partial_{t,h} ^\bullet \hat u_h^*)^l - \partial_{t,h}^\bullet w-\partial_{t,h} ^\bullet u - g \|_{L^2(\Gamma)} 
&\le
Ch^{k+1} \|\partial_{t,h}^\bullet  w + \partial_{t,h}^\bullet u + g \|_{H^1(\Gamma)} \\
&\quad
+ Ch
\inf_{\chi_h\in S_h(\Gamma_h[\x^*])} \|\partial_{t,h}^\bullet  w + \partial_{t,h}^\bullet u + g - \chi_h^l \|_{H^1(\Gamma)} . \notag 
\end{align} 
Hence, by using the triangle inequality, we have
\begin{align}\label{dtHh-dtH-H1-1}
\|(\partial_{t,h} ^\bullet \hat u_h^*)^l - \partial_{t,h} ^\bullet u \|_{H^1(\Gamma)} 
&\le
C( \|\partial_{t,h}^\bullet  w \|_{H^1(\Gamma)} 
+ \|g \|_{H^1(\Gamma)} )
+ Ch^{k+1} \| \partial_{t,h}^\bullet u \|_{H^1(\Gamma)} \notag \\
&\quad 
+ C
\inf_{\chi_h\in S_h(\Gamma_h[\x^*])} \| \partial_{t,h}^\bullet u  - \chi_h^l \|_{H^1(\Gamma)}  . 
\end{align} 
and
\begin{align}\label{dtHh-dtH-L2-1}
\|(\partial_{t,h} ^\bullet \hat u_h^*)^l - \partial_{t,h} ^\bullet u \|_{L^2(\Gamma)} 
&\le
 \|\partial_{t,h}^\bullet  w \|_{L^2(\Gamma)} 
+ \|g \|_{L^2(\Gamma)} 
+ Ch (\|\partial_{t,h}^\bullet  w \|_{H^1(\Gamma)} + \|g \|_{H^1(\Gamma)})
 \notag \\
&\quad 
+ Ch^{k+1} \| \partial_{t,h}^\bullet u\|_{H^1(\Gamma)}  + Ch
\inf_{\chi_h\in S_h(\Gamma_h[\x^*])} \|\partial_{t,h}^\bullet u - \chi_h^l \|_{H^1(\Gamma)}  . 
\end{align} 
It remains to estimate the right-hand sides of \eqref{dtHh-dtH-H1-1} and \eqref{dtHh-dtH-L2-1}. 

\subsection{Estimate for $\|g\|_{H^1(\Gamma)}$}
We estimate $\|g\|_{H^1(\Gamma)}$ from the expression in \eqref{def-g}, where $L_1(\varphi^l) $ can be estimated by using boundedness of $\|\nabla_{\Gamma}\tilde v_h^*\|_{L^\infty(\Gamma)}$ as shown in \eqref{tilde_v_h-v} and the estimate of $\|w\|_{H^1(\Gamma)}$ in \eqref{L2-H1-w}, i.e., 
\begin{align*}
|L_1(\varphi^l) |
\le
C\|w\|_{H^1(\Gamma)} \|\nabla_{\Gamma}\tilde v_h^*\|_{L^\infty(\Gamma)} \|\varphi^l\|_{H^1(\Gamma)} 
\le
Ch^k \|\varphi^l\|_{H^1(\Gamma)} .
\end{align*}
Given the following established estimate (which follows from \cite[Proposition 2.7 and relations (2.1)-(2.12)]{Demlow-2009}): 
\begin{align}\label{tilde-v_p}
   & \|(\nabla _\Gamma \cdot \tilde v _h^*)^{-l} - \nabla_{\Gamma_h[\bf x^*]} \cdot \hat I_h^*v \|_{L^p(\Gamma_h[\bf x^*])} \notag\\
    &\sim \|\nabla _\Gamma \cdot \tilde v _h^* - (\nabla_{\Gamma_h[\bf x^*]} \cdot \hat I_h^*v)^l \|_{L^p(\Gamma)} \notag\\
    &\le \|\nabla _\Gamma \cdot (\tilde v _h^* - v) \|_{L^p(\Gamma)} + \|\nabla _\Gamma \cdot v - (\nabla_{\Gamma_h[\bf x^*]} \cdot \hat I_h^*v)^l \|_{L^p(\Gamma)} \notag\\
    & \le Ch^k ,
\end{align}
we apply Lemma \ref{Lemma:L2} and \eqref{W1p-Hh-H}, along with considering the estimates for surface perturbation via the lift map. Consequently, we have 
\begin{equation*}
  \begin{aligned}
    |L_2(\varphi^l)| 
    &\le \big|\int_{\Gamma} u \cdot \varphi ^l  \nabla _{\Gamma}\cdot \tilde v_h^* - \int_{\Gamma_h[\bf x ^*]}u^{-l} \cdot \varphi (\nabla_{\Gamma} \cdot \tilde v_h^*)^{-l}\big|\\
    & \quad \, + \big| \int_{\Gamma_{h}[\bf x^*]} u^{-l}\cdot \varphi (\nabla_{\Gamma} \cdot \tilde v_h^*)^{-l} - \int_{\Gamma_{h}[\bf x^*]} u^{-l}\cdot \varphi (\nabla_{\Gamma_h[\bf x^*]} \cdot \hat I_h^*v)\big|\\
    & \quad \, + \big| \int_{\Gamma_{h}[\bf x^*]} u^{-l}\cdot \varphi (\nabla_{\Gamma_h[\bf x^*]} \cdot \hat I_h^*v) - \int_{\Gamma_{h}[\bf x^*]} \hat u_h^* \cdot \varphi (\nabla_{\Gamma_h[\bf x^*]} \cdot \hat I_h^*v) \big|\\
    & \le C h^k  \|\varphi^l\|_{L^2(\Gamma)} . 
  \end{aligned}
\end{equation*}
Similarly, by employing the result of Lemma \ref{Lemma:W1p} and following a process analogous to the one described above, we obtain 
\begin{equation*} 
  \begin{aligned}
    |L_3(\varphi^l)| \le Ch^k \|\varphi^l\|_{H^1(\Gamma)}
  \end{aligned}
\end{equation*}
Therefore, by substituting $\varphi^l=g$ into \eqref{def-g} and utilizing the above-established estimates of $L_1(\varphi^l)$, $L_2(\varphi^l)$ and $L_3(\varphi^l)$, we obtain 
\begin{align}\label{H1-estimate-g} 
\|g\|_{H^1(\Gamma)}  \le Ch^k  . 
\end{align}

\subsection{Estimate for $\|g\|_{L^2(\Gamma)}$}

Next, we estimate $\|g\|_{L^2(\Gamma)}$ via a duality argument. Specifically, we define $\varphi^l\in H^2(\Gamma)^4$ to be the solution of 
\begin{align} \label{PDE-varphi-g}
\varphi^l - \Delta_{\Gamma}  \varphi^l = g \,\,\,\mbox{on}\,\,\, \Gamma , 
\end{align} 
which satisfies the following standard $H^2$ regularity estimate: 
\begin{align}\label{H2-varphi-w} 
\|\varphi^l\|_{H^2(\Gamma)} \le C \|g\|_{L^2(\Gamma)} .
\end{align}
By testing \eqref{PDE-varphi-g} with $g$ and estimating the right-hand side of \eqref{def-g}, we shall prove the following result:  
\begin{align} \label{g-remain}
\|g\|_{L^2(\Gamma)}^2 
= \int_{\Gamma} 
(g\varphi^l + \nabla_{\Gamma} g \cdot \nabla_{\Gamma}  \varphi^l) 
\le 
Ch^{k+1} \|\varphi^l\|_{H^2(\Gamma)} .
\end{align} 
This, together with \eqref{H2-varphi-w}, would imply the following estimate: 
\begin{align}\label{L2-estimate-g} 
\|g\|_{L^2(\Gamma)} 
\le
Ch^{k+1} .
\end{align} 
It remains to prove the inequality in \eqref{g-remain}. This is achieved by estimating the right-hand side of \eqref{def-g} as follows. 

Firstly, $| L_1(\varphi^l) |$ can be estimated by using integration by parts (after approximating $\tilde v_h^*$ by $v$ in the integrand), i.e.,  
\begin{align*} 
| L_1(\varphi^l) |
& =
\int_{\Gamma} (w\cdot \varphi^l \nabla_{\Gamma}\cdot \tilde v_h^* +\nabla_{\Gamma} w\cdot D_{\Gamma}\tilde v_h^*   \nabla_{\Gamma}  \varphi^l) \\
& =
\int_{\Gamma} (w\cdot \varphi^l \nabla_{\Gamma}\cdot \tilde v_h^* +\nabla_{\Gamma} w\cdot D_{\Gamma}v   \nabla_{\Gamma}  \varphi^l) 
+ 
\int_{\Gamma}\nabla_{\Gamma} w\cdot D_{\Gamma}(\tilde v_h^* - v)  \nabla_{\Gamma}  \varphi^l  \\
&=
\int_{\Gamma} [w\cdot \varphi^l \nabla_{\Gamma}\cdot \tilde v_h^* - w \cdot(\nabla_{\Gamma} \cdot (D_{\Gamma}v   \nabla_{\Gamma}  \varphi^l)) ] 
+ 
\int_{\Gamma}\nabla_{\Gamma} w\cdot D_{\Gamma}(\tilde v_h^* - v)  \nabla_{\Gamma}  \varphi^l  \\
&\le
C\|w\|_{L^2(\Gamma)} \| \nabla_{\Gamma} \cdot \tilde v_h^*\|_{L^\infty(\Gamma)}\|\varphi^l\|_{H^{2}(\Gamma)} 
+C\|w\|_{L^2(\Gamma)}\|\varphi^l\|_{H^{2}(\Gamma)} \\
&\quad 
+ C\|w\|_{H^1(\Gamma)} \|D_{\Gamma}(\tilde v_h^* - v) \|_{L^\infty} 
\|\varphi^l\|_{H^1(\Gamma)} \\
&\le C\|w\|_{L^2(\Gamma)} \|\varphi^l\|_{H^{2}(\Gamma)} 
+  Ch\|w\|_{H^1(\Gamma)} \|\varphi^l\|_{H^{2}(\Gamma)} 
\quad
\mbox{(here \eqref{tilde_v_h-v} is used)} \\
&\le
Ch^{k+1} \|\varphi^l\|_{H^{2}(\Gamma)} , 
\end{align*} 
where we have used the estimate of $\|w\|_{L^2(\Gamma)}$ and $\|w\|_{H^1(\Gamma)} $ in \eqref{L2-H1-w}. 

Secondly, $| L_2(\varphi^l) |$ can be estimated by defining and utilizing intermediate surfaces between $\Gamma_h[\x^*]$ to $\Gamma$. Namely, we define 
$$
x^{\theta} := (1-\theta)x^{l}+\theta x\,\,\,\mbox{for}\,\,\, x\in\Gamma_h[\x^*]
\quad\mbox{and}\quad 
\Gamma^{\theta}:=(1-\theta)\Gamma+\theta\Gamma_h[\x^*]
=\{x^\theta:x\in\Gamma_h[\x^*]\} .
$$ 
Let $u^\theta$, $\varphi^\theta$, $\tilde v_h^\theta$ and $e^\theta$ be functions on $\Gamma^\theta$ defined by 
\begin{align*}
\begin{aligned}
&u^\theta(x^\theta) = (1-\theta)u(x^l) +\theta \hat u_h^*(x) , \\
&\varphi^\theta(x^\theta) = \varphi(x) = \varphi^l(x^l) , \\ 
&\tilde v_h^\theta(x^\theta) = (1-\theta)\tilde v_h^*(x^l) +\theta \hat I_h^*v(x) , \\
&e^\theta(x^\theta) = x-x^l 
&&\mbox{for}\,\,\, x \in\Gamma_h[\x^*] . 
\end{aligned}
\end{align*}
Then the surface $\Gamma^\theta$ moves with velocity $e^\theta$ (as $\theta$ changes) and 
$$
\partial_{\theta}^{\bullet} u^{\theta} (x^\theta) = 
\hat u_h^*(x) - u(x^l) , 
\quad
\partial_{\theta}^{\bullet} \varphi^{\theta} =0 
\quad\mbox{and}\quad
\partial_{\theta}^{\bullet} \tilde v_h^\theta(x^\theta)  = 
\hat I_h^*v(x) - \tilde v_h^*(x^l) .
$$
From \eqref{tilde_v_h-v} we conclude that 
\begin{equation}\label{v^*_t}
  \begin{aligned}
    |\partial_{\theta}^{\bullet} \tilde v_h^\theta(x^\theta) |
    = | \hat I_h^*v(x) - \tilde v_h^*(x^l) | 
    \le | \hat I_h^*v(x) - v(x^l) |
    + | v(x^l) - \tilde v_h^*(x^l) | 
    \le
    Ch^{k+1} .
  \end{aligned}
\end{equation}
Since
\begin{equation*}
  \begin{aligned}
    \|\nabla_{\Gamma_h[\mathbf{x}^*]} x^\theta - \nabla_{\Gamma_h[\mathbf{x}^*]} \mathrm{id}\|_{L^p(\Gamma_h[\mathbf{x}^*])} &= (1 - \theta) \|\nabla_{\Gamma_h[\mathbf{x}^*]}e^1\|_{L^p(\Gamma_h[\mathbf{x}^*])} 
    \le Ch^k \,\,\,\mbox{for}\,\,\, 2\le p\le \infty,
  \end{aligned}
\end{equation*}
it follows that the \(L^p(\Gamma^\theta)\) norm of \(f\) is equivalent to the \(L^p(\Gamma_h[\mathbf{x}^*])\) norm of \(f \circ X^\theta\). The result of Lemma \ref{Lemma:L2} and the norm equivalence between $\Gamma^\theta$ and $\Gamma_h[\x^*]$ imply that 
\begin{equation}\label{H^*Lp_t}
\|\partial_{\theta}^{\bullet} u^\theta(x^\theta)\|_{L^2(\Gamma^\theta)} \le C \|\hat u_h^* - u^{-l}\|_{L^2(\Gamma_h[\mathbf{x}^*])} \le Ch^{k+1} 
   .
\end{equation}
Then $|L_{2} (\varphi^l)|$ can be rewritten as follows:
\begin{align*} 
|L_2(\varphi^l) |
& =
\bigg| \int_{\Gamma} u\cdot\varphi^l \nabla_{\Gamma}\cdot \tilde v_h^* 
-\int_{\Gamma_h[\x^*]} \hat u_h^*\cdot \varphi \nabla_{\Gamma_h[\x^*]}\cdot \hat I_h^*v \bigg|  \\
&=
\bigg|\int_0^1\frac{\d}{\d \theta} \int_{\Gamma^\theta} u^\theta \cdot \varphi^\theta \nabla_{\Gamma^\theta }\cdot \tilde v_h^\theta  \,\d \theta \bigg| \\
&=
\bigg|\int_0^1\int_{\Gamma^\theta} 
[u^\theta \cdot \varphi^\theta (\nabla_{\Gamma^\theta }\cdot \tilde v_h^\theta)
(\nabla_{\Gamma^\theta} \cdot e^{\theta}) 
+\partial_{\theta}^\bullet u^\theta \cdot\varphi^\theta  (\nabla_{\Gamma^\theta }\cdot \tilde v_h^\theta) ]\d\theta \\
&\quad 
+\int_0^1\int_{\Gamma^\theta} u^\theta \cdot \varphi^\theta  \big( \nabla_{\Gamma^\theta }\cdot \partial_{\theta}^\bullet\tilde v_h^\theta - {\rm tr}(\nabla_{\Gamma^{\theta}} e^{\theta} \nabla_{\Gamma^{\theta}} \tilde v_h^\theta) +{\rm tr}[n^{\theta}(n^{\theta})^{\top}(\nabla_{\Gamma^{\theta}} e^{\theta})^{\top} \nabla_{\Gamma^{\theta}} \tilde v_h^\theta] \big) \,\d \theta \bigg| \\
& =: |L_{21} (\varphi^l) + L_{22}(\varphi^l) + L_{23}(\varphi^l) + L_{24}(\varphi^l) + L_{25}(\varphi^l)|. 
\end{align*} 
The term $L_{22} (\varphi^l)$ can be estimated by utilizing \eqref{H^*Lp_t} and $\|\nabla_{\Gamma^\theta }\cdot \tilde v_h^\theta\|_{L^\infty(\Gamma^\theta)}\le C$. The latter result follows from \eqref{tilde_v_h-v} and $\|\nabla_{\Gamma_h[\x^*]}\cdot \hat I_h v\|_{L^\infty(\Gamma_h[\x^*])}\le C$, and the equivalence of norms on $\Gamma^\theta$, $\theta\in[0,1]$. These results, and the equivalence between $\|\varphi^\theta\|_{L^2(\Gamma^\theta)}$ and $\|\varphi^l\|_{L^2(\Gamma)}$, lead to the following estimate of $L_{22} (\varphi^l)$:  
\begin{equation*}
  \begin{aligned}
    |L_{22} (\varphi^l)|&\le Ch^{k+1} \|\varphi^l\|_{L^2(\Gamma)}.
  \end{aligned}
\end{equation*}
The term $L_{21} (\varphi^l)$ can be rewritten as follows:
\begin{equation*}
  \begin{aligned}
    |L_{21} (\varphi^l)| & = \bigg| \int_0^1 \Big[\int_{\Gamma^\theta} u^\theta \cdot \varphi^\theta  (\nabla_{\Gamma^\theta} \cdot \tilde v_h^\theta)(\nabla_{\Gamma^\theta}\cdot e^\theta) -   \int_{\Gamma} u \cdot\varphi^l (\nabla_{\Gamma}\cdot \tilde v_h^*)(\nabla_{\Gamma}\cdot e^1)\Big] \,\d \theta  \\
    & \quad \, + \int_\Gamma u \cdot \varphi^l (\nabla_{\Gamma}\cdot \tilde v_h^* - \nabla_{\Gamma}\cdot v)(\nabla_\Gamma e^1)+ \int_\Gamma u \cdot \varphi^l (\nabla_{\Gamma}\cdot v)(\nabla_\Gamma e^1) \bigg|\\
    & =: |L_{211} (\varphi^l) + L_{212}(\varphi^l) + L_{213}(\varphi^l)|.
  \end{aligned}
\end{equation*}
Since 
\begin{equation*}
  \begin{aligned}
    L_{211} (\varphi^l) 
    &= \int_{0}^\theta \frac{\d}{\d\alpha} \int_{\Gamma^\alpha} u^\alpha \varphi^\alpha (\nabla_{\Gamma^\alpha}\cdot \tilde v_h^\alpha)(\nabla_{\Gamma^\alpha}\cdot e^\alpha) \,\d \alpha \\
    & = \int_{0}^\theta \int_{\Gamma^\alpha}\big[u^\alpha \varphi^\alpha(\nabla_{\Gamma^\alpha}\cdot \tilde v_h^\alpha)(\nabla_{\Gamma^\alpha}\cdot e^\alpha)(\nabla_{\Gamma^\alpha}\cdot e^\alpha)  + \partial_\alpha^\bullet u^\alpha \varphi^\alpha(\nabla_{\Gamma^\alpha}\cdot \tilde v_h^\alpha)(\nabla_{\Gamma^\alpha}\cdot e^\alpha)\\
    & \qquad\qquad\quad + u^\alpha \varphi^\alpha\partial_\alpha^\bullet (\nabla_{\Gamma^\alpha}\cdot \tilde v_h^\alpha)(\nabla_{\Gamma^\alpha}\cdot e^\alpha) + u^\alpha \varphi^\alpha(\nabla_{\Gamma^\alpha}\cdot \tilde v_h^\alpha)\partial_\alpha^\bullet (\nabla_{\Gamma^\alpha}\cdot e^\alpha)\big]\,\d \alpha ,
  \end{aligned}
\end{equation*}
where $\partial_\alpha^\bullet (\nabla_{\Gamma^\alpha}\cdot \tilde v_h^\alpha)$ and $\partial_\alpha^\bullet (\nabla_{\Gamma^\alpha}\cdot e^\alpha)$ can be further expressed as follows using relation \eqref{dt-grad-f}: 
\begin{align*} 
\partial_\alpha^\bullet (\nabla_{\Gamma^\alpha}\cdot \tilde v_h^\alpha)
&=(\nabla_{\Gamma^\alpha}\cdot \partial_\alpha^\bullet\tilde v_h^\alpha)-(\nabla_{\Gamma^\alpha} e^\alpha - n_{\Gamma^\alpha} n_{\Gamma^\alpha}^\top(\nabla_{\Gamma^\alpha} e^\alpha)^\top)\nabla_{\Gamma^\alpha} \tilde v_h^\alpha \\
\partial_\alpha^\bullet (\nabla_{\Gamma^\alpha}\cdot e^\alpha)
&=(\nabla_{\Gamma^\alpha}\cdot \partial_\alpha^\bullet e^\alpha)-(\nabla_{\Gamma^\alpha} e^\alpha - n_{\Gamma^\alpha} n_{\Gamma^\alpha}^\top(\nabla_{\Gamma^\alpha} e)^\top)\nabla_{\Gamma^\alpha}e^\alpha . 
\end{align*} 
By using the norm equivalence between $\Gamma$ and $\Gamma^\alpha$, and the interpolation error estimates 
\begin{align}\label{interpl-e}
\begin{aligned}
\|e^\alpha\|_{L^\infty(\Gamma^\alpha)} &\sim \|{{{\rm id}_{\Gamma_h[\x^*]}}}^l - {{{\rm id}_{\Gamma_h[\x^*]}}}\|_{L^\infty(\Gamma_h[\x^*])}\le Ch^{k+1}  \\
\|\nabla_{\Gamma^\alpha} e^\alpha\|_{L^\infty(\Gamma^\alpha)} &\sim \|\nabla_{\Gamma_h[\x^*]} ({{{\rm id}_{\Gamma_h[\x^*]}}}^l - {{{\rm id}_{\Gamma_h[\x^*]}}})\|_{L^\infty(\Gamma_h[\x^*])}\le Ch^k , 
\end{aligned}
\end{align}  
as well as the estimates in \eqref{v^*_t}--\eqref{H^*Lp_t}, we can derive the following estimate of $L_{211} (\varphi^l)$:  
\begin{equation*}
  \begin{aligned}
     |L_{211} (\varphi^l)| &\le C h^{2k} \|\varphi^l\|_{L^2(\Gamma)} \le C h^{k+1} \|\varphi^l\|_{L^2(\Gamma)}.
  \end{aligned}
\end{equation*}
The term $L_{212} (\varphi^l)$ can be estimated by employing inequalities \eqref{tilde_v_h-v} and \eqref{interpl-e}, i.e., 
\begin{equation*}
  \begin{aligned}
    |L_{212} (\varphi^l)| &\le C h^{2k} \|\varphi^l\|_{L^2(\Gamma)} \le C h^{k+1} \|\varphi^l\|_{L^2(\Gamma)}.
  \end{aligned}
\end{equation*}
The term $L_{213} (\varphi^l)$ can be estimated with integration by parts, which allows us to transfer the gradient from $e^1$ to $H \varphi^l (\nabla_{\Gamma} \cdot v)$. This, along with \eqref{interpl-e}, leads to the following result:
\begin{equation*}
  \begin{aligned}
    |L_{213} (\varphi^l)| &\le C h^{k+1} \|\varphi^l\|_{H^1(\Gamma)}.
  \end{aligned}
\end{equation*}

By employing a similar method as that for estimating $L_{21}(\varphi^l)$, we can derive the following result: 
\begin{align*}
|L_{23}(\varphi^l) + L_{24}(\varphi^l) + L_{25}(\varphi^l)| &\le Ch^{k+1} \|\varphi^l\|_{H^1(\Gamma)}.  
\end{align*}
Combining the estimates for \( |L_{2i}(\varphi^l)| \) for \( i=1,\ldots,5 \), we have the following result: 
\begin{equation*}
  \begin{aligned}
    |L_{2}(\varphi^l)| &\le C h^{k+1} \|\varphi^l\|_{H^1(\Gamma)}.
  \end{aligned}
\end{equation*}

Thirdly, we can estimate $ |L_{3}(\varphi^l)|$ similarly as $ |L_{2}(\varphi^l)|$, by utilizing the intermediate surface $\Gamma^\theta$ between $\Gamma_h[\x^*]$ and $\Gamma$. This leads to the following result: 
\begin{equation*}
  \begin{aligned}
    |L_{3}(\varphi^l)| &\le C h^{k+1} \|\varphi^l\|_{H^2(\Gamma)}.
  \end{aligned}
\end{equation*}
Substituting these estimates for $L_i(\varphi^l)$ for $i =1, 2,3$ into \eqref{def-g}, we obtain 
\begin{align} \label{dual-estimate-g}
\bigg| \int_{\Gamma} 
(g\varphi^l + \nabla_{\Gamma} g \cdot \nabla_{\Gamma}  \varphi^l) \bigg| 
\le 
Ch^{k+1} \|\varphi^l\|_{H^2(\Gamma)} . 
\end{align} 
This proves \eqref{g-remain} and therefore completes the proof of \eqref{L2-estimate-g}.

\subsection{Estimate for $\|\partial_{t,h}^\bullet w\|_{H^1(\Gamma)}$}
\label{section:def-varphi_h}

To estimate $\|\partial_{t,h}^\bullet w\|_{H^1(\Gamma)}$, we need to consider the time derivative of \eqref{def-w} by extending $\varphi^l\in H^1(\Gamma)^4$ in time in such a way that 
\begin{align}\label{mat-varphi=0}
\partial_{t,h}^\bullet \varphi^l = (\partial_{t,h}^\bullet \varphi)^l =0 ,
\end{align}
where $\partial_{t,h}^\bullet \varphi^l$ denotes the material derivative with respect to velocity $\tilde v_h^*$. For $\y^\theta:=(1-\theta)\x^*+\theta \y^*$, let $\hat u_h^{\theta}$, $\hat \varphi_h^{\theta}$ and $\hat e_y^\theta$ be the finite element functions on the intermediate surface $\Gamma_h[\y^\theta]$ with nodal vectors ${\bf u}^*$, ${\bf P}_h\varphi$ and ${\bf e_y}:=\y^*-\x^*$, respectively. 
Then the surface $\Gamma_h[\y^\theta]$ moves with velocity $\hat e_y^{\theta} $ as $\theta\in[0,1]$ changes, and 
$$
\partial_\theta^{\bullet} \hat u_h^{\theta} = \partial_\theta^{\bullet} \hat \varphi_h^{\theta} = 0 .
$$
When $\theta$ is fixed and $t$ increases, the surface $\Gamma_h[\y^\theta]$ moves with velocity $v_h^\theta$, which is a finite element function on $\Gamma_h[\y^\theta]$ with nodal vector 
$$ 
{\bf v}^\theta = (1-\theta){\bf I}_h v + \theta {\bf v}^* . 
$$ 
Let $e_v^\theta$ be the finite element function on $\Gamma_h[\y^\theta]$ with nodal vector ${\bf v}^* - {\bf I}_h v$, which is the same as the nodal vector of $- \hat I_h^*(\hat H_h^* \hat n_h^* - \hat I_h^* H \hat I_h^* n)$. Hence, by using the $L^p$ and $W^{1,p}$ stability of $\hat I_h^*$ (see Appendix A), and the $W^{1,p}$ and $L^2$ estimates of the dynamic Ritz projection in Lemma \ref{Lemma:W1p} and Lemma \ref{Lemma:L2}, we can derive the following estimates:  
\begin{align}\label{ev-theta}
\|e_v^\theta\|_{L^2(\Gamma_h[\y^\theta])} \le Ch^{k+1}
\quad\mbox{and}\quad 
\|e_v^\theta\|_{W^{1,p}(\Gamma_h[\y^\theta])} 
\le Ch^{k} \,\,\,\mbox{for}\,\,\, 2\le p<\infty.  
\end{align}
For a function $f$ defined on $\Gamma_h[\y^\theta]$ (such as $f=\hat u_h^\theta$ and $f=\hat\varphi_h^\theta$), we denote by $\partial_{t,h}^\bullet f$ the material derivative with respect to the velocity $v_h^\theta$. 
By differentiating \eqref{def-w} in time we obtain  
\begin{align} \label{dth-w-elliptic}
&\int_{\Gamma} (\partial_{t,h}^\bullet w\cdot  \varphi^l+\nabla_{\Gamma} \partial_{t,h}^\bullet w \cdot \nabla_{\Gamma}  \varphi^l) \notag \\
&
= 
\frac{\d}{\d t} [ ({\bf K}(\x^*)-{\bf K}(\y^*)){\bf u}^*\cdot ({\bf P}_h\varphi) ]
-\int_{\Gamma} (w\cdot \varphi^l \nabla_{\Gamma}\cdot \tilde v_h^* 
+\nabla_{\Gamma} w \cdot D_{\Gamma}\tilde v_h^* \nabla_{\Gamma}  \varphi^l) \notag \\ 
&
= -\frac{\d}{\d t} \int_0^1 \frac{\d}{\d \theta} \int_{\Gamma_h[\y^\theta]} 
( \hat u_h^\theta \cdot \hat \varphi_h^\theta 
+ \nabla_{\Gamma_h[\y^\theta]} \hat u_h^\theta \cdot \nabla_{\Gamma_h[\y^\theta]} \hat \varphi_h^\theta ) \d\theta 
-\int_{\Gamma} (w\cdot\varphi^l \nabla_{\Gamma}\cdot\tilde v_h^* 
+\nabla_{\Gamma} w \cdot D_{\Gamma}\tilde v_h^* \nabla_{\Gamma}  \varphi^l) \notag \\
&
= -\frac{\d}{\d t} \int_0^1 \int_{\Gamma_h[\y^\theta]} 
( \hat u_h^\theta \cdot \hat \varphi_h^\theta 
\nabla_{\Gamma_h[\y^\theta]} \cdot \hat e_y^\theta
+ \nabla_{\Gamma_h[\y^\theta]} \hat u_h^\theta \cdot D_{\Gamma_h[\y^\theta]}\hat e_y^\theta\nabla_{\Gamma_h[\y^\theta]} \hat \varphi_h^\theta ) \d\theta \notag \\
&\quad\,
-\int_{\Gamma} (w\cdot \varphi^l \nabla_{\Gamma}\cdot\tilde v_h^* 
+\nabla_{\Gamma} w \cdot D_{\Gamma}\tilde v_h^* \nabla_{\Gamma}  \varphi^l) \notag \\
&
= 
-\int_0^1 \int_{\Gamma_h[\y^\theta]} \hat u_h^\theta \cdot \hat \varphi_h^\theta \nabla_{\Gamma_h[\y^\theta]} \cdot \hat e_y^\theta \, \nabla_{\Gamma_h[\y^\theta]} \cdot v_h^\theta  \d\theta \notag \\
&\quad\,
- \int_0^1 \int_{\Gamma_h[\y^\theta]} 
\partial_{t,h}^\bullet \hat u_h^\theta \cdot \hat \varphi_h^\theta \nabla_{\Gamma_h[\y^\theta]} \cdot \hat e_y^\theta \d\theta \notag\\
&\quad\,
- \int_0^1 \int_{\Gamma_h[\y^\theta]} \hat u_h^\theta \cdot \partial_{t,h}^\bullet\hat \varphi_h^\theta \nabla_{\Gamma_h[\y^\theta]} \cdot \hat e_y^\theta \d\theta \notag \\
&\quad\,
- \int_0^1 \int_{\Gamma_h[\y^\theta]} \hat u_h^\theta \cdot \hat \varphi_h^\theta 
\Big( \nabla_{\Gamma_h[\y^\theta]} \cdot \partial_{t,h}^\bullet  \hat e_y^\theta 
- {\rm tr}(\nabla_{\Gamma_h[\y^\theta]} v_h^{\theta} \nabla_{\Gamma_h[\y^\theta]} \hat e_y^\theta) \notag \\
&\qquad\qquad\qquad\qquad\qquad
+ {\rm tr}[n_{\Gamma_h[\y^\theta]} n_{\Gamma_h[\y^\theta]} ^{\top}(\nabla_{\Gamma_h[\y^\theta]} v_h^{\theta})^{\top} \nabla_{\Gamma_h[\y^\theta]} \hat e_y^\theta]  
\Big)  \d\theta \notag \\
&\quad\,
- \int_0^1 \int_{\Gamma_h[\y^\theta]} \nabla_{\Gamma_h[\y^\theta]} \hat u_h^\theta \cdot D_{\Gamma_h[\y^\theta]}\hat e_y^\theta\nabla_{\Gamma_h[\y^\theta]} \hat \varphi_h^\theta 
\nabla_{\Gamma_h[\y^\theta]}\cdot v_h^\theta \d\theta \notag \\
&\quad\, 
-\int_0^1 \int_{\Gamma_h[\y^\theta]} 
\partial_{t,h}^\bullet \nabla_{\Gamma_h[\y^\theta]} \hat u_h^\theta \cdot D_{\Gamma_h[\y^\theta]}\hat e_y^\theta\nabla_{\Gamma_h[\y^\theta]} \hat \varphi_h^\theta \d\theta
 \notag \\ 
&\quad\, 
-\int_0^1 \int_{\Gamma_h[\y^\theta]} 
\nabla_{\Gamma_h[\y^\theta]} \hat u_h^\theta \cdot  \partial_{t,h}^\bullet D_{\Gamma_h[\y^\theta]}\hat e_y^\theta\nabla_{\Gamma_h[\y^\theta]} \hat \varphi_h^\theta \d\theta  \notag \\
&\quad\, 
-\int_0^1 \int_{\Gamma_h[\y^\theta]} 
\nabla_{\Gamma_h[\y^\theta]} \hat u_h^\theta \cdot  D_{\Gamma_h[\y^\theta]}\hat e_y^\theta \partial_{t,h}^\bullet \nabla_{\Gamma_h[\y^\theta]} \hat \varphi_h^\theta \d\theta \notag \\
&\quad\,
-\int_{\Gamma} (w\cdot \varphi^l \nabla_{\Gamma}\cdot\tilde v_h^* 
+\nabla_{\Gamma} w \cdot D_{\Gamma}\tilde v_h^* \nabla_{\Gamma}  \varphi^l) \notag \\
&=:
\sum_{j=1}^9 W_j .
\end{align} 
Since $\partial_{t,h}^\bullet \hat u_h^\theta$ has nodal vector $\dot\u^*$, it follows from \eqref{ind-H-K} that $\|\partial_{t,h}^\bullet \hat u_h^\theta\|_{W^{1,6}(\Gamma_h[\y^\theta])}\le C$.  
By using the estimate of $\|\nabla_{\Gamma_h[\y^\theta]} \hat e_y^\theta\|_{L^2(\Gamma_h[\y^\theta])} \sim \|\nabla_{\Gamma_h[\x^*]} ({{{\rm id}_{\Gamma_h[\x^*]}}}-\hat y_h^*)\|_{L^2(\Gamma_h[\x^*])}$ in Lemma \ref{Lemma:W1p}, as well as inequalities \eqref{L2-H1-w}, \eqref{tilde_v_h-v} and \eqref{ind-H-K}, we can derive the following estimate:  
$$
 |W_1|  +  |W_2  |   +  |W_5| +  |W_9| \le Ch^k \| \varphi^l \|_{H^{1}(\Gamma)} , 
$$

The term $|W_6|$ can be estimated by using relation \eqref{dt-grad-f}, which allows us to write $\partial_{t,h}^\bullet \nabla_{\Gamma_h[\y^\theta]} \hat u_h^\theta$ as  
$$ 
\partial_{t,h}^\bullet \nabla_{\Gamma_h[\y^\theta]} \hat u_h^\theta
=\nabla_{\Gamma_h[\y^\theta]} \partial_{t,h}^\bullet\hat u_h^\theta
- (\nabla_{\Gamma_h[\y^\theta]} v_h^\theta - n_{\Gamma_h[\y^\theta]}n_{\Gamma_h[\y^\theta]}^\top(\nabla_{\Gamma_h[\y^\theta]} v_h^\theta)^\top)\nabla_{\Gamma_h[\y^\theta]} \hat u_h^\theta . 
$$ 
This, together with Lemma \ref{Lemma:W1p} and \eqref{ind-H-K}, implies $\|\partial_{t,h}^\bullet \nabla_{\Gamma_h[\y^\theta]} \hat u_h^\theta\|_{L^6(\Gamma_h[\y^\theta])} \le C$ and therefore 
$$ 
|W_6| \le C\|\partial_{t,h}^\bullet \nabla_{\Gamma_h[\y^\theta]} \hat u_h^\theta\|_{L^6(\Gamma_h[\y^\theta])} 
\| D_{\Gamma_h[\y^\theta]}\hat e_y^\theta \|_{L^3(\Gamma_h[\y^\theta])}
\| \nabla_{\Gamma_h[\y^\theta]} \hat \varphi_h^\theta \|_{L^2(\Gamma_h[\y^\theta])} 
\le Ch^k \| \varphi^l \|_{H^1(\Gamma)} .
$$ 

The estimate of $|W_3|$ depends on the estimate of $\partial_{t,h}^\bullet\hat \varphi_h^\theta $. 
Since $(\partial_{t,h}^\bullet \varphi)^l = (\partial_{t,h}^\bullet \varphi^l) = 0$, as shown in \eqref{mat-varphi=0}, where $\varphi = (\varphi^l)^{-l}$ and $\partial_{t,h}^\bullet \varphi^l$ denotes the material derivative with respect to velocity $\tilde v_h^*$, differentiating equation $0=\int_{\Gamma_h[\mathbf{x}^*]} (P_h \varphi - \varphi) \chi_h $ with respect to time, we obtain the following relation for all functions $\chi_h\in S_h(\Gamma_h[\x^*])$ such that $\partial_{t,h}^\bullet\chi_h=0$: 
\begin{equation}\label{l2}
  \begin{aligned}
     0 &= \frac{\d}{\d t} \int_{\Gamma_h[\mathbf{x}^*]} (P_h \varphi - \varphi) \chi_h \\
     & = \int_{\Gamma_h[\mathbf{x}^*]} \partial_{t,h}^\bullet (P_h \varphi - \varphi) \chi_h + \int_{\Gamma_h[\mathbf{x}^*]} (P_h \varphi - \varphi) \chi_h \nabla_{\Gamma_h[\mathbf{x}^*]} \cdot v_h^* \\
     & = \int_{\Gamma_h[\mathbf{x}^*]} \partial_{t,h}^\bullet P_h \varphi \chi_h + \int_{\Gamma_h[\mathbf{x}^*]} (P_h \varphi - \varphi) \chi_h \nabla_{\Gamma_h[\mathbf{x}^*]} \cdot v_h^* . 
  \end{aligned}
\end{equation}
This implies the following estimate by the duality argument:
\begin{equation}\label{varphi_t}
    \|\partial_{t,h}^\bullet P_h \varphi\|_{L^2(\Gamma_h[\x^*])} 
    \le C \|P_h \varphi - \varphi\|_{L^2(\Gamma_h[\x^*])} \le C h\|\varphi^l\|_{H^1(\Gamma)} .
\end{equation}
From the definition of $\partial_{t,h}^\bullet \hat \varphi_h^\theta$ and inequality \eqref{varphi_t}, along with the norm equivalence on $\Gamma_h[\y^\theta]$ and $\Gamma_h[\bf x^*]$, we obtain
\begin{equation}\label{varphitheta_t}
  \begin{aligned}
    \|\partial_{t,h}^\bullet \hat \varphi_h^\theta\|_{L^2(\Gamma_h[\y^\theta])} = \|(\partial_{t,h}^\bullet P_h \varphi )\circ X_{h}^* \circ (Y_h^\theta)^{-1}\|_{L^2(\Gamma_h[\y^\theta])} \le C h\|\varphi^l\|_{H^1(\Gamma)},
  \end{aligned}
\end{equation}
where $Y^\theta_h: \Gamma_h^0 \rightarrow \Gamma_h[\y^\theta] $ is the finite element flow map with nodal vector $\y^\theta$. 
By utilizing \eqref{varphitheta_t} and the estimate of $\|\nabla_{\Gamma_h[\y^\theta]} \hat e_y^\theta\|_{L^2(\Gamma_h[\y^\theta])} \sim \|\nabla_{\Gamma_h[\x^*]} ({{{\rm id}_{\Gamma_h[\x^*]}}}-\hat y_h^*)\|_{L^2(\Gamma_h[\x^*])}$ in Lemma \ref{Lemma:W1p}, we obtain  
\begin{equation*}
  \begin{aligned}
    |W_3| \le C h^{k+1} \|\varphi^l\|_{H^1(\Gamma)}. 
  \end{aligned}
\end{equation*} 

In order to estimate $|W_8|$, we use relation \eqref{dt-grad-f} again to write $\partial_{t,h}^\bullet \nabla_{\Gamma_h[\y^\theta]} \hat \varphi_h^\theta$ as 
$$ 
\partial_{t,h}^\bullet \nabla_{\Gamma_h[\y^\theta]} \hat \varphi_h^\theta
= \nabla_{\Gamma_h[\y^\theta]} \partial_{t,h}^\bullet\hat \varphi_h^\theta
- (\nabla_{\Gamma_h[\y^\theta]} v_h^\theta - n_{\Gamma_h[\y^\theta]}n_{\Gamma_h[\y^\theta]}^\top(\nabla_{\Gamma_h[\y^\theta]} v_h^\theta)^\top)\nabla_{\Gamma_h[\y^\theta]} \hat \varphi_h^\theta . 
$$ 
where $\nabla_{\Gamma_h[\y^\theta]} \partial_{t,h}^\bullet\hat \varphi_h^\theta$ can be estimated by combining the inverse inequality with \eqref{varphitheta_t}, i.e., 
\begin{equation}\label{nablavarphitheta_t}
  \begin{aligned}
    \|\nabla_{\Gamma_h[\y^\theta]}\partial_{t,h}^\bullet \hat \varphi_h^\theta\|_{L^2(\Gamma_h[\y^\theta])} \le Ch^{-1}\|\partial_{t,h}^\bullet \hat \varphi_h^\theta\|_{L^2(\Gamma_h[\y^\theta])} \le C \|\varphi^l\|_{H^1(\Gamma)} . 
  \end{aligned}
\end{equation}
This, together with the estimate of $\|\nabla_{\Gamma_h[\y^\theta]} \hat e_y^\theta\|_{L^2(\Gamma_h[\y^\theta])} \sim \|\nabla_{\Gamma_h[\x^*]} ({{{\rm id}_{\Gamma_h[\x^*]}}}-\hat y_h^*)\|_{L^2(\Gamma_h[\x^*])}$ in Lemma \ref{Lemma:W1p}, leads to the following result:
\begin{equation*}
  \begin{aligned}
    |W_8| \le C h^k \|\varphi^l\|_{H^1(\Gamma)}.
  \end{aligned}
\end{equation*}

Since $\partial_{t,h}^\bullet {{{\rm id}_{\Gamma_h[\x^*]}}} = \hat I_h^* v = - \hat I_h^*(H n) = - \hat I_h^*(\hat I_h^*H \hat I_h^*n)$ and $\partial_{t,h}^\bullet \hat y_h^* = - \hat I_h^*(\hat H_h^* \hat n_h^*) $ on $\Gamma_h[\x^*]$ (the latter follows from the relation $\dot\y^*=-{\bf I}_h( {\bf H}^*\bullet {\bf n}^*) $ in \eqref{def-yHn}), the following identity holds: 
\begin{equation*}
  \begin{aligned}
    \partial_{t,h} ^\bullet \hat e_y^\theta  & = \Big[\frac{\d}{\d t} ((\hat y_h^* - {{{\rm id}_{\Gamma_h[\x^*]}}} )\circ X_h^*)\Big] \circ (Y_h^\theta)^{-1} = \partial_{t,h}^\bullet (\hat y_h^* - {{{\rm id}_{\Gamma_h[\x^*]}}} )  \circ X_h^* \circ (Y_h^\theta)^{-1}\\
    & = - (\hat I_h^*(\hat H_h^* \hat n_h^*) - \hat I_h^*(\hat I_h^*H\hat I_h^*n))\circ X_h^* \circ (Y_h^\theta)^{-1} . 
  \end{aligned}
\end{equation*}
By using the stability of $\hat I_h^*$ in the $H^1(\Gamma_h[\mathbf{x}^*])$ norm for products of finite element functions (see \cite[Lemma 5.3]{KLL-2021}) and Lemma \ref{Lemma:W1p}, we obtain:
\begin{equation}\label{eytheta_t}
  \begin{aligned}
    \|\partial_{t,h}^\bullet \hat e_y^\theta\|_{H^1(\Gamma_h[\y^\theta])} 
    &\le C \|\hat I_h^*(\hat H_h^* \hat n_h^*) - \hat I_h^*(\hat I_h^*H \hat I_h^* n)\|_{H^1(\Gamma_h[\mathbf{x}^*])} \\
    &\le C \|\hat H_h^* - \hat I_h^*H \|_{H^1(\Gamma_h[\mathbf{x}^*])}
    + C \|\hat n_h^* - \hat I_h^*n \|_{H^1(\Gamma_h[\mathbf{x}^*])} 
    \le Ch^k . 
  \end{aligned}
\end{equation} 
Using the inequality \eqref{eytheta_t} and the estimate of $\|\nabla_{\Gamma_h[\y^\theta]} \hat e_y^\theta\|_{L^2(\Gamma_h[\y^\theta])} \sim \|\nabla_{\Gamma_h[\x^*]} ({{{\rm id}_{\Gamma_h[\x^*]}}}-\hat y_h^*)\|_{L^2(\Gamma_h[\x^*])}$ in Lemma \ref{Lemma:W1p}, we derive that 
\begin{equation*}
  \begin{aligned}
    |W_4| + |W_7| \le Ch^k \|\varphi^l\|_{H^1(\Gamma)}. 
  \end{aligned}
\end{equation*}
As a result, substituting $\varphi^l= \partial_{t,h}^\bullet w$ into \eqref{dth-w-elliptic} yields 
\begin{align} \label{dth-w-H1-h}
 \| \partial_{t,h}^\bullet w \|_{H^1(\Gamma)} 
 \le 
Ch^k .  
\end{align}

\subsection{Proof of the ${\bf H^1}$ estimates in Lemma \ref{Lemma:L2-dtH}}

Now, substituting \eqref{H1-estimate-g}  and \eqref{dth-w-H1-h} into \eqref{dtHh-dtH-H1-1}, along with assumption \eqref{ind-H-K}, we obtain 
\begin{align*}
\|(\partial_{t,h} ^\bullet \hat u_h^*)^l - \partial_{t,h} ^\bullet u \|_{H^1(\Gamma)} 
&\le
Ch^k + C
\inf_{\chi_h\in S_h(\Gamma_h[\x^*])} \| \partial_{t,h}^\bullet u  - \chi_h^l \|_{H^1(\Gamma)}  \notag \\
&=
Ch^k  + C
\inf_{\chi_h\in S_h(\Gamma_h[\x^*])} \| \partial_{t}^\bullet u + (\tilde v_h^*-v)\cdot\nabla_\Gamma u - \chi_h^l \|_{H^1(\Gamma)}  \notag \\
&\le 
Ch^k  + C\| \partial_{t}^\bullet u - (I_h^*\partial_{t}^\bullet u)^l  \|_{H^1(\Gamma)} , \notag
\end{align*} 
where the last inequality follows from choosing $\chi_h=I_h^*\partial_{t}^\bullet u$ and utilizing $\|\tilde v_h^*-v\|_{H^1(\Gamma)}\le Ch^k$. The latter has been shown in \eqref{tilde_v_h-v}. Since $\partial_{t}^\bullet u$ is a smooth function on $\Gamma$, it follows that $\| \partial_{t}^\bullet  u - (I_h^*\partial_{t}^\bullet u)^l  \|_{H^1(\Gamma)}\le Ch^k$ and therefore 
\begin{align}\label{dtHh-dtH-H1-f}
\|(\partial_{t,h} ^\bullet \hat u_h^*)^l - \partial_{t}^\bullet u \|_{H^1(\Gamma)} 
&\le Ch^k \quad\mbox{for}\,\,\, t\in[0,t_*+\delta] . 
\end{align}

\subsection{Estimate for $\|\partial_{t,h}^\bullet w\|_{L^2(\Gamma)}$}

We estimate $\|\partial_{t,h}^\bullet w\|_{L^2(\Gamma)}$ via a duality argument. Specifically, we define $\varphi^l\in H^2(\Gamma)^4$ to be the solution of 
\begin{align} \label{PDE-varphi-dtw}
\varphi^l - \Delta_{\Gamma}  \varphi^l = \partial_{t,h}^\bullet w \,\,\,\mbox{on}\,\,\, \Gamma , 
\end{align} 
which satisfies the following standard $H^2$ regularity estimate: 
\begin{align}\label{H2-varphi-dtw} 
\|\varphi^l\|_{H^2(\Gamma)} \le C \|\partial_{t,h}^\bullet w\|_{L^2(\Gamma)} .
\end{align}
By testing \eqref{PDE-varphi-dtw} with $\partial_{t,h}^\bullet w$ and estimating the right-hand side of \eqref{dth-w-elliptic}, we shall prove the following result:  
\begin{align} \label{dtw-remain}
\|\partial_{t,h}^\bullet w\|_{L^2(\Gamma)}^2 
= \int_{\Gamma} 
(\partial_{t,h}^\bullet w\cdot \varphi^l + \nabla_{\Gamma} \partial_{t,h}^\bullet w \cdot \nabla_{\Gamma}  \varphi^l) 
\le 
Ch^{k+1} \|\varphi^l\|_{H^2(\Gamma)} .
\end{align} 
This, together with \eqref{H2-varphi-dtw}, would imply the following estimate: 
\begin{align}\label{L2-estimate-dtw} 
\|\partial_{t,h}^\bullet w\|_{L^2(\Gamma)} 
\le
Ch^{k+1} .
\end{align} 
It remains to prove the inequality in \eqref{dtw-remain}. This is achieved by estimating $W_j$, $j=1,\dots,9$, on the right-hand side of \eqref{dth-w-elliptic} as follows. 

The same as before (see Section \ref{section:def-varphi_h}), we denote by $\hat u_h^{\theta}$, $\hat \varphi_h^{\theta}$ and $\hat e_y^\theta$ the finite element functions on surface $\Gamma_h[\y^\theta]$ with nodal vectors ${\bf u}^*$, ${\bf P}_h\varphi$ and ${\bf e_y}:=\y^*-\x^*$, respectively, and denote $\hat\varphi_h^*=\hat\varphi_h^0=P_h\varphi \in S_h[\x^*]= S_h[\y^0]$. Then the surface $\Gamma_h[\y^\theta]$ moves with velocity $\hat e_y^{\theta} $ as $\theta\in[0,1]$ changes, and 
$$
\partial_\theta^{\bullet} \hat u_h^{\theta} = \partial_\theta^{\bullet} \hat \varphi_h^{\theta} = 0 .
$$
When $\theta$ is fixed and $t$ increases, the surface $\Gamma_h[\y^\theta]$ moves with velocity $v_h^\theta$, which is a finite element function on $\Gamma_h[\y^\theta]$ with nodal vector 
$$ 
{\bf v}^\theta = (1-\theta){\bf I}_h v + \theta {\bf v}^* . 
$$  
By comparing $\int_{\Gamma_h[\y^\theta]} \hat u_h^\theta \cdot  \hat \varphi_h^\theta \nabla_{\Gamma_h[\y^\theta]} \cdot \hat e_y^\theta \, \nabla_{\Gamma_h[\y^\theta]} \cdot v_h^\theta  \d\theta$ with its value at $\theta=0$, and using the Newton--Leibniz rule, we have 
\begin{align*} 
W_1
&
=\int_0^1 \int_{\Gamma_h[\y^\theta]} \hat u_h^\theta \cdot \hat \varphi_h^\theta \nabla_{\Gamma_h[\y^\theta]} \cdot \hat e_y^\theta \, \nabla_{\Gamma_h[\y^\theta]} \cdot v_h^\theta  \d\theta \\
& 
= \int_{0}^1 \int_0^\theta \frac{\d}{\d\alpha} \int_{\Gamma_h[\y^\alpha]}  \hat u^\alpha_h \cdot\hat \varphi_h^\alpha\nabla_{\Gamma_h[\y^\alpha]}\cdot \hat e_y^\alpha  \nabla _{\Gamma_h[\y^\alpha]}\cdot v_h^\alpha \,\d \alpha \,\d \theta 
&&\mbox{(Newton--Leibniz rule)} \\
& \quad\,+\int_{\Gamma_h[{\bf x ^*}]} \hat u_h^* \cdot \hat \varphi_h^*\nabla _{\Gamma_h[\bf x ^*]} \cdot \hat e_y \nabla _{\Gamma_h[\bf x ^*]} \cdot \hat I_h^* v &&\mbox{(value at $\theta=0$)} \\
&=:  W_{11} + W_{12} . 
\end{align*} 
Since $\partial_\alpha^\bullet\hat u_h^\alpha=\partial_\alpha^\bullet\hat e_y^\alpha=0$, $\partial_\alpha^\bullet\hat \varphi_h^\alpha=0$ and $\partial_\alpha^\bullet v_h^\alpha=e_v^\alpha$, it follows that, by using formula \eqref{dt-grad-f}, 
\begin{equation*}
  \begin{aligned}
    |W_{11}|  
    & \le C(\|\hat e_y^\alpha\|_{W^{1,6}(\Gamma_h[\y^\alpha])}^2 + \|\hat e_y^\alpha\|_{W^{1,6}(\Gamma_h[\y^\alpha])}\|\hat e_v^\alpha\|_{W^{1,6}(\Gamma_h[\y^\alpha])})
     \|\varphi^l\|_{H^1(\Gamma)} \\
    & \le Ch^{2k} \|\varphi^l\|_{H^1(\Gamma)} , 
  \end{aligned} 
\end{equation*} 
where we have used Lemma \ref{Lemma:W1p} and \eqref{ev-theta} in estimating $\|\hat e_y^\alpha\|_{W^{1,6}(\Gamma_h[\y^\alpha])}$ and  $\|\hat e_v^\alpha\|_{W^{1,6}(\Gamma_h[\y^\alpha])}$ in the last inequality, respectively. By estimating the surface perturbation error between $\Gamma_h[\bf x ^*]$ and $\Gamma$, and using integration by parts for the integral on $\Gamma$ to remove the gradient operator $\nabla_{\Gamma}$ from $(\hat e_y)^l$, along with error estimates of Lagrange interpolation, we have 
\begin{align*}
    |W_{12}| &\le \Big|\int_{\Gamma_h[\bf x ^*]} \hat u_h^* \cdot\hat \varphi_h^*  \nabla_{\Gamma_h[\bf x ^*]}\cdot \hat e_y \nabla_{\Gamma_h[\bf x ^*]}\cdot \hat I_h^* v - \int_{\Gamma} (\hat u_h^*)^l \cdot (\hat \varphi_h^*)^l \nabla_{\Gamma}\cdot (\hat e_y)^l \nabla_{\Gamma}\cdot (\hat I_h^* v)^l \Big| \\
    &  \quad \, + \Big|  \int_{\Gamma} (\hat u_h^*)^l \cdot (\hat \varphi_h^*)^l \nabla_{\Gamma}\cdot (\hat e_y)^l (\nabla_{\Gamma}\cdot (\hat I_h^* v)^l - \nabla_\Gamma \cdot v) \Big| + \Big| \int_{\Gamma} (\hat u_h^*)^l \cdot(\hat \varphi_h^*)^l \nabla_{\Gamma}\cdot (\hat e_y)^l \nabla_\Gamma \cdot v \Big|\\
    & \le Ch^{2k} \|\varphi^l\|_{H^1(\Gamma)} + Ch^{2k} \|\varphi^l\|_{H^1(\Gamma)}  + Ch^{k+1} \|\varphi^l\|_{H^1(\Gamma)} 
    \le Ch^{k+1} \|\varphi^l\|_{H^1(\Gamma)} . 
\end{align*}
This proves that
\begin{align}\label{estimate-W1}
|W_1|
& \le
Ch^{k+1}  \| \varphi^l \|_{H^{1}(\Gamma)} .
\end{align}

Similarly, by converting the integral from $\Gamma_h[\y^\theta]$ to $\Gamma$ and then applying integration by parts along with the induction assumption \eqref{ind-H-K}, we can derive the following estimate:  
\begin{align}
|W_2|
:= \Big|\int_0^1 \int_{\Gamma_h[\y^\theta]} 
\partial_{t,h}^\bullet \hat u_h^\theta \cdot \hat \varphi_h^\theta \nabla_{\Gamma_h[\y^\theta]} \cdot \hat e_y^\theta \d\theta \Big|
& \le
Ch^{k+1}  \| \varphi^l \|_{H^{1}(\Gamma)} .
\end{align}
By using \eqref{varphitheta_t}, we have 
\begin{align}
|W_3|
&\le
Ch^{k+1}  \| \varphi^l \|_{H^{1}(\Gamma)}  .
\end{align}
By using \eqref{l2projectionH1}, the first inequality in \eqref{varphi_t}, and the first equality in \eqref{varphitheta_t}, we have
\begin{equation*}
  \begin{aligned}
    \|\partial_{t,h}^\bullet \hat \varphi_h^\theta\|_{H^1(\Gamma_h[\x^*])} & \le Ch^{-1}\|\partial_{t,h}^\bullet \hat \varphi_h^\theta\|_{L^2(\Gamma_h[\x^*])}\\
    &\le C h^{-1} \|P_h \varphi - \varphi\|_{L^2(\Gamma_h[\x^*])} \le C h\|\varphi^l\|_{H^2(\Gamma)},
  \end{aligned}
\end{equation*}
which implies that
\begin{align}
  |W_8|&\le
  Ch^{k+1}  \| \varphi^l \|_{H^{2}(\Gamma)}  .
\end{align}
Since $\partial_{t,h}^\bullet \hat e^\theta_{y}$ has the same nodal vector as $\hat v_h^* - \hat I_h^*v$, by the similar method along with Lemma \ref{Lemma:L2} and inequality \eqref{w_L2_bound}, we can obtain the following estimates for $W_j$, $j=4,5,7,9$, i.e., 
\begin{align} 
|W_4| + |W_5| + |W_7| + |W_9| 
&\le 
Ch^{k+1} \|\varphi^l \|_{H^2(\Gamma)} .  
\end{align}
It remains to prove the following result:
\begin{align}\label{estimate-W6}
|W_{6}|
\le
Ch^{k+1}\|\varphi^l \|_{H^2(\Gamma)} . 
\end{align} 
This can be done similarly by converting the integral in the expression of $W_6$ from $\Gamma_h[\y^\theta]$ to $\Gamma_h[\x^*]$, and using the Newton--Leibniz formula, we have 
\begin{align}\label{def-W6}
    W_ 6 &= \int_0^1 \int_{\Gamma_h[\y^\theta]} \partial_{t,h}^{\bullet} \nabla_{\Gamma_h[\y^\theta]} \hat u^\theta_h \cdot D_{\Gamma_h[\y^\theta]} \hat e^\theta _y \nabla_{\Gamma_h[\y^\theta]} \hat \varphi_h^\theta \, \d \theta \notag\\
    & = \int_0^1 \int_0^\theta \frac{\d}{\d\alpha}\int_{\Gamma_h[\y^\alpha]} \partial_{t,h}^{\bullet} \nabla_{\Gamma_h[\y^\alpha]} \hat u_h^\alpha \cdot D_{\Gamma_h[\y^\alpha]} \hat e_y^\alpha \nabla_{\Gamma_h[\y^\alpha]} \hat \varphi_h^\alpha \,\d \alpha \,\d\theta \notag\\
    & \quad \, + \int_{\Gamma_h[\bf x^*]} \partial_{t,h}^{\bullet} \nabla_{\Gamma_h[\bf x^*]} \hat u_h^* \cdot D_{\Gamma_h[\bf x^*]} \hat e_y \nabla_{\Gamma_h[\bf x^*]} \hat \varphi_h^* \, \d\theta \notag\\
    & = :W_{61} + W_{62},
\end{align}
where $W_{61}$ can be rewritten as follows:
\begin{equation*}
  \begin{aligned}
    W_{61} & = \int_0^1 \int_0^\theta \int_{\Gamma_h ^\alpha} \partial_{t,h}^\bullet \nabla_{\Gamma_h[\y^\alpha]} \hat u_h^\alpha \cdot D_{\Gamma_h[\y^\alpha]} \hat  e_y^\alpha  \nabla _{\Gamma_h[\y^\alpha]} \hat \varphi_h^\alpha  \nabla_{\Gamma_h[\y^\alpha]} \cdot \hat e_y^\alpha \,\d\alpha \,\d\theta \\
    & \quad \, + \int_0^1 \int_0^\theta \int_{\Gamma_h[\y^\alpha] } \partial_{\alpha}^\bullet  (\partial_{t,h}^\bullet \nabla_{\Gamma_h[\y^\alpha]} \hat u_h^\alpha) \cdot D_{\Gamma_h[\y^\alpha]} \hat  e_y^\alpha  \nabla _{\Gamma_h[\y^\alpha]} \hat \varphi_h^\alpha  \,\d\alpha \,\d\theta\\
    & \quad\,+\int_0^1 \int_0^\theta \int_{\Gamma_h[\y^\alpha]} \partial_{t,h}^\bullet \nabla_{\Gamma_h[\y^\alpha]} \hat u_h^\alpha \cdot \partial_{\alpha}^\bullet (D_{\Gamma_h[\y^\alpha]} \hat  e_y^\alpha)  \nabla _{\Gamma_h[\y^\alpha]} \hat \varphi_h^\alpha  \,\d\alpha \,\d\theta\\
    & \quad \, + \int_0^1 \int_0^\theta \int_{\Gamma_h[\y^\alpha]} \partial_{t,h}^\bullet \nabla_{\Gamma_h[\y^\alpha]} \hat u_h^\alpha \cdot D_{\Gamma_h[\y^\alpha]} \hat  e_y^\alpha \partial_{\alpha}^\bullet ( \nabla _{\Gamma_h[\y^\alpha]} \hat \varphi_h^\alpha)  \,\d\alpha \,\d\theta\\
    & =: W_{611} + W_{612} + W_{613}+ W_{614}.
  \end{aligned}
\end{equation*}
Since the nodal vector of $\partial_{t,h}^\bullet \hat u^\alpha_h $ is $\dot\u^*$, which is independent of $\alpha$, it follows that $\partial_{\alpha}^\bullet\partial_{t,h}^\bullet \hat u^\alpha_h = 0$. Hence, by applying $\partial_{t,h}^\bullet$ and $\partial_{\alpha}^\bullet$ to $\nabla_{\Gamma_h[\y^\alpha]} \hat u^\alpha_h$ sequentially and using relation \eqref{dt-grad-f}, we obtain 
\begin{align*}
   \partial_{t,h}^\bullet \nabla_{\Gamma_h[\y^\alpha]} \hat u^\alpha_h &= \nabla_{\Gamma_h[\y^\alpha]}\partial_{t,h}^\bullet \hat u^\alpha_h - (\nabla_{\Gamma_h[\y^\alpha]} v_h^\alpha - n_{\Gamma_h[\y^\alpha]} n_{\Gamma_h[\y^\alpha] }^\top (\nabla_{\Gamma_h[\y^\alpha] }v_h^\alpha)^\top) \nabla_{\Gamma_h[\y^\alpha] }\hat u_h^\alpha \\
    \partial_{\alpha}^\bullet \partial_{t,h}^\bullet \nabla_{\Gamma_h[\y^\alpha]} \hat u^\alpha_h 
    & =  
    -(\nabla_{\Gamma_h[\y^\alpha]} \hat e_y^\alpha - n_{\Gamma_h[\y^\alpha]} n_{\Gamma_h[\y^\alpha]}^\top (\nabla_{\Gamma_h[\y^\alpha]} \hat e_y^\alpha)^\top ) \nabla_{\Gamma_h[\y^\alpha]}\partial_{t,h}^\bullet \hat u_h^\alpha\\
    & \quad \, -\partial_{\alpha}^\bullet (\nabla_{\Gamma_h[\y^\alpha]} v_h^\alpha - n_{\Gamma_h[\y^\alpha]}n_{\Gamma_h[\y^\alpha]}^\top (\nabla _{\Gamma_h[\y^\alpha] }v_h^\alpha)^\top) \nabla_{\Gamma_h[\y^\alpha] }\hat u_h^\alpha \\
    &\quad\, + (\nabla_{\Gamma_h[\y^\alpha]} v_h^\alpha - n_{\Gamma_h[\y^\alpha]}n_{\Gamma_h[\y^\alpha]}^\top (\nabla _{\Gamma_h[\y^\alpha] }v_h^\alpha)^\top) \\
    &\qquad\cdot (\nabla_{\Gamma_h[\y^\alpha]} \hat e_y^\alpha - n_{\Gamma_h[\y^\alpha]} n_{\Gamma_h[\y^\alpha]}^\top (\nabla_{\Gamma_h[\y^\alpha]} \hat e_y^\alpha)^\top ) \nabla_{\Gamma_h[\y^\alpha]}\hat u_h^\alpha 
\end{align*}
where
\begin{align*}
\partial_{\alpha}^\bullet (\nabla_{\Gamma_h[\y^\alpha]} v_h^\alpha)
&= \nabla_{\Gamma_h[\y^\alpha]} \partial_{\alpha}^\bullet  v_h^\alpha 
-(\nabla_{\Gamma_h[\y^\alpha]} \hat e_y^\alpha - n_{\Gamma_h[\y^\alpha]} n_{\Gamma_h[\y^\alpha]}^\top (\nabla_{\Gamma_h[\y^\alpha]} \hat e_y^\alpha)^\top )  \nabla_{\Gamma_h[\y^\alpha]} v_h^\alpha \\
\partial_{\alpha}^\bullet n_{\Gamma_h[\y^\alpha]} 
& =  - (\nabla_{\Gamma_h[\y^\alpha]} \hat e_y^\alpha) n_{\Gamma_h[\y^\alpha]} .
\end{align*}
Since $\partial_{\alpha}^\bullet  v_h^\alpha = e_v^\alpha $, which has been estimated in \eqref{ev-theta}, it follows that 
\begin{align*}
&\| \partial_{\alpha}^\bullet (\nabla_{\Gamma_h[\y^\alpha]} v_h^\alpha) \|_{L^6(\Gamma_h[\y^\alpha])} 
+ \| \partial_{\alpha}^\bullet n_{\Gamma_h[\y^\alpha]} \|_{L^6(\Gamma_h[\y^\alpha])} \\
& \le C
\|e_v^\alpha \|_{W^{1,6}(\Gamma_h[\y^\alpha])} + C\|\hat e_y^\alpha \|_{W^{1,6}(\Gamma_h[\y^\alpha])}
\le Ch^k . 
\end{align*}
By using this result and the estimate of $\|\nabla_{\Gamma_h[\y^\alpha]} \hat e_y^\alpha \|_{L^p(\Gamma_h[\y^\alpha])} \sim \|\nabla_{\Gamma_h[\x^*]} ({{{\rm id}_{\Gamma_h[\x^*]}}}-\hat y_h^*)\|_{L^p(\Gamma_h[\x^*])}$ in Lemma \ref{Lemma:W1p}, as well as the estimate of $\|\nabla_{\Gamma_h[\y^\alpha]}\partial_{t,h}^\bullet \hat u_h^\alpha\|_{L^6(\Gamma_h[\y^\alpha])}$ in \eqref{ind-H-K}, we have 
$$
\|\partial_{t,h}^\bullet \nabla_{\Gamma_h[\y^\alpha]} \hat u^\alpha_h\|_{L^6(\Gamma_h[\y^\alpha])}
\le C
$$
and
\begin{align*}
\|\partial_{\alpha}^\bullet \partial_{t,h}^\bullet \nabla_{\Gamma_h[\y^\alpha]} \hat u^\alpha_h\|_{L^3(\Gamma_h[\y^\alpha])} 
&\le C\|\hat e_y^\alpha \|_{W^{1,6}(\Gamma_h[\y^\alpha])} \|\partial_{t,h}^\bullet \hat u_h^\alpha\|_{W^{1,6}(\Gamma_h[\y^\alpha])} \\
&\quad\,+ C\| \partial_{\alpha}^\bullet (\nabla_{\Gamma_h[\y^\alpha]} v_h^\alpha) \|_{L^6(\Gamma_h[\y^\alpha])} 
+ C\| \partial_{\alpha}^\bullet n_{\Gamma_h[\y^\alpha]} \|_{L^6(\Gamma_h[\y^\alpha])} \\
&\quad\, + C\| \nabla_{\Gamma_h[\y^\alpha]} \hat e_y^\alpha \|_{L^6(\Gamma_h[\y^\alpha])} \\
&\le Ch^k.
\end{align*}
This implies that 
\begin{align*}
|W_{611}|
&\le
C\|\partial_{t,h}^\bullet \nabla_{\Gamma_h[\y^\alpha]} \hat u^\alpha_h\|_{L^6(\Gamma_h[\y^\alpha])}
\|\hat e_y^\alpha \|_{W^{1,6}(\Gamma_h[\y^\alpha])}^2 
\| \hat\varphi_h^\alpha\|_{H^1(\Gamma_h[\y^\alpha])} 
\le Ch^{2k} \|\varphi^l\|_{H^1(\Gamma)} \\
|W_{612}|
&\le
C\|\partial_{\alpha}^\bullet \partial_{t,h}^\bullet \nabla_{\Gamma_h[\y^\alpha]} \hat u^\alpha_h\|_{L^3(\Gamma_h[\y^\alpha])} 
\|\hat e_y^\alpha \|_{W^{1,6}(\Gamma_h[\y^\alpha])} 
\| \hat\varphi_h^\alpha\|_{H^1(\Gamma_h[\y^\alpha])} 
\le Ch^{2k} \|\varphi^l\|_{H^1(\Gamma)} . 
\end{align*}
The estimates of $|W_{613}| $ and $ |W_{614}|$ can be done similarly by using relation \eqref{dt-grad-f}, i.e.,  
\begin{align*}
&|W_{613}|+ |W_{614}| \\
&\le
C\|\partial_{t,h}^\bullet \nabla_{\Gamma_h[\y^\alpha]} \hat u^\alpha_h\|_{L^6(\Gamma_h[\y^\alpha])}
\|\hat e_y^\alpha \|_{W^{1,6}(\Gamma_h[\y^\alpha])}^2 
\| \hat\varphi_h^\alpha\|_{H^1(\Gamma_h[\y^\alpha])} 
\le Ch^{2k} \|\varphi^l\|_{H^1(\Gamma)} .
\end{align*}
Therefore, we have 
\begin{align}\label{estimate-W61}
    |W_{61}| \le Ch^{k+1} \|\varphi^l\|_{H^1(\Gamma)} . 
\end{align}

The estimate of $|W_{62}| $ can be obtained by changing the underlying surface from $\Gamma_h[\x^*]$ to $\Gamma$ and then performing integration by parts on $\Gamma$. This can be done by defining a family of intermediate surfaces $\Gamma^{\theta}$, $\theta\in[0,1]$, between smooth surface $\Gamma$ and interpolated surface $\Gamma_h[\x^*]$, i.e., 
$$
\Gamma^{\theta}:=(1-\theta)\Gamma+\theta\Gamma_h[\x^*]
=\{x^\theta:x\in\Gamma_h[\x^*]\} ,
\,\,\,\mbox{with}\,\,\,
x^{\theta} := (1-\theta)x^{l}+\theta x\,\,\,\mbox{for}\,\,\, x\in\Gamma_h[\x^*] . 
$$ 
Let $u^\theta$, $\varphi^\theta$, $\tilde v_h^\theta$, $\hat e_y^\theta$ and $e^\theta$ be functions on $\Gamma^\theta$ defined by 
\begin{align*}
&\hat u^\theta(x^\theta) = (1-\theta)u(x^l) +\theta \hat u_h^*(x) , \\
&\hat \varphi^\theta(x^\theta) = (1-\theta) \varphi^l(x^l) +\theta \hat \varphi_h^*(x) , \\ 
&\hat e_y ^\theta(x^\theta) = \hat e_y(x) = \hat y_h^* - x \quad\mbox{(see the definition in Section \ref{section:def-hat-ey})} , \\
&e^\theta(x^\theta) = x-x^l 
&&\mbox{for}\,\,\, x \in\Gamma_h[\x^*] . 
\end{align*}
By this definition, $\varphi^0 = \varphi^l$ and $\hat e_y^0 = \hat e_y^l$, and surface $\Gamma^\theta$ moves with velocity $e^\theta$ as $\theta\in[0,1]$ changes. Moreover, the following relations hold: 
$$
\partial_{\theta}^{\bullet} \hat u^{\theta} (x^\theta) = 
\hat u_h^*(x) - u(x^l) , 
\quad
\partial_{\theta}^{\bullet} \varphi^{\theta}(x^\theta) = \hat \varphi_h^* (x) - \varphi^l(x^l), 
\quad\mbox{and}\quad
\partial_{\theta}^{\bullet} \hat e_y^\theta(x^\theta)  = 
0 .
$$
Then we can rewrite $W_{62}$ as follows by using formula \eqref{dt-grad-f}: 
\begin{align*}
    W_{62} &= \int_{\Gamma_h[\mathbf x^*]}\nabla_{\Gamma_h[\mathbf x^*]}\partial_{t,h}^\bullet \hat u^*_h \cdot D_{\Gamma_h[\mathbf x^*]}\hat e_y \nabla_{\Gamma_h[\mathbf x^*]}\hat \varphi_h^*\\
    & \quad\, - \int_{\Gamma_h[\mathbf x^*]}[\nabla_{\Gamma_h[\mathbf x^*]} \hat I_h^*v - n_{\Gamma_h[\mathbf x^*]}^*(n_{\Gamma_h[\mathbf x^*]}^*)^\top(\nabla_{\Gamma_h[\mathbf x^*]} \hat I_h^*v)^\top]\nabla_{\Gamma_h[\mathbf x^*]}\hat u_h^* \cdot D_{\Gamma_h[\mathbf x^*]}\hat e_y \nabla_{\Gamma_h[\mathbf x^*]}\hat \varphi_h^*\\
    & =: W_{621} + W_{622}.
\end{align*}
We can further approximate $W_{621}$ by $\int_{\Gamma} \nabla_{\Gamma} \partial_{t,h}^\bullet u \cdot D_{\Gamma}\hat e_y^l \nabla_{\Gamma} \varphi^l$ and use Newton--Leibniz rule, i.e., 
\begin{align*}
    W_{621} &= \int_0^1 \frac{\d}{\d \theta} \int_{\Gamma^\theta}  \nabla_{\Gamma^\theta} \partial_{t,h}^\bullet \hat u^\theta \cdot D_{\Gamma^\theta}\hat e_y^\theta\nabla_{\Gamma^\theta} \hat \varphi^\theta \,\d\theta + \int_{\Gamma} \nabla_{\Gamma} \partial_{t}^\bullet u \cdot D_{\Gamma}\hat e_y^l \nabla_{\Gamma} \varphi^l \\
    & = \int_0^1 \int_{\Gamma^\theta}\nabla_{\Gamma^\theta} \partial_{t,h}^\bullet \hat u^\theta \cdot D_{\Gamma^\theta}\hat e_y^\theta\nabla_{\Gamma^\theta}\hat \varphi^\theta \nabla_{\Gamma^\theta} \cdot e^\theta \,\d\theta+ \int_0^1 \int_{\Gamma^\theta} \partial_{\theta}^\bullet\nabla_{\Gamma^\theta} \partial_{t,h}^\bullet\hat u^\theta \cdot D_{\Gamma^\theta}\hat e_y^\theta\nabla_{\Gamma^\theta} \hat \varphi^\theta \,\d\theta\\
    & \quad \, + \int_0^1 \int_{\Gamma^\theta} \nabla_{\Gamma^\theta} \partial_{t,h}^\bullet\hat u^\theta \cdot \partial_{\theta}^\bullet D_{\Gamma^\theta}\hat e_y^\theta\nabla_{\Gamma^\theta} \hat \varphi^\theta \,\d\theta + \int_0^1 \int_{\Gamma^\theta} \nabla_{\Gamma^\theta} \partial_{t,h}^\bullet\hat u^\theta \cdot  D_{\Gamma^\theta}\hat e_y^\theta\partial_{\theta}^\bullet\nabla_{\Gamma^\theta} \hat \varphi^\theta \,\d\theta \\
    & \quad \, + \int_{\Gamma} \nabla_{\Gamma} \partial_{t}^\bullet u \cdot D_{\Gamma}\hat e_y \nabla_{\Gamma} \varphi^l = :W_{6211} + W_{6212} + W_{6213} + W_{6214} + W_{6215} . 
\end{align*}
By using the following estimates 
\begin{align*}
\|\partial_{t,h}^\bullet \hat u^\theta \|_{W^{1,6}(\Gamma^\theta)} &\le C &&\mbox{(as a result of \eqref{ind-H-K})} \\
\| \hat e_y^\theta \|_{W^{1,6}(\Gamma^\theta)} &\sim \| \hat y_h^* - {{{\rm id}_{\Gamma_h[\x^*]}}} \|_{W^{1,6}(\Gamma_h[\x^*])} \le Ch^k &&\mbox{(shown in Lemma \ref{Lemma:W1p})} \\
\| e^\theta \|_{W^{1,6}(\Gamma^\theta)} &\le Ch^k &&\mbox{(error of interpolating surface $\Gamma$)} . 
\end{align*}
we have 
$$
|W_{6211}|
\le 
\|\partial_{t,h}^\bullet \hat u^\theta \|_{W^{1,6}(\Gamma^\theta)} 
\| \hat e_y^\theta \|_{W^{1,6}(\Gamma^\theta)} \|\hat\varphi^\theta \|_{H^1(\Gamma^\theta)} \| e^\theta \|_{W^{1,6}(\Gamma^\theta)} \le 
Ch^{2k} \|\varphi^l\|_{H^1(\Gamma)} .
$$ 
By using formula \eqref{dt-grad-f}, we see that 
$\partial_{\theta}^\bullet\nabla_{\Gamma^\theta} \partial_{t,h}^\bullet\hat u^\theta $ is equal to $\nabla_{\Gamma^\theta} \partial_{\theta}^\bullet\partial_{t,h}^\bullet\hat u^\theta$ plus a term which is bounded by $\nabla_{\Gamma^\theta}e^\theta$ times $\nabla_{\Gamma^\theta} \partial_{t,h}^\bullet\hat u^\theta $. Therefore, 
\begin{align*}
|W_{6212}|
&\le 
( \|\nabla_{\Gamma^\theta} \partial_{\theta}^\bullet\partial_{t,h}^\bullet \hat u^\theta \|_{L^2(\Gamma^\theta)} 
+ \| e^\theta \|_{W^{1,3}(\Gamma^\theta)} \|\partial_{t,h}^\bullet \hat u^\theta \|_{W^{1,6}(\Gamma^\theta)} 
)
\| \hat e_y^\theta \|_{W^{1,3}(\Gamma^\theta)} \|\hat\varphi^\theta \|_{W^{1,6}(\Gamma^\theta)} \\
&\le 
Ch^{2k} \|\varphi^l\|_{H^2(\Gamma)} ,
\end{align*}
where the last inequality uses the Sobolev embedding $H^2(\Gamma)\hookrightarrow W^{1,6}(\Gamma)$, the following result that follows from \eqref{dtHh-dtH-H1-f}: 
$$
\|\nabla_{\Gamma^\theta} \partial_{\theta}^\bullet\partial_{t,h}^\bullet \hat u^\theta \|_{L^2(\Gamma^\theta)} 
  \le C\|(\partial_{\theta}^\bullet\partial_{t,h}^\bullet \hat u^\theta)^l \|_{H^1(\Gamma)} 
        = C \| (\partial_{t,h}^\bullet \hat u_h^*)^l - \partial_t ^\bullet u\|_{H^1(\Gamma)} \le Ch^k, 
$$ 
and the following result pertains to the interpolation error caused by the perturbation of the surface:
$$
\|\nabla_{\Gamma^\theta} e^\theta\|_{L^\infty(\Gamma^\theta)}\le Ch^k.
$$
Moreover, $W_{6213} $ and $ W_{6214}$ can be estimated similarly by using the following results: 
\begin{align*}
    \|\nabla_{\Gamma^\theta} \partial_\theta^\bullet \varphi_h^\theta\|_{L^2(\Gamma^\theta)} & \le \|\varphi^l - (\hat \varphi_h^*)^l\|_{H^1(\Gamma)} 
    = \|\varphi^l - (P_h\varphi)^l\|_{H^1(\Gamma)}  \le Ch\|\varphi^l \|_{H^2(\Gamma)} 
\end{align*}
This would yield the following results: 
\begin{align*}
    |W_{6213}| + |W_{6214}| \le Ch^{2k} \|\varphi^l \|_{H^2(\Gamma)} .
\end{align*}
Then, $W_{6215}$ can be estimated with integration by parts and the estimate of $\|\hat e_y\|_{L^2(\Gamma)}\sim \| \hat y_h^* - {{{\rm id}_{\Gamma_h[\x^*]}}} \|_{L^2(\Gamma_h[\x^*])}$ in Lemma \ref{Lemma:L2}, with $ |W_{6215}| \le Ch^{k+1} \|\varphi^l\|_{H^2(\Gamma)}$. 
This proves the following result (by summing up the estimates of $|W_{621j}|$, $j=1,\dots,5$): 
\begin{align*}
    |W_{621}| &\le Ch^{k+1} \|\varphi^l\|_{H^2(\Gamma)} .
\end{align*}
Note that $|W_{622}|$ can be estimated similarly as $|W_{621}|$, with $|W_{622}| \le Ch^{k+1} \|\varphi^l\|_{H^2(\Gamma)}$. By summing up the estimates of $|W_{621}|$ and $|W_{622}|$ we obtain 
\begin{align}\label{estimate-W62}
    |W_{62}| \le Ch^{k+1} \|\varphi^l\|_{H^2(\Gamma)}.
\end{align}
Now, substituting \eqref{estimate-W61} and \eqref{estimate-W62} into \eqref{def-W6} yields \eqref{estimate-W6}. Then, substituting the estimates in \eqref{estimate-W1}--\eqref{estimate-W6} into the right-hand side of \eqref{dth-w-elliptic} yields \eqref{dtw-remain} and therefore completes the proof of \eqref{L2-estimate-dtw}.

\subsection{Proof of the ${\bf L^2}$ estimates in Lemma \ref{Lemma:L2-dtH}} 

By substituting estimates \eqref{H1-estimate-g}, \eqref{L2-estimate-g}, \eqref{dth-w-H1-h} and \eqref{L2-estimate-dtw} into \eqref{dtHh-dtH-L2-1}, we obtain the following result: 
\begin{align*} 
\|(\partial_{t,h}^\bullet \hat u_h^*)^l - \partial_{t,h}^\bullet u \|_{L^2(\Gamma)} 
&\le
Ch^{k+1} + Ch 
\inf_{\chi_h\in S_h(\Gamma_h[\x^*])} \| \partial_{t,h}^\bullet u - \chi_h^l \|_{H^1(\Gamma)} \notag \\
&\le
Ch^{k+1}+ Ch 
\| \partial_{t,h}^\bullet u - (I_h^*\partial_t^\bullet u)^l  \|_{H^1(\Gamma)} \notag \\
&=
Ch^{k+1} + Ch 
 \| \partial_{t}^\bullet u + (\tilde v_h^*-v) \cdot\nabla u - (I_h^*\partial_t^\bullet u)^l \|_{H^1(\Gamma)} \notag \\
&\le
Ch^{k+1}+ Ch 
 \| \partial_{t}^\bullet u - (I_h^*\partial_t^\bullet u)^l  \|_{H^1(\Gamma)} ,
\end{align*} 
where the last inequality follows from the application of the triangle inequality and the estimate of $\|\tilde v_h^*-v\|_{H^1(\Gamma)}$ in \eqref{tilde_v_h-v}. The last inequality, together with a standard estimate of interpolation error $\| \partial_{t}^\bullet u - (I_h^*\partial_t^\bullet u)^l  \|_{H^1(\Gamma)} $, implies the following $L^2$ estimate:
\begin{align}\label{dtHh-dtH-L2-f}
\|(\partial_{t,h} ^\bullet \hat u_h^*)^l - \partial_{t}^\bullet u \|_{L^2(\Gamma)} 
&\le Ch^{k+1} \quad\mbox{for}\,\,\, t\in[0,t_*+\delta] . 
\end{align} 
This proves \eqref{L2-dtH*}--\eqref{L2-dtn*} for $t\in[0,t_*]$ and therefore completes the proof of Lemma \ref{Lemma:L2-dtH}, as  discussed at the end of Section \ref{section:B1}. 
\hfill$\square$

\section{Proof of Lemma \ref{Lemma:defects}}\label{appendix_D}
\renewcommand{\thelemma}{D.\arabic{lemma}}
\renewcommand{\theequation}{D.\arabic{equation}}
\renewcommand{\thesubsection}{D.\arabic{subsection}}

The proof of Lemma \ref{Lemma:defects} is similar as the proof for Lemma \ref{Lemma:L2-dtH}. For the readers' convenience, we present a complete proof and divide it into several sections.

For a finite element function $\chi_u \in S_h[\y^*]^4 $ we denote by $\hat\chi_u$ the finite element function on $\Gamma_h[\x^*]$ with the same nodal vector as $\chi_u$ and denote by $\hat\chi_u^l$ the lift of $\hat\chi_u$ from $\Gamma_h[\x^*]$ to $\Gamma$. Then equation \eqref{KLL} implies that 
\begin{equation}\label{exact_u}
  \begin{aligned}
 &\int_{\Gamma} \partial_t ^\bullet u \cdot\hat \chi_u^l 
    + \int_{\Gamma} \nabla_{\Gamma} u \cdot \nabla_{\Gamma} \hat \chi_u^l 
    = \int_{\Gamma}|\nabla_{\Gamma}n|^2 u\cdot \hat \chi_u^l   \qquad \forall \chi_u \in S_h[\y^*]^4 . 
  \end{aligned}
\end{equation}
The definition of the dynamic Ritz projection in \eqref{def-Ritz} implies that 
\begin{align}\label{Ritz_u}
&\int_{\Gamma_h[\y^*]}\nabla_{\Gamma_h[\y^*]} u_h^* \cdot \nabla_{\Gamma_h[\y^*]} \chi_u - \int_{\Gamma} \nabla_{\Gamma} u \cdot \nabla_{\Gamma} \hat \chi_u^l 
 = \int_{\Gamma}  u  \cdot \hat \chi_u^l - \int_{\Gamma_h[\y^*]}  u_h^*\cdot  \chi_u
\end{align}
By subtracting \eqref{exact_u} from \eqref{weak-Ritz-u-version} and utilizing \eqref{Ritz_u}, we have 
\begin{equation}\label{defect_H_identity}
  \begin{aligned}
   & \int_{\Gamma_h[\bf y^*]}{d}_u^*\cdot \chi_u \\
   &= \int_{\Gamma_h[\y^*]} \partial_{t,h}^\bullet u_h^* \cdot\chi_u - \int_{\Gamma} \partial_t ^\bullet u \cdot\hat \chi_u^l + \int_{\Gamma_h[\y^*]}\nabla_{\Gamma_h[\y^*]} u_h^* \cdot \nabla_{\Gamma_h[\y^*]} \chi_u - \int_{\Gamma} \nabla_{\Gamma} u \cdot \nabla_{\Gamma} \hat \chi_u^l \\
    & \quad \, + \int_{\Gamma}|\nabla_{\Gamma}n|^2 u\cdot \hat \chi_u^l - \int_{\Gamma_h[\y^*]}  |\nabla_{\Gamma_h[\y^*]}n_h^*|^2 u_h^*\cdot \chi_u \\
    & =  \int_{\Gamma_h[\y^*]} \partial_{t,h}^\bullet u_h^* \cdot \chi_u - \int_{\Gamma} \partial_t ^\bullet u\cdot  \hat \chi_u^l + \int_{\Gamma}  u  \cdot \hat \chi_u^l - \int_{\Gamma_h[\y^*]}  u_h^*\cdot  \chi_u \\
    & \quad\,+ \int_{\Gamma} |\nabla_{\Gamma}n|^2 u\cdot\hat \chi_u^l - \int_{\Gamma_h[\y^*]} |\nabla_{\Gamma_h[\y^*]}n_h^*|^2 u_h^* \cdot\chi_u\\
    & =: \int_{\Gamma_h[\bf y^*]}{d}_{u,1}^*\cdot \chi_u + \int_{\Gamma_h[\bf y^*]}{d}_{u,2}^*\cdot \chi_u +\int_{\Gamma_h[\bf y^*]}{d}_{u,3}^*\cdot \chi_u, \qquad \qquad \quad \forall \chi_u \in S_h[\y^*]^4 . 
  \end{aligned}
\end{equation}

The three terms on the right-hand side of \eqref{defect_H_identity} are estimated in the following lemmas.
 
\begin{lemma}\label{Lemma:defects12}
Under the assumptions of Theorem \ref{THM1}, the following estimates hold{\rm:}
\begin{align*}
\bigg|\int_{\Gamma_h[\bf y^*]}{d}_{u,1}^*\cdot \chi_u\bigg|
\le
Ch^{k+1}  \|\chi_u\|_{H^1(\Gamma_h[\bf y^*])} ,\\
\bigg|\int_{\Gamma_h[\bf y^*]}{d}_{u,2}^*\cdot \chi_u\bigg|
\le
Ch^{k+1}  \|\chi_u\|_{H^1(\Gamma_h[\bf y^*])}  . 
\end{align*}
\end{lemma}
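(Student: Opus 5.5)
Both estimates rest on one move: transfer every integral over $\Gamma_h[\y^*]$ to the interpolated surface $\Gamma_h[\x^*]$ through the intermediate surfaces $\Gamma_h[\y^\theta]$, $\y^\theta=\x^*+\theta\e_\y$. Note that $\chi_u$, $u_h^*$ and $\partial_{t,h}^\bullet u_h^*$ have $\theta$-independent nodal vectors, so their material derivatives in $\theta$ vanish. The $\Gamma_h[\x^*]$ integrals are then lifted to $\Gamma$ and compared with the exact integrals. This follows the pattern used for $B_0$--$B_3$ in the proof of Lemma \ref{Lemma:L2} and for $L_2$ in Appendix \ref{appendix_C}.

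For $d_{u,2}^*$ we write
\begin{align*}
\int_{\Gamma_h[\y^*]} u_h^*\cdot\chi_u-\int_\Gamma u\cdot\hat\chi_u^l
=(\M(\y^*)-\M(\x^*))\u^*\cdot\bm\chi+\Big(\int_{\Gamma_h[\x^*]}\hat u_h^*\cdot\hat\chi_u-\int_\Gamma u\cdot\hat\chi_u^l\Big).
\end{align*}
In the second bracket we replace the $\Gamma_h[\x^*]$-integral by the $\Gamma$-integral of the lifts. The geometric error is $O(h^{k+1})\|\hat\chi_u\|_{L^2}$ by \cite[Proposition 2.3]{Demlow-2009}, and the function error $\|(\hat u_h^*)^l-u\|_{L^2(\Gamma)}\le Ch^{k+1}$ comes from \eqref{L2-u-u}.

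For the first term, Lemma \ref{BilinearMA} gives $\int_0^1\int_{\Gamma_h[\y^\theta]}\hat u_h^{*,\theta}\cdot\hat\chi^\theta\,\nabla\cdot\hat e_y^\theta\,\d\theta$. We freeze $\theta=0$, which costs $O(\|\hat e_y\|_{W^{1,4}}^2\|\chi\|_{L^2})=O(h^{2k})$ by a Newton--Leibniz argument in $\theta$ with Lemma \ref{Lemma:W1p}. We then lift to $\Gamma$, replace $\hat u_h^*$ by $u$ using Lemma \ref{Lemma:W1p} (the cost is $h^k\cdot h^k$), and integrate by parts to move the divergence off $\hat e_y^l$. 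The result is bounded by $C\|\hat e_y\|_{L^2}\|\chi_u\|_{H^1}\le Ch^{k+1}\|\chi_u\|_{H^1}$ by Lemma \ref{Lemma:L2}. Since $2k\ge k+1$ for $k\ge1$, all the remainders are acceptable.

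For $d_{u,1}^*$ the same decomposition is applied with $u_h^*$ replaced by $\partial_{t,h}^\bullet u_h^*$ and $u$ replaced by $\partial_t^\bullet u$. The function error is now controlled by \eqref{dtHh-dtH-L2-f}. The bound $\|\partial_{t,h}^\bullet \hat u_h^*\|_{W^{1,\infty}}\le C$ from \eqref{t-u_h-w-infty-bound} handles the $\theta$-perturbation terms, and the integration-by-parts step again relies on $\|\hat e_y\|_{L^2}\le Ch^{k+1}$.

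The main obstacle is the $\M(\y^*)-\M(\x^*)$ term. The naive bound $\|\nabla\hat e_y\|_{L^2}\|\chi\|_{L^2}$ only gives $h^k$, so the full order hinges on the integration by parts on $\Gamma$. That step requires differentiating $\hat u_h^{*,l}\cdot\hat\chi_u^l$, which is why $\|\chi_u\|_{H^1}$ (and not the $L^2$ norm) appears on the right-hand side. It also requires keeping the commutator errors from lifting and from freezing $\theta$ at order $h^{2k}$.
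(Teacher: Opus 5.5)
Your proof is correct and follows essentially the paper's route: you pass from $\Gamma_h[\y^*]$ to $\Gamma_h[\x^*]$ through the intermediate surfaces $\Gamma_h[\y^\theta]$, freeze $\theta=0$ at cost $O(h^{2k})$, lift to $\Gamma$, and integrate by parts to move the divergence off $\hat e_y^l$, so that Lemma~\ref{Lemma:L2} gives $h^{k+1}\|\chi_u\|_{H^1}$; this is the paper's $J_1$--$J_4$ argument, and the $d_{u,1}^*$ case is handled the same way using Lemma~\ref{Lemma:L2-dtH}. The only difference is bookkeeping: the paper also moves the nodal vector from ${\bf I}_h u$ to ${\bf u}^*$ along the path and uses the Lagrange interpolation error, whereas you keep ${\bf u}^*$ fixed and put the function error into \eqref{L2-u-u}, which comes to the same thing.
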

\begin{proof}
By using the triangle inequality, we have
\begin{align}\label{estimate-du2}
&\bigg|\int_{\Gamma_h[\bf y^*]}{d}_{u,2}^*\cdot \chi_u\bigg| \\
&=
\bigg|\int_{\Gamma} u\cdot \hat \chi_u^l -  \int_{\Gamma_h[\y^*]}  u_h^*\cdot \chi_u\bigg| \notag\\
&\le
\bigg|\int_{\Gamma} u\cdot \hat \chi_u^l -  \int_{\Gamma_h[\x^*]} u^{-l}\cdot \hat\chi_u\bigg|
+ \bigg|\int_{\Gamma_h[\x^*]} u^{-l}\cdot \hat \chi_u -  \int_{\Gamma_h[\x^*]}  \hat I_h^*u \cdot \hat\chi_u\bigg| \notag\\
&\quad 
+\bigg| \int_{\Gamma_h[\x^*]}  \hat I_h^*u\cdot  \hat\chi_u -  \int_{\Gamma_h[\y^*]}  u_h^*\cdot \chi_u \bigg| \notag\\
&\le
Ch^{k+1} \|\chi_u\|_{L^2(\Gamma_h[\y^*])} + Ch^{k+1} \|\chi_u\|_{L^2(\Gamma_h[\y^*])}  
+\bigg| \int_{\Gamma_h[\x^*]} \hat I_h^*u \cdot\hat\chi_u -  \int_{\Gamma_h[{\y}^*]}  u_h^*\cdot\chi_u \bigg| , \notag
\end{align}
where the first term on the right-hand side is obtained by using the standard estimates for the geometric perturbation errors in \cite[Lemma 5.6]{Kovacs2018} (also see \cite[Lemma 7.4]{KLLP2017}), and the second term on the right-hand side of the inequality above is obtained by using error estimates for the Lagrange interpolation, i.e., $\|u - (\hat I_h^* u)^l\|_{L^\infty(\Gamma)}\le Ch^{k+1}$. In both terms we have converted $\|\chi_u\|_{L^2(\Gamma_h[\bf x^*])}$ to $\|\chi_u\|_{L^2(\Gamma_h[\y^*])}$ by using the norm equivalence between $\Gamma_h[\x^*]$ and $\Gamma_h[\y^*]$. The third term on the right-hand side of the inequality above can be estimated as follows. 

Let $ \y^\theta=(1-\theta)\x^*+\theta \y^*$,
let $\chi_u^{\theta}$, $u_h^{\theta}$ and $e_y^\theta$ be the finite element functions on $\Gamma_h[\y^\theta]$ with nodal vectors ${\bm \chi_{\bf u}}$, $(1-\theta){\bf I}_hu+\theta {\bf u}^*$ and $\bf e_\y: = \y^* -\x^*$, respectively. Then 
$$
\partial_\theta^{\bullet}u_h^{\theta} = e_u^{\theta}
\quad\mbox{and}\quad
\partial_\theta^{\bullet}\hat\chi_u^{\theta} =0 , 
$$
where $e_u^{\theta}$ is the finite element function on $\Gamma_h[\y^\theta]$ with nodal vector ${\bf u}^*-{\bf I}_hu$. The surface $\Gamma_h[\y^\theta]$ is moving with velocity $e_y^\theta$ as $\theta$ changes from $0$ to $1$. Then, Lemma \ref{Lemma:L2} and the equivalence of norms on $\Gamma_h[\y^\theta]$, $\theta\in[0,1]$, imply that
\begin{align}\label{ey-L2-copy}
\|e_y^\theta\|_{L^2(\Gamma_h[\y^\theta])} + \|e_u^\theta\|_{L^2(\Gamma_h[\y^\theta])} \le Ch^{k+1}.
\end{align}
By using \eqref{ey-L2-copy}, along with the equivalence of $L^p$ norm on $\Gamma_h[\bf y^\theta]$ and $\Gamma_h[\bf y^*]$, we have  
\begin{align*}
&\quad\bigg| \int_{\Gamma_h[\x^*]}  \hat I_h^*u \cdot\hat\chi_u -  \int_{\Gamma_h[\y^*]}  u_h^*\cdot\chi_u \bigg| \\
&=
\bigg|\int_0^1 \frac{\d}{\d\theta}\int_{\Gamma_h[\y^\theta]}  u_h^{\theta} \cdot \chi_u^\theta \d\theta\bigg| \\
&=\bigg|\int_0^1 \int_{\Gamma_h[\y^\theta]}  (e_u^{\theta} \cdot\chi_u^\theta + u_h^{\theta} \cdot\chi_u^\theta \nabla\cdot e_y^\theta)  \d\theta\bigg| \\
&\le C\int_0^1  \|e_u^{\theta}\|_{L^2(\Gamma_h[\y^\theta])} \|\chi_u^\theta\|_{L^2(\Gamma_h[\y^\theta])} \d\theta + \bigg| \int_0^1 \int_{\Gamma_h[\y^\theta]} u_h^{\theta} \cdot \chi_u^\theta \nabla_{\Gamma_h[\y^\theta]}\cdot e_y^\theta \d\theta\bigg| \\
&\le Ch^{k+1}\|\chi_u\|_{L^2(\Gamma_h[\bf y^*])} + \bigg| \int_0^1 \int_{\Gamma_h[\y^\theta]} u_h^{\theta} \cdot\chi_u^\theta \nabla_{\Gamma_h[\y^\theta]}\cdot e_y^\theta \d\theta\bigg| .
\end{align*}
The second term on the right-hand side of the last inequality can be estimated as follows (by using the triangle inequality): 
\begin{align*}
    \bigg| \int_0^1 \int_{\Gamma_h[\y^\theta]} u_h^{\theta} \cdot \chi_u^\theta \nabla_{\Gamma_h[\y^\theta]}\cdot e_y^\theta \d\theta\bigg| 
    &\le \bigg|\int_{\Gamma_h[\bf x^*]} \hat u_h^* \cdot \hat\chi_u^* \nabla_{\Gamma_h[\bf x^*]}\cdot e_y^0  - \int_{\Gamma} (\hat u_h^*)^l \cdot (\hat\chi_u^*)^l (\nabla_{\Gamma_h[\bf x^*]}\cdot e_y^0)^l \bigg|\\
    &\quad \,+ \bigg|\int_{\Gamma} (\hat u_h^*)^l \cdot (\hat\chi_u^*)^l [(\nabla_{\Gamma_h[\bf x^*]}\cdot e_y^0)^l  -  \nabla_{\Gamma}\cdot (e_y^0)^l ]\bigg|\\
    & \quad \,+ \bigg|\int_0^1 \int_0^\theta \frac{\d }{\d \alpha} \int_{\Gamma_h[\y^\alpha]} u_h^\alpha \cdot \chi_h^\alpha \nabla_{\Gamma_{h}[\bf y^\alpha]}\cdot e_y^\alpha \d\alpha\d\theta\bigg|\\
    & \quad \, + \bigg|\int_{\Gamma} (\hat u_h^*)^l \cdot(\hat\chi_u^*)^l \nabla_{\Gamma}\cdot (e_y)^l \bigg|\\
    &=: J_1 + J_2 + J_3 + J_4 
    \le  Ch^{k+1} \|{\chi_u}\|_{H^1(\Gamma_h[\bf y^*])},
\end{align*} 
where $J_1$ is estimated by applying the geometric perturbation errors (see \cite[Lemma 5.6]{Kovacs2018} and \cite[Lemma 7.4]{KLLP2017}); $J_2$ is estimated by applying the chain rule of partial differentiation and Lemma \ref{Lemma:W1p}, which imply that 
$$
J_2\le C\|(\hat\chi_u^*)^l\|_{L^2(\Gamma)} h^k \|\nabla_{\Gamma_h[\x^*]} e_y^0\|_{L^2(\Gamma_h[\x^*])} \le Ch^{2k} \|\hat\chi_u^*\|_{L^2(\Gamma_h[\y^*])} ; 
$$
$J_3$ is estimated by utilizing identities \eqref{M-Diff-0} and \eqref{dt-grad-f}; $J_4$ is estimated by applying integration by parts which transfers the gradient operator $\nabla_\Gamma$ from $(e_y)^l$ to $(\hat u_h^*)^l \cdot(\hat\chi_u^*)^l $. 

This proves the desired upper bound for the third term on the right-hand side of \eqref{estimate-du2}, and therefore completes the proof for the second result of Lemma \ref{Lemma:defects12}. The first result of Lemma \ref{Lemma:defects12} can be proved in the same way by using Lemma \ref{Lemma:L2-dtH}. 
\hfill\end{proof}

\begin{lemma}\label{Lemma:defect3}
Under the assumptions of Theorem \ref{THM1}, the following estimate holds{\rm:}
\begin{align*}
  \bigg|\int_{\Gamma_h[\bf y^*]}{d}_{u,3}^*\cdot \chi_u\bigg|
  \le
Ch^{k+1}  \|{\chi_u}\|_{H^1(\Gamma_h[\bf y^*])}. 
\end{align*}
\end{lemma}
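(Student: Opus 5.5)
The plan is to follow the same three-step splitting used for $d_{u,2}^*$ in Lemma \ref{Lemma:defects12}, writing
\begin{align*}
\int_{\Gamma_h[\y^*]}d_{u,3}^*\cdot\chi_u
&= \Big(\int_{\Gamma}|\nabla_\Gamma n|^2u\cdot\hat\chi_u^l-\int_{\Gamma_h[\x^*]}|\nabla_{\Gamma_h[\x^*]}n^{-l}|^2u^{-l}\cdot\hat\chi_u\Big)\\
&\quad+\Big(\int_{\Gamma_h[\x^*]}|\nabla_{\Gamma_h[\x^*]}n^{-l}|^2u^{-l}\cdot\hat\chi_u-\int_{\Gamma_h[\x^*]}|\nabla_{\Gamma_h[\x^*]}\hat n_h^*|^2\hat u_h^*\cdot\hat\chi_u\Big)\\
&\quad+\Big(\int_{\Gamma_h[\x^*]}|\nabla_{\Gamma_h[\x^*]}\hat n_h^*|^2\hat u_h^*\cdot\hat\chi_u-\int_{\Gamma_h[\y^*]}|\nabla_{\Gamma_h[\y^*]}n_h^*|^2u_h^*\cdot\chi_u\Big)=:K_1+K_2+K_3 .
\end{align*}
$K_1$ is a pure geometric perturbation term between $\Gamma$ and $\Gamma_h[\x^*]$, and I will bound it by $Ch^{k+1}\|\chi_u\|_{L^2}$ using \cite[Lemma 5.6]{Kovacs2018}. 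The smooth function $|\nabla_\Gamma n|^2u$ only enters through the perturbation of the area element and of the tangential gradient, which are $O(h^{k+1})$ in $L^\infty$.

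The key term is $K_2$. I will factor it as
\[
|\nabla n|^2u-|\nabla\hat n_h^*|^2\hat u_h^*=|\nabla n|^2(u-\hat u_h^*)+(\nabla n-\nabla\hat n_h^*):(\nabla n+\nabla\hat n_h^*)\,\hat u_h^*,
\]
with all quantities taken on $\Gamma_h[\x^*]$. The first piece is bounded by $Ch^{k+1}\|\chi_u\|_{L^2}$ by Lemma \ref{Lemma:L2} together with interpolation error. For the second piece a direct $L^2$ bound only gives $h^k$, so I must move the gradient off the error. I will write it as $\int\nabla(n-\hat n_h^*):\Psi$ with $\Psi=(\nabla n+\nabla\hat n_h^*)\,\hat u_h^*\cdot\hat\chi_u$ and split $\Psi=\Psi_0+(\Psi-\Psi_0)$, where $\Psi_0=2\nabla n\,(u\cdot\hat\chi_u)$ is built from smooth data times $\hat\chi_u$.
\begin{itemize}
\item For the $\Psi_0$ part, I lift to $\Gamma$ (a further perturbation error of size $Ch^{k+1}$) and integrate by parts. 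This moves the derivative onto $\nabla n\,u\,\hat\chi_u^l$, giving $C\|n-(\hat n_h^*)^l\|_{L^2}\|\chi_u\|_{H^1}\le Ch^{k+1}\|\chi_u\|_{H^1}$ by Lemma \ref{Lemma:L2}.
\item The remainder $\Psi-\Psi_0$ contains a factor $\nabla(\hat n_h^*-n)$ or $\hat u_h^*-u$. I will bound it by products of $W^{1,p}$ errors, each $O(h^k)$ by Lemma \ref{Lemma:W1p}: for instance $\|\nabla(n-\hat n_h^*)\|_{L^4}^2\|\chi_u\|_{L^\infty}$. To avoid $L^\infty$ of $\chi_u$, I will instead use $\|\cdot\|_{L^{6}}\|\cdot\|_{L^{3}}\|\chi_u\|_{L^2}$ or $\|\chi_u\|_{L^6}\le C\|\chi_u\|_{H^1}$. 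This gives $Ch^{2k}\|\chi_u\|_{H^1}\le Ch^{k+1}\|\chi_u\|_{H^1}$ for $k\ge1$.
\end{itemize}
The uniform bound $\|\hat u_h^*\|_{W^{1,\infty}}\le C$ from \eqref{u_h-w-infty-bound} is used throughout.

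$K_3$ is the surface change from $\Gamma_h[\x^*]$ to $\Gamma_h[\y^*]$ with fixed nodal vectors. I will treat it exactly like $J_1$–$J_4$ in Lemma \ref{Lemma:defects12}, via the family $\Gamma_h[\y^\theta]$ and formula \eqref{dt-grad-f} for $\partial_\theta^\bullet\nabla$. This produces integrals of the form $\int|\nabla n_h^\theta|^2u_h^\theta\chi_u^\theta\,\nabla\cdot e_y^\theta$ and $\int \nabla n_h^\theta(\nabla e_y^\theta)\nabla n_h^\theta\,u_h^\theta\chi_u^\theta$. I will freeze these at $\theta=0$, where the second-order differences are $O(h^{2k})$ by Lemma \ref{Lemma:W1p}, then lift to $\Gamma$, replace $\hat u_h^*$ and $\hat n_h^*$ by $u$ and $n$ at cost $h^{2k}$, and integrate by parts to remove the gradient from $e_y^l$. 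Lemma \ref{Lemma:L2} then gives $Ch^{k+1}\|\chi_u\|_{H^1}$.

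I expect the main obstacle to be the integration-by-parts step in $K_2$ and $K_3$ where the error carries a gradient. The care needed is to keep every nonlinear remainder quadratic in $W^{1,p}$ errors with $p<\infty$, since Lemma \ref{Lemma:W1p} excludes $p=\infty$, and to use only the $H^1$ norm of $\chi_u$ through Sobolev embedding in two dimensions.
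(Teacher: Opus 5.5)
Your proof is correct. It uses the same toolkit as the paper: geometric perturbation between $\Gamma$ and $\Gamma_h[\x^*]$, the homotopy $\Gamma_h[\y^\theta]$ with \eqref{dt-grad-f}, freezing at $\theta=0$, lifting, and integration by parts, so that only the $L^2$ errors of Lemma \ref{Lemma:L2} and quadratic $W^{1,p}$ errors from Lemma \ref{Lemma:W1p} remain. The decomposition, however, is organized differently.

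\begin{itemize}
\item \textbf{Paper's route.} It first compares the exact data with the \emph{interpolants} $\hat I_h^*n,\hat I_h^*u$ on $\Gamma_h[\x^*]$ (terms $F_1,F_2$); there integration by parts acts on the interpolation error $\eta$ of a smooth function. It then runs a single homotopy that changes the surface from $\x^*$ to $\y^*$ \emph{and} the nodal values from $\mathbf I_h u$ to $\mathbf u^*$ at the same time (terms $E_1$--$E_5$).
\item \textbf{Your route.} You compare the exact data directly with the Ritz nodal values $\hat n_h^*,\hat u_h^*$ on the fixed surface $\Gamma_h[\x^*]$ (term $K_2$). You then move only the surface, with frozen nodal vectors (term $K_3$), so the $\theta$-derivative contains purely geometric terms.
\end{itemize}

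Your split separates function error from surface error more cleanly. It also makes explicit the step the paper only asserts (``$E_3$ can be estimated similarly as $E_2$''): the gradient is removed from $\nabla(n-\hat n_h^*)$ through $\Psi_0$, lifting and integration by parts on $\Gamma$, with an $h^{2k}$ remainder controlled by $W^{1,4}$ bounds. The paper's route keeps the fixed-surface step a pure interpolation estimate for smooth functions. The price is a homotopy whose derivative mixes $e_u^\theta$, $e_n^\theta$ and $e_y^\theta$.

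One precision point in $K_1$. The perturbation of the tangential gradient itself is only $O(h^k)$, because the normals of $\Gamma$ and $\Gamma_h[\x^*]$ differ by $O(h^k)$. What is $O(h^{k+1})$ is the combined perturbation of the area element and of the quadratic quantity $|\nabla_{\Gamma_h[\x^*]}n^{-l}|^2$. For a vector $v$ tangent to $\Gamma$, projecting onto the tangent plane of $\Gamma_h[\x^*]$ changes $|v|^2$ only by $O(h^{2k})|v|^2$, since the normal mismatch enters quadratically. This is the content of the bilinear-form estimate in \cite[Lemma 5.6]{Kovacs2018} that you cite. With this reading your bound $K_1\le Ch^{k+1}\|\chi_u\|_{L^2}$ holds, and it is slightly sharper than the paper's $F_1$ estimate, which needs $\|\chi_u\|_{H^1}$.
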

\begin{proof}
  Observe that
\begin{align}\label{du3.phi}
  \bigg|\int_{\Gamma_h[\bf y^*]}{d}_{u,3}^*\cdot \chi_u\bigg|
&= 
\bigg| \int_{\Gamma} |\nabla_{\Gamma}n|^2 u \cdot \hat \chi_u^l -
\int_{\Gamma_h[\y^*]}
|\nabla_{\Gamma_h[\y^*]}n_h^*|^2 u_h^* \cdot \chi_u \bigg| \notag \\
&\le 
\bigg| \int_{\Gamma} |\nabla_{\Gamma}n|^2 u \cdot \hat \chi_u^l  -
\int_{\Gamma_h[\x^*]}
|\nabla_{\Gamma_h[\x^*]}\hat I_h^*n|^2 \hat I_h^*u\cdot \hat\chi_u \bigg|  \notag \\
&\quad
+
\bigg| \int_{\Gamma_h[\x^*]}
|\nabla_{\Gamma_h[\x^*]}\hat I_h^*n|^2 \hat I_h^*u\cdot \hat\chi_u 
-\int_{\Gamma_h[\y^*]}
|\nabla_{\Gamma_h[\y^*]}n_h^*|^2 u_h^* \cdot \chi_u  \bigg| .
\end{align}
The first term on the right-hand side of \eqref{du3.phi} can be estimated as follows:
\begin{align}\label{d3-12}
& \bigg| \int_{\Gamma} |\nabla_{\Gamma}n|^2 u \cdot \hat \chi_u^l  -
\int_{\Gamma_h[\x^*]}
|\nabla_{\Gamma_h[\x^*]}\hat I_h^*n|^2 \hat I_h^*u\cdot \hat\chi_u \bigg|  \notag \notag \\
&\le  \bigg| \int_{\Gamma} |\nabla_{\Gamma}n|^2 u\cdot\hat \chi_u^l -
\int_{\Gamma_h[\x^*]}
|\nabla_{\Gamma_h[\x^*]}n^{-l}|^2 u^{-l} \cdot \hat\chi_u\bigg| \notag \\
&\quad
+ \bigg|\int_{\Gamma_h[\x^*]}
|\nabla_{\Gamma_h[\x^*]}n^{-l}|^2 u^{-l} \cdot \hat\chi_u  -
\int_{\Gamma_h[\x^*]}
|\nabla_{\Gamma_h[\x^*]}\hat I_h^*n|^2 \hat I_h^*u\cdot \hat\chi_u\bigg| \notag \\ 
&=:F_1+F_2 .
\end{align}
To estimate $F_1$, we denote 
$$
x^{\theta} := (1-\theta)x^{l}+\theta x\,\,\,\mbox{for}\,\,\, x\in\Gamma_h[\x^*]
\quad\mbox{and}\quad 
\Gamma^{\theta}:=(1-\theta)\Gamma+\theta\Gamma_h[\x^*]
=\{x^\theta:x\in\Gamma_h[\x^*]\} .
$$ 
Let $u^\theta$, $n^\theta$, $e^\theta$ and $\chi_u^\theta$ be functions on $\Gamma^\theta$ defined by 
$$
u^\theta(x^\theta) = u(x^l),
\quad 
n^\theta(x^\theta) = n(x^l),
\quad
e^\theta(x^\theta) = x-x^l
\quad\mbox{and}\quad
\chi_u^\theta(x^\theta)=\hat \chi_u(x) 
\quad\mbox{for}\,\,\, x\in\Gamma_h[\bf x^*] . 
$$
Then
$$
\partial_{\theta}^{\bullet} u^{\theta} = \partial_{\theta}^{\bullet} \chi_u^{\theta}  =0
\quad\mbox{and}\quad
\partial_{\theta}^{\bullet} n^{\theta} =0 , 
$$
and
\begin{align*}
F_1 
&=\bigg|  \int_0^1 \frac{\d}{\d\theta} \int_{\Gamma^\theta}
|\nabla_{\Gamma^\theta}n^\theta|^2 u^\theta\cdot\chi_u^\theta \d\theta\bigg|  \\
&=  \bigg|  \int_0^1  \int_{\Gamma^\theta}
|\nabla_{\Gamma^\theta}n^\theta|^2 (\nabla_{\Gamma^\theta}\cdot e^\theta) u^\theta\cdot\chi_u^\theta\d\theta \\
&\quad\,\,
 + \int_0^1\int_{\Gamma^\theta} 
2 \nabla_{\Gamma^\theta}n^\theta 
\cdot\Big(\nabla_{\Gamma^{\theta}} \partial_{\theta}^{\bullet} n^{\theta}-\nabla_{\Gamma^{\theta}} e^{\theta} \nabla_{\Gamma^{\theta}} n^{\theta}+\nu^{\theta}(\nu^{\theta})^{\top}(\nabla_{\Gamma^{\theta}} e^{\theta})^{\top} \nabla_{\Gamma^{\theta}} n^{\theta}  \Big) u^\theta\cdot\chi_u^\theta \d\theta \bigg|  \\
&=:  |F_{11}+F_{12}| ,
\end{align*}
where $\nu^\theta$ is the unit normal vector on $\Gamma^\theta$. 
Notice that the following identity holds:
\begin{align*}
    F_{11} & =  \int_0^1  \int_{\Gamma^\theta}
    |\nabla_{\Gamma^\theta}n^\theta|^2 (\nabla_{\Gamma^\theta}\cdot e^\theta) u^\theta\cdot\chi_u^\theta\d\theta \\
    &  = \int_0^1 \big[ \int_{\Gamma^\theta}  |\nabla_{\Gamma^\theta}n^\theta|^2 (\nabla_{\Gamma^\theta}\cdot e^\theta) u^\theta\cdot\chi_u^\theta - \int_{\Gamma} |\nabla_{\Gamma}n|^2 (\nabla_{\Gamma} \cdot e^l) u\cdot\hat \chi_u^l\big] \,\d \theta\\
    & \quad \, + \int_{\Gamma} |\nabla_{\Gamma}n|^2 (\nabla_{\Gamma} \cdot e^l) u\cdot\hat \chi_u^l = : F_{111} + F_{112}.
\end{align*}
By employing the intermediate surface again and referencing the inequality $\|\nabla _{\Gamma^\theta} \cdot e^\theta\|_{L^\infty(\Gamma^\theta)} \le Ch^k$ induced from Lagrange interpolation error estimates, we obtain
\begin{equation*}
  \begin{aligned}
    |F_{111}| \le C\|\nabla _{\Gamma^\theta} \cdot e^\theta\|_{L^\infty(\Gamma^\theta)} ^2 \|\hat \chi_u^l\|_{L^2(\Gamma)}\le Ch^{2k} \|\hat \chi_u\|_{H^1(\Gamma_h[\bf x^*])}.
  \end{aligned}
\end{equation*}
By integration by parts and referencing the inequality $\|e^\theta\|_{L^\infty(\Gamma^\theta)} \le Ch^{k+1}$ from Lagrange interpolation error estimates, we have
\begin{equation*}
  \begin{aligned}
    |F_{112}| \le C\|e^\theta\|_{L^\infty(\Gamma^\theta)} \|\nabla_{\Gamma} \hat \chi_u^l\|_{L^2(\Gamma)}\le Ch^{k+1} \|\hat \chi_u\|_{H^1(\Gamma_h[\bf x^*])}.
  \end{aligned}
\end{equation*}
The term $F_{12}$ can be estimated in the same way as $F_{11}$ (split it into two parts, separately, and using integration by parts for the second part). Hence, 
\begin{align}
F_1 \le Ch^{k+1} \|\hat \chi_u\|_{H^1(\Gamma_h[\bf x^*])}.
\end{align}
By using the triangle inequality, we have 
\begin{align*}
F_2
&= \bigg|\int_{\Gamma_h[\x^*]}
|\nabla_{\Gamma_h[\x^*]}n^{-l}|^2 u^{-l}\cdot  \hat\chi_u -
\int_{\Gamma_h[\x^*]}
|\nabla_{\Gamma_h[\x^*]}\hat I_h^*n|^2 \hat I_h^*u \cdot \hat\chi_u \bigg| \\
&\le 
\bigg|\int_{\Gamma_h[\x^*]}
|\nabla_{\Gamma_h[\x^*]}n^{-l}|^2 (u^{-l}-\hat I_h^*u)\cdot  \hat\chi_u\bigg| \\
&\quad + \bigg|
\int_{\Gamma_h[\x^*]}
( |\nabla_{\Gamma_h[\x^*]}n^{-l}|^2- |\nabla_{\Gamma_h[\x^*]}\hat I_h^*n|^2) \hat I_h^*u\cdot\hat\chi_u \bigg| \\
&=:  F_{21} +F_{22}.
\end{align*}
By using error estimates of Lagrange interpolation, we have
\begin{align*}
F_{21}
&\le 
C\|u^{-l}-\hat I_h^*u\|_{L^2(\Gamma_h[\x^*])} \| \hat\chi_u \|_{L^2(\Gamma_h[\x^*])}
\le 
Ch^{k+1}\| \hat\chi_u \|_{L^2(\Gamma_h[\x^*])} \le Ch^{k+1}\| \hat\chi_u \|_{L^2(\Gamma_h[\x^*])}.
\end{align*}
Let $\eta=n^{-l}-\hat I_h^*n$, then 
\begin{align*}
F_{22}
&=
\bigg|
\int_{\Gamma_h[\x^*]}
(\nabla_{\Gamma_h[\x^*]}n^{-l}+\nabla_{\Gamma_h[\x^*]}\hat I_h^*n) \cdot \nabla_{\Gamma_h[\x^*]}\eta \big(\hat I_h^*u \cdot \hat\chi_u\big) \bigg|\\
&\le 
\bigg|
\int_{\Gamma_h[\x^*]}
(\nabla_{\Gamma_h[\x^*]}n^{-l}+\nabla_{\Gamma_h[\x^*]}\hat I_h^*n
- 2 (\nabla_{\Gamma}n) ^{-l} ) \cdot \nabla_{\Gamma_h[\x^*]}\eta \big(\hat I_h^*u \cdot \hat\chi_u\big) \bigg| \\
&\quad
+\bigg|
\int_{\Gamma_h[\x^*]}
2 (\nabla_{\Gamma}n) ^{-l}  \cdot \nabla_{\Gamma_h[\x^*]}\eta \big(\hat I_h^*u \cdot \hat\chi_u\big)  - 2\int_{\Gamma} \nabla_{\Gamma} n \cdot \nabla_{\Gamma}\eta^l \big(\hat I_h^*u \cdot \hat\chi_u\big)^l\bigg| \\
&\quad
+ \bigg|2\int_{\Gamma} \nabla_{\Gamma} n \cdot \nabla_{\Gamma}\eta^l \big(\hat I_h^*u \cdot \hat\chi_u\big)^l\bigg| =: F_{221} + F_{222} + F_{223}.
\end{align*}
Since the following inequality holds ({\mbox{Lagrange interpolation error estimates}})
\begin{align}\label{eta-error}
  \|\eta\|_{L^2(\Gamma_h[\bf x^*])} + h\|\nabla_{\Gamma_h[\bf x^*]} \eta\|_{L^2(\Gamma_h[\bf x^*])} \le Ch^{k+1},
\end{align}
along with error estimates from perturbation of surfaces, we have
\begin{align*}
  F_{221} + F_{222} &\le Ch^{2k} \| \hat\chi_u \|_{L^2(\Gamma_h[\x^*])}.
\end{align*}
By utilizing integration by parts to remove the gradient $\nabla_\Gamma$ from $\eta^l$, along with inequality \eqref{eta-error}, we have
\begin{align*}
  F_{223} \le C\|\eta^l\|_{L^2(\Gamma)} \|\hat\chi_u \|_{H^1(\Gamma_h[\x^*])} \le Ch^{k+1} \| \hat\chi_u \|_{H^1(\Gamma_h[\x^*])}.
\end{align*}
Then we obtain 
\begin{align}
F_2 &\le F_{21} + F_{221} + F_{222} + F_{223} \le
Ch^{k+1} \| \hat\chi_u \|_{H^1(\Gamma_h[\x^*])}. 
\end{align}
Substituting the estimates of $F_1$ and $F_2$ into \eqref{d3-12}, we obtain the following result for the first term on the right-hand side of \eqref{du3.phi}: 
\begin{align}\label{d3-1}
 \bigg| \int_{\Gamma} |\nabla_{\Gamma}n|^2 u \cdot \hat \chi_u^l  -
\int_{\Gamma_h[\x^*]}
|\nabla_{\Gamma_h[\x^*]}\hat I_h^*n|^2 \hat I_h^*u\cdot \hat\chi_u \bigg| 
&\le 
Ch^{k+1} \| \hat\chi_u \|_{H^1(\Gamma_h[\x^*])}\notag\\
& \le Ch^{k+1} \| \hat\chi_u \|_{H^1(\Gamma_h[\y^*])} , 
\end{align}
where norm equivalence of $W^{1,p}$ norm on $\Gamma_h[\bf x^*]$ and $\Gamma_h[\bf y^*]$ has been used in the last inequality.

The second term on the right-hand side of \eqref{du3.phi} can be estimated by employing the intermediate surface again between $\Gamma_h[\bf x^*]$ and $\Gamma_h[\bf y^*]$. Recall that $ \y^\theta=(1-\theta)\x^*+\theta \y^*$. 
Let $\chi_u^{\theta}$, $u_h^{\theta}$, $n_h^\theta$ and $e_y^\theta$ be the finite element functions on $\Gamma_h[\y^\theta]$ with nodal vectors ${\bm \chi_{\bf u}}$, $(1-\theta){\bf I}_hu+\theta {\bf u}^*$,$(1-\theta){\bf I}_h n+\theta {\bf n}^*$ and ${\bf e}_\y := \y^* -\x^*$, respectively. Then 
$$
\partial_\theta^{\bullet}u_h^{\theta} = e_u^{\theta}, \quad \partial_\theta^{\bullet}n_h^{\theta} = e_n^{\theta}
\quad\mbox{and}\quad
\partial_\theta^{\bullet}\hat\chi_u^{\theta} =0 , 
$$
where $e_u^{\theta}$ and $e_n^{\theta}$ are the finite element functions on $\Gamma_h[\y^\theta]$ with nodal vectors ${\bf u}^*-{\bf I}_hu$ and ${\bf n}^*-{\bf I}_h n$, respectively. The surface $\Gamma_h[\y^\theta]$ is moving with velocity $e_y^\theta$ as $\theta$ changes from $0$ to $1$. Then, Lemmas \ref{Lemma:W1p} and \ref{Lemma:L2} imply that
\begin{align}\label{ey-L2-copy-2}
\|e_y^\theta\|_{L^2(\Gamma_h[\y^\theta])} + h\|e_y^\theta\|_{H^1(\Gamma_h[\y^\theta])} \le Ch^{k+1},
\end{align}
\begin{align}\label{en-L2}
  \|e_n^\theta\|_{L^2(\Gamma_h[\y^\theta])}
  +h\|e_n^\theta\|_{H^1(\Gamma_h[\y^\theta])} \le Ch^{k+1},
  \end{align}
\begin{align}\label{eu-L2}
  \|e_u^\theta\|_{L^2(\Gamma_h[\y^\theta])}
  +h\|e_u^\theta\|_{H^1(\Gamma_h[\y^\theta])} \le Ch^{k+1}.
\end{align}
Then the second term on the right-hand side of \eqref{du3.phi} can be estimated as follows:
\begin{align}\label{rhs-du3}
&\quad \bigg| \int_{\Gamma_h[\x^*]}
|\nabla_{\Gamma_h[\x^*]}\hat I_h^*n|^2 \hat I_h^*u\cdot \hat\chi_u 
-\int_{\Gamma_h[\y^*]}
|\nabla_{\Gamma_h[\y^*]}n_h^*|^2 u_h^* \cdot \chi_u  \bigg| \notag\\
&=
\bigg| \int_0^1 \frac{\d}{\d\theta} \int_{\Gamma_h[\y^\theta]}
|\nabla_{\Gamma_h[\y^\theta]}n_h^\theta|^2 u_h^\theta \cdot\chi_u^\theta \,\d\theta \bigg| \notag\\
&=
\bigg| \int_0^1  \int_{\Gamma_h[\y^\theta]} 
( |\nabla_{\Gamma_h[\y^\theta]}n_h^\theta|^2 e_u^\theta \cdot \chi_u^\theta + |\nabla_{\Gamma_h[\y^\theta]}n_h^\theta|^2\big(u_h^\theta \cdot \chi_u^\theta\big) \nabla_{\Gamma_h[\y^\theta]}\cdot e_y^\theta)\,\d\theta \notag \\
&\quad
+\int_0^1  \int_{\Gamma_h[\y^\theta]}  2 \nabla_{\Gamma_h[\y^\theta]}n_h^\theta \cdot \Big(\nabla_{\Gamma_h[\y^\theta]} \partial_{\theta}^{\bullet} n_{h}^{\theta}-\nabla_{\Gamma_h[\y^\theta]} e_{y}^{\theta} \nabla_{\Gamma_h[\y^\theta]} n_{h}^{\theta}+\nu^{\theta}(\nu^{\theta})^{\top}(\nabla_{\Gamma_h[\y^\theta]} e_{y}^{\theta})^{\top} \nabla_{\Gamma_h[\y^\theta]} n_{h}^{\theta}  \Big) \big(u_h^\theta \cdot \chi_u^\theta\big)\,\d\theta \bigg| \notag\\
&\le 
\bigg| \int_0^1  \int_{\Gamma_h[\y^\theta]} |\nabla_{\Gamma_h[\y^\theta]}n_h^\theta|^2 e_u^\theta \cdot \chi_u^\theta\,\d\theta \bigg|  \notag\\
&\quad
+ \bigg| \int_0^1 \int_{\Gamma_h[\y^\theta]}  |\nabla_{\Gamma_h[\y^\theta]}n_h^\theta|^2 \big(u_h^\theta \cdot \chi_u^\theta\big)\nabla_{\Gamma_h[\y^\theta]}\cdot e_y^\theta \,\d\theta \bigg| \notag\\
&\quad
+ \bigg| \int_0^1  \int_{\Gamma_h[\y^\theta]}  2 \nabla_{\Gamma_h[\y^\theta]}n_h^\theta \cdot  \nabla_{\Gamma_h[\y^\theta]} e_n^{\theta} \big(u_h^\theta \cdot \chi_u^\theta\big)\,\d\theta \bigg| \notag\\
&\quad
+
\bigg| \int_0^1  \int_{\Gamma_h[\y^\theta]}   2 \big(\nabla_{\Gamma_h[\y^\theta]}n_h^\theta \cdot
\nabla_{\Gamma_h[\y^\theta]} e_{y}^{\theta} \nabla_{\Gamma_h[\y^\theta]} n_{h}^{\theta}\big) \big(u_h^\theta \cdot \chi_u^\theta\big)\,\d\theta \bigg|\notag \\
&\quad
+
\bigg| \int_0^1  \int_{\Gamma_h[\y^\theta]}  2 \big(\nabla_{\Gamma_h[\y^\theta]}n_h^\theta \cdot \nu_{h}^{\theta}(\nu_{h}^{\theta})^{\top}(\nabla_{\Gamma_h[\y^\theta]} e_{y}^{\theta})^{\top} (\nabla_{\Gamma_h[\y^\theta]} n_{h}^{\theta} )\big)\big(u_h^\theta \cdot \chi_u^\theta\big)\,\d\theta \bigg| \notag\\
&=: \sum_{j=1}^5 E_j,
\end{align}
where $\nu^\theta$ is the unit normal vector on $\Gamma_h[\y^\theta]$.
By using inequality \eqref{eu-L2}, we have  
\begin{align*}
E_1 
&= \bigg| \int_0^1  \int_{\Gamma_h[\y^\theta]} |\nabla_{\Gamma_h[\y^\theta]}n_h^\theta|^2 \big(e_u^\theta \cdot \chi_u^\theta\big)\,\d\theta \bigg| \\ 
&\le C \|e_u^\theta\|_{L^2(\Gamma_h[\y^\theta])} 
\|\nabla_{\Gamma_h[\y^\theta]}n_h^\theta\|_{L^{6}(\Gamma_h[\y^\theta])}^2 
\|\chi_u^\theta\|_{L^{6}(\Gamma_h[\y^\theta])} \\
&\le
C h^{k+1}
\|\chi_u\|_{H^1(\Gamma_h[\y^*])},
\end{align*}
where norm equivalence of $L^p$ and $W^{1,p}$ norms on $\Gamma_h[\y^\theta]$ and $\Gamma_h[\y^*]$ are used in the last inequality.
Observe that $E_2$ can be rewritten as follows:
  \begin{align}\label{rewriteE2}
    E_2 &= \bigg| \int_0^1 \int_{\Gamma_h[\y^\theta]} |\nabla_{\Gamma_h[\y^\theta]}n_h^\theta|^2 \big( u_h^\theta \cdot \chi_u^\theta\big) \nabla_{\Gamma_h[\y^\theta]}\cdot e_y^\theta \d\theta  \bigg|\notag \\ 
    & \le \bigg| \int_0^1 \big[\int_{\Gamma_h[\y^\theta]} |\nabla_{\Gamma_h[\y^\theta]}n_h^\theta|^2 \big(u_h^\theta\cdot\chi_u^\theta\big) \nabla_{\Gamma_h[\y^\theta]}\cdot e_y^\theta - \int_{\Gamma_h[\bf x^*]}  |\nabla_{\Gamma_h[\bf x^*]} \hat I_h^*n|^2 \big(\hat I_h^* u \cdot \hat \chi_u\big)\nabla_{\Gamma_h[\bf x^*]}\cdot e_y^0\big]\d\theta \bigg| \notag\\
    & \quad + \bigg| \int_{\Gamma_h[\bf x^*]} |\nabla_{\Gamma_h[\bf x^*]} \hat I_h^*n|^2 \big(\hat I_h^* u \cdot \hat \chi_u\big)\nabla_{\Gamma_h[\bf x^*]}\cdot e_y^0 - \int_{\Gamma} |\nabla_{\Gamma} n|^2 \big(u\cdot \hat \chi_u^l\big) \nabla_\Gamma\cdot (e_y^0)^l\bigg|\notag\\
    & \quad + \bigg| \int_{\Gamma} |\nabla_{\Gamma} n|^2 \big(u\cdot \hat \chi_u^l\big) \nabla_\Gamma\cdot (e_y^0)^l\bigg| = :E_{21} + E_{22} + E_{23}.
  \end{align}
By employing the intermediate surface between $\Gamma_h[\y^\theta]$ and $\Gamma_h[\x^*]$ again and referencing the inequalities \eqref{ey-L2-copy-2}, \eqref{en-L2} and \eqref{eu-L2}, we obtain
\begin{equation*}
  \begin{aligned}
    E_{21} \le Ch^{2k} \|\chi_u\|_{L^2(\Gamma_h[\bf y^*])}.
  \end{aligned}
\end{equation*}
By employing the intermediate surface between $\Gamma_h[\bf x^*]$ and $\Gamma$ throughout the lift map and utilizing the inequality $\|\nabla _{\Gamma_h[\bf x^*]} \cdot (x-x^l)\|_{L^p(\Gamma_h [\bf x^*])} \le Ch^k$ induced from Lagrange interpolation error estimates, we obtain
\begin{equation*}
  \begin{aligned}
    E_{22} \le Ch^{2k} \|\chi_u\|_{L^2(\Gamma_h[\bf y^*])}.
  \end{aligned}
\end{equation*}
By using integration by parts along with inequality \eqref{ey-L2-copy-2}, we derive 
\begin{equation*}
  \begin{aligned}
    E_{23} \le C\|e_y^0\|_{L^2(\Gamma_h[\bf x^*])} \|\chi_u\|_{H^1(\Gamma_h[\bf y^*])} \le Ch^{k+1} \| \chi_u\|_{H^1(\Gamma_h[\bf y^*])}.
  \end{aligned}
\end{equation*}
Substituting the estimates of $E_{2j}$ for $j=1,2,3$ into \eqref{rewriteE2}, we obtain 
\begin{equation*}
  \begin{aligned}
    E_2 \le Ch^{k+1} \| \chi_u\|_{H^1(\Gamma_h[\bf y^*])}.
  \end{aligned}
\end{equation*}
The terms $E_3$, $E_4$ and $E_5$ can be estimated similarly as $E_2$, then we have
\begin{align*}
  E_3 + E_4 + E_5 \le Ch^{k+1} \| \chi_u\|_{H^1(\Gamma_h[\bf y^*])}.
\end{align*}
Substituting the estimates of $E_{j}$ for $j=1,2,3,4,5$ into \eqref{rhs-du3}, we obtain the following result for the second term on the right-hand side of \eqref{du3.phi}:
\begin{align}\label{rhs-du3-result}
  \bigg| \int_{\Gamma_h[\x^*]}
|\nabla_{\Gamma_h[\x^*]}\hat I_h^*n|^2 \hat I_h^*u\cdot \hat\chi_u 
-\int_{\Gamma_h[\y^*]}
|\nabla_{\Gamma_h[\y^*]}n_h^*|^2 u_h^* \cdot \chi_u  \bigg| \le Ch^{k+1} \| \chi_u\|_{H^1(\Gamma_h[\bf y^*])}.
\end{align}
Then substituting inequalities \eqref{d3-1} and \eqref{rhs-du3-result} into inequality \eqref{du3.phi}, we obtain
\begin{align*}
  \bigg|\int_{\Gamma_h[\bf y^*]}d^*_{u,3} \cdot \chi_u\bigg| \le Ch^{k+1} \|{\chi_u}\|_{H^1(\Gamma_h[\bf y^*])},
\end{align*}
which proves Lemma \ref{Lemma:defect3}. 
\end{proof}
\hfill

Substituting results from Lemma \ref{Lemma:defects12} and Lemma \ref{Lemma:defect3} into \eqref{defect_H_identity}, we obtain
\begin{align*}
  \bigg|\int_{\Gamma_h[\bf y^*]}d^*_{u} \cdot \chi_u\bigg| \le Ch^{k+1} \|{\chi_u}\|_{H^1(\Gamma_h[\bf y^*])},
\end{align*}
which proves Lemma \ref{Lemma:defects}. 
\end{document}